\subjclass[2010]{14F42, 19E15, 55P42, 55P91}
\keywords{Equivariant motivic homotopy, Bredon motivic cohomology, Betti realization}
  \definecolor{dark-red}{rgb}{0.4,0.15,0.15}
\newcommand{\C}{\mathbb{C}} 
\newcommand{\A}{\mathbb{A}}
\renewcommand{\P}{\mathbb{P}}
\newcommand{\Z}{\mathbb{Z}}
\newcommand{\R}{\mathbb{R}}
\newcommand{\G}{\mathbb{G}}
\renewcommand{\L}{\mathbb{L}}
\newcommand{\MM}{\mathcal{M}}
\renewcommand{\SS}{\mathbb{S}}
\newcommand{\RRe}{\mathrm{Re}}
\newcommand{\iso}{\cong}
\newcommand{\wkeq}{\simeq}
\newcommand{\Sch}{\mathrm{Sch}}
\newcommand{\Sm}{\mathrm{Sm}}
\newcommand{\sch}{\mathrm{Sch}}
\newcommand{\sm}{\mathrm{Sm}}
\newcommand{\spc}{\mathrm{Spc}}
\newcommand{\topp}{\mathrm{Top}}
\newcommand{\Ab}{\mathrm{Ab}}
\newcommand{\spre}{\mathrm{sPre}}
\newcommand{\sspt}{\mathrm{Spt}^{\Sigma}}
\newcommand{\GMS}{G\mathrm{Spc}_{\bullet}}
\newcommand{\MS}{\mathrm{Spc}}
\newcommand{\bMS}{\mathrm{Spc}_{\bullet}}
\newcommand{\bGH}{\mathrm{H}_{G,\bullet}}
\newcommand{\HH}{\mathrm{H}_{\bullet}}
\newcommand{\SH}{\mathrm{SH}}
\newcommand{\rH}{\widetilde{H}}
\newcommand{\rG}{\rho_{G}}
\newcommand{\EG}{\mathbf{E}}
\newcommand{\BG}{\mathbf{B}}
\newcommand{\EGt}{\widetilde{\EG} }
\newcommand{\Es}{\mathrm{E}_{\bullet}}
\newcommand{\Th}{\mathrm{Th}}
\newcommand{\id}{\mathrm{id}}
\newcommand{\wt}[1]{\widetilde{#1}}
\newcommand{\mcal}[1]{\mathcal{#1}}
\newcommand{\ul}[1]{\underline{\smash{#1}}}
\newcommand{\MZ}{\mathbf{M}\ul{\Z}}
\newcommand{\MA}{\mathbf{M}\ul{A}}
\newcommand{\M}[1]{\mathbf{M}\ul{#1}}
\DeclareMathOperator*{\colim}{\mathrm{colim}}
\DeclareMathOperator{\diag}{\mathrm{diag}}
\DeclareMathOperator{\spec}{\mathrm{Spec}}
\DeclareMathOperator{\proj}{\mathrm{Proj}}
\DeclareMathOperator{\cone}{\mathrm{cone}}
\newcommand{\Sym}{\mathrm{Sym}}
\DeclareMathOperator{\coker}{\mathrm{coker}}
\DeclareMathOperator{\codim}{\mathrm{codim}}
\DeclareMathOperator{\Hom}{Hom}
\DeclareMathOperator{\Ext}{Ext}
\DeclareMathOperator{\Cor}{Cor}
\DeclareMathOperator{\Aut}{Aut}
\DeclareMathOperator{\ch}{char}
\DeclareMathOperator{\Tot}{Tot}
\DeclareMathOperator{\msing}{\mathcal{S}\mathrm{ing}}
\newcommand{\shom}{\underline{\mathrm{Hom}}}
\newcommand{\ihom}{\mathbf{hom}}
\newcommand{\cd}{\smash\cdot}
\numberwithin{equation}{section} 
\theoremstyle{plain} 
\newaliascnt{theorem}{equation}  
\newtheorem{theorem}[theorem]{Theorem}  
\newaliascnt{proposition}{equation}  
\newtheorem{proposition}[proposition]{Proposition}
\newaliascnt{lemma}{equation}    
\newtheorem{lemma}[lemma]{Lemma}
\newaliascnt{corollary}{equation}  
\newtheorem{corollary}[corollary]{Corollary}
\newaliascnt{claim}{equation}  
 \theoremstyle{definition}
\newaliascnt{definition}{equation}  
\newtheorem{definition}[definition]{Definition}
\newaliascnt{example}{equation}  
\newaliascnt{remark}{equation}   
\newtheorem{remark}[remark]{Remark}
\newaliascnt{condition}{equation}
\newaliascnt{notationconvention}{equation}
\newcommand{\aref}[1]{\autoref{#1}}
\begin{document}
\title[Topological comparison theorems for Bredon motivic cohomology]{Topological comparison theorems for \\ Bredon motivic cohomology}

\author{J. Heller}
\email{jeremiahheller.math@gmail.com}
\address{Department of Mathematics, University of Illinois, Urbana-Champaign}
\author{M. Voineagu}
\email{m.voineagu@unsw.edu.au}
\address{UNSW Sydney, NSW 2052 Australia}
\author{P. A. {\O}stv{\ae}r}
\email{paularne@math.uio.no}
\address{Department of Mathematics, University of Oslo, Norway}

\begin{abstract}
We prove equivariant versions of the Beilinson-Lichtenbaum conjecture for Bredon motivic cohomology of smooth complex and real varieties 
with an action of the group of order two.
This identifies equivariant motivic and topological invariants in a large range of degrees.
\end{abstract} 
\maketitle
\tableofcontents

\section{Introduction}

A major achievement of motivic homotopy theory is the proof of the Bloch-Kato conjecture relating Milnor $K$-theory to Galois cohomology \cite{Voev:miln}, 
\cite{Voev:BK},  
and consequently the solution of the Beilinson-Lichtenbaum conjecture by the results in \cite[\S7]{SV:BK}.
With finite mod-$n$ coefficients and $X$ a smooth scheme of finite type over a field $k$ of characteristic coprime to $n$, 
one form of the Beilinson-Lichtenbaum conjecture asserts that the comparison map between motivic and \'etale cohomology 
\begin{equation}
\label{equation:BLconjecturemap}
H^{p,q}_{\mcal{M}}(X,\Z/n)\to H^{p}_{et}(X,\mu_{n}^{\otimes q})
\end{equation}
is an isomorphism when $p\leq q$
\cite[Conjecture 6.8]{SV:BK} and \cite[Chapter 1]{OrangeBook}.
This ``\'etale descent'' property identifies a large range of motivic cohomology groups, 
a.k.a.~higher Chow groups, 
with more computable \'etale cohomology groups.
For a smooth complex variety $X$, 
the \'etale cohomology groups in (\ref{equation:BLconjecturemap}) agree with the singular cohomology groups of its corresponding analytic space $X(\C)$.
Thus the Beilinson-Lichtenbaum conjecture provides a powerful link between algebro-geometric and topological invariants.
This can further be enhanced to prove the Quillen-Lichtenbaum conjectures comparing the algebraic and hermitian $K$-theories of $X$ with their analytic or \'etale counterparts, 
see e.g., 
\cite[Theorem 4.7]{SuslinICM94}, \cite[Theorem 7.10]{Voev:miln} and \cite[Theorem 5.1]{BKSO}.

A framework for invariants of smooth varieties equipped with a group scheme action has recently been organized in the subject of equivariant motivic homotopy theory
\cite{Deligne:V}, \cite{HKO}, \cite{HKO:EMHT}, \cite{Herrmann}, \cite{Hoyois:six}, \cite{CJ}.
Of particular interest are actions by the group $C_{2}$ of order two governing the examples of hermitian $K$-theory and motivic Real cobordism \cite{HKO}.
Bredon motivic cohomology introduced in \cite[\S5]{HVO:cancellation} is an equivariant generalization of motivic cohomology for finite group actions.
This theory is now amenable to a homotopical analysis on account of the equivariant cancellation theorem shown in \cite[Theorem 9.7]{HVO:cancellation}.

We employ this setup to prove an equivariant Beilinson-Lichtenbaum comparison theorem for smooth complex (and real) varieties. Before stating our main theorem, we review the players involved. If $M$ is a topological space with $C_2$-action and $A$ is an abelian group, the Bredon cohomology groups $H^{a+p\sigma}_{C_2}(M, \ul{A})$ are an
 equivariant analog of singular cohomology groups. Here $a,p$ are integers and $\sigma$ stands for the sign representation, i.e., topological Bredon cohomology is graded by virtual representations of $C_2$. 
  Now if $X$ is a smooth complex $C_{2}$-scheme of finite type, the Bredon motivic cohomology groups 
 $H^{a+p\sigma, b+q\sigma}_{C_{2}}(X,\ul{A})$ have a grading by a $4$-tuple of integers. This $4$-tuple is a pair of virtual representations $V= a+p\sigma$ and $W=b+q\sigma$ which are respectively the ``cohomological degree" and the ``weight" of the grading.

In \aref{app:EMHT} we construct a natural comparison map between Bredon motivic cohomology and its equivariant topological counterpart
\begin{equation}
\label{equation:equivariantBLconjecturemap}
H^{a+p\sigma, b+q\sigma}_{C_{2}}(X,\ul{A})
\to 
H^{a+p\sigma}_{C_{2}}(X(\C),\ul{A}).
\end{equation}

The map in (\ref{equation:equivariantBLconjecturemap}) is induced by a Betti realization functor for the stable $C_{2}$-equivariant motivic homotopy category.
In fact, 
the groups on the left hand side of (\ref{equation:equivariantBLconjecturemap}) are represented by a $C_2$-equivariant Bredon motivic cohomology spectrum $\M{A}$, 
whose Betti realization agrees with the $C_2$-equivariant Eilenberg-MacLane spectrum for the constant Mackey functor $\ul{A}$, 
cf.~\aref{definition:MBC}, \aref{subsection:topologicalrealization(stable)}, and \aref{thm:realbred}.

A virtual $C_2$-representation $U=a+p\sigma$ has the dimension given by $\dim(U)=a+p$ and $\dim (U^{C _2})=a$. The Beilinson-Lichtenbaum comparison theorem says that the comparison map \eqref{equation:BLconjecturemap} is an isomorphism (resp.~ injection) for cohomological degrees less than or equal than the weight (resp.~ weight plus one). 
Our main result, which appears as \aref{theorem:equivariantBLconjecture} below, 
asserts that the condition of isomorphism depends on the dimensions of the representations and their fixed points.  A precise statement is the following. 
\begin{theorem} 
 Let $X$ be a smooth complex scheme of finite type with $C_2$-action. 
Write $V=a+p\sigma$ and $W=b+q\sigma$.  
 Then, the comparison map
 (\ref{equation:equivariantBLconjecturemap}) is an isomorphism for $a+p\leq b+q$ and $a\leq \min\{b,b-q\}$ and an injection when  $a+p\leq b+q+1$, and  $a\leq \min\{b,b-q\}+1$ for any finite abelian group $A$. 
\end{theorem}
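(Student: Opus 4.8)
The comparison map \eqref{equation:equivariantBLconjecturemap} is induced by the stable $C_2$-equivariant Betti realization functor, which by \aref{thm:realbred} sends $\M{A}$ to the topological Eilenberg--MacLane spectrum of the constant Mackey functor $\ul A$. Since realization is monoidal, preserves homotopy colimits and cofiber sequences, and is compatible with geometric fixed points, the plan is to resolve $\M{A}$ by the isotropy separation cofiber sequence $\EG_+ \wedge \M{A} \to \M{A} \to \EGt \wedge \M{A}$, to establish the comparison separately on the Borel term $\EG_+ \wedge \M{A}$ and on the geometric term $\EGt \wedge \M{A}$ in the appropriate ranges, and then to conclude by the four/five lemma applied to the long exact sequences that this cofiber sequence and its realization induce in the bigraded cohomology of $X$, keeping careful track of where the ``$+1$'' of an injective range is used.

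\emph{The Borel term.} For $X$ smooth over $\C$, the groups $[\Sigma^\infty X_+, \Sigma^{V,W}(\EG_+\wedge\M{A})]_{\Gsh}$ are computed by a spectral sequence whose input is assembled from the group cohomology of $C_2$ and the motivic cohomology of the underlying complex scheme $\ul X$ (using that $\Sigma^\infty X_+$ is dualizable in $\Gsh$); the $\sigma$-direction of the grading is controlled through the Euler class cofiber sequence $C_{2+}\to\EG_+\to S^\sigma\wedge\EG_+$, which over $\EG$ contributes only a non-equivariant correction. Betti realization yields the analogous topological spectral sequence with the singular cohomology of $\ul X(\C)$ as input, and the comparison is the induced morphism of spectral sequences. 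The non-equivariant Beilinson--Lichtenbaum theorem applied to $\ul X$ makes this an isomorphism (resp.\ an injection) on the input in the expected range of cohomological degree versus weight; as $\ul X$ is of finite type the spectral sequences converge and the range passes to the abutment, producing the comparison whenever $\dim(V)\leq\dim(W)$ (resp.\ $\leq\dim(W)+1$), i.e.\ in the first coordinate of $\ul{\dim}$.

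\emph{The geometric term.} Smashing with $\EGt$ is a smashing localization whose local objects are detected by the geometric fixed point functor $\Phi^{C_2}$, and $\Phi^{C_2}(\Sigma^\infty X_+)\wkeq\Sigma^\infty X^{C_2}_+$, where over $\C$ the fixed scheme $X^{C_2}$ is smooth with trivial action. Hence $[\Sigma^\infty X_+,\Sigma^{V,W}(\EGt\wedge\M{A})]_{\Gsh}$ is a shifted cohomology group of $X^{C_2}$ with coefficients in $\Phi^{C_2}\M{A}$, and likewise after realization with $\Phi^{C_2}$ of the topological $H\ul A$. The essential input here is an explicit computation of $\Phi^{C_2}\M{A}$ as a wedge of shifted motivic Eilenberg--MacLane spectra, compatible under realization with the known wedge decomposition of the geometric fixed points of $H\ul A$. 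Granting this, each wedge summand contributes a shifted copy of the non-equivariant comparison map for $X^{C_2}$, so Beilinson--Lichtenbaum for $X^{C_2}$ gives an isomorphism (resp.\ injection) precisely when $\dim(V^{C_2})\leq\dim(W^{C_2})$ (resp.\ $\leq\dim(W^{C_2})+1$), the second coordinate of $\ul{\dim}$. Feeding the Borel and geometric ranges through the connecting homomorphisms of the isotropy long exact sequence then yields exactly the two conditions $\ul{\dim}(V)\leq\ul{\dim}(W)$ and $\ul{\dim}(V)\leq\ul{\dim}(W)+1$ of the statement.

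\emph{Main obstacle and an alternative.} I expect the crux to be the identification of $\Phi^{C_2}\M{A}$ together with its compatibility with realization, and more generally carrying out the fixed-point reduction of the geometric term for an arbitrary smooth $C_2$-scheme rather than for a single linear representation; the remaining bookkeeping --- tracking $\sigma$-twists and the precise isomorphism/injection ranges through the two spectral sequences and the boundary maps --- is technical but routine once that is in place. An organization that avoids computing $\Phi^{C_2}\M{A}$ outright is to induct on the closed--open decomposition $X^{C_2}\hookrightarrow X\hookleftarrow X\setminus X^{C_2}$: the open part carries a free $C_2$-action, so its equivariant cohomology is the motivic cohomology of the smooth quotient $(X\setminus X^{C_2})/C_2$ and is governed by the classical theorem, while the closed part has trivial action and, via homotopy purity and a Thom isomorphism, reduces to the motivic cohomology of $X^{C_2}$ as a module over the Bredon motivic cohomology of a point from \cite{HVO:cancellation}; one then compares the two localization long exact sequences. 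In either approach the decisive points are the same: matching the $\sigma$-twists across \eqref{equation:equivariantBLconjecturemap} and keeping precise control of the ranges.
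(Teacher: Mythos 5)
Your overall strategy matches the paper's exactly: resolve via the motivic isotropy separation cofiber sequence $X_+\wedge\EG C_{2\,+}\to X_+\to X_+\wedge\EGt C_2$, prove the comparison for the Borel and geometric pieces in the respective ranges $\dim(V)\leq\dim(W)$ and $\dim(V^{C_2})\leq\dim(W^{C_2})$, and combine via the long exact sequences (\aref{prop:free}, \aref{prop:isotrop}, and the proof of \aref{theorem:equivariantBLconjecture}). Your ``alternative'' route for the geometric term — the closed--open decomposition $X^{C_2}\hookrightarrow X\hookleftarrow X\setminus X^{C_2}$, equivariant homotopy purity, Thom isomorphism for the normal bundle along $X^{C_2}$, and the vanishing of $\rH^{\star,\star}_{C_2}((-)\wedge\EGt C_2)$ on the free open part — is precisely what \aref{prop:isotrop} does, and is the route you should commit to. Your primary proposal there (compute $\Phi^{C_2}\MA$ explicitly and compare with the topological wedge decomposition of $\Phi^{C_2}H\ul{A}$) is not carried out in the paper, and, as you note, the needed identifications of $\Phi^{C_2}\MA$ and $\Phi^{C_2}\Sigma^\infty X_+$ in the \emph{motivic} category are substantive and unproved here; the purity/Thom route avoids ever having to compute a motivic geometric fixed point.

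For the Borel term, however, your sketch has real gaps that the paper's argument (\aref{prop:free}) is designed to avoid. The paper first reduces coefficients to $\Z/2$ (transfer for odd primes via \aref{prop:mtwo}, Bockstein induction for $2$-power torsion), then invokes the genuine periodicity theorem — $s\in H^{\sigma-1,0}_{C_2}(\EG C_2,\ul{\Z/2})$ and $t\in H^{\sigma-1,\sigma-1}_{C_2}(\EG C_2,\ul{\Z/2})$ are invertible (\aref{thm:periodicity}, \aref{cor:period}) — to collapse the four-fold grading to an integer bigrading, and then identifies each finite-stage Borel group $H^{a,b}_{C_2}(X\times U_n,\ul{\Z/2})$ with the ordinary motivic cohomology of the smooth quasi-projective quotient $(X\times U_n)/C_2$ (\aref{lem:freequot}), so that non-equivariant Beilinson--Lichtenbaum applies directly to those quotients; a Milnor-sequence comparison then handles the $\lim$ and $\lim^1$ terms. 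Your proposal replaces this with a homotopy-fixed-point-type spectral sequence with input built from $H^s(C_2;H^{t,w}_{\mcal{M}}(\ul X))$, together with dualizability of $\Sigma^\infty X_+$ and the assertion that the Euler-class cofiber sequence gives ``only a non-equivariant correction'' in the $\sigma$-direction. None of these is free: dualizability of $\Sigma^\infty X_+$ in $\Gsh(\C)$ for general smooth $X$ is not established (and not needed) in the paper; passing an isomorphism/injection range from $E_2$ to the abutment of such a spectral sequence requires convergence and boundary-map control that the Milnor-sequence route sidesteps; and the control of the $\sigma$-direction you gesture at is exactly the periodicity theorem, which is a nontrivial computation occupying most of \aref{section:periodicityandBmc} and which only holds with $\Z/2$-coefficients — hence the coefficient reduction, which your sketch omits entirely. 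So the decomposition and the ranges in your proposal are right, but the Borel-term argument as written would not go through without substantial additional work, and the missing ingredients are precisely the periodicity theorem and the free-quotient identification.
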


The condition $a\leq b-q$ cannot be improved as we can see from  \aref{rem}. From this theorem it also results that in the above  range of degrees it follows that $H^{a+p\sigma,b+q\sigma}_{C_{2}}(X,\ul{A})$ is a finite abelian group.

We also establish an equivariant version of the Beilinson-Lichtenbaum comparison theorem in the case of smooth real $C_{2}$-schemes of finite type, see \aref{equivBLconjecture}. In this case, if $X$ is a finite-type $C_2$-scheme over $\R$, the space of complex points $X(\C)$ has a $C_2\times C_2$-action. One copy of $C_2$ acts algebraically through the action on $X$ and the other acts via complex conjugation. Thus \aref{equivBLconjecture} is a comparison result which relates    $C_2$-equivariant Bredon motivic cohomology of $X$ with $C_2\times C_2$-equivariant topological Bredon cohomology of $X(\C)$.

We expect that over other fields, there is an analogous comparison between Bredon motivic cohomology and an appropriate equivariant \'etale analog of the topological Bredon cohomology. However it doesn't appear that this has been developed yet. Doing so here would lead us too far astray and so we leave this for a future paper.

The nonequivariant Beilinson-Lichtenbaum comparison theorem is a fundamental ingredient in the proof of \aref{theorem:equivariantBLconjecture}. We leverage this nonequivariant comparison result to an equivariant comparison by means of a motivic version of the isotropy separation cofiber sequence together with some computations in Borel motivic cohomology. We emphasize that these arguments rely both on the representability of 
Bredon motivic cohomology in the stable equivariant motivic homotopy category as well as that it can be computed as sheaf hypercohomology. That the representable theory coincides with sheaf hypercohomology is basically a recasting of the homotopy invariance and equivariant cancellation theorems, established in \cite{HVO:cancellation}, and is proved in \aref{thm:motivicrep} below.

The equivariant Beilinson-Lichtenbaum comparison theorem is a first important step towards understanding the Bredon motivic cohomology ring 
$H^{\star,\star}_{C_{2}}(k,\ul{\Z/2})$ of a field $k$.
As in (\ref{equation:equivariantBLconjecturemap}), 
the gradings are sums of $C_{2}$-representations.
These invariants are fundamental for understanding key features of $C_2$-equivariant motivic homotopy theory, 
e.g., 
$H^{\star,\star}_{C_{2}}(k,\ul{\Z/2})$ forms part of the largely unknown $C_2$-equivariant motivic Steenrod algebra of cohomology operations.
Another fundamental aspect of Bredon motivic cohomology is that the zeroth slice of the $C_2$-equivariant motivic sphere spectrum turns out 
to be the highly structured $C_2$-equivariant Bredon motivic cohomology spectrum $\M{\Z}$ introduced in \aref{definition:MBC},
cf.~\cite{HO:zeroslices}. 

This paper is structured as follows.
Sections 2-7 are devoted to the proof of our main result for the comparison map \eqref{equation:equivariantBLconjecturemap}.
The proof rests on techniques from equivariant motivic homotopy theory: 
roughly speaking, 
taking complex or real points induces a Betti realization functor which in turn gives rise to the comparison map for Bredon motivic cohomology.
The details of these constructions are found in \aref{app:EMHT}.
In \aref{section:emht} we develop the homotopical techniques that we need in the sequel;
in particular, 
the motivic isotropy separation cofiber sequence plays a central role in our approach.
The computational core of the paper lies in Sections 3-7. 
First, 
in \aref{section:Bredonmotivi9ccohomology},  
we establish that Bredon motivic cohomology is represented by $\M{\Z}$ and show that it affords Thom classes for a certain class of equivariant vector bundles as well as Gysin sequences. In \aref{sub:genborel} we 
compare Bredon motivic cohomology with Edidin-Graham's equivariant higher Chow groups.
In \aref{section:periodicityandBmc} we show that the generalized ``geometric'' Borel motivic cohomology ring 
$H^{\star,\star}_{C_2}(\EG C_2,\ul{\Z/2})$ is periodic with period $(2\sigma-2,\sigma-1)$. 
These preliminary results are used to prove the complex and real comparison theorems for Bredon motivic cohomology in \aref{s:C} and \aref{s:R}, 
respectively.

\subsection*{Notation and Conventions:}
Throughout $k$ is a perfect field of characteristic $\ch(k)\neq 2$. 
Unless said otherwise, a scheme is always assumed to be separated. 
For a finite group $G$, let $G\sch/k$ denote the category of separated schemes of finite type over $k$ with left $G$-actions and equivariant maps. Similarly, $G\sm/k$ is the category of smooth schemes of finite type over $k$ with left $G$-actions and equivariant maps. We use the term 
{\em $k$-variety } synonymous with separated, finite type, scheme over $k$.

We write $\A(V) = \spec (\Sym (V^{\vee}))$ for the affine scheme associated to a vector space $V$ over $k$, 
and $\P(V) = \proj(\Sym (V^{\vee}))$ for the associated projective scheme.

The construction of the stable $G$-equivariant motivic homotopy category $\SH_{G}(k)$ is recalled in Appendix \ref{app:EMHT}. 
We write $[-,-]_{G}$ for maps in $\SH_{G}(k)$.
We distinguish four sphere objects in the  $C_{2}$-equivariant motivic homotopy category, see \aref{sub:C2}. 
These are denoted $S^{1}$, $S^{\sigma}$, $S^{1}_{t}$, and $S^{\sigma}_{t}$.
The sphere $S^{1}$ is the usual simplicial sphere and $S^{\sigma}$ the simplicial sign representation sphere. 
The algebro-geometric sphere $S^{1}_{t}$ is the pointed scheme $(\G_{m},1)$ equipped with trivial action and $S^{\sigma}_{t}$ is the pointed scheme $(\G_{m},1)$ 
equipped with the inversion action, $x\mapsto x^{-1}$.
We write $V=a+p\sigma$ for the $C_2$-representation which is the sum of $a$-copies of the trivial representation and $p$-copies of the sign representation, 
and define 
\begin{equation}
\label{eqn:funnyspheres}
S^{a+p\sigma, b + q\sigma}
:=
S^{a-b}\wedge S^{(p-q)\sigma}\wedge S^{b}_{t}\wedge S^{q\sigma}_{t}.
\end{equation}
We adopt the convention that $*$ refers to an integer grading of homotopy or cohomology groups while $\star$ refers to grading by representations.

\section{Background}\label{section:emht}
Equivariant motivic homotopy theory was introduced by Voevodsky \cite{Deligne:V} as a tool for understanding symmetric products and motivic Eilenberg-MacLane spaces. 
Stable equivariant motivic homotopy category was introduced by Hu-Kriz-Ormsby \cite{HKO} as part of their study of the homotopy limit problem for hermitian $K$-theory of fields. 
In this section we recall definitions and basic results about equivariant motivic homotopy theory. 
Technical details and a fairly complete, self-contained discussion are given in \aref{app:EMHT}.

\subsection{Equivariant Nisnevich topology}
\label{EquivariantNisnevichtopology}
The equivariant Nisnevich topology was introduced by Voevodsky \cite[\S3]{Deligne:V}. 
See \cite[\S3]{HVO:cancellation} and \cite[\S2]{HKO:EMHT} for more details concerning the equivariant Nisnevich topology.
\begin{definition}
\label{definition:eNt}
An \emph{equivariant distinguished square}
\begin{equation}\label{eqn:edist}
\xymatrix{
 W\ar[r]\ar[d] & Y \ar[d]^{p} \\
 U \ar@{^{(}->}[r]^{i} & X
}
\end{equation}
is a cartesian square in $G\Sch/k$ such that $p$ is \'etale, $i:U\subseteq X$ is an open embedding, and $p$ induces an isomorphism of reduced schemes
$(Y\setminus W)_{red} \iso (X\setminus U)_{red}$. An elementary Nisnevich cover is the cover $\{U\to X, Y\to X\}$ associated to an equivariant distinguished square.
The \emph{equivariant Nisnevich topology} on  $G\Sm/k$ is the smallest Grothendieck topology containing the elementary Nisnevich covers. 
\end{definition}

Recall that the set-theoretic stabilizer $S_{x}$ of a point $x\in X$ is the subgroup $S_{x}:=\{g\in G\mid gx =x\}$. 
By \cite[Proposition 3.5]{HVO:cancellation}, 
an equivariant \'etale map $f:Y\to X$ is an equivariant Nisnevich cover if and only if for every $x\in X$ there is $y\in Y$ such that $f$ induces an 
isomorphism $k(y)\iso k(x)$ on residue fields as well as an isomorphism $S_{x}\iso S_{y}$ on set-theoretic stabilizers. 

\begin{remark}\label{rem:localaffine}
Using this characterization of Nisnevich covers we see that every smooth $G$-scheme is locally affine. 
Every point $x\in X$ has an $S_{x}$-invariant affine neighborhood (take any affine neighborhood and consider the intersection of the translates by elements of $S_{x}$).
Let $U_{x}$ be such a neighborhood. 
Then $G\times^{S_{x}}U_{x}\to X$,
where for a subgroup $H\subseteq G$ and $H$-scheme $Z$ we write $G\times^{H}Z$ for $(G\times Z)/H$,  
is an equivariant Nisnevich neighborhood of $G\cd x$ in the sense of \cite[Section 2]{HKO:EMHT}. 
The collection $\{G\times^{S_{x}}U_{x}\to X\}$ is an ``infinite'' equivariant Nisnevich cover of $X$ by smooth affine $G$-schemes admitting a finite subcover by 
\cite[Remark 2.18]{HKO:EMHT}. 
\end{remark}

\subsection{Motivic {$G$}-spaces and spectra}

A \emph{motivic $G$-space} over $k$ is a presheaf of simplicial sets on $G\sm/k$. 
We write $G\MS(k)$ and $G\bMS(k)$ respectively for the categories of motivic $G$-spaces and pointed motivic $G$-spaces over $k$. 
The unstable equivariant motivic homotopy category is constructed following a pattern familiar from ordinary motivic homotopy theory; 
technical details are found in \aref{app:EMHT}. 
In brief, 
it is the homotopy category of a model structure which is constructed so that the following two relations hold.
\begin{enumerate}
 \item[(i)] Any equivariant distinguished square (\ref{eqn:edist}) is a homotopy cocartesian square.
 \item[(ii)] The projection $X\times\A^{1}\to X$ is an equivariant motivic weak equivalence for any $X$ in $G\sm/k$.
\end{enumerate}
These relations have non-obvious consequences.
For example,   
by the Whitehead theorem, 
a map inducing isomorphisms on equivariant Nisnevich sheaves of homotopy groups is an equivariant motivic weak equivalence, 
cf.~\cite[\S3.2, Proposition 2.14]{MV:A1}.
Moreover,
every $G$-equivariant vector bundle is an equivariant motivic weak equivalence \cite[Proposition 4.10]{HKO:EMHT}.

Let $V$ be a representation of $G$, 
e.g., 
the regular representation $\rho_{G}=k[G]$.
The associated \emph{motivic representation sphere} is defined to be the pointed motivic $G$-space 
$$
T^{V}:=\P(V\oplus 1)/\P(V).
$$
For an integer $n\geq 0$ we use the smash product in $G\bMS(k)$ to define  
$$
T^{nV}:=(T^{V})^{\wedge n}.
$$

Consider the equivariant distinguished square
$$
\xymatrix{
\A(V)\setminus\{0\}\ar[r] \ar[d] & \A(V) \ar[d] \\
\P(V\oplus 1)\setminus \P(1) \ar[r] & \P(V\oplus 1).
}
$$ 
It is in particular a homotopy cocartesian square. 
The inclusion of $\P(V\oplus 1)\setminus\P(1)$ into $\P(V\oplus 1)$ is equivariantly $\A^{1}$-homotopic to the inclusion $\P(V)\subseteq \P(V\oplus 1)$; 
the requisite deformation is given by  $([x_{0}:\cdots:x_{n+1}], t)\mapsto [x_{0}:\cdots:x_{n}:tx_{n+1}]$. 
We conclude there is an equivariant motivic weak equivalence
$$
T^{V}\wkeq \A(V)/\A(V)\setminus\{0\}.
$$
Also note that $T^{nV} \wkeq T^{V^{\oplus n}}$ and more generally $T^{V}\wedge T^{W}\wkeq T^{V\oplus W}$.

The stable equivariant motivic homotopy category $\SH_{G}(k)$ is the stabilization of $G\bMS(k)$ with respect to the sphere $T^{\rho_{G}}$. 
We use symmetric $T^{\rho_{G}}$-spectra as a model for $\SH_{G}(k)$, see
\aref{subsection:Stableequivariantmotivichomotopytheory}. It is a tensor triangulated category with unit the sphere spectrum $\mathbf{1} = \Sigma^{\infty}_{T^{\rho_{G}}}S^{0}$. 
Here, $S^{0}=\spec(k)_{+}$ is the unit for the smash product in $G\bMS(k)$.
If $X$ is an unbased motivic $G$-space, e.g.,  a smooth $G$-scheme, we have an associated based motivic $G$-space $X_+$, by adding a disjoint basepoint, 
and an associated suspension spectrum $\Sigma^{\infty}_{T^{\rho_{G}}}X_+$. 
When no confusion should arise, 
we sometimes simply write $X$ for $\Sigma^{\infty}_{T^{\rho_{G}}}X_+$ and $S^{0}$ for the sphere spectrum.

\subsection{{$C_2$}-equivariant spheres}\label{sub:C2}
When $G= C_2$ there are only two representations, 
the trivial representation and the sign representation which we write as $\sigma$. It is convenient  to introduce the following sphere objects.  
The \emph{sign Tate sphere} $S^{1}_{t}$ is the pointed $C_2$-scheme $(\G_{m},1)$ where $\G_{m}$ is equipped with the action $x\mapsto x^{-1}$.
The \emph{simplicial sign representation sphere} $S^{\sigma}$ is defined to be the unreduced suspension of $C_2$, 
i.e., 
it is the homotopy cofiber of $C_{2\,+}\to S^{0}$.
We have as well the usual simplicial sphere $S^1$ and the Tate sphere $S^{1}_{t}$  which is the pointed scheme $(\G_{m},1)$ considered with trivial action. 
As observed in \cite[\S4.1]{HKO}, 
there is an equivariant motivic weak equivalence $S^{\sigma}\wedge S^{\sigma}_{t}\wkeq T^{\sigma}$. 

The indexing convention 
\begin{equation*}
S^{a+p\sigma, b + q\sigma}
:=
S^{a-b}\wedge S^{(p-q)\sigma}\wedge S^{b}_{t}\wedge S^{q\sigma}_{t}
\end{equation*}
in (\ref{eqn:funnyspheres}) is a mixture between the convention standardly used in motivic homotopy theory and that used in classical equivariant homotopy theory.  
The translation between the convention of indexing here and the one in 
\cite[\S4.1]{HKO} is given by
$$
S^{a+p\sigma, b + q\sigma} = S^{(a-b) +(p-q)\gamma + b \alpha + q\gamma\alpha}.
$$
The convention we use in this paper has the feature that the effect of the complex Betti realization functor (constructed in \aref{sub:top}) is the first entry of the index, 
$$
\RRe_{\C}(S^{a+p\sigma, b + q\sigma}) = S^{a+p\sigma}.
$$ 
A real Betti realization (taking value in $\SH_{C_2\times \Sigma_2}$) is constructed in \aref{sub:topR}. In this case, 
$$
\RRe_{\C,\,\Sigma_2}(S^{a+p\sigma, b + q\sigma}) = 
S^{a-b+ (p-q)\sigma +b\epsilon +q\sigma\otimes\epsilon},
$$
where $\sigma$ is the sign representation corresponding to the factor $C_2$ and $\epsilon$ is the sign representation corresponding to $\Sigma_2$.

The following two fundamental homotopy cofiber sequences of pointed motivic $C_2$-spaces are useful for computations, 
\begin{equation}\label{eqn:fun1}
C_{2\,+} \to S^{0} \to S^{\sigma, 0}
\end{equation}
and
\begin{equation}\label{eqn:fun2}
(\A(n\sigma)\setminus \{0\})_+ \to S^{0} \to S^{2n\sigma, n\sigma}.
\end{equation}
Here, 
the first maps in these sequences are induced respectively by the projections $C_2\to \spec(k)$ and $\A(n\sigma)\setminus \{0\}\to \spec(k)$.

\subsection{Motivic isotropy separation}
The isotropy separation cofiber sequence is a fundamental tool for analyzing equivariant homotopy types in classical equivariant homotopy theory. 
Our proof of the main Theorems \ref{theorem:equivariantBLconjecture} and \ref{equivBLconjecture} makes use of an appropriate motivic version. 
We shall restrict our attention to the group $C_2$. 
See \cite{GH:S0} for a general discussion of motivic isotropy separation. 

Recall that the classical topological isotropy separation cofiber sequence is 
$$
\Es C_{2\,+} \to S^{0} \to \wt{\rm E}_{\bullet} C_{2},
$$
where the last term is defined by this sequence. 
A check of fixed points shows that a model for $\wt{\rm E}_{\bullet}C_2$ is given by
\begin{equation}\label{eqn:colim}
\wt{\rm E}_{\bullet} C_{2}\wkeq \colim_{n}S^{n\sigma}. 
\end{equation}

The \emph{geometric classifying space} $\BG C_2$ for $C_2$ is defined as the quotient of 
$$
\EG C_{2} := \colim_{n} \A(n\sigma)\setminus\{0\}
$$
by the free $C_{2}$-action.
This space plays an important role in nonequivariant motivic homotopy theory because it is a geometric model for the \'etale classifying space \cite[\S4.2]{MV:A1}. 
A similar interpretation of the geometric classifying space is also true in equivariant motivic homotopy theory as well, 
see \cite{GH:S0}.

The \emph{motivic isotropy separation cofiber sequence} is the cofiber sequence
\begin{equation}
\label{eqn:motisotropy}
\EG C_{2\,+}\to S^{0}\to \wt{\EG}C_{2},
\end{equation}
where the space $\wt{\EG}C_{2}$ is defined by this cofiber sequence. 
Because of the definition of the geometric classifying space we have an equivariant motivic equivalence
\begin{equation}
\label{eqn:colim2}
\EGt C_{2} \wkeq \colim_{n} S^{2n\sigma, n\sigma}.
\end{equation}

\begin{proposition}\label{prop:invariance}
The maps $S^{0}\to S^{2\sigma, \sigma}$ and $S^{0}\to S^{\sigma, 0}$ induce equivariant motivic equivalences $\EGt  C_{2}\wkeq S^{2\sigma,\sigma}\wedge \EGt  C_{2}$
and $\EGt  C_{2}\wkeq S^{\sigma, 0}\wedge\EGt  C_{2}$.
\end{proposition}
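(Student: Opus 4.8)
The plan is to exploit the colimit description \eqref{eqn:colim2}, namely $\EGt C_2 \wkeq \colim_n S^{2n\sigma, n\sigma}$, together with the fundamental cofiber sequence \eqref{eqn:fun2}. First I would observe that smashing \eqref{eqn:colim2} with $S^{2\sigma,\sigma}$ simply reindexes the colimit: $S^{2\sigma,\sigma}\wedge \EGt C_2 \wkeq \colim_n S^{2(n+1)\sigma, (n+1)\sigma}$, and the map $S^{0}\to S^{2\sigma,\sigma}$ induces exactly the shift-by-one map of this telescope into itself. Since a sequential colimit is unchanged under passing to a cofinal subsequence, this shift map is an equivalence on colimits, which gives the first claimed equivalence $\EGt C_2 \wkeq S^{2\sigma,\sigma}\wedge \EGt C_2$.

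For the second equivalence I would work from the cofiber sequence \eqref{eqn:fun2}, $(\A(n\sigma)\setminus\{0\})_+ \to S^0 \to S^{2n\sigma, n\sigma}$, after smashing with $\EGt C_2$. The key point is that $(\A(n\sigma)\setminus\{0\})_+ \wedge \EGt C_2$ should be contractible: the scheme $\A(n\sigma)\setminus\{0\}$ has a free $C_2$-action away from the origin, so it is built (via the equivariant Nisnevich topology and the colimit \eqref{eqn:colim2} presenting $\EGt C_2$ out of such free pieces) from cells of the form $C_{2+}\wedge(\text{something})$; and $C_{2+}\wedge \EGt C_2$ is contractible because in the cofiber sequence \eqref{eqn:motisotropy}, smashing with $C_{2+}$ makes the first map $\EG C_{2+}\wedge C_{2+} \to C_{2+}$ an equivalence (the geometric classifying construction $\EG C_2$ is $C_2$-free-equivalent to $S^0$, hence after restricting to a free orbit it becomes an equivalence). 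Concretely I would argue that $C_{2+}\wedge \EG C_{2+} \to C_{2+}$ is an equivalence since $\A(n\sigma)\setminus\{0\}$ restricted over the free orbit is just $\A^{n}\setminus\{0\}$ with a disjoint second copy, which is $\A^1$-contractible onto... no — more cleanly: $\EG C_2 = \colim_n \A(n\sigma)\setminus\{0\}$ and each term, upon forgetting to a trivial-action scheme by crossing with $C_2$, becomes highly connected, so $C_{2+}\wedge\EG C_{2+}\to C_{2+}$ is a weak equivalence. Granting $(\A(n\sigma)\setminus\{0\})_+\wedge\EGt C_2 \wkeq *$, the cofiber sequence yields $\EGt C_2 \wkeq S^{2n\sigma,n\sigma}\wedge\EGt C_2$ for every $n\geq 1$; taking $n=1$ and combining with the first equivalence $\EGt C_2\wkeq S^{2\sigma,\sigma}\wedge\EGt C_2$ already proved gives $\EGt C_2\wkeq S^{\sigma,0}\wedge\EGt C_2$ (since $S^{2\sigma,\sigma} = S^{\sigma,0}\wedge S^{\sigma,\sigma}_t$ and $S^{\sigma,\sigma}_t$ is an invertible sphere, one can also deduce it by cancelling, but the cleanest route is via the cofiber sequence $C_{2+}\to S^0\to S^{\sigma,0}$ directly).

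Actually the most economical organization is: apply the two fundamental cofiber sequences \eqref{eqn:fun1} and \eqref{eqn:fun2} (with $n=1$) after smashing with $\EGt C_2$, and show in each case the fiber term smashes to zero. For \eqref{eqn:fun1} the fiber is $C_{2+}\wedge\EGt C_2$ and for \eqref{eqn:fun2} it is $(\A(\sigma)\setminus\{0\})_+\wedge\EGt C_2$; but $\A(\sigma)\setminus\{0\}$ is $C_2$-equivariantly $\A^1$-equivalent to a free $C_2$-set (it retracts $C_2$-equivariantly onto $\{\pm 1\}\cong C_2$), so both fiber terms reduce to showing $C_{2+}\wedge\EGt C_2 \wkeq *$. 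This last fact follows from the defining cofiber sequence \eqref{eqn:motisotropy}: smashing with $C_{2+}$, the map $C_{2+}\wedge\EG C_{2+}\to C_{2+}$ is induced by $\EG C_2\to \spec(k)$, which becomes an equivalence after the free induction because $\EG C_2$ has free $C_2$-action and each finite stage is an affine space minus the origin, hence equivariant-motivically equivalent, over a free orbit, to a point; passing to the colimit, $C_{2+}\wedge\EG C_{2+}\wkeq C_{2+}$, so its cofiber $C_{2+}\wedge\EGt C_2$ is contractible.

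\textbf{Main obstacle.} The delicate point is the vanishing $C_{2+}\wedge\EGt C_2\wkeq *$, equivalently that $C_{2+}\wedge\EG C_{2+}\to C_{2+}$ is an equivalence. This is not formal: it uses that the geometric model $\EG C_2=\colim_n\A(n\sigma)\setminus\{0\}$ is genuinely "free-contractible", which one verifies by noting that restriction along a free orbit $C_2\hookrightarrow \spec(k)$ (equivalently, smashing with $C_{2+}$ and using the adjunction with the forgetful functor to $\SH(k)$) sends $\A(n\sigma)\setminus\{0\}$ to $(\A^n\setminus\{0\})\sqcup(\A^n\setminus\{0\})$ with $C_2$ permuting the factors — i.e. $C_2\times(\A^n\setminus\{0\})$ — whose reduced suspension spectrum becomes $(2n-1)$-connected, so the colimit over $n$ is contractible. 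Handling the equivariant Nisnevich-local and colimit bookkeeping carefully is where the real work lies; everything else is reindexing of telescopes and three-term cofiber sequence arguments.
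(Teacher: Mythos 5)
Your final working route is essentially the paper's: the first equivalence is immediate from reindexing the telescope $\EGt C_2 \wkeq \colim_n S^{2n\sigma,n\sigma}$; the second follows from smashing the cofiber sequence $C_{2+}\to S^0\to S^{\sigma,0}$ with $\EGt C_2$ once one knows $C_{2+}\wedge\EGt C_2\wkeq *$; and you locate that vanishing in the nonequivariant contractibility of $\EG C_2=\colim_n\A(n\sigma)\setminus\{0\}$ together with the free-induction/forgetful adjunction (the paper invokes the identification $G_+\wedge\mcal{X}^{e}\iso G_+\wedge\mcal{X}$ of \aref{lem:trick} and cites Morel--Voevodsky for the contractibility), which is exactly what the paper does.

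However, the ``most economical organization'' you propose contains a false step. You assert that $\A(\sigma)\setminus\{0\}$ is $C_2$-equivariantly $\A^1$-equivalent to the free $C_2$-set $\{\pm 1\}\cong C_2$, claiming it retracts equivariantly onto $\{\pm 1\}$. This is not true in motivic homotopy theory: $\A(\sigma)\setminus\{0\}$ is $\G_m$ with the sign action, and $\G_m$ is not $\A^1$-weakly equivalent to any finite $k$-scheme — it carries the invertible Tate circle, and collapsing it to a $0$-dimensional scheme would trivialize the theory. The topological intuition that $\C^{*}$ deformation retracts to the unit circle (and thence near $\pm 1$) has no algebro-geometric analogue. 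Relatedly, the intermediate claim that ``each finite stage is an affine space minus the origin, hence equivariant-motivically equivalent, over a free orbit, to a point'' is also wrong for each finite $n$ (only the colimit over $n$ is contractible), though you implicitly correct this in your last paragraph. Neither error is load-bearing for your actual conclusion — the colimit-reindexing argument handles the first equivalence and $C_{2+}\to S^0\to S^{\sigma,0}$ handles the second — but as written the detour through the cofiber sequence \eqref{eqn:fun2} would fail; if one insists on that route one must instead argue directly that $X_+\wedge\EGt C_2\wkeq *$ whenever $X$ has free action, which is the content of the later \aref{lem:EGtilde}(i) and not a one-line reduction to $C_{2+}$.
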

\begin{proof}
The first equivalence follows from (\ref{eqn:colim2}) together with the fact that the cyclic permutation on $T^{3\sigma}$ is the identity by \aref{lem:cyclicistrivial} below. 
Using \aref{lem:trick}, and that $\EG C_{2,+}$ is nonequivariantly contractible \cite[Proposition 4.2.3]{MV:A1},
we have  equivariant motivic equivalences 
$$
\EG C _{2+}\wedge C _{2\,+}\simeq (\EG C_{2\,+})^{e}\wedge C _{2+}
\simeq C _{2\,+}.
$$
It follows that $(C_{2})_{+}\wedge \EGt  C_{2}  \wkeq *$. 
Thus the second equivalence follows from the cofiber sequence (\ref{eqn:fun1}). 
\end{proof}

\begin{lemma}\label{lem:cyclicistrivial}
	Let $W$ be a representation. Let $\gamma\in\Sigma_k$ be an even permutation. Then the induced map on $T^{kW}$ is the identity map in 
	${\rm H}_{G,\bullet}(k)$.
\end{lemma}
\begin{proof}
	Since $W^{\oplus k} = 1^{\oplus k}\otimes W$, any automorphism of $1^{\oplus k}$ induces an equivariant automorphism of $W^{\oplus k}$ and thus a pairing  ${\rm GL}_k\times \A(W^{\oplus k})\to \A(W^{\oplus k})$ in $G\Sm_k$ (where ${\rm GL}_k$ has trivial action). This induces in turn a pairing $({\rm GL}_k)_+\wedge T^{kW}\to T^{kW}$ in $G\bMS(S)$. 
	An even permutation $\gamma$ is the product of row multiplication and addition of elementary matrices. We therefore have a map $\A^1\to {\rm GL}_{k}$ connecting the identity and $\gamma$. This implies that the map on $T^{kW}$ induced by an even permutations is equivariantly $\A^1$-homotopic to the identity.  
\end{proof}

\section{Bredon motivic cohomology}
\label{section:Bredonmotivi9ccohomology}
In \cite[\S5]{HVO:cancellation} we introduced a Bredon motivic cohomology theory on $G\sm/k$.   
Here we define an equivariant motivic spectrum which is a representing object for the Bredon motivic cohomology groups of loc.~cit., 
and record some of its basic properties. 
To keep the exposition streamlined, 
we restrict our discussion to the group $C_2$.

\subsection{Motivic complexes}\label{sub:motiviccomplex}
For $X$ and $Y$ smooth $k$-schemes of finite type we write $\Cor_{k}(X,Y)$ for the group of finite correspondences \cite[\S3.1]{SV:BK}. 
If $X$ and $Y$ have an action by $C_2$ then $C_2$ acts on $\Cor_{k}(X,Y)$ as well.
The category of equivariant correspondences $C_2\Cor_{k}$ has the same objects as $C_2\Sm/k$ and maps  $\Cor_{k}(X,Y)^{C_2}$. 
A \emph{presheaf with equivariant transfers} on $C_2\Sm/k$ is an additive presheaf on $C_2\Cor_k$. 
 
If $Y$ is a smooth $C_2$-scheme over $k$, write
$\Z_{tr,C_2}(Y)$ for the free presheaf with equivariant transfers, 
$$
\Z_{tr,C_2}(Y)(X)
:=
\Cor_{k}(X,Y)^{C_2}.
$$ 
More generally, 
if $A$ is an abelian group, 
we write 
$A_{tr,C_2}(Y)=\Z_{tr,C_2}(Y)\otimes A$.
It is useful to extend the definition of $A_{tr,C_2}(-)$ to quotients of $G$-schemes. If $\mathcal{X}=\colim_{i} X_i$, where $X_i$ are smooth $C_2$-schemes
over $k$ and the colimit is in the category of presheaves,
then we define $A_{tr,C_2}(\mathcal{X}) := \colim_{i} A_{tr,C_2}(X_{i})$, where the colimit is computed in the category of presheaves of abelian groups.

Defining $\Z_{tr, C_2}(X)\otimes^{tr}\Z_{tr,C_2}(Y) := \Z_{tr, C_2}(X\times Y)$ determines a symmetric monoidal product $\otimes^{tr}$ on the category of presheaves with equivariant transfers. 

If $W$ is a finite $C_2$-set, viewed as a smooth $C_2$-scheme over $k$, we have isomorphisms as presheaves on $C_2\Sm/k$, 
$\Z_{tr, C_2}(W) \iso \Z(W):=\Z \Hom _k(-,W)^{C_2}$. 
Moreover, if $F$ is a presheaf with equivariant transfers, 
then we also have an isomorphism $F\otimes^{tr}\Z_{tr,C_2}(W)\iso F\otimes \Z(W)$ of presheaves on $C_2\Sm/k$.

If $F$ is a presheaf of abelian groups we write $C_{*}F(X)$ for the simplicial abelian group $F(X\times\Delta^{\bullet}_{k})$. 
We use the same notation for the associated cochain complex.
Here, 
$\Delta^{\bullet}_{k}$ is the standard cosimplicial object in $\Sm/k$.

Write $D^{-}(C_2\Cor_k)$ for the derived category of bounded above chain complexes of equivariant Nisnevich sheaves with equivariant transfers on $C_2\Sm/k$.
According to \cite[Lemma 5.12]{HVO:cancellation} the cone $\Z_{top}(\sigma)$ of the map $\Z_{tr,C_2}(C_2)\to \Z$ is invertible in $(D^{-}(C_2\Cor_k),\otimes^{tr})$.
If $F_\bullet$ is a cochain complex of presheaves with equivariant transfers, write $F_{\bullet}[\sigma]= F_{\bullet}\otimes^{tr}\Z_{top}(\sigma)$.

Let $V=a+b\sigma$ be a representation of $C_2$ and $A$ an abelian group. 
Define the \emph{motivic Bredon complex}
$$
\ul{A}(V) := a_{{\rm Nis}}(C_*A_{tr,C_2}(T^{V}))[-2a-2b\sigma],
$$
where $a_{{\rm Nis}}$ denotes sheafification in the equivariant Nisnevich topology, 
see \aref{definition:eNt}. 
There are quasi-isomorphisms $a_{{\rm Nis}}(C_*A_{tr,C_2}(T))\wkeq a_{{\rm Nis}}(C_*A_{tr,C_2}(S^1_t))[1]$ and $a_{{\rm Nis}}(C_*A_{tr,C_2}(T^{\sigma}))\wkeq a_{{\rm Nis}}(C_*A_{tr,C_2}(S^\sigma_t))[\sigma]$, 
see \cite[p.328]{HVO:cancellation}. 
It follows that there is a quasi-isomorphism 
\begin{equation}\label{eqn:ab}
a_{{\rm Nis}}(C_*A_{tr,C_2}(T^{V}))\wkeq a_{{\rm Nis}}(C_*A_{tr,C_2}(S^{a}_t\wedge S^{b\sigma}_t))[a+b\sigma].
\end{equation}
 
Now let $A$ be a commutative ring. 
We construct a product pairing 
$$
\ul{A}(V)\otimes \ul{A}(W)\to \ul{A}(V\oplus W).
$$ 
First, we have an associative pairing 
$A_{tr,C_2}(T^{V})\otimes A_{tr,C_2}(T^{W}) \to A_{tr,C_2}(T^{V}\wedge T^{W})$ of presheaves which is induced by the pairing
\begin{align*}
A_{tr,C_2}(\P(V\oplus 1))(U)\otimes & A_{tr,C_2}(\P(W\oplus 1))(U) \\
& \xrightarrow{\times}
A_{tr,C_2}(\P((V\oplus 1)\times \P(W\oplus 1))(U \times  U) 
\\
& \xrightarrow{\Delta^{*}} A_{tr,C_2}(\P(V\oplus 1)\times \P(W\oplus 1))(U).
\end{align*}
Let $A_{*,*}$ be a bisimplicial abelian group. We write $A_{*,*}$ as well for the associated cochain complex.
By the Eilenberg-Zilber theorem \cite[Theorem IV.2.4]{GoerssJardine}, 
taking totalizations and diagonals yields a natural quasi-isomorphism of chain complexes $\Tot(A_{*,*})\to \diag(A_{*,*})$.
We thus obtain the natural pairing 
\begin{align*}
\Tot \big( A_{tr,C_2}&(T^{V})(U\times \Delta^{\bullet}_{k}) \otimes A_{tr,C_2}(T^{W})(U\times\Delta^{\bullet}_{k}) \big) 
\\
&\to 
\diag\big(A_{tr,C_2}(T^{V})(U\times \Delta^{\bullet}_{k})\otimes A_{tr,C_2}(T^{W})(U\times\Delta^{\bullet}_{k}) \big)
\\
& \to A_{tr,C_2}(T^{V}\wedge T^{W})(U\times \Delta^{\bullet}_{k})\xleftarrow{\iso} 
A_{tr,C_2}(T^{V\oplus W})(U\times \Delta^{\bullet}_{k}).
\end{align*}
This induces our desired pairing upon sheafification. 
Now the quasi-isomorphism obtained from the Eilenberg-Zilber theorem is homotopy associative, 
so the pairing is also homotopy associative.

\subsection{Stable representability}\label{sub:stablerep}
Let $A$ be an abelian group. Let $\mathcal{F}$ be a presheaf of sets. 
we may consider $A_{tr,C_2}(\mathcal{F})$ as a presheaf of sets and therefore as a based motivic $C_2$-space, the basepoint is $0$.  
We have a canonical map $\gamma: \mathcal{F}\to A_{tr,C_2}(\mathcal{F})$ of motivic $C_2$-spaces and there is a pairing  
$\mu:A_{tr,C_2}(X)\wedge \Z_{tr,C_2}(Y) \to A_{tr,C_2}(X \times Y)$ of motivic $C_2$-spaces.

\begin{definition}
\label{definition:MBC} 
Let $A$ be an abelian group. The \emph{motivic Bredon cohomology spectrum} $\MA$,  
is defined by letting $\MA_{n} := A_{tr,C_2}(T^{n\rho_{C_2}})$ with structure maps
\begin{align*}
A_{tr,C_2}(T^{ n\rho_{C_2}}) \wedge T^{\rho_{C_2}} 
\xrightarrow{\id\wedge \gamma} &
A_{tr,C_2}(T^{ n\rho_{C_2}})\wedge \Z_{tr,C_2}(T^{\rho_{C_2}})
\\
\xrightarrow{\mu} & A_{tr,C_2}(T^{ (n+1)\rho_{C_2}}).
\end{align*}
\end{definition}

The symmetric group $\Sigma_{n}$ acts on  $\MA_{n}$ by permuting the factors of $T^{\rho_{C_2}}$. 
The iterated structure maps  $\MA_{n}\wedge T^{ k\rho_{C_2}} \to \MA_{n+k}$ are $(\Sigma_{n}\times \Sigma_{k})$-equivariant. 
This means that $\MA$ is a symmetric motivic $C_2$-spectrum, 
cf.~\aref{def:spectra}. Moreover, if $A$ is a commutative ring, there are pairings $\MA_{n}\wedge \MA_{k}\to \MA_{n+k}$ which give $\MA$ the structure of a commutative ring spectrum 
(i.e., a commutative monoid in the category of equivariant symmetric motivic spectra $\sspt_{C_2}(k)$).

Finally, we note that the spectrum $\MA$ is stably equivalent to $\MZ\wedge \SS A$ where $\SS A$ is a Moore spectrum associated to $A$. This can be seen by noting that the assignment $A\mapsto \MA$ defines a functor $\Ab \to \sspt_{C_2}(k)$
which has the following properties.
\begin{enumerate}
\item[(i)] If $A = \colim_{i} A_{i}$ is a filtered colimit of abelian groups, then $\colim_{i} \MA_i\wkeq \MA$.
\item[(ii)] If $A = \oplus_{j} A_j$, then $\vee_{j} \MA_j\wkeq \MA$.
\item[(iii)] If $0\to A_1\to A_2\to  A_3\to 0$ is an exact sequence of abelian groups, then $\MA_1 \to \MA_2 \to \MA_3$ is a homotopy cofiber sequence of spectra.
\end{enumerate}
Moore spectra satisfy similar properties and so it suffices to check the statement when $A=\Z$, where it is trivial. Of course, an important advantage of the model $\MA$ is that it is a commutative ring ring spectrum when $A$ is a commutative ring while  this is not necessarily a priori clear for $\MZ\wedge \SS A$.

\begin{definition}

The \emph{Bredon motivic cohomology} of a motivic $C_2$-spectrum $\mathsf{E}$, 
with coefficients in the abelian group $A$, 
is defined by

$$
\rH^{a+p\sigma, b+q\sigma}_{C_{2}}(\mathsf{E}, \ul{A}):= [\mathsf{E}, S^{a+p\sigma, b+q\sigma}\wedge\MA]_{\SH_{C_2}(k)}.
$$
If $X$ is a smooth $C_2$-scheme, 
then its unreduced Bredon motivic cohomology is defined via its suspension spectrum by setting
$$
H^{a+p\sigma, b+q\sigma}_{C_2}(X, \ul{A}):= 
\rH^{a+p\sigma, b+q\sigma}_{C_2}(\Sigma_{T^{\rho_{C_2}}}^{\infty} X_+,\ul{A}).
$$
\end{definition}

Next we verify that the  definition of Bredon motivic cohomology which we have just given agrees with the one given in \cite{HVO:cancellation}. 
(Note, however, that the indexing we use in the present paper is slightly different than in loc.~cit.). 
The fact that Bredon motivic cohomology defined in the stable equivariant motivic homotopy category is equal to the hypercohomology groups of the motivic complexes, 
plays a crucial role in our arguments in the later sections. 
This fact relies on the homotopy invariance and equivariant cancellation theorems for presheaves with equivariant transfers proved in 
\cite[Theorem 8.12, Theorem 9.7]{HVO:cancellation}.

If $\mcal{X} = \colim X_i$ is a colimit (in presheaves) of smooth $C_2$-schemes over $k$,  write $\Z(\mcal{X}) := \colim_i\Z(X_i)$ and 
 $$
 H_{C_{2}Nis}^{a}(\mcal{X}, \ul{A}(V)[p\sigma]) := 
 \Ext_{C_{2}Nis}^{a}(\Z(\mcal{X}), \ul{A}(V)[p\sigma]).
 $$ 
\begin{theorem}\label{thm:motivicrep}
Let $V= b+q\sigma$ be a virtual representation of $C_2$, $W=c+d\alpha$ a representation such that $V\oplus W$ is a representation, 
$X$ a smooth $C_2$-scheme of finite type over $k$, and $A$ an abelian group. 
Then there is a natural isomorphism
$$
H^{a+p\sigma, b+q\sigma}_{C_2}(X,\ul{A})
\cong
H_{C_{2}Nis}^{a}(T^{W}\wedge X_+, \ul{A}(W\oplus V)[2c+(2d+p)\sigma]).
$$
\end{theorem}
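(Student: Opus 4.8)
The plan is to adapt to the $C_2$-equivariant setting the argument by which, in the nonequivariant theory, Voevodsky's cancellation theorem upgrades the motivic Eilenberg--MacLane spectrum to a motivic $\Omega$-spectrum and thereby identifies the represented theory with Nisnevich hypercohomology of the defining motivic complexes. Throughout one may work with a general abelian group $A$ directly, since $A_{tr,C_2}(-) = \Z_{tr,C_2}(-)\otimes A$ is levelwise free. The two geometric inputs are the equivariant homotopy invariance and cancellation theorems of \cite[Theorem 8.12, Theorem 9.7]{HVO:cancellation}, together with the invertibility of $\Z_{top}(\sigma)$ in $D^{-}(C_2\Cor_k)$ from \cite[Lemma 5.12]{HVO:cancellation} and the quasi-isomorphisms \eqref{eqn:ab}.

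First I would unwind the left-hand side. By \aref{definition:MBC} the spectrum $\MA$ is the $T^{\rho_{C_2}}$-spectrum with $\MA_n = A_{tr,C_2}(T^{n\rho_{C_2}})$, so $H^{a+p\sigma,b+q\sigma}_{C_2}(X,\ul{A}) = [\Sigma^\infty_{T^{\rho_{C_2}}} X_+,\, S^{a+p\sigma,b+q\sigma}\wedge\MA]_{\SH_{C_2}(k)}$. The structural heart of the proof is the claim that, up to stable equivalence, $\MA$ is a levelwise motivically fibrant $\Omega$-spectrum. This rests on three points: (i) equivariant homotopy invariance \cite[Theorem 8.12]{HVO:cancellation} shows that the cohomology sheaves of $C_*A_{tr,C_2}(T^{n\rho_{C_2}})$ are strictly homotopy invariant, so the associated equivariant Eilenberg--MacLane motivic $C_2$-space is $\A^{1}$-local; (ii) the Brown--Gersten property for the equivariant Nisnevich topology, which one may check on smooth affine $C_2$-schemes using \aref{rem:localaffine} and the finite-subcover property (cf.\ \cite{HKO:EMHT}), makes the same object Nisnevich-fibrant and a representing object for $H^{\ast}_{C_2 Nis}$; and (iii) the equivariant cancellation theorem \cite[Theorem 9.7]{HVO:cancellation}, combined with the invertibility of $\Z_{top}(\sigma)$ and \eqref{eqn:ab}, shows that the adjoint bonding maps $a_{\#}A_{tr,C_2}(T^{n\rho_{C_2}})\to \shom_{C_2}(T^{\rho_{C_2}}, a_{\#}A_{tr,C_2}(T^{(n+1)\rho_{C_2}}))$, computed in $D^{-}(C_2\Cor_k)$, are equivariant motivic weak equivalences.

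With $\MA$ recognized as an $\Omega$-spectrum, the suspension spectrum/evaluation adjunction reduces a stable map out of a suspension spectrum to an unstable mapping space at a finite level, and the representation $W$ enters precisely here: smashing with the invertible object $T^W$ converts the (possibly virtual) weight $V = b+q\sigma$ into the genuine representation $W\oplus V$, expressing $[\Sigma^\infty X_+, S^{a+p\sigma,b+q\sigma}\wedge\MA]$ in terms of honest motivic $C_2$-spaces rather than formal desuspensions and replacing $X_+$ by $T^W\wedge X_+$. I would then cancel the stabilizing spheres $T^{N\rho_{C_2}}$ against the Tate factors inside $A_{tr,C_2}(T^{N\rho_{C_2}})$ via \cite[Theorem 9.7]{HVO:cancellation} and \eqref{eqn:ab}, so that the computation lands on $a_{\#}C_*A_{tr,C_2}(T^{W\oplus V})$ together with its shift $[-2c-2d\sigma]$, where $W\oplus V = c+d\sigma$, i.e.\ precisely the complex $\ul{A}(W\oplus V)$. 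Finally the Dold--Kan correspondence and the Eilenberg--MacLane functor identify the remaining unstable mapping space with $\Ext^a_{C_2 Nis}(\Z(T^W\wedge X_+), \ul{A}(W\oplus V)[p\sigma])$, with the residual simplicial sphere $S^{a-b}$ accounting for the cohomological degree $a$ and the simplicial sign sphere $S^{(p-q)\sigma}$ accounting, via the invertibility of $\Z_{top}(\sigma)$, for the twist $[p\sigma]$. The isomorphism is realized by the natural comparison map induced by $\gamma\colon \mathcal{F} \to A_{tr,C_2}(\mathcal{F})$ and the bonding maps of $\MA$, so naturality in $X$ is automatic; the resulting right-hand side agrees, after the reindexing noted before the statement, with the Bredon motivic cohomology groups of \cite{HVO:cancellation}.

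I expect the main obstacle to be the structural claim of the second paragraph: verifying that $\MA$ is genuinely an $\Omega$-spectrum in the equivariant motivic category, which requires combining equivariant Brown--Gersten descent with the equivariant cancellation theorem, and then carrying out the index bookkeeping consistently across the four sphere types $S^1$, $S^\sigma$, $S^1_t$, $S^\sigma_t$, the virtual-versus-genuine distinction, and the $[\sigma]$-twist. A subsidiary point, needed to see the right-hand side is well posed, is a Thom/suspension isomorphism showing that $H^a_{C_2 Nis}(T^W\wedge X_+, \ul{A}(W\oplus V)[p\sigma])$ is independent of the auxiliary choice of $W$ with $W\oplus V$ a representation; this too follows from cancellation, applied now to the representation spheres $T^1 = \P^1$ and $T^\sigma$.
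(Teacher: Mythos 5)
Your proposal is correct and follows essentially the same route as the paper: showing that (a levelwise fibrant replacement of) $\MA$ is an $\Omega_{T^{\rho_{C_2}}}$-spectrum by identifying the adjoint bonding maps with the equivariant cancellation isomorphism, then converting the resulting finite-level mapping space to equivariant Nisnevich hypercohomology using the fibrant-replacement theorem for presheaves with equivariant transfers (\aref{thm:rcoh}), and finally bookkeeping the sphere indices. The one cosmetic difference is that you partially unpack the content of \aref{thm:rcoh} (strict homotopy invariance giving $\A^1$-locality, descent giving Nisnevich fibrancy) whereas the paper invokes that theorem directly as a black box.
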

\begin{proof}
We assume that $V=b +q\sigma$ is an actual representation; 
the more general case of a virtual representation is similar.
Let $\MA\to \MA'$ be a levelwise motivic fibrant replacement, 
i.e., 
for each $n$,  
$\MA'_n$ is motivic fibrant and $\MA_n\to \MA'_n$  is a motivic weak equivalence 
(see \aref{subsection:Stableequivariantmotivichomotopytheory} for details on the motivic model structure).
We claim that $\MA'$ is already a fibrant motivic $C_2$-spectrum. 
Indeed, 
using \aref{thm:rcoh}, 
the map 
$$
\pi_{i}(\MA'_{n})(X)\to \pi_{i}(\Omega_{T^{\rho_{C_2}}}\MA'_{n+1})(X)
$$ 
is naturally identified with the  map 
$$
H^{-i}_{C_2Nis}(X, \ul{A}(n\rho_{C_2}))\to H^{-i}_{C_2Nis}(T^{\rho_{C_2}}\wedge X_+, \ul{A}((n+1)\rho_{C_2})).
$$ 
This is the map of the equivariant Cancellation Theorem \cite[Theorem 9.8]{HVO:cancellation}, 
which is an isomorphism for all $i$. 
Therefore,
$\MA'_{n}\to \Omega_{T^{\rho_{C_2}}}\MA'_{n+1}$ is a weak equivalence of motivic $C_2$-spaces, 
which implies that $\MA'$ is an $\Omega_{T^{\rho_{C_2}}}$-spectrum and so is a fibrant motivic $C_2$-spectrum, 
cf.~\aref{subsection:Stableequivariantmotivichomotopytheory}.

Let $i,j,k,l$ be nonnegative integers such that
$$
S^{(i+k) +(j+l)\sigma, k+l\sigma}\wedge S^{a+p\sigma, b+q\sigma} \wkeq T^{m\rho_{C_2}}
$$
for some $m\geq 0$.  
In particular, 
$m = i+a-b = j+p-q = k+b = l +q$.
We have $H^{a+p\sigma, b+q\sigma}_{C_2}(X,\ul{A})=[X_+,S^{a+p\sigma, b+q\sigma}\wedge \MA]_{\SH_{C_2}(k)}$ and
\begin{align*}
[ X_+, &   S^{a+p\sigma, b+q\sigma}\wedge \MA]_{\SH_{C_2}(k)} \\
& \iso
[S^{(i+k) +(j+l)\sigma, k+l\sigma}\wedge X_+,  T^{m\rho_{C_2}}\wedge \MA]_{\SH_{C_2}(k)}  
\\
& \iso
[S^{i}\wedge S^{j\sigma}\wedge S^{k}_{t}\wedge S^{l\sigma}_{t}\wedge X_+,  T^{m\rho_{C_2}}\wedge \MA']_{\SH_{C_2}(k)} \\
& \iso
[S^{i}\wedge S^{j\sigma}\wedge S^{k}_{t}\wedge S^{l\sigma}_{t}\wedge X_+, (\MA')_m]_{{\rm H}_{\bullet,C_2}(k)} 
\\
& \iso H^{-i}_{C_{2}Nis}(S^{j\sigma}\wedge S^{k}_{t}\wedge S^{l\sigma}_{t}\wedge X_+, C_*A_{tr,C_2}(T^{m})) 
\\
& \iso H^{-i}_{C_{2}Nis}(S^{j\sigma}\wedge S^{k}_{t}\wedge S^{l\sigma}_{t}\wedge X_+, C_*A_{tr,C_2}(S^{m}_t\wedge S^{m\sigma}_t)[m +m\sigma])
 \\
 & \iso H^{0}_{C_{2}Nis}( X, C_*A_{tr,C_2}(S^{m-k}_t\wedge S^{(m-l)\sigma}_t)[(m-i) +(m-j)\sigma])
 \\
 & \iso
 H^{a}_{C_{2}Nis}(X, \ul{A}(V)[p\sigma]).
\end{align*}

The first two isomorphisms are immediate. 
The third follows from \cite[Theorem 8.10]{Hovey:spt} and the standard adjunction relating  ${\rm H}_{\bullet,C_2}(k)$ and $\SH_{C_2}(k)$, 
the fourth from \aref{thm:rcoh}, the remaining isomorphisms  follow from (\ref{eqn:ab}) and equivariant cancellation \cite[Theorem 9.8]{HVO:cancellation}.

\end{proof}  

\begin{remark}
Under the isomorphisms above, the product structures arising from the pairing of spectra $\MA\wedge \MA \to \MA$ agrees with that arising from the pairing of complexes 
$\ul{A}(V)\otimes \ul{A}(W)\to \ul{A}(V\oplus W)$.
\end{remark}

\subsection{Basic properties}
We record some of the basic properties of Bredon motivic cohomology.

\subsubsection{Cancellation}
Let $V= s+t\sigma$ be a virtual representation. We defined Bredon motivic cohomology via a representing spectrum in $\SH_{C_2}(k)$. Immediate from this definition  we have natural isomorphisms
\begin{equation}\label{eqn:suspiso}
\rH^{a+p\sigma, b+q\sigma}_{C_{2}}(T^{V}\wedge \mathsf{E}, \ul{A}) \iso \rH^{a-2s+(p-2t)\sigma, b-s+(q-t)\sigma}_{C_{2}}(\mathsf{E}, \ul{A}).
\end{equation}
\subsubsection{Equivariant transfers}\label{subsub:transfers}
Recall we write $D^{-}(C_2\Cor_k)$ for the derived category of bounded above chain complexes of equivariant Nisnevich sheaves with equivariant transfers on $C_2\Sm/k$.
\begin{proposition}
	Let $X$ be a smooth $C_2$-scheme over $k$ and $K_{\bullet}$ a cochain complex of presheaves with equivariant transfers. Then there is a natural isomorphism
	$\Ext^{n}_{D^{-}(C_2\Cor_k)}(\Z_{tr,C_2}(X), K_{\bullet})\iso H^{n}_{C_2Nis}(X, K_{\bullet})$.
\end{proposition}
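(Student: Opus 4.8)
The plan is to follow the standard Voevodsky-style template for recognizing Nisnevich hypercohomology of a free object, now in the equivariant setting. The statement asserts that $\Z_{tr,C_2}(X)$ is a ``small projective-like'' generator for computing $\Ext$ in $D^-(C_2\Cor_k)$, and that this $\Ext$ agrees with equivariant Nisnevich hypercohomology. First I would reduce to the case where $K_\bullet$ is a single sheaf, by a hypercohomology spectral sequence argument, or—more cleanly—work at the level of derived functors directly: take an injective (or Nisnevich-locally fibrant) resolution $K_\bullet \to I_\bullet$ in the category of complexes of equivariant Nisnevich sheaves with equivariant transfers, and observe that the forgetful functor to equivariant Nisnevich sheaves (without transfers) sends such a resolution to one computing $H^n_{C_2Nis}(X,-)$. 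So the crux is the comparison
$$
\Hom_{D^-(C_2\Cor_k)}(\Z_{tr,C_2}(X), I_\bullet[n]) \iso \Hom_{D^-(C_2Nis)}(\Z_{C_2}(X), I_\bullet[n]),
$$
where $\Z_{C_2}(X)$ is the free Nisnevich sheaf (no transfers). By the defining adjunction (finite correspondences receive a map from the graph functor $\Z_{C_2}(X)\to \Z_{tr,C_2}(X)$), the left side computes $\Hom$ out of the representable, i.e. sections of $I_\bullet$ over $X$ up to the transfer structure; the content is that this graph map becomes a quasi-isomorphism after applying $\Ext$ against a complex of equivariant-transfer sheaves whose underlying Nisnevich complex is acyclic in the right range.

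The key input is the existence of enough injectives (or a suitable model/derived-category structure) on the abelian category of equivariant Nisnevich sheaves with equivariant transfers on $C_2\Sm/k$, together with the fact that the forgetful functor to equivariant Nisnevich sheaves is exact and preserves injectives—this is the equivariant analogue of \cite[Lemma 6.23]{OrangeBook} / Voevodsky's original lemma in \cite{SV:BK}, and it should follow from the fact that the forgetful functor has an exact left adjoint $F\mapsto \Z_{tr,C_2}\otimes_{\Z_{C_2}} F$ (or, concretely, that $\Z_{tr,C_2}(-)$ of a representable is a direct summand of a coproduct of representables after restriction, since correspondences decompose). Granting this, an injective $I$ with transfers is in particular injective as an equivariant Nisnevich sheaf, so $H^n_{C_2Nis}(X, I)=0$ for $n>0$, and $\Hom_{C_2\Cor_k}(\Z_{tr,C_2}(X), I) = I(X) = \Hom_{C_2Nis}(\Z_{C_2}(X), I)$ by the Yoneda lemma in both categories. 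Running the resulting two hypercohomology spectral sequences (or simply comparing derived functors on the injective resolution) then yields the isomorphism for a general complex $K_\bullet$ by the usual boundedness/convergence argument.

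The main obstacle I anticipate is not the abstract homological algebra but verifying that the forgetful functor from equivariant-transfer sheaves to equivariant Nisnevich sheaves preserves injectives in this equivariant Nisnevich setting—equivalently, that its left adjoint is exact. In the nonequivariant case this rests on the fact that $\Cor_k(-,Y)$ restricted to a point is a free abelian group (étale-locally, or after passing to Nisnevich stalks, a correspondence is a sum of graphs of finite maps), so $\Z_{tr}$ of a representable is, stalkwise, a free summand. Equivariantly one must be careful because one takes $C_2$-fixed points $\Cor_k(X,Y)^{C_2}$, and the relevant local picture involves the set-theoretic stabilizers appearing in the characterization of equivariant Nisnevich covers (Remark after \aref{definition:eNt}); I would check exactness on equivariant Nisnevich stalks, i.e. at Hensel local rings together with their stabilizer subgroup, using \cite[Proposition 3.5]{HVO:cancellation} and the local structure of equivariant correspondences over such a base. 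Once that local exactness is in hand, everything else is the routine derived-functor bookkeeping sketched above, and one cites \cite[Theorem 8.12]{HVO:cancellation} (homotopy invariance) only insofar as it is needed to know $D^-(C_2\Cor_k)$ is the correct derived category to work in.
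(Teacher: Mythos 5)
Your overall strategy (resolve $K_\bullet$ injectively in the category of equivariant Nisnevich sheaves with equivariant transfers, then compare $\Ext$ with Nisnevich hypercohomology by showing each term of the resolution is acyclic for $H^{*}_{C_2Nis}(X,-)$) is the right shape, but you have identified the wrong key lemma, and your proposed route to acyclicity is not the one the paper uses and is not clearly available.

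The paper's proof is simply the equivariant transcription of \cite[Proposition 3.1.9]{Voevodsky:triangulated}. That argument does \emph{not} prove that the forgetful functor from Nisnevich sheaves with transfers to Nisnevich sheaves preserves injectives (equivalently, that its left adjoint is exact); exactness of that left adjoint is unclear and is not what \cite[Lemma 6.23]{OrangeBook} or \cite{SV:BK} establishes. What \cite[Lemma 6.23]{OrangeBook} actually says, and what the equivariant analogue supplied here by \cite[Theorem 4.15]{HVO:cancellation} says, is that the \v{C}ech complex $\cdots \to \Z_{tr,C_2}(U\times_X U)\to \Z_{tr,C_2}(U)\to \Z_{tr,C_2}(X)\to 0$ attached to an equivariant Nisnevich cover $U\to X$ is exact as a complex of equivariant Nisnevich sheaves with equivariant transfers. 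Applying $\Hom(-,I)$ for $I$ injective with transfers then kills positive-degree \v{C}ech cohomology; to upgrade this \v{C}ech vanishing for all covers to vanishing of the derived Nisnevich cohomology groups one needs the finite cohomological dimension of the equivariant Nisnevich site, which is exactly why the paper cites \cite[Corollary 3.9]{HVO:cancellation}. Your proposal never invokes cohomological dimension, but without it the passage from ``$\check{H}^{>0}$ vanishes for all covers'' to ``$H^{>0}_{C_2Nis}(X,I)=0$'' does not follow.

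Two smaller points. The stalkwise decomposition of equivariant correspondences over a Henselian local scheme with its stabilizer, which you rightly anticipate as the crucial local input, is used to prove the \v{C}ech exactness statement of \cite[Theorem 4.15]{HVO:cancellation}, not to prove exactness of a free-transfer left adjoint. And the final appeal to homotopy invariance \cite[Theorem 8.12]{HVO:cancellation} is misplaced: the present proposition is a statement about the derived category of the abelian category of equivariant Nisnevich sheaves with transfers and holds before, and independently of, any $\A^1$-localization; homotopy invariance only becomes relevant later, e.g.\ in \aref{thm:rcoh} and \aref{thm:motivicrep}.
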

\begin{proof} 
	The argument is as in \cite[Proposition 3.1.9]{Voevodsky:triangulated}, using that
	smooth $C_2$-schemes have finite equivariant Nisnevich cohomological dimension \cite[Corollary 3.9]{HVO:cancellation} together with \cite[Theorem 4.15]{HVO:cancellation}. 
\end{proof}
  
\begin{corollary}
If $K_{\bullet}$ is a cochain complex of sheaves with equivariant transfers then $H^{n}_{C_2Nis}(-, K_{\bullet})$ is a presheaf with equivariant transfers.
\end{corollary}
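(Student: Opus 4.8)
The plan is to deduce this formally from the preceding Proposition. First I would recall that the assignment $X\mapsto \Z_{tr,C_2}(X)$ is \emph{covariantly} functorial in $X$ with respect to equivariant finite correspondences: a class $\alpha\in\Cor_k(X,Y)^{C_2}$ induces, by composition of correspondences, a morphism $\Z_{tr,C_2}(X)\to\Z_{tr,C_2}(Y)$ of presheaves with equivariant transfers, and this assignment is compatible with composition and identities by associativity of the correspondence product. Sheafifying in the equivariant Nisnevich topology and passing to $D^{-}(C_2\Cor_k)$ then yields a functor $C_2\Cor_k\to D^{-}(C_2\Cor_k)$, $X\mapsto \Z_{tr,C_2}(X)$.

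Next, fix the cochain complex $K_{\bullet}$. Composing the functor above with the contravariant functor $\Ext^{n}_{D^{-}(C_2\Cor_k)}(-,K_{\bullet})$ produces a contravariant functor on $C_2\Cor_k$, namely $X\mapsto \Ext^{n}_{D^{-}(C_2\Cor_k)}(\Z_{tr,C_2}(X),K_{\bullet})$. It is additive: a disjoint union of $C_2$-schemes gives $\Z_{tr,C_2}(X\sqcup Y)\iso \Z_{tr,C_2}(X)\oplus \Z_{tr,C_2}(Y)$ in $D^{-}(C_2\Cor_k)$, and $\Ext^{n}$ carries finite direct sums in the first variable to the corresponding finite products of abelian groups. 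Hence $X\mapsto \Ext^{n}_{D^{-}(C_2\Cor_k)}(\Z_{tr,C_2}(X),K_{\bullet})$ is a presheaf with equivariant transfers in the sense of \aref{sub:motiviccomplex}.

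Finally, the preceding Proposition provides an isomorphism $\Ext^{n}_{D^{-}(C_2\Cor_k)}(\Z_{tr,C_2}(X),K_{\bullet})\iso H^{n}_{C_2Nis}(X,K_{\bullet})$ natural in $X\in C_2\Sm/k$. Thus, as a presheaf on $C_2\Sm/k$, the functor $H^{n}_{C_2Nis}(-,K_{\bullet})$ is isomorphic to one carrying equivariant transfers, and transporting the transfer maps along this isomorphism endows $H^{n}_{C_2Nis}(-,K_{\bullet})$ itself with the structure of a presheaf with equivariant transfers extending its given presheaf structure. There is essentially no obstacle here; the only points deserving care are the well-definedness of the functor $X\mapsto \Z_{tr,C_2}(X)$ on $C_2\Cor_k$ — already contained in the construction of equivariant correspondences in \aref{sub:motiviccomplex} — and its compatibility with equivariant Nisnevich sheafification and with the localization functor to $D^{-}(C_2\Cor_k)$.
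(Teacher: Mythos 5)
Your argument is correct and is exactly the intended deduction: the paper states the Corollary without proof, implicitly relying on the same reasoning you lay out, namely that $\Ext^{n}_{D^{-}(C_2\Cor_k)}(\Z_{tr,C_2}(-),K_{\bullet})$ is a contravariant additive functor on $C_2\Cor_k$ via the covariant functoriality of $\Z_{tr,C_2}$ in equivariant correspondences, and the preceding Proposition transports this structure to $H^{n}_{C_2Nis}(-,K_{\bullet})$.
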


\aref{thm:motivicrep} shows that for any $a,b,p,q$,  
$H^{a+p\sigma, b+q\sigma}_{C_2}(X,\ul{A})$ is naturally identified with equivariant Nisnevich hypercohomology groups with coefficients in a (bounded above) 
cochain complex of sheaves with equivariant transfers. 
It is therefore a presheaf with equivariant transfers. 

\subsubsection{Mayer-Vietoris sequences} 
Since Bredon motivic cohomology is a representable theory, 
associated to an equivariant distinguished square (\ref{eqn:edist}) is a Mayer-Vietoris long exact sequence
\begin{align*}
\cdots \to H^{a+p\sigma, b+q\sigma}_{C_2}(X,\ul{A}) \to &
H^{a+p\sigma,b+q\sigma}_{C_2}(U,\ul{A})\oplus H^{a+p\sigma, b+q\sigma}_{C_2}(Y,\ul{A})
\\
\to & H^{a+p\sigma, b+q\sigma}_{C_2}(W,\ul{A}) \to H^{a+1+p\sigma, b+q\sigma}_{C_2}(X,\ul{A}) \to \cdots .
\end{align*}

\subsubsection{Ring structure}
When $A$ is a commutative ring,
then $\MA$ is a commutative ring spectrum. 
We thus have a cup product pairing for smooth $C_2$-varieties
$$
H^{a+p\sigma, b+q\sigma}_{C_2}(X, \ul{A})\times H^{c+s\sigma, d+t\sigma}_{C_2}(X,\ul{A}) \xrightarrow{\cup} 
H^{(a+c)+(p+s)\sigma, (b+d) + (q+t)\sigma}_{C_2}(X, \ul{A}) 
$$
given by the usual formula 
$$
x\cup y := \Delta^*(x \boxtimes y),
$$ 
where $\Delta:X\to X\times X$ is the diagonal and $\boxtimes$ is the external product. 
The cup product makes $H^{\ast,\ast}_{C_2}(X,\ul{A})$ into a $\Z^{4}$-graded ring. 
Since it is a representable theory, 
it is  a module over $\pi_{0}^{C_2}(\mathbf{1}):={\rm End}_{\SH_{C_2}(k)}(\mathbf{1})$ and in fact an algebra over this ring.

Consider the following endomorphisms of $\mathbf{1}$ in $\SH_{C_2}(k)$
\begin{align}
\nonumber
\epsilon = & \Sigma^{-2}_{S^1_t}\left(S^1_t\wedge S^1_t\xrightarrow{{\rm twist}} S^1_t\wedge S^1_t\right), \\
\label{eqn:elements}
\epsilon'= & \Sigma^{-2}_{S^{\sigma}_t}\left(S^{\sigma}_t \wedge S^{\sigma}_t \xrightarrow{{\rm twist}} S^{\sigma}_t\wedge S^{\sigma}_t\right), \\
u=& \Sigma^{-2}_{S^{\sigma}}\left(S^{\sigma}\wedge S^{\sigma} \xrightarrow{{\rm twist}} S^{\sigma}\wedge S^{\sigma}\right).
\nonumber
\end{align}
 
We also write $\epsilon, \epsilon', u\in H^{0,0}_{C_2}(k,\ul{A})$ for the respective elements in cohomology, 
induced by these endomorphisms. 
 
\begin{proposition}\label{prop:skewcomm}
Let $x\in H^{a+p\sigma, b+q\sigma}_{C_2}(X, \ul{A})$ and $y\in H^{c+s\sigma, d+t\sigma}_{C_2}(X, \ul{A})$. 
Then 
$$
x\cup y = (-1)^{ac}(u)^{ps}(\epsilon)^{bd}(\epsilon')^{qt} (y\cup x).
$$
\end{proposition}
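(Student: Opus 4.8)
The statement is a graded-commutativity formula for the cup product, which at the level of representing spectra comes down to understanding the twist map on smash products of the four sphere objects $S^1$, $S^\sigma$, $S^1_t$, $S^\sigma_t$. The plan is to reduce everything to the case $X = \spec(k)$ and to the structure of $\M{A}$ as a commutative ring spectrum. First I would recall the standard argument, following the nonequivariant model (e.g.~Morel's analysis, or the treatment in the stable homotopy category of a commutative monoid): for a commutative ring spectrum $\mathsf{R}$ and classes $x\colon \mathsf{E}\to S^{U}\wedge \mathsf{R}$, $y\colon \mathsf{E}\to S^{W}\wedge \mathsf{R}$, the two products $x\cup y$ and $y\cup x$ differ precisely by the endomorphism of $S^{U+W}$ given by the twist $S^{U}\wedge S^{W}\xrightarrow{\ \tau\ } S^{W}\wedge S^{U}$ (composed back, using commutativity of the multiplication and cocommutativity of the diagonal $\Delta$). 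So the whole computation is: identify $\tau$ as an element of $[\mathbf 1,\mathbf 1]_{C_2} = \pi^{C_2}_0(\mathbf 1)$, then push it into $H^{0,0}_{C_2}(k,\ul A)$ via the unit map.

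The second step is to decompose the twist multiplicatively. Writing $S^{U} = S^{a_1}\wedge S^{p_1\sigma}\wedge S^{b_1}_t\wedge S^{q_1\sigma}_t$ and similarly for $S^{W}$, the twist $S^{U}\wedge S^{W}\to S^{W}\wedge S^{U}$ is a composite of block transpositions, one for each of the four sphere types; since smash product is symmetric monoidal the total sign/unit is the product of the contributions of each block. For the simplicial sphere $S^1$ the block transposition of $S^{a_1}$ past $S^{a_2}$ contributes the classical sign $(-1)^{a_1 a_2}$ (the twist on $S^1\wedge S^1$ is $-1\in\pi_0$). For the other three spheres the self-twist on $S^{\sigma}\wedge S^{\sigma}$, $S^1_t\wedge S^1_t$, $S^\sigma_t\wedge S^\sigma_t$ is, by definition of $u$, $\epsilon$, $\epsilon'$ in \eqref{eqn:elements}, exactly those elements after desuspension; so transposing a block of $p_1$ copies of $S^\sigma$ past $p_2$ copies contributes $u^{p_1 p_2}$, and similarly $\epsilon^{b_1 b_2}$, $(\epsilon')^{q_1 q_2}$. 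One must also check that transposing two \emph{different} sphere types past each other contributes nothing (identity in $\pi_0$), which is automatic because e.g.~$S^1\wedge S^\sigma \to S^\sigma\wedge S^1$ is not a self-map and, under the block bookkeeping, only the ``diagonal'' self-transpositions survive — the cross terms cancel in pairs since $\tau\circ\tau = \id$. Collecting, the twist on $S^U\wedge S^W$ equals $(-1)^{a_1 a_2} u^{p_1 p_2}\epsilon^{b_1 b_2}(\epsilon')^{q_1 q_2}$, which with $U = a+p\sigma,\ b+q\sigma$ (using the definition \eqref{eqn:funnyspheres}, so the relevant exponents of $x$ are $a, p, b, q$ and of $y$ are $c, s, d, t$) gives exactly the claimed coefficient $(-1)^{ac}u^{ps}\epsilon^{bd}(\epsilon')^{qt}$.

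The third step is the reduction that justifies steps one and two: the cup product is defined via $x\cup y = \Delta^*(x\boxtimes y)$ with $\Delta$ the diagonal of $X$, and the general-$X$ formula follows from the $\spec(k)$ computation because $H^{\star,\star}_{C_2}(X,\ul A)$ is an algebra over $\pi^{C_2}_0(\mathbf 1)$, hence over the subring generated by the images of $\epsilon,\epsilon',u$, and these central elements commute with the reindexing suspension isomorphisms \eqref{eqn:suspiso}; so it suffices to know how the twist acts on the sphere factors, which is independent of $X$. Equivalently one can argue directly with the spectrum-level braiding isomorphisms of $\SH_{C_2}(k)$ as a tensor triangulated category, which is where the four elements genuinely live.

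**Main obstacle.** The routine part is the symmetric-monoidal bookkeeping; the delicate point is being careful that the block-transposition argument correctly isolates \emph{only} the self-transposition contributions and that the identification of the self-twist on $S^\sigma_t\wedge S^\sigma_t$ (resp.~$S^\sigma\wedge S^\sigma$, $S^1_t\wedge S^1_t$) with the element $\epsilon'$ (resp.~$u$, $\epsilon$) of \eqref{eqn:elements} really produces the stated exponents without an off-by-one in the desuspension, and that no extra sign is introduced when commuting simplicial spheres past Tate or representation spheres (so that, e.g., there is no hidden $\epsilon$-type correction mixing $a$ with $b$). This is essentially the content of checking that the graded-commutative structure on the four-fold-graded ring is the ``expected'' one, and the cleanest route is to verify the four separate one-variable identities (self-twist on each sphere type) and then invoke symmetric monoidality, rather than attempting the full four-sphere twist in one go.
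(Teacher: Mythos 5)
Your overall strategy — reduce the commutativity deviation to the endomorphism of $S^{\alpha_1+\alpha_2,\beta_1+\beta_2}$ coming from the twist on $S^{\alpha_1,\beta_1}\wedge S^{\alpha_2,\beta_2}$, factor the twist into per-sphere contributions, then push the result into $H^{0,0}_{C_2}(k,\ul A)$ via the unit — is indeed the approach the paper takes, except the paper outsources exactly the step you try to do by hand to Dugger's coherence machinery (\cite[Proposition 6.13]{dugger:coherence}).

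There is, however, a concrete bookkeeping error in the step where you claim ``the relevant exponents of $x$ are $a,p,b,q$ and of $y$ are $c,s,d,t$'' while citing \eqref{eqn:funnyspheres}. That definition gives
$$S^{a+p\sigma,\,b+q\sigma}=S^{a-b}\wedge S^{(p-q)\sigma}\wedge S^{b}_t\wedge S^{q\sigma}_t,$$
so in the $(S^1,S^\sigma,S^1_t,S^\sigma_t)$-basis the exponents of $x$ are $(a-b,\,p-q,\,b,\,q)$ and of $y$ are $(c-d,\,s-t,\,d,\,t)$, not $(a,p,b,q)$ and $(c,s,d,t)$. Carrying your block-transposition argument through honestly with these exponents, and using $\epsilon,\epsilon',u$ as the self-twists of $S^1_t,S^\sigma_t,S^\sigma$ as in \eqref{eqn:elements}, produces
$$(-1)^{(a-b)(c-d)}\,u^{(p-q)(s-t)}\,\epsilon^{bd}\,(\epsilon')^{qt},$$
which is \emph{not} literally the stated coefficient $(-1)^{ac}u^{ps}\epsilon^{bd}(\epsilon')^{qt}$: the two differ by $(-1)^{ad+bc}u^{pt+qs}$ (mod squares). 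Matching the stated form requires either a nontrivial relation among the twist elements arising from $T\wkeq S^1\wedge S^1_t$ and $T^\sigma\wkeq S^\sigma\wedge S^\sigma_t$ (note in this vein that the elements in \eqref{eqn:elements} and the self-twists of $T,T^\sigma$ used in the proof of \aref{prop:product} differ by a sign — so the constants are not interchangeable), or a different choice of canonical identifications $S^{\alpha_1,\beta_1}\wedge S^{\alpha_2,\beta_2}\xrightarrow{\cong}S^{\alpha_1+\alpha_2,\beta_1+\beta_2}$ than the naive per-sphere sort you invoke. Pinning down precisely which identifications are in play, and why the cross-contributions between non-independent sphere objects do \emph{not} cancel for free, is the whole content of Dugger's coherence argument; your appeal to ``$\tau\circ\tau=\id$'' only disposes of cross-transpositions between genuinely independent invertible objects, and it does not cover the hidden cross-terms that arise when the degree-shift from $b,q$ is encoded partly in the simplicial-sphere exponents $a-b,p-q$. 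So the proposal, as written, does not actually establish the stated formula: the exponent assignment is wrong, and that error is not innocuous.
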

\begin{proof}
This follows by the same argument as \cite[Proposition 6.13]{dugger:coherence} (see also Remark 6.14 of loc.~cit.). 
The point is to carefully analyze the endomorphism of $S^{\alpha_1+\alpha_2, \beta_1 +\beta_2}$ arising from the twist 
$S^{\alpha_1, \beta_1} \wedge S^{\alpha_2,  \beta_2} \to S^{\alpha_2, \beta_2} \wedge S^{\alpha_1, \beta_1}$, 
where $(\alpha_1, \beta_1) = (a+p\sigma, b+q\sigma)$ and $(\alpha_2, \beta_2) = (c+s\sigma, d+t\sigma)$.
\end{proof}

\begin{remark}
We will see in \aref{prop:product} below 
that $\epsilon = 1$, $\epsilon'=1$, 
and $u= -1$ in $H^{\star,\star}_{C_2}(k,\ul{A})$ and therefore $ x\cup y = (-1)^{ac+ps} (y\cup x)$.
\end{remark}

\subsubsection{Topological realization}
Let $k=\C$. 
The functor $C_2\Sm/\C \to C_2{\rm Top}$, $X\mapsto X(\C)$ extends to a functor $\RRe_{\C}:\SH_{C_2}(\C)\to \SH_{C_2}$ from the stable equivariant motivic homotopy category over $\C$ 
to the classical stable equivariant homotopy category, 
see \aref{sub:top}. 
For a topological space $M$ with $C_2$-action we write $H^{a+p\sigma}_{C_2}(M,\ul{A})$ for the topological Bredon cohomology theory, 
and $\rH^{a+p\sigma,b+q\sigma}_{C_2}(\mcal{X}, \ul{A})$ for the reduced Bredon cohomology of a pointed motivic $C_2$-space $\mcal{X}$. 
By \aref{thm:realbred}, 
the Betti realization $\RRe_{\C}(\MA)$ represents topological Bredon cohomology with $\ul{A}$-coefficients. 
This yields the comparison functor
$$
\RRe_{\C}:H^{a+p\sigma, b+q\sigma}_{C_2}(X, \ul{A}) \to H^{a+p\sigma}_{C_2}(X(\C), \ul{A}).
$$
Moreover, 
this yields a ring homomorphism since $\RRe_{\C}:\SH_{C_2}(\C)\to \SH_{C_2}$ is a symmetric monoidal functor, 
see \aref{theorem:stableBettirealization}.

\subsubsection{Change of groups}
Consider the adjunction $C_{2}\times - : \Sm/k \rightleftarrows C_2\Sm/k:(-)^{e}$,
where $X^{e}$ is the underlying smooth scheme of a smooth scheme with $C_2$-action. 
This extends to an adjunction on pointed motivic spaces and by \aref{prop:Gstadj} it stabilizes to yield the adjunction 
\begin{equation*}
C_{2\,+}\wedge - :\SH(k)\rightleftarrows \SH_{C_2}(k) : (-)^{e}
\end{equation*}
on stable homotopy categories. 
The functor $(-)^{e}$ should be thought of as forgetting the action. 
Similarly, the adjunction $(-)^{triv}:\Sm/k\rightleftarrows C_2\Sm/k:(-)^{C_2}$ induces an adjunction on stable homotopy categories
 $$
 (-)^{triv}:\SH(k)\rightleftarrows \SH_{C_2}(k):(-)^{C_2}.
 $$ 
Here  $(-)^{triv}:\Sm/k\to C_2\Sm/k$ sends a smooth scheme $X$ to the $C_2$-scheme consisting of $X$ with the trivial action. 
If $\mathsf{E}$ is a motivic spectrum then $C_{2\,+}\wedge \mathsf{E}$ agrees with $C_{2\,+}\wedge (\mathsf{E})^{triv}$ 
(the latter being the smash product of two equivariant motivic spectra). 
Typically we simply write $\mathsf{E}$ again for the spectrum $\mathsf{E}^{triv}$.

Write $H^{\ast, \ast}_{\mcal{M}}(-, A)$ for motivic cohomology theory and ${\bf M}A$ for the representing motivic spectrum. 
There is a natural map 
\begin{equation}\label{eqn:phimap}
\phi:\rH^{a+p\sigma, b+q\sigma}_{C_2}(\mcal{X}, \ul{A}) \to \rH^{a+p, b+q}_{\mcal{M}}(\mcal{X}^{e}, A)
\end{equation}
obtained from 
$(-)^{e}:[\mcal{X}, S^{a+b\sigma, b+q\sigma}\wedge \MA]_{\SH_{C_2}(k)}
\to [\mcal{X}^{e}, (S^{a+b\sigma, b+q\sigma}\wedge \MA)^{e}]_{\SH(k)}$ and the facts that 
$(S^{a+b\sigma, b+q\sigma}\wedge \MA)^{e} = S^{a+b, b+q}\wedge (\MA)^{e}$ and $(\MA)^{e}$ is a $T^2$-spectrum representing motivic cohomology.
When $k=\C$, 
by \aref{prop:stablecommrel} and \aref{thm:realbred} we have a commutative square
\begin{equation}
\label{eqn:esquare}
\xymatrix{
\rH^{a+p\sigma, b+q\sigma}_{C_2}(\mcal{X}, \ul{A}) \ar[r]^{\phi}\ar[d] & \rH^{a+p, b+q}_{\mcal{M}}(\mcal{X}^{e}, A) \ar[d] \\
\rH^{a+p\sigma}_{C_2}(\RRe_{\C}(\mcal{X}), \ul{A}) \ar[r]^{\phi} & \rH^{a+p}_{sing}(\RRe_{\C}(\mcal{X})^{e}, A).
}
\end{equation}

\begin{proposition}\label{prop:C2e}
Let $\mcal{X}$ be a pointed motivic $C_2$-space. 
For integers $a, b, p, q$, there is a natural isomorphism
$$
\rH^{a+p\sigma, b+q\sigma}_{C_2}(C_{2\,+}\wedge \mcal{X}, \ul{A}) \xrightarrow{\iso} \rH^{a+p, b+q}_{\mcal{M}}( \mcal{X}^{e}, A).
$$
Moreover, 
when $k=\C$, 
there is a commutative square
$$
\xymatrix{
\rH^{a+p\sigma, b+q\sigma}_{C_2}(C_{2\,+}\wedge \mcal{X}, \ul{A}) \ar[r]^-{\iso}\ar[d]_-{\RRe_{\C}} & \rH^{a+p, b+q}_{\mcal{M}}( \mcal{X}^{e}, A)\ar[d]^-{\RRe_{\C}}\\
\rH^{a+p\sigma}_{C_2}(C_{2\,+}\wedge \RRe_{\C}(\mcal{X}), \ul{A}) \ar[r]^-{\iso} & \rH^{a+p }_{sing}( \RRe_{\C}(\mcal{X})^{e}, A).
}
$$
\end{proposition}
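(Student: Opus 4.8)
The plan is to deduce the isomorphism directly from the change-of-groups adjunction together with the two identifications already recorded just before \eqref{eqn:phimap}. First I would unwind definitions: since $\Sigma^{\infty}_{T^{\rho_{C_2}}}$ commutes with $C_{2\,+}\wedge -$, one has
$$
\rH^{a+p\sigma, b+q\sigma}_{C_2}(C_{2\,+}\wedge \mcal{X}, \ul{A})
= [C_{2\,+}\wedge \Sigma^{\infty}_{T^{\rho_{C_2}}}\mcal{X},\, S^{a+p\sigma, b+q\sigma}\wedge \MA]_{\SH_{C_2}(k)}.
$$
Applying the stable change-of-groups adjunction of \aref{prop:Gstadj} rewrites this as $[\Sigma^{\infty}_{T^{\rho_{C_2}}}\mcal{X},\, (S^{a+p\sigma, b+q\sigma}\wedge \MA)^{e}]_{\SH(k)}$. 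Now invoking that $(S^{a+p\sigma, b+q\sigma}\wedge \MA)^{e} = S^{a+p, b+q}\wedge (\MA)^{e}$ and that $(\MA)^{e}$ is a $T^{2}$-spectrum representing motivic cohomology identifies this with $\rH^{a+p, b+q}_{\mcal{M}}(\mcal{X}^{e}, A)$. All three steps are natural in $\mcal{X}$, which gives the asserted natural isomorphism.

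For the commutative square when $k=\C$, I would check that the complex Betti realization $\RRe_{\C}$ intertwines the three steps above with their evident topological analogues. The adjunction step is compatible with $\RRe_{\C}$ because $\RRe_{\C}$ is symmetric monoidal (\aref{theorem:stableBettirealization}) and commutes with both $C_{2\,+}\wedge -$ and the restriction functor $(-)^{e}$, hence carries the unit and counit of the motivic change-of-groups adjunction to those of the topological one; this is the same compatibility underlying \aref{prop:stablecommrel} and the square \eqref{eqn:esquare}. The coefficient step is compatible because $\RRe_{\C}(\MA)$ is the topological Bredon Eilenberg--MacLane spectrum for $\ul{A}$ by \aref{thm:realbred}, so $\RRe_{\C}(S^{a+p\sigma, b+q\sigma}\wedge \MA)\wkeq S^{a+p\sigma}\wedge \RRe_{\C}(\MA)$ and its underlying spectrum represents singular cohomology, these identifications matching the comparison maps occurring in \eqref{eqn:esquare}. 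Pasting the three resulting squares yields the displayed square.

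The main obstacle I expect is organizational rather than substantive: one must make precise that $\RRe_{\C}$ is compatible with the change-of-groups adjunction at the level of stable homotopy categories, i.e.\ that it sends left adjoint to left adjoint and right adjoint to right adjoint in a way compatible with units and counits. Once the appendix has recorded that $\RRe_{\C}$ commutes with restriction and with induction (which holds since it is assembled levelwise from $X\mapsto X(\C)$), this is a routine mate-of-an-adjunction argument and requires no new computation. One minor point worth spelling out is that $(C_{2\,+}\wedge \mcal{X})^{e}\wkeq \mcal{X}^{e}\vee \mcal{X}^{e}$, so the adjunction isomorphism --- unlike the map $\phi$ of \eqref{eqn:phimap} --- singles out one of the two wedge summands, and one should confirm $\RRe_{\C}$ respects that choice.
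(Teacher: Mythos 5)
Your plan follows the same route as the paper: change-of-groups adjunction, the identification $(\MA)^{e}\wkeq \mathbf{M}A$, and the realization isomorphism $\RRe_{\C}(\MA)\iso{\rm H}\ul{A}$ from \aref{thm:realbred}, plus the monoidality of $\RRe_{\C}$ for the second square. That is exactly what the paper's proof does.

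There is one step, however, that is not quite right as written. You ``apply the stable change-of-groups adjunction'' to pass from $[C_{2\,+}\wedge\Sigma^{\infty}\mcal{X},\,\mathsf{F}]_{\SH_{C_2}(k)}$ to $[\mcal{X}^{e},\,\mathsf{F}^{e}]_{\SH(k)}$, but the adjunction $(C_{2\,+}\wedge -)\dashv(-)^{e}$ of \aref{prop:Gstadj} has source $\SH(k)$, not $\SH_{C_2}(k)$. Since $\mcal{X}$ may have nontrivial $C_2$-action, $\Sigma^{\infty}\mcal{X}$ is not in the image of $(-)^{triv}$, so $C_{2\,+}\wedge\Sigma^{\infty}\mcal{X}$ (a smash in $\SH_{C_2}(k)$) is not literally of the form $C_{2\,+}\wedge\mathsf{E}$ with $\mathsf{E}\in\SH(k)$. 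What makes the argument go through is precisely \aref{lem:trick}: the map $i:C_{2\,+}\wedge\mcal{X}^{e}\to C_{2\,+}\wedge\mcal{X}$ (adjoint to the inclusion of the first wedge summand) is an equivalence in $\SH_{C_2}(k)$. Precomposing with $i^{*}$ first and only then applying the adjunction gives the isomorphism, and this is the order the paper uses. You do notice this phenomenon --- your remark at the end about the adjunction singling out one wedge summand of $(C_{2\,+}\wedge\mcal{X})^{e}\wkeq\mcal{X}^{e}\vee\mcal{X}^{e}$ is exactly \aref{lem:trick} in disguise --- but you file it as a detail of the $\RRe_{\C}$-compatibility rather than as the step that makes the adjunction applicable in the first place. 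With that bookkeeping corrected, the argument matches the paper's.
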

\begin{proof}
By \aref{lem:trick}, 
the adjoint of the inclusion $\mcal{X}^{e}\to (C_{2\,+}\wedge \mcal{X})^{e} = \mcal{X}^e\coprod \mcal{X}^e$ as the first summand, 
is an isomorphism $i:C_{2\,+}\wedge \mcal{X}^{e}\iso C_{2\,+}\wedge \mcal{X}$ in the equivariant motivic homotopy category. 
Together with the adjunction isomorphism and the observation that $(\MA)^{e}$ represents motivic cohomology, 
we obtain isomorphisms
\begin{align*}
[C_{2\,+}\wedge \mcal{X}, \Sigma^{a+p\sigma, b+q\sigma}\MA]_{\SH_{C_2}(k)} & \iso 
[C_{2\,+}\wedge (\mcal{X})^{e}, \Sigma^{a+p\sigma, b+q\sigma}\MA]_{\SH_{C_2}(k)} \\
& \iso [\mcal{X}^{e}, \Sigma^{a+p, b+q}{\bf M} A]_{\SH(k)},
\end{align*}
yielding the first part of the proposition.
 
We have $\RRe_{\C}(\mcal{X})^{e}\iso\RRe_{\C}(\mcal{X}^{e})$ and $C_{2\,+}\wedge \RRe_{\C}(\mcal{X})\iso\RRe_{\C}(C_{2\,+}\wedge \mcal{X})$ by \aref{prop:commrel}. 
The map $\RRe_{\C}(i)$ is an isomorphism $C_{2\,+}\wedge \RRe_{\C}(\mcal{X})^{e}\iso C_{2\,+}\wedge\RRe_{\C}(\mcal{X})$. 
Together with the isomorphism $\RRe_{\C}(\MA) \cong {\rm H}\ul{A}$ in \aref{thm:realbred} this implies the second part of the proposition.
\end{proof}

\begin{proposition}
\label{prop:forgetiso}
Let $X$ be a smooth $k$-scheme considered as a $C_2$-scheme with trivial action. 
For integers $m,n$,  
the map (\ref{eqn:phimap}) is an isomorphism
$$
\phi:H^{m,n}_{C_2}(X, \ul{A}) \xrightarrow{\iso} H^{m,n}_{\mcal{M}}(X, A).
$$
\end{proposition}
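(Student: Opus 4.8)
The plan is to rewrite both sides as Nisnevich hypercohomology of complexes of sheaves with transfers and then to compare, over trivial-action schemes, both the coefficient complexes and the topologies involved. Since $X$ carries the trivial action we have $X^e=X$, so $\phi$ is the map $[X_+,S^{m,n}\wedge\MA]_{\SH_{C_2}(k)}\to[X_+,S^{m,n}\wedge(\MA)^e]_{\SH(k)}$ induced by the forgetful functor $(-)^e$, using that $(\MA)^e$ is a $T^2$-spectrum representing motivic cohomology. By \aref{thm:motivicrep} with $V=n$ (the case $n<0$ is similar, using a twist by a trivial representation) the left-hand group is naturally $H^{m}_{C_2Nis}(X,\ul A(n))$, where $\ul A(n)=a_{\#}\big(C_*A_{tr,C_2}(T^{n})\big)[-2n]$ and $T^{n}=(\P^1)^{\wedge n}$ carries the trivial action; the non-equivariant counterpart of \aref{thm:motivicrep}, via Voevodsky's cancellation theorem, identifies the right-hand group with $H^{m}_{Nis}(X,A(n))$, $A(n)=a_{\#}^{Nis}\big(C_*A_{tr}(T^{n})\big)[-2n]$. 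Under these identifications $\phi$ is the evident comparison map, and it remains to show it is an isomorphism.

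This reduces to two points. First, before sheafification the equivariant complex restricts to the non-equivariant one along the trivial-action embedding: if $U$ and $Y$ both carry the trivial action then $\Cor_k(U,Y)^{C_2}=\Cor_k(U,Y)$, and since $T^{n}$ has trivial action this gives a natural identification of presheaves on $\Sm/k$ between the restriction of $C_*A_{tr,C_2}(T^{n})$ along $(-)^{triv}$ and $C_*A_{tr}(T^{n})$, compatibly with the shift by $[-2n]$ and with the map induced by $(-)^e$. Second, for a smooth $C_2$-scheme $X$ with trivial action, equivariant Nisnevich hypercohomology of $X$ agrees with ordinary Nisnevich hypercohomology of $X$ with coefficients restricted along $(-)^{triv}$. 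The intuition is that $S_x=C_2$ for every $x\in X$ — so the neighbourhoods $G\times^{S_x}U_x$ of \aref{rem:localaffine} are just ordinary opens with trivial action — hence the equivariant Nisnevich topos, localized at $X$, only involves the ``trivial orbit type'' and reduces to the ordinary Nisnevich topos of $X$. Concretely one analyses the equivariant Nisnevich site along the orbit types $C_2/C_2$ and $C_2/e$, using the cover characterization recalled after \aref{definition:eNt}, that is, \cite[Proposition 3.5]{HVO:cancellation}: a trivial-action scheme is concentrated in the $C_2/C_2$-stratum, whose associated site is the ordinary Nisnevich site of $\Sm/k$. Combining the two points identifies $H^{m}_{C_2Nis}(X,\ul A(n))$ with $H^{m}_{Nis}(X,A(n))$ compatibly with $\phi$, proving the proposition.

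The main obstacle is the second point: one must check carefully that restricting the equivariant Nisnevich topology to trivial-action schemes returns the ordinary Nisnevich topology, and that equivariant Nisnevich hypercohomology of a trivial-action scheme is insensitive to the non-trivial-action objects lying over it. Everything else — the coefficient comparison of the first point and the two invocations of the equivariant and non-equivariant forms of \aref{thm:motivicrep} — is formal.
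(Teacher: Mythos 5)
Your overall framework is right — reduce to a comparison of Nisnevich and equivariant Nisnevich hypercohomology of the motivic complex over the trivial-action scheme $X$ — but you leave the crux of the matter, your ``second point,'' as an acknowledged gap. Saying that the localized equivariant Nisnevich topos over a trivial-action $X$ ``only involves the trivial orbit type'' is not literally true: objects $Y\to X$ in $C_2\Sm/k$ with $X$ trivial can still have free or mixed action (e.g.\ $C_2\times U\to X$), so the local structure over $X$ is not simply the ordinary Nisnevich site. What you would actually need is (a) that every equivariant Nisnevich hypercover of $X$ is refined by a trivial-action hypercover (via taking fixed points and using that $S_x=C_2$ forces covers to hit the $C_2/C_2$-stratum), and (b) that this refinement statement passes to a quasi-isomorphism after sheafification, since the restriction along $(-)^{triv}$ of the equivariant sheafification of $C_*A_{tr,C_2}(T^n)$ need not literally equal the Nisnevich sheafification of $C_*A_{tr}(T^n)$. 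You flag this as ``the main obstacle'' without resolving it, and the stratification gesture in the middle of your writeup does not amount to a proof.

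The paper sidesteps all of this by constructing an explicit inverse going in the opposite direction: instead of restricting equivariant sheaves along $(-)^{triv}$, it uses the pushforward $t_*$ on sheaf categories induced by the morphism of sites $t:(C_2\Sm/k)_{C_2Nis}\to(\Sm/k)_{Nis}$ coming from $(-)^{triv}$. The two facts that do all the work are that $t_*$ is exact and that $t_*A(n)=\ul A(n)$. Exactness means there is no spectral sequence to control, and the identification $t_*A(n)=\ul A(n)$ directly produces a map $\psi:H^m_{Nis}(X,A(n))\to H^m_{C_2Nis}(X,\ul A(n))$ which is checked to be inverse to $\phi$. This is both shorter and avoids the sheafification-compatibility issue entirely, because it never passes a sheafified complex back through restriction. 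You should replace the heuristic appeal to orbit types with this construction of $t_*$ and the verification of its exactness and the formula $t_*A(n)=\ul A(n)$; that is the content the proposition actually needs.
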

\begin{proof}
The functor $(-)^{e}:C_2\Sm/k\to \Sm/k$ takes equivariant Nisnevich covers to Nisnevich covers and so induces a morphism of sites
$(\Sm/k)_{Nis} \to (C_2\Sm/k)_{C_2Nis}$. This induces a map on cohomology $\phi:H^{m}_{C_2Nis}(X,\ul{A}(n)) \to H^m_{Nis}(X, A(n))$. 
Here $A(n)$ is the Nisnevich sheafification of $C_*A_{tr}(T^{\wedge n})[-2n]$, 
i.e., it is the ``usual'' weight-$n$ motivic complex.
Under the isomorphism of \aref{thm:motivicrep} this map $\phi$ is identified with the map 
$\phi$ in the statement of the proposition.

There is also a morphism of sites 
$t:(C_2\Sm/k)_{C_2Nis}\to (\Sm/k)_{Nis}$, induced by  
$(-)^{triv}:\Sm/k\to C_2\Sm/k$. The functor $t^*$ is exact and  
$t^*A(n) = \ul{A}(n)$,  see 
\cite[Lemma 3.19]{HVO:cancellation}. We thus have an induced isomorphism
$\psi:H^m_{Nis}(X,A(n))\iso H^m_{C_2Nis}(X, \ul{A}(n))$; this is an inverse to $\phi$.
\end{proof}

\begin{proposition}
\label{lem:freequot}
Let $X$ be a smooth quasi-projective $C_2$-scheme over $k$. 
Suppose that $X$ has free action.  
For integers $m,n$,  
there are isomorphisms  
$$
\xymatrix{
H^{m,n}_{\mcal{M}}(X/C_2, A)&
H^{m,n}_{C_2}(X/C_2, \ul{A}) \ar[r]^-{\iso}_-{\pi^*}\ar[l]_-{\iso}^-{\phi}& 
H^{m,n}_{C_{2}}(X,\ul{A}),
}
$$
where $\pi:X\to X/C_2$ is the quotient.  
Moreover, 
when $k=\C$, 
there are commutative squares
$$
\xymatrix{
H^{m,n}_{\mcal{M}}(X/C_2, A) \ar[d]_{\RRe_{\C}}&
H^{m,n}_{C_2}(X/C_2, \ul{A}) \ar[r]^-{\iso}_-{\pi^*}\ar[d]_{\RRe_{\C}}\ar[l]_-{\iso}^-{\phi}& 
H^{m,n}_{C_{2}}(X,\ul{A}) \ar[d]_{\RRe_{\C}}\\
H^{m}_{sing}(X(\C)/C_2, A) &
H^{m}_{C_2}(X(\C)/C_2, \ul{A}) \ar[r]^-{\iso}_-{\pi^*}\ar[l]_-{\iso}^-{\phi}&
H^{m}_{C_{2}}(X(\C),\ul{A}).
}
$$
\end{proposition}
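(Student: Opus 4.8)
The plan is to establish the three claims in turn, the isomorphism $\pi^*$ being the one requiring real work. Since $C_2$ acts freely on the quasi-projective scheme $X$, the quotient $\bar X:=X/C_2$ exists as a smooth quasi-projective $k$-scheme and $\pi\colon X\to\bar X$ is a finite \'etale $C_2$-torsor; as $\bar X$ carries the trivial $C_2$-action, the isomorphism $\phi\colon H^{m,n}_{C_2}(\bar X,\ul A)\xrightarrow{\iso}H^{m,n}_{\mcal M}(\bar X,A)$ is exactly \aref{prop:forgetiso}.

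For the isomorphism $\pi^*$, I would first invoke \aref{thm:motivicrep} to identify both sides with equivariant Nisnevich hypercohomology, reducing the claim to showing that pullback along $\pi$ is an isomorphism $H^m_{C_2Nis}(\bar X,\ul A(n))\to H^m_{C_2Nis}(X,\ul A(n))$ (in general one also smashes with a trivial-representation Thom space as in \aref{thm:motivicrep}, which does not affect the argument since the schemes involved remain free). The key point is that, because $C_2$ acts freely on $X$, every equivariant \'etale $X$-scheme is again free, so the set-theoretic stabilizer condition in the characterization of equivariant Nisnevich covers becomes vacuous and $Y\mapsto Y/C_2$ is an equivalence between the small equivariant Nisnevich site of $X$ and the small Nisnevich site of $\bar X$. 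Under this equivalence $\ul A(n)$ restricts to the usual weight-$n$ motivic complex $A(n)$ on $\bar X$: for a free $C_2$-scheme $Y$ and a smooth scheme $W$ with trivial action, flat pullback along the torsor $Y\times W\to (Y/C_2)\times W$ identifies $\Cor_k(Y/C_2,W)$ with $\Cor_k(Y,W)^{C_2}$ (a $C_2$-invariant finite correspondence on a free $C_2$-scheme descends to the quotient), and this is compatible with the simplicial and sheafification steps building $\ul A(n)$ and $A(n)$ from $W=T^{\wedge n}$ and its constituents. It then remains to check that the resulting comparison isomorphism $H^m_{C_2Nis}(X,\ul A(n))\cong H^m_{Nis}(\bar X,A(n))=H^{m,n}_{\mcal M}(\bar X,A)$ fits into a commutative triangle with $\pi^*$ and the isomorphism $\phi$ of \aref{prop:forgetiso} for $\bar X$; for this I would compute on \v{C}ech cocycles for an equivariant Nisnevich cover $\{U_i^{triv}\to\bar X\}$ of $\bar X$, whose pullback $\{U_i^{triv}\times_{\bar X}X\to X\}$ corresponds under the site equivalence back to $\{U_i\to\bar X\}$ since $(U_i^{triv}\times_{\bar X}X)/C_2=U_i$, and on sections the composite ``restrict along $\pi$, then descend along the torsor'' is the identity of $\Cor_k(U_i,T^{\wedge n})$, which is precisely $\phi$ for $\bar X$. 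The triangle commutes, whence $\pi^*$ is an isomorphism.

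For the complex realization squares ($k=\C$), observe that $\bar X$, being the geometric quotient of a free action, satisfies $\bar X(\C)=X(\C)/C_2$ — the \'etale torsor $\pi$ realizes to the two-sheeted covering $X(\C)\to X(\C)/C_2$ — and that $X(\C)$ is a smooth manifold with free $C_2$-action, hence a free $C_2$-CW complex. The bottom row then consists of classical isomorphisms: $\phi$ is an isomorphism for the trivial-action space $X(\C)/C_2$, and $\pi^*$ is an isomorphism because the Bredon cohomology of a free $C_2$-CW complex agrees with the singular cohomology of its orbit space. The left square commutes by compatibility of $\RRe_{\C}$ with the forgetful map $\phi$ (as in the square \aref{eqn:esquare} and the surrounding discussion), and the right square commutes by functoriality of $\RRe_{\C}$ applied to $\pi$ together with the identification $\RRe_{\C}(\MA)\cong\mathrm H\ul A$ of \aref{thm:realbred}.

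The main obstacle is proving that $\pi^*$ is an isomorphism \emph{integrally}. A transfer argument for the degree-two covering $\pi$ only yields an isomorphism after inverting $2$ — by the projection formula $\pi_*\pi^*$ is multiplication by $2$ — so one is forced into the sheaf-theoretic model, where the real content is the descent of $C_2$-invariant finite correspondences along $X\to X/C_2$ together with the bookkeeping needed to identify the resulting comparison isomorphism with the geometrically defined map $\pi^*$.
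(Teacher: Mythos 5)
Your proposal is correct and follows the same high-level strategy as the paper: $\phi$ is an isomorphism by \aref{prop:forgetiso}, the map $\pi^*$ is translated via \aref{thm:motivicrep} into a map of equivariant Nisnevich hypercohomology groups, and the complex realization squares commute by \eqref{eqn:esquare} and functoriality. The one place you diverge is that the paper disposes of the central descent isomorphism $H^{m}_{Nis}(X/C_2, A(n)) \iso H^{m}_{C_2Nis}(X, \ul{A}(n))$ by citing \cite[Lemma~3.19]{HVO:cancellation}, whereas you reconstruct it from scratch: the observation that a free $C_2$-action renders the stabilizer condition in equivariant Nisnevich covers vacuous, so that $Y\mapsto Y/C_2$ is an equivalence of sites, together with the descent of $C_2$-invariant finite correspondences $\Cor_k(Y,W)^{C_2}\iso\Cor_k(Y/C_2,W)$ along the torsor. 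Your closing remark that a transfer argument only yields the result after inverting $2$ correctly explains why this sheaf-theoretic route, rather than a Becker--Gottlieb-style argument, is necessary for integral coefficients.
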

\begin{proof}
The first arrow is an isomorphism by \aref{prop:forgetiso}. 
By \aref{thm:motivicrep}, 
the second map is identified with the map between equivariant Nisnevich hypercohomology groups $\pi^*:H^{m}_{C_2Nis}(X/C_2, \ul{A}(n))\to H^{m}_{C_2Nis}(X, \ul{A}(n))$.
Since $X/C_2$ has trivial action  
we have the isomorphism
$$
H^{m}_{C_2Nis}(X/C_2, \ul{A}(n)) \iso H^{m}_{Nis}(X/C_2, A(n)).
$$ 
Under this identification, 
the isomorphism 
$$
H^{m}_{Nis}(X/C_2, A(n)) \iso H^{m}_{C_2Nis}(X, \ul{A}(n))
$$ 
of \cite[Lemma 3.19]{HVO:cancellation} is $\pi^*:H^{m}_{C_2Nis}(X/C_2, \ul{A}(n))\to H^{m}_{C_2Nis}(X, \ul{A}(n))$. 
 
When $k=\C$, we have that $X(\C)/C_2 = (X/C_2)(\C)$. 
The commutativity of the first square is a specialization of (\ref{eqn:esquare}). The commutativity of the second is immediate.
\end{proof}

We embed $C_2\subseteq \A(\sigma)$ via $C_2=\{\pm 1\}$. 
By the equivariant homotopical purity theorem \cite[Theorem 7.6]{HKO:EMHT}, \cite[Theorem 3.23]{Hoyois:six}, 
there is an equivariant motivic weak equivalence 
$$
C_{2+}\wedge T^{\sigma}\wkeq \P(\sigma\oplus 1)/\P(\sigma\oplus 1)\setminus C_2.
$$ 
Since $\P(\sigma\oplus 1)\setminus\A(\sigma)\subseteq \P(\sigma\oplus 1)\setminus C_2$ and $T^{\sigma} \wkeq \P(\sigma\oplus 1)/\P(\sigma\oplus 1)\setminus\A(\sigma)$ we have a map 
$\tau':T^{\sigma}\to C_{2+}\wedge T^{\sigma}$, 
and hence a stable map $\tau:S^{0}\to C_{2+}$. 
 
\begin{remark}
Presumably topological Bredon cohomology is a presheaf with equivariant transfers (in the sense of \cite[\S4]{HVO:cancellation}), but establishing this would require a lengthy digression. For this reason we use the map $\tau$ in the following proposition rather than transfer maps coming from the theory of  presheaves with equivariant transfers.
\end{remark}

\begin{proposition}\label{prop:mtwo}
Let $\mathsf{E}$ be a motivic $C_2$-spectrum over $\C$. Write $\pi:C_{2+}\wedge \mathsf{E}\to \mathsf{E}$ for the projection. 
Then the diagram below commutes and $\tau^*\pi^*=2$,
$$
\xymatrix{
\rH^{\star,\star}_{C_2}(\mathsf{E}, \ul{A})\ar[r]^-{\pi^*}\ar[d]_{\RRe_{\C}} & 
\rH^{\star,\star}_{C_2}(C_{2+}\wedge \mathsf{E},\ul{A}) \ar[r]^-{\tau^*}\ar[d]_{\RRe_{\C}} &
\rH^{\star,\star}_{C_2}(\mathsf{E},\ul{A})\ar[d]_{\RRe_{\C}}
\\
\rH^{\star}_{C_2}(\RRe_{\C}(\mathsf{E}), \ul{A}) \ar[r]^-{\pi^*}&
\rH^{\star}_{C_2}(C_{2+}\wedge \RRe_{\C}(\mathsf{E}),\ul{A}) \ar[r]^-{\tau^*}&
\rH^{\star}_{C_2}(\RRe_{\C}(\mathsf{E}),\ul{A}).
}
$$
\end{proposition}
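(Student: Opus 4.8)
The plan is to treat the two assertions separately. Commutativity of the two squares is formal once one knows that Betti realization carries the motivic maps $\pi$ and $\tau$ to their topological counterparts, while the identity $\tau^*\pi^*=2$ follows by recognizing this composite operation as multiplication by a \emph{single} universal class in $H^{0,0}_{C_2}(\C,\ul{A})$, which one evaluates after forgetting the $C_2$-action.

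For commutativity, recall that the vertical maps are the comparison homomorphisms obtained by applying the exact symmetric monoidal functor $\RRe_{\C}\colon\SH_{C_2}(\C)\to\SH_{C_2}$, using the identifications $\RRe_{\C}(\MA)\wkeq{\rm H}\ul{A}$ and $\RRe_{\C}(S^{a+p\sigma,b+q\sigma})=S^{a+p\sigma}$ from \aref{thm:realbred} and \aref{theorem:stableBettirealization}. Every operation on cohomology induced by a map of spectra is compatible with $\RRe_{\C}$ by functoriality, so it suffices to identify $\RRe_{\C}(\pi)$ and $\RRe_{\C}(\tau\wedge\id_{\mathsf{E}})$ with the two horizontal maps in the bottom row. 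The first is immediate, since $\RRe_{\C}(C_{2+})=C_{2+}$ as a $C_2$-space and realization preserves collapse maps. For the second I would unwind the construction of $\tau$: it is the desuspension along the invertible object $T^{\sigma}$ of the composite of the collapse $T^{\sigma}\wkeq\P(\sigma\oplus 1)/(\P(\sigma\oplus 1)\setminus\A(\sigma))\to\P(\sigma\oplus 1)/(\P(\sigma\oplus 1)\setminus C_2)$ with the equivariant homotopical purity equivalence $\P(\sigma\oplus 1)/(\P(\sigma\oplus 1)\setminus C_2)\wkeq C_{2+}\wedge T^{\sigma}$. Since $\RRe_{\C}$ sends open immersions to open immersions of $C_2$-spaces and is compatible with the equivariant purity theorem (cf.~\aref{app:EMHT}), it carries $\tau$ to the classical transfer $S^{0}\to C_{2+}$; hence both squares commute.

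For the identity $\tau^*\pi^*=2$, write $\lambda\colon C_{2+}\to S^{0}$ for the collapse map, so that $\pi=\lambda\wedge\id_{\mathsf{E}}$ and $\pi\circ(\tau\wedge\id_{\mathsf{E}})=\Psi\wedge\id_{\mathsf{E}}$ with $\Psi:=\lambda\circ\tau\in\pi_0^{C_2}(\mathbf{1})$; thus $\tau^*\pi^*$ is precomposition with $\Psi\wedge\id_{\mathsf{E}}$. Since $\mathsf{Z}\mapsto\Psi\wedge\id_{\mathsf{Z}}$ is a natural self-transformation of the identity functor and $S^{\star,\star}\wedge\MA$ is a free rank-one module over the ring spectrum $\MA$, this operation equals multiplication by the image $c$ of $\Psi$ under the unit map $\mathbf{1}\to\MA$, an element of $H^{0,0}_{C_2}(\C,\ul{A})=\pi_0^{C_2}(\MA)$ that does \emph{not} depend on $\mathsf{E}$; so it suffices to prove $c=2$. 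By \aref{prop:forgetiso} the forgetful homomorphism $\phi\colon H^{0,0}_{C_2}(\C,\ul{A})\xrightarrow{\iso}H^{0,0}_{\mcal{M}}(\C,A)=A$ is an isomorphism, and it carries $c$ to the class of $\Psi^{e}=\lambda^{e}\circ\tau^{e}$ in $\pi_0(\mathbf{1})$. Now $\tau^{e}\colon\mathbf{1}\to\mathbf{1}\vee\mathbf{1}$ is a model for the transfer of the trivial double cover $\spec(\C)\sqcup\spec(\C)\to\spec(\C)$: composed with either summand projection it is the Thom collapse at one of the rational points $\{\pm 1\}\subseteq\A^1$, hence an $\A^1$-weak equivalence, and the two components agree under the $C_2$-symmetry interchanging the points, so $\Psi^{e}=\lambda^{e}\circ\tau^{e}=1+1=2$. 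Therefore $\phi(c)=2=\phi(2\cdot 1)$, whence $c=2$ by injectivity of $\phi$, and $\tau^*\pi^*=2$ on Bredon motivic cohomology. The bottom-row identity follows by the same argument with $\SH_{C_2}$, ${\rm H}\ul{A}$ and the tautological isomorphism $H^{0}_{C_2}(\mathrm{pt},\ul{A})\iso A$ in place of \aref{prop:forgetiso}, or directly by applying $\RRe_{\C}$ to the identity $\Psi\wedge\id_{\MA}=2\cdot\id_{\MA}$ just obtained.

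The step I expect to require the most care is the explicit identification of the underlying nonequivariant map $\tau^{e}$ with a model of the transfer of the trivial double cover---in particular the orientation bookkeeping ensuring the answer is $+2$ and not $0$ or $-2$---together with the companion fact that $\RRe_{\C}$ carries the purity-built $\tau$ to the classical transfer; both are geometrically transparent but require tracing through the construction of $\tau$ and invoking the compatibility of Betti realization with the equivariant homotopical purity theorem.
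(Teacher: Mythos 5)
Your proof is correct and takes essentially the same route as the paper: reduce to $\mathsf{E}=S^0$, observe that $\tau^*\pi^*$ is multiplication by a scalar in $H^{0,0}_{C_2}(\C,\ul{A})\cong A$, and compute that scalar to be $2$ by forgetting the $C_2$-action (via \aref{prop:C2e} and \aref{prop:forgetiso}) and analyzing the underlying nonequivariant map $\lambda^e\circ\tau^e$. The paper states the commutativity and the nonequivariant calculation more tersely ("immediate," "the bottom arrow sends $1$ to $2$"), while you unwind the construction of $\tau$ and supply the rank-one module/naturality argument for the reduction, but the content is the same.
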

\begin{proof}
The commutativity of the diagram is immediate.
It suffices to treat the case $\mathsf{E}=S^{0}$ and to see that $\tau^*(1) = 2$.
The topological realization of $\tau$ is the Spanier-Whitehead dual of the projection 
$C_{2+}\to S^{0}$. In particular $\tau^*(1) = 2$ in $H^{0}_{C_2}({\rm pt.},\ul{A})$.
It remains to show that $\tau^*(1)=2\in H^{0,0}_{C_2}(k,\ul{A}) = A$. 
This follows from the commutative diagram
$$
\xymatrix{
A=\rH^{0,0}_{C_2}(C_{2+},\ul{A}) \ar[r]^-{\tau^*}\ar[d]_{\iso}^{\phi}&
\rH^{0,0}_{C_2}(S^{0},\ul{A})=A\ar[d]^{\iso}_{\phi}
\\
A=\rH^{0,0}_{\mcal{M}}(S^{0}\vee S^{0},\ul{A}) \ar[r]^-{\tau^*}&
\rH^{0,0}_{\mcal{M}}(S^{0},\ul{A})=A
}
$$
and that the bottom arrow sends $1$ to $2$. 
\end{proof}

\subsection{Thom isomorphisms}
Let $\mathsf{R}$ be a $C_2$-equivariant motivic commutative ring spectrum, 
$X$ a smooth $C_2$-scheme over $k$, 
and $E\to X$ a $C_2$-equivariant vector bundle.

\begin{definition}
An \emph{$\mathsf{R}$-Thom class} (or simply \emph{Thom class}, when $\mathsf{R}$ is understood) for $E$ is a class $u\in \mathsf{R}^{\star,\star}_{C_2}(\Th(E))$ 
with the property that for any equivariant map $f:Y\to X$ of smooth $C_{2}$-schemes over $k$, 
the composition
$$
\mathsf{R}^{\star, \star}_{C_{2}}(Y_+) 
\xrightarrow{\id\otimes f^*u} 
\mathsf{R}^{\star, \star}_{C_{2}}(Y_+)\otimes \mathsf{R}^{\star, \star}_{C_{2}}(\Th(f^*E))
\xrightarrow{\Delta^*} 
\mathsf{R}^{\star, \star}_{C_{2}}(\Th(f^*E))
$$ 
is an isomorphism. 
Here, 
$\Delta: \Th(f^*E)\to Y_+\wedge \Th(f^*E)$ is the Thom diagonal.
\end{definition}

\begin{proposition}\label{prop:thomtriv}
Let $V=a +b \sigma$. 
There are classes
$u_{V}\in \rH^{2a+2b\sigma, a+b\sigma}_{C_2}(T^{V},\ul{\Z})$
such that
$$
\rH^{\star,\star}_{C_2}(X_+, \ul{\Z})\xrightarrow{- \cup (1_X\times u_{V})} \rH^{\star+2a+2b\sigma, \star+a+b\sigma}_{C_2}(X_+\wedge T^{V},\ul{\Z})
$$
is an isomorphism, for any $C_2$-variety $X$. Moreover, if $\phi$ is an automorphism of the  $C_2$-equivariant vector bundle $X\times \A(V)\to X$, then 
\begin{equation}\label{eqn:gp}
\phi^*(1_{X}\times u_{V}) = 1_X\times u_{V}.
\end{equation}
 
\end{proposition}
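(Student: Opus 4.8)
The plan is to take $u_{V}$ to be the tautological ``suspension'' Thom class and to derive both assertions from the invertibility of $S^{2a+2b\sigma,\,a+b\sigma}$ in $\SH_{C_2}(k)$ and the orientability of ordinary motivic cohomology. First I would record the equivalence $T^{V}\wkeq T^{a}\wedge T^{b\sigma}\wkeq S^{2a+2b\sigma,\,a+b\sigma}$, using $T\wkeq S^{2,1}$ and $T^{\sigma}\wkeq S^{2\sigma,\sigma}$ (in the indexing of \eqref{eqn:funnyspheres}; the latter is recorded in \aref{sub:C2}). Together with the suspension isomorphism \eqref{eqn:suspiso} this gives $\rH^{2a+2b\sigma,\,a+b\sigma}_{C_2}(T^{V},\ul{A})\iso\rH^{0,0}_{C_2}(S^{0},\ul{A})=A$, and I would declare $u_{V}$ to be the class corresponding to $1\in A$; equivalently $u_{V}$ is the composite $T^{V}\wkeq S^{2a+2b\sigma,\,a+b\sigma}\xrightarrow{\,\Sigma\eta\,}\Sigma^{2a+2b\sigma,\,a+b\sigma}\MA$, with $\eta\colon\mathbf{1}\to\MA$ the unit. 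Since $u_{V}$ factors through the unit, the unit axiom for $\MA\wedge\MA\to\MA$ identifies $x\mapsto x\boxtimes u_{V}$ with the suspension map $\Sigma^{2a+2b\sigma,\,a+b\sigma}(-)$ followed by the isomorphism induced by $X_{+}\wedge T^{V}\wkeq\Sigma^{2a+2b\sigma,\,a+b\sigma}X_{+}$; as $S^{2a+2b\sigma,\,a+b\sigma}$ is $\otimes$-invertible this is an isomorphism, and it is exactly $-\cup(1_{X}\times u_{V})$. Running the same computation with the Thom diagonal in place of the diagonal of $X$ also shows $u_{V}$ is an $\MA$-Thom class in the sense of the definition above.

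For \eqref{eqn:gp} I would argue as follows. By the Thom isomorphism just proved (specialized to $\star=0$), $\rH^{2a+2b\sigma,\,a+b\sigma}_{C_2}(X_{+}\wedge T^{V},\ul{A})$ is free of rank one over $H^{0,0}_{C_2}(X,\ul{A})$ on the generator $1_{X}\times u_{V}$. Because $\phi$ is a bundle map over $X$, the induced self-map of $X_{+}\wedge T^{V}$ commutes with the projection to $X_{+}$, so $\phi^{*}$ is $H^{\star,\star}_{C_2}(X,\ul{A})$-linear; hence $\phi^{*}(1_{X}\times u_{V})=c_{\phi}\cdot(1_{X}\times u_{V})$ for a unique unit $c_{\phi}\in H^{0,0}_{C_2}(X,\ul{A})$, and it suffices to show $c_{\phi}=1$. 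I would do this by applying the forgetful functor $(-)^{e}\colon\SH_{C_2}(k)\to\SH(k)$. It is symmetric monoidal and takes the unit of $\MA$ to the unit of $(\MA)^{e}$, which represents ordinary motivic cohomology; therefore it carries $1_{X}\times u_{V}$ to $1_{X^{e}}\times u_{a+b}$, the ordinary Thom class of the trivial rank $a+b$ bundle, and satisfies $(\phi^{*})^{e}=(\phi^{e})^{*}$ for the underlying automorphism $\phi^{e}$ of $X^{e}\times\A^{a+b}\to X^{e}$. It is moreover injective on $H^{0,0}_{C_2}(X,\ul{A})$: this group is the ring of equivariant-Nisnevich-locally constant $A$-valued functions on $X$, namely $(A^{\pi_{0}(X^{e})})^{C_2}$, and $(-)^{e}$ is its inclusion into $H^{0,0}_{\mcal{M}}(X^{e},A)=A^{\pi_{0}(X^{e})}$ (for trivial action this is \aref{prop:forgetiso}). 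So $c_{\phi}=1$ reduces to the non-equivariant identity $(\phi^{e})^{*}(1_{X^{e}}\times u_{a+b})=1_{X^{e}}\times u_{a+b}$.

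Finally, that non-equivariant identity says the motivic Thom class of a trivial vector bundle is invariant under bundle automorphisms, which is a standard consequence of the fact that motivic cohomology is an oriented motivic ring spectrum: such an automorphism acts on $\rH^{2(a+b),\,a+b}_{\mcal{M}}(X^{e}_{+}\wedge T^{a+b},A)\iso H^{0,0}_{\mcal{M}}(X^{e},A)$ by multiplication by $\langle\det\rangle$, which equals $1$ in an oriented theory (equivalently, $\eta$ acts as zero); alternatively one checks this at the generic points of $X^{e}$, where every invertible matrix is a product of elementary matrices --- each $\A^{1}$-homotopic to the identity through automorphisms --- and a diagonal matrix. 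Unwinding the suspension and Thom isomorphisms then gives \eqref{eqn:gp}. I expect the only non-formal ingredient to be this orientability of motivic cohomology; everything else is bookkeeping with suspension isomorphisms and the monoidality of $(-)^{e}$, and the single point deserving care is the injectivity of $(-)^{e}$ on $H^{0,0}_{C_2}(X,\ul{A})$.
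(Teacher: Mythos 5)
Your treatment of the Thom isomorphism itself is essentially the paper's argument recast: both rest on \aref{thm:motivicrep} (equivalently, cancellation), and your ``suspension of the unit'' description of $u_V$ yields the same class, up to a unit, as the paper's $u_V=u_1^au_\sigma^b$ built from the $\lambda$-classes. The interesting divergence is in the proof of \eqref{eqn:gp}. The paper argues by induction on $\dim V$: the base case $V=\sigma$ uses the transfer splitting of \cite[Proposition~8.3]{HVO:cancellation} and a reduction to the universal unit over $\G_m$, and the induction step reduces to points $C_2\times_H\spec(R)$ of the equivariant Nisnevich topology, handling $\spec(R)$ with nontrivial action by an explicit reduction of matrices in ${\rm GL}_n(R)^{C_2}$ to block form via equivariant $\A^1$-homotopies. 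You bypass all of this and reduce uniformly to the nonequivariant statement via the forgetful functor --- a cleaner and shorter route, if it works.

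The load-bearing claim in your version is the injectivity of $\phi\colon H^{0,0}_{C_2}(X,\ul{A})\to H^{0,0}_{\mcal{M}}(X^e,A)$, and this is a genuine gap as written. You assert $H^{0,0}_{C_2}(X,\ul{A})\iso (A^{\pi_0(X^e)})^{C_2}$ with $\phi$ the evident inclusion, but the only support you cite, \aref{prop:forgetiso}, covers only trivial actions. The claim is, I believe, correct: by \aref{thm:motivicrep} the group is $H^0_{C_2Nis}(X,\ul{A}(0))$, and one can check that $\ul{A}(0)=a_\#(C_*A_{tr,C_2}(S^0))$ is already a discrete complex (since $\pi_0((U\times\Delta^n_k)^e)=\pi_0(U^e)$) equal to the equivariant Nisnevich sheaf $U\mapsto (A^{\pi_0(U^e)})^{C_2}$, so that the morphism-of-sites map in degree $0$ is exactly the inclusion of $C_2$-invariants. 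But this is a computation, not a formality; the paper never proves it, and everything downstream --- in particular the step ``$\phi(c_\phi)=1$ implies $c_\phi=1$'' --- hinges on it. State and prove that lemma, and your argument gives a correct and somewhat shorter alternative to the paper's proof; without it, the reduction to the nonequivariant identity is unjustified. (Your two other ingredients are fine: module-linearity of $\phi^*$ follows from the Thom diagonal, and invariance of the nonequivariant Thom class under ${\rm GL}_n$ is the standard orientability/$\epsilon=1$ argument the paper also uses implicitly for the case $C_2\times\spec(R)$.)
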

\begin{proof}

Let $u_1 \in \rH^{2,1}_{C_2}(T,\ul{\Z})$ and $u_{\sigma} \in \rH^{2\sigma,\sigma}_{C_2}(T^\sigma,\ul{\Z})$ be elements corresponding to the unit under the suspension isomorphism \eqref{eqn:suspiso}. 
For a representation $V=a + b\sigma$ define $u_{V} = (u_1)^{a}(u_{\sigma})^{b}$. This element satisfies the first condition and it remains to check that (\ref{eqn:gp}) holds for any equivariant bundle automorphism $\phi$ of $X\times\A(V)$. We will proceed by induction on the dimension of $V$.

We first consider the case  $V=\sigma$. 
Write $\alpha = a+ p\sigma$, $\beta = b+q\sigma$ and  write
$$
H^{\alpha,\beta}_{C_2}(X,\ul{\Z})_{(-\sigma)} = 
\coker\left(H^{\alpha,\beta}_{C_2}(X\times \A(\sigma),\ul{\Z})\xrightarrow{i^*} H^{\alpha,\beta}_{C_2}(X\times (\A(\sigma)\setminus\{0\}), \ul{\Z})\right).
$$
Since $H^{\alpha,\beta}_{C_2}(-,\ul{\Z})$ is a presheaf with equivariant transfers (see \aref{subsub:transfers}), if $X$ is affine then by 
\cite[Proposition 8.3]{HVO:cancellation}
the map $i^*$ has a retraction and we have a natural splitting
\begin{equation}
\label{eqn:spl}
H^{\alpha,\beta}_{C_2}(X\times (\A(\sigma)\setminus\{0\}), \ul{\Z}) = 
H^{\alpha,\beta}_{C_2}(X\times \A(\sigma), \ul{\Z})
\oplus H^{\alpha,\beta}_{C_2}(X,\ul{\Z})_{(-\sigma)}.
\end{equation}

Consider the cofiber sequence $(\A(\sigma)\setminus\{0\})_+\to \A(\sigma)_+\to T^{\sigma}$. 
The induced long exact sequences break into (split) short exact sequences
$$
0\to H^{\alpha, \beta}_{C_2}(\A(\sigma), \ul{\Z}) \xrightarrow{i^*} 
H^{\alpha, \beta}_{C_2}(\A(\sigma)\setminus\{0\},\ul{\Z}) \xrightarrow{\delta} 
\rH^{\alpha + 1,\beta}_{C_2}(T^{\sigma},\ul{\Z}) \to 0.
$$
The element $u_{\sigma}$ lifts to an element $u_{\sigma}'\in H^{2\sigma-1,\sigma}_{C_2}(\A(\sigma)\setminus\{0\}, \ul{\Z})$.
We choose $u'_{\sigma}$ so that under the  splitting (\ref{eqn:spl}), we have $u'_{\sigma}\in H^{2\sigma-1,\sigma}_{C_2}(\spec(k),\ul{\Z})_{(-\sigma)}$.
Let $\phi$ be an automorphism of the equivariant vector bundle $X\times \A(\sigma)$ over $X$. By naturality, 
to show that $\phi^*(1_{X}\times u_{\sigma}) = 1_{X}\times u_{\sigma}$ for a smooth affine $C_2$-variety $X$, 
we are reduced to showing
\begin{equation}\label{eqn:enuf}
\phi_{0}^*(1_{X}\times u'_{\sigma}) = (1_{X}\times u'_{\sigma}) + \beta,
\end{equation}
where $\beta$ is some element in $\ker(\delta)$ 
and $\phi_{0}$ is the restriction of $\phi$ to  $X\times \A(\sigma) \setminus \{0\}$. 
For any $X$ (not necessarily affine), 
the group of equivariant linear automorphisms of $X\times \A(\sigma)$ over $X$ is  $(\mcal{O}_{X}^{*})^{C_2}$, 
i.e., 
an equivariant automorphism is given by  multiplication with an invariant unit. 
An invariant unit is specified (uniquely) by an equivariant map $X\to \G_{m}$. 
By naturality, 
to verify the relation (\ref{eqn:gp}) for $1_{X}\times u_{\sigma}$ and any automorphism $\phi$ of $X\times\A(\sigma)$, 
it suffices to verify it for $X=\G_{m}$ and $\phi$ the automorphism of $\G_{m}\times \A(\sigma)\setminus \{0\}\to \G_{m}$ given by multiplication with the canonical unit $t$ of $\G_{m}$. 
In this case,  
$\phi_{0}$ is the map $\langle pr_1, \mu \rangle$, 
where $pr_1$ is the projection to the first factor and $\mu:\G_{m}\times (\A(\sigma)\setminus\{0\})\to \A(\sigma)\setminus \{0\}$ is the multiplication. 
We have that $\phi^*_{0}(1_{\G_{m}}\times u'_{\sigma}) =  1\cup \mu^*(u'_{\sigma}) = \mu^*(u'_{\sigma})$.

From the naturality of the decomposition (\ref{eqn:spl}) we see that $\mu^*$ restricts to a map  
$$
\mu^*: H^{2\sigma-1,\sigma}(k,\ul{\Z})_{(-\sigma)} \to  H^{2\sigma-1,\sigma}(\G_{m},\ul{\Z})_{(-\sigma)}. 
$$
Moreover, 
this map has a splitting induced by $e:\spec(k)\to \G_{m}$, 
the inclusion at $1\in \G_{m}$.

It follows that $\mu^*(1_{\G_{m}}\times u'_{\sigma}) = 1_{\G_{m}}\times u_{\sigma}' + \gamma$, 
where $\gamma \in \ker(e^*)$. 
An easy diagram chase shows that $\delta(\ker(e^*)) =0$, 
and so (\ref{eqn:enuf}) holds. 
It follows that (\ref{eqn:gp}) holds for $1_{X}\times u_{\sigma}$ for any $X$ and any automorphism of $X\times \A(\sigma)$. 
A similar (but easier) argument establishes (\ref{eqn:gp}) for $1_{X}\times u_{1}$ as well. 
This takes care of the case when $V$ has dimension one.

Now we proceed by induction on the dimension of $V$. Let $V = a + b\sigma$ be an $n$-dimensional representation. Let $\ul{\Aut}^{C_2}(V)$ be the presheaf whose value on $X$ is the group of equivariant bundle automorphisms of $X\times \A(V)$. It is represented by a group scheme in $C_2\Sm/k$, in particular it is an equivariant Nisnevich sheaf. Under the isomorphism \eqref{eqn:suspiso}, the assignment $1_X\times u_V\mapsto \phi^*(1_X\times u_V)$ is an automorphism of $H^{0,0}_{C_2}(X,\ul{\Z})=H^0_{C_2Nis}(X,\Z)$. This assignment is natural and defines a morphism of sheaves
$\ul{\Aut}^{C_2}(V)\to\Z^{\times}$. To check that the image of this map is $\{1\}$, we may assume that $X$ is a point of the equivariant Nisnevich topology. Recall that for a finite group $G$, the points of the equivariant Nisnevich topology are of the form $G\times_{H} \spec(R)$ where $H\subseteq G$ is a subgroup and $R$ is an essentially smooth Henselian local $k$-algebra with $H$-action, see \cite[Theorem 3.14]{HVO:cancellation}.   
For $G=C_2$, there are two possibilities,  
$X = C_{2}\times \spec(R)$ or $X=\spec(R)$, 
where $R$ is a smooth local ring with $C_2$-action. 
In the first case the claim follows from \aref{prop:C2e} and that the $u_V$ are nonequivariant Thom classes. 
Now we consider the case that $X=\spec(R)$, 
where $R$ is a smooth local ring with $C_2$-action. 
The equivariant automorphisms of $X\times\A(V)$ are $(\Aut_{R}(V_{R}))^{C_2}$. 
If $A$ is a matrix with entries in $R$, 
write $A^{\sigma}$ for the matrix obtained by applying the involution $\sigma$ to its entries. 
Under the identification ${\rm End}_{R}(V_{R}) = {\rm Mat}_{n\times n}(R)$ the $C_2$-action is given by
$$
\left[
\begin{array}{cc}
A & B \\
C & D
\end{array}
\right]
\mapsto
\left[
\begin{array}{cc}
A^{\sigma} & -(B^{\sigma}) \\
-(C^{\sigma}) & D^{\sigma}
\end{array}
\right].
$$
Here, 
$A$ is an $a\times a$-matrix and $B$ is an $b\times b$-matrix, and $a+b=n$. 
It follows that this matrix is in ${\rm Mat}_{n\times n}(R)^{C_2}$ if and only if $A$ and $D$ have coefficients in $R^{C_2}$ and $\sigma$ acts by $-1$ on the coefficients of $B$, $C$.

Let $x\in R$ and for $i\neq j$ write $E_{ij}(x)$ for the elementary matrix corresponding to adding $x$ times row $j$ to row $i$, 
i.e.,  
it has $x$ in position $(i,j)$ and is the same as the identity matrix in all other entries. 
Then $E_{ij}(x)$ is in ${\rm GL}_{n}(R)^{C_2}$ if
\begin{enumerate}
\item[(i)] either $i,j\leq a$ or $a<i,j$ and $x\in R$ is invariant, or
\item[(ii)] $i\leq a$, $j>a$ or $i> a$, $j\leq a$ and $\sigma x = -x$.
\end{enumerate}
The matrix $E_{ij}(x)$ is equivariantly homotopic to the identity via the equivariant $\A^1$-homotopy $t\mapsto E_{ij}(tx)$.

Let $T_{ij}$ be the elementary matrix corresponding to switching the $i$th and $j$th rows. 
If $i,j\leq a$ then $T_{ij}$ is in ${\rm Mat}_{n\times n}(R)^{C_2}$. 
Moreover, 
in this case there is an algebraic map $\A^1\to {\rm GL}_{a}(R^{C_2})\subseteq  {\rm GL}_{n}(R)^{C_2}$ joining $T_{ij}$  and the diagonal matrix 
$\langle -1,1\ldots, 1\rangle$.  
Thus there is an equivariant $\A^1$-homotopy joining $T_{ij}$ and $\langle -1,1,\ldots, 1\rangle$ in ${\rm GL}_{a+b}(R)$. 
Similarly, 
$T_{ij}$ is equivariantly $\A^1$-homotopic to the identity for $i,j>a$.

Let $M=(m_{ij})$ be an invertible matrix in ${\rm Mat}_{n\times n}(R)^{C_2}$. 
We show that there is an $\A^1$-homotopy 
$$
M\wkeq_{\A^1}
\left[
\begin{array}{cc}
u & 0 \\
0 & M'
\end{array}
\right]
\;\;\;\text{or} \;\;\; \left[
\begin{array}{cc}
M' & 0 \\
0 & u
\end{array}
\right].
$$
Here, 
$u\in R$ is an invariant unit and $M'$ is an invertible $(n-1)\times (n-1)$-matrix in ${\rm Mat}_{(n-1)\times(n-1)}(R)^{C_2}$.
First we assume the entry $m_{11}$ of $M$ is a unit. 
In this case, 
the claim follows by multiplying with the elementary matrices $E_{i1}(-m_{i1}/m_{11})$, $i>1$ on the left and with $E_{1j}(-m_{1j}/m_{11})$, 
$j>1$ on the right.  
If $m_{11}$ is not a unit but some $m_{ij}$ is a unit for $i,j\leq a$, 
then by multiplying by $T_{1i}$ on the left and $T_{j1}$ on the right, 
we are reduced to the previous case. 
If all entries $m_{ij}$ of $A$ (i.e., $i,j\leq a$) are non-units, 
we can repeat the previous arguments for the elements of $D$, 
if at least one of its entries is a unit. 
The only remaining case is when all entries of $A$ and of $D$ are in the maximal ideal of $R$. 
For $t\in \A^1$ we write $M_{t}$ for the matrix which agrees with $M$ in all positions except $(M_{t})_{11} = t+m_{11}$. 
This gives a map $\A^1\to {\rm Mat}_{n\times n}(R)^{C_2}$. 
Write $M_{t}'$ for the $a\times a$-matrix consisting of $(M_{t})_{ij}$ for $i,j\leq a$. 
The reduction of $M_{t}$ modulo the maximal ideal of $R$ is
$$
\overline{M_{t}} = 
\left[
\begin{array}{cc}
 \overline{M'_t} & \overline{B} \\
 \overline{C} & 0
\end{array}
\right].
$$
Since $M=M_{0}$ is invertible so is $\overline{M}_{t}$. 
It follows that $M_{t}$ is invertible for all $t$. 
Now $M_{1}$ is equivariantly $\A^1$-homotopic to $M$ and the previous case applies to $M_1$, so we are done.
\end{proof}

Recall (\ref{eqn:elements}) that we have elements $\epsilon, \epsilon', u \in H^{0,0}_{C_2}(k,\ul{\Z})$ which determine the commutativity properties
of the ring $H^{\star,\star}_{C_2}(X,\ul{\Z})$. These elements are 
$\epsilon = \Sigma^{-2}_{T}\tau^*_{T}(\Sigma^2_T1)$, $\epsilon' = \Sigma^{-2}_{T^{\sigma}}\tau_{T^{\sigma}}^*(\Sigma^2_{T^\sigma}1)$, 
and $u=\Sigma^{-2}_{S^{\sigma}}\tau_{S^{\sigma}}^*(\Sigma^2_{S^{\sigma}}1)$, 
where $\tau_{\mathsf{E}}:\mathsf{E}\wedge \mathsf{E}\to \mathsf{E}\wedge \mathsf{E}$ is the twist endomorphism. 

\begin{proposition}
\label{prop:product}
In $H^{0,0}_{C_2}(k, \ul{\Z})$,  $\epsilon = 1$, $\epsilon' =1$, and $u = -1$. 
In particular, 
if $x\in H^{a+p\sigma, b+q\sigma}_{C_2}(X, \ul{\Z})$ and $y\in H^{c+s\sigma, d+t\sigma}_{C_2}(X, \ul{\Z})$, 
then
$$
x\cup y = (-1)^{ac+ps} (y\cup x).
$$
\end{proposition}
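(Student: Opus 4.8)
The plan is to compute the three elements separately, treating $\epsilon$ and $\epsilon'$ by Thom-class rigidity and $u$ by comparison with nonequivariant motivic cohomology. Throughout I would use the reformulations recorded just above, namely $\epsilon = \Sigma^{-2}_{T}\tau_{T}^*(\Sigma^2_T 1)$, $\epsilon' = \Sigma^{-2}_{T^\sigma}\tau_{T^\sigma}^*(\Sigma^2_{T^\sigma}1)$, and $u = \Sigma^{-2}_{S^\sigma}\tau_{S^\sigma}^*(\Sigma^2_{S^\sigma}1)$, where $\tau$ denotes the symmetry isomorphism in each case.

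To handle $\epsilon$, I would use the identification $T\wedge T\wkeq\A(1\oplus1)/(\A(1\oplus1)\setminus\{0\}) = \Th(\A(1\oplus1)\to\spec k)$ coming from $\A^1\times\A^1 = \A^2$, under which the symmetry isomorphism $\tau_T$ of $T\wedge T$ is precisely the Thom space of the linear automorphism $\phi$ of $\A(1\oplus1) = \A^2$ interchanging the two coordinates. Since $u_1 = \Sigma^{1,0}\lambda_1$ and, by the construction in the proof of \aref{prop:thomtriv}, $\lambda_1\in\rH^{1,1}_{C_2}(S^1_t,\ul{A})$ is the canonical generator (i.e.\ the suspension of $1$), one has $u_1 = \Sigma_T(1)$, hence $\Sigma^2_T(1) = u_1^2 = u_{1\oplus1}$ is the Thom class of the trivial rank-two bundle over $\spec k$. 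Applying the invariance relation \eqref{eqn:gp} of \aref{prop:thomtriv} with $X = \spec k$ and this $\phi$ then gives $\tau_T^*(\Sigma^2_T 1) = \phi^*(u_{1\oplus1}) = u_{1\oplus1} = \Sigma^2_T 1$, so $\epsilon = 1$. The computation of $\epsilon'$ is identical after replacing $1$ by $\sigma$ everywhere; the only extra observation is that the coordinate-interchange automorphism of $\A(\sigma\oplus\sigma)$ commutes with the $C_2$-action (which acts by $-\id$) and so is an \emph{equivariant} bundle automorphism, so \eqref{eqn:gp} again applies and yields $\epsilon' = 1$.

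For $u$ I would instead pass to the nonequivariant category along the forgetful functor $(-)^e\colon\SH_{C_2}(k)\to\SH(k)$, which is symmetric monoidal and exact. Since $S^\sigma$ is the unreduced suspension of $C_{2+}$ and $(C_{2+})^e\wkeq S^0\vee S^0$ with the fold map to $S^0$, one has $(S^\sigma)^e\wkeq S^1$, the simplicial circle, while $({\bf M}A)^e$ represents motivic cohomology; by \aref{prop:forgetiso} the induced map $\phi\colon H^{0,0}_{C_2}(k,\ul{A})\to H^{0,0}_{\mcal{M}}(k,A) = A$ is an isomorphism. By functoriality of $(-)^e$ this map carries $u$ to $\Sigma^{-2}_{S^1}\tau_{S^1}^*(\Sigma^2_{S^1}1)$ computed in $H^{0,0}_{\mcal{M}}(k,A)$, and since the symmetry isomorphism of $S^1\wedge S^1$ equals $-\id_{S^1\wedge S^1}$ in any stable homotopy category, this element is $-1$; as $\phi$ is an isomorphism I conclude $u = -1$. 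The last assertion of the proposition is then immediate upon substituting $\epsilon = \epsilon' = 1$ and $u = -1$ into \aref{prop:skewcomm}.

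The step I expect to be the main obstacle is the first one: one must be careful to match the abstract symmetry $\tau_T$ (respectively $\tau_{T^\sigma}$) on $T\wedge T$ with the geometric coordinate-swap automorphism, and to identify $\Sigma^2_T(1)$ (respectively $\Sigma^2_{T^\sigma}(1)$) with the Thom class $u_{1\oplus1}$ (respectively $u_{\sigma\oplus\sigma}$), getting all the normalizations right, so that \aref{prop:thomtriv} can legitimately be brought to bear; the argument for $u$ is by contrast routine once the forgetful functor is in hand.
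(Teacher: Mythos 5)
Your computations of $\epsilon$ and $\epsilon'$ follow the paper's proof almost exactly: the key step in both is to recognize $\tau_T$ (resp. $\tau_{T^\sigma}$) as the Thom-space of the coordinate swap on $\A^2$ (resp. $\A(\sigma\oplus\sigma)$) and invoke the automorphism-invariance relation \eqref{eqn:gp} from \aref{prop:thomtriv}. One small simplification the paper makes that you could borrow: you flag the identification $\Sigma^2_T 1 = u_{1\oplus 1}$ as the delicate normalization step, but this identification is unnecessary. Since $\tau_T^*$ is $H^{0,0}_{C_2}(k,\ul{A})$-linear and $H^{4,2}_{C_2}(k,\ul{A})$ is a free rank-one module on $u_2$, the single equation $\tau_T^*(u_2)=u_2$ already forces $\tau_T^*=\id$ on the whole group, hence $\tau_T^*(\Sigma^2_T 1) = \Sigma^2_T 1$ regardless of how $\Sigma^2_T 1$ compares to $u_2$. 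This sidesteps the normalization worry entirely.

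Your treatment of $u$, on the other hand, departs from the paper's proof in a genuinely interesting way, and it is correct. The paper works at the chain level: under \aref{thm:motivicrep} the element $u$ corresponds to the twist on $\Z_{top}(\sigma)\otimes^{tr,\L}\Z_{top}(\sigma)$ in $D^-(C_2\Cor_k)$, and the authors write down an explicit chain homotopy between the twist and $-1$ on the length-two complex $\Z_{tr,C_2}(C_2\times C_2)\to\Z_{tr,C_2}(C_2)^{\oplus 2}\to\Z$. Your argument instead pushes $u$ through the symmetric monoidal forgetful functor $(-)^e$: since $(S^\sigma)^e\wkeq S^1$ (cofiber of the fold map $S^0\vee S^0\to S^0$), the class $\phi(u)$ is the twist on $S^1\wedge S^1$ in $\SH(k)$, which is $-1$; and \aref{prop:forgetiso} makes $\phi\colon H^{0,0}_{C_2}(k,\ul{A})\to H^{0,0}_{\mcal{M}}(k,A)=A$ an isomorphism of groups sending $1\mapsto 1$, whence $u=-1$. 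This is a cleaner and more conceptual route: it replaces an ad hoc chain homotopy with a standard computation in nonequivariant stable homotopy plus a change-of-groups isomorphism already available in the paper, and it makes transparent the sense in which the sign of $u$ is purely the classical orientation-reversal sign of $S^1\wedge S^1$. The paper's chain-level argument has the modest advantage of being self-contained within $D^-(C_2\Cor_k)$ and not relying on the forgetful functor's comparison.
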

\begin{proof}
We have that 
$\rH^{4,2}_{C_2}(T^{2},\ul{\Z})$ is a free $H^{0,0}_{C_2}(k,\ul{\Z})$-module with basis $u_{2}$. 
The map $\tau_T$ is induced by the twist automorphism of the vector bundle $\A^2\to \spec(k)$ and so by \aref{prop:thomtriv}, $\tau_{T}^*(u_{2}) = u_{2}$. 
In particular, 
$\tau_{T}^{*} = \id$ and it follows that 
$\epsilon = \Sigma^{-2}_{T}\tau_{T}^{\ast}(\Sigma^2_T1)=1$, as claimed. 
The argument for $\epsilon'$ is entirely similar.

Now we compute $u$. Recall from \aref{sub:motiviccomplex} the complex $\Z_{top}(\sigma)$ of presheaves with transfers. Under the identification of \aref{thm:motivicrep} the element $u$ corresponds to the twist isomorphism $\Z_{top}(\sigma)\otimes^{tr,\L}\Z_{top}(\sigma)\to \Z_{top}(\sigma)\otimes^{tr,\L}\Z_{top}(\sigma)$ in 
$D^{-}(C_2\Cor_k)$. 
The twist isomorphism of complexes $C\otimes D \to D\otimes C$  is given  componentwise by $(-1)^{pq}$ times the twist $C^{p}\otimes D^{q} \to D^{q}\otimes C^{p}$.
The complex $\Z_{top}(\sigma)\otimes^{tr,\L}\Z_{top}(\sigma)$ is isomorphic to 
$$
\Z_{tr,C_2}(C_2\times C_2) \xrightarrow{\langle q_2, -q_1 \rangle} \Z_{tr,C_2}(C_2)\oplus \Z_{tr,C_2}(C_2) \xrightarrow{p_*\oplus p_*} \Z,  
$$
where $\Z$ is in degree $0$, $p:C_2\to \spec(k)$ is the projection, and $q_i:C_2\times C_2\to C_2$ is the projection to the $i$th factor.
Under this isomorphism, the twist isomorphism is given respectively in degrees $-2$, $-1$, and $0$, by $-\tau_*$, $\tau$, and $\id$.

A chain homotopy between the twist map and $-\id$ is given by $\{s_{i}\}$, $s_i = 0$, $i\neq 0,-1$, and $s_0 = \langle 0 , p^{t} \rangle$, $s_{-1} = \Delta'_* \oplus \Delta'_*$. 
Here $\Delta':C_2\to C_2\times C_2$ is given by $e\mapsto \sigma\times e$, $\sigma\mapsto e\times \sigma$ and $p^t$ is the transpose of $p$. 
Indeed, it is quickly checked that 
$s_{-1}\langle q_2 , -q_1\rangle = \Delta'_*q_2 - \Delta'_*q_1 =-\tau_*+\id$ and
$(p_*\oplus p_*)s_0 = 2\cd\id$. For the remaining chain homotopy relation, we have that $\langle q_2,-q_1\rangle s_1 + s_0(p_*\oplus p_*) $ is equal to
$$
   \begin{pmatrix}
q_2 \Delta'_* &  q_2 \Delta'_*\\ 
p^tp_*-q_1 \Delta'_* & p^tp_*-q_1 \Delta'_*
\end{pmatrix} 
= \begin{pmatrix}
\id  &  \id \\ 
(\id + m_*) - m_* & (\id +m_*)-m_*
\end{pmatrix} 
= \tau+\id,
$$
where $m:C_2\to C_2$ is the nontrivial involution.
\end{proof}

\begin{definition}
Let $V$ be a $C_2$-representation and $E\to X$ a $C_2$-equivariant vector bundle. 
Say that $E$ is \emph{type $V$} if every point $x\in X$ is contained in an invariant open neighborhood $U\subseteq X$ such that $E|_{U}$ is isomorphic, 
as a $C_2$-equivariant vector bundle, 
to the product bundle $U\times \A(V)\to U$. 
\end{definition}

\begin{theorem}\label{thm:thomisos}
Let $X$ be a smooth $C_2$-scheme over $k$, $V= a + b\sigma$, and $E\to X$ a $C_2$-equivariant vector bundle of type $V$. 
Then there are Thom classes $$
{\rm th}(E)\in \rH_{C_2}^{2a+2b\sigma,a+b\sigma}({\rm Th(E)}, \ul{A}).
$$  
\end{theorem}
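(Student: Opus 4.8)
The plan is to build $\mathrm{th}(E)$ by gluing local Thom classes supplied by \aref{prop:thomtriv}, using the automorphism-invariance \eqref{eqn:gp} to see that the local choices are mutually compatible, and Mayer--Vietoris to assemble them into a global class.

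First I would fix a finite cover. Since $X$ is of finite type over $k$ it is Noetherian and quasi-compact, and as $E$ is of type $V$ there is a finite cover $X=U_1\cup\cdots\cup U_n$ by invariant open subschemes together with equivariant trivializations $\psi_i\colon E|_{U_i}\xrightarrow{\ \sim\ }U_i\times\A(V)$; each $U_i$ is again a smooth $C_2$-scheme. By \aref{prop:thomtriv} the class $\psi_i^*(1_{U_i}\times u_V)\in\rH^{2a+2b\sigma,a+b\sigma}_{C_2}(\Th(E|_{U_i}),\ul A)$ is a Thom class for $E|_{U_i}$, and the normalization $-\cup(1\times u_V)\colon\rH^{\star,\star}_{C_2}(Y_+,\ul A)\xrightarrow{\ \sim\ }\rH^{\star+2a+2b\sigma,\star+a+b\sigma}_{C_2}(\Th(Y\times\A(V)),\ul A)$ persists after any base change $Y\to U_i$. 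A Zariski square is an equivariant distinguished square (\aref{definition:eNt}); applying $\Th$ to the distinguished square $E=E|_U\cup_{E|_{U\cap U'}}E|_{U'}$ for invariant opens $U,U'$ (whose zero-section complements form a distinguished square as well) yields a homotopy cocartesian square of Thom spaces, hence a Mayer--Vietoris long exact sequence in the representable theory $\rH^{\star,\star}_{C_2}(-,\ul A)$.

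Next I would induct on $k$ over $V_k:=U_1\cup\cdots\cup U_k$, producing $t_k\in\rH^{2a+2b\sigma,a+b\sigma}_{C_2}(\Th(E|_{V_k}),\ul A)$ with $t_k|_{\Th(E|_{U_i})}=\psi_i^*(1\times u_V)$ for all $i\le k$. Take $t_1:=\psi_1^*(1\times u_V)$. For the step, set $W:=V_k\cap U_{k+1}$, an invariant open of $U_{k+1}$, so $E|_W$ is trivial and, via \aref{prop:thomtriv}, $\rH^{2a+2b\sigma,a+b\sigma}_{C_2}(\Th(E|_W),\ul A)$ is identified with $H^{0,0}_{C_2}(W,\ul A)$; the two classes $t_k|_W$ and $\psi_{k+1}^*(1\times u_V)|_W$ differ by an element $\delta\in H^{0,0}_{C_2}(W,\ul A)$. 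Restricting to $W_i:=U_i\cap U_{k+1}$ for $i\le k$: the trivializations $\psi_i|_{W_i}$ and $\psi_{k+1}|_{W_i}$ of $E|_{W_i}$ differ by an automorphism of $W_i\times\A(V)$, so \eqref{eqn:gp} gives $\psi_i^*(1\times u_V)|_{W_i}=\psi_{k+1}^*(1\times u_V)|_{W_i}$, and together with the inductive normalization of $t_k$ this forces $\delta|_{W_i}=0$. Since $\{W_i\}_{i\le k}$ covers $W$ and $H^{0,0}_{C_2}(-,\ul A)$ is the global-sections presheaf of a Nisnevich sheaf (the constant sheaf $\ul A$, by \aref{thm:motivicrep} and the identification of the weight-zero motivic complex), separatedness gives $\delta=0$. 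Hence $(t_k,\psi_{k+1}^*(1\times u_V))$ lies in the kernel of the Mayer--Vietoris difference map for $V_{k+1}=V_k\cup U_{k+1}$ and lifts to the desired $t_{k+1}$, which restricts to $\psi_i^*(1\times u_V)$ on each $U_i$ with $i\le k+1$.

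Finally, set $\mathrm{th}(E):=t_n$ and verify it is a Thom class: for an equivariant $f\colon Y\to X$, pull the cover back to $\{f^{-1}(U_i)\}$; on each piece $f^*E$ is trivial and $f^*\mathrm{th}(E)$ restricts to $1\times u_V$ for the induced trivialization, so the composite in the definition of a Thom class is an isomorphism there by \aref{prop:thomtriv}, and a Mayer--Vietoris plus five-lemma induction over this cover of $Y$ propagates the isomorphism to all of $Y$. I expect the gluing step to be the main obstacle: a priori the local Thom classes only form a \v{C}ech $0$-cocycle, and one must show the Mayer--Vietoris obstruction to lifting it vanishes at each stage — this is precisely where the automorphism-invariance \eqref{eqn:gp} (the substitute for an ``orientation'') and the sheaf property of $H^{0,0}_{C_2}(-,\ul A)$ are used.
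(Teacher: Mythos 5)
Your proof is correct and is essentially the paper's proof (cover by trivializing invariant opens, induct on the number of opens, patch the local classes $\psi_i^*(1\times u_V)$ via Mayer–Vietoris, with \eqref{eqn:gp} guaranteeing agreement on overlaps), just filled out with the details the paper elides — in particular the sheaf-theoretic argument that the obstruction $\delta\in H^{0,0}_{C_2}(W,\ul A)$ vanishes because it vanishes on the cover $\{W_i\}$, and the final Mayer–Vietoris/five-lemma check that the patched class is actually a Thom class.
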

\begin{proof}
By assumption, there is a  cover $X = U_{1}\cup \cdots \cup U_{n}$ by open invariant subschemes such that
$E|_{U_{i}} \iso U_{i}\times \A(V)$. 
Proceeding by induction on $n$,  
we can use the Mayer-Vietoris long exact sequence to patch the elements $1_{U_{i}}\times u_{V}$ constructed in the previous proposition. 
The condition of (\ref{eqn:gp}) guarantees that they patch together. 
\end{proof}

In the following, $A$ denotes an abelian group.

\begin{corollary}
\label{cor:gysin}
Let $i:Z\hookrightarrow X$ be a closed immersion of smooth $C_{2}$-schemes over $k$, 
with open complement $j:U\hookrightarrow X$ and normal bundle $\mcal{N}_{i}$.
Suppose that $Z=\coprod Z_{r}$, 
with each $Z_{r}$ invariant, and $\mcal{N}_{i}|_{Z_{r}}$ is of type $a_r + b_r\sigma$. 
Then there is a Gysin long exact sequence 
$$
\cdots \to \oplus_{r} H_{C_2}^{\star-2\alpha_r,\star-\alpha_{r}}(Z_{r},\ul{A})\to H_{C_2}^{\star,\star}(X, \ul{A})\xrightarrow{j^*} H_{C_2}^{\star, \star}(U,\ul{A})  \to \cdots .
$$
\end{corollary}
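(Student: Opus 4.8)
The plan is to deduce the Gysin sequence from the Thom isomorphism of \aref{thm:thomisos} together with the homotopy purity theorem in equivariant motivic homotopy theory. First I would recall that for the closed immersion $i:Z\hookrightarrow X$ with open complement $j:U\hookrightarrow X$, the homotopy purity theorem \cite[Theorem 7.6]{HKO:EMHT} provides an equivariant motivic weak equivalence $X/U \wkeq \Th(\mcal{N}_i)$, and hence a cofiber sequence of pointed motivic $C_2$-spaces
\begin{equation*}
U_+ \xrightarrow{j} X_+ \to \Th(\mcal{N}_i).
\end{equation*}
Applying the representable theory $\rH^{\star,\star}_{C_2}(-,\ul{A})$ to this cofiber sequence yields a long exact sequence
\begin{equation*}
\cdots \to \rH^{\star,\star}_{C_2}(\Th(\mcal{N}_i),\ul{A}) \to H^{\star,\star}_{C_2}(X,\ul{A}) \xrightarrow{j^*} H^{\star,\star}_{C_2}(U,\ul{A}) \to \cdots.
\end{equation*}

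Next I would identify the term $\rH^{\star,\star}_{C_2}(\Th(\mcal{N}_i),\ul{A})$. Since $Z=\coprod_r Z_r$ with each $Z_r$ invariant, the normal bundle splits as $\mcal{N}_i = \coprod_r \mcal{N}_i|_{Z_r}$ and so $\Th(\mcal{N}_i) = \bigvee_r \Th(\mcal{N}_i|_{Z_r})$, giving $\rH^{\star,\star}_{C_2}(\Th(\mcal{N}_i),\ul{A}) = \bigoplus_r \rH^{\star,\star}_{C_2}(\Th(\mcal{N}_i|_{Z_r}),\ul{A})$. By hypothesis $\mcal{N}_i|_{Z_r}$ is of type $a_r + b_r\sigma$, so \aref{thm:thomisos} supplies a Thom class ${\rm th}(\mcal{N}_i|_{Z_r}) \in \rH^{2a_r+2b_r\sigma, a_r+b_r\sigma}_{C_2}(\Th(\mcal{N}_i|_{Z_r}),\ul{A})$, and cupping with it (against the Thom diagonal, exactly as in the definition of Thom class) gives an isomorphism $H^{\star,\star}_{C_2}(Z_r,\ul{A}) \xrightarrow{\iso} \rH^{\star+2a_r+2b_r\sigma,\star+a_r+b_r\sigma}_{C_2}(\Th(\mcal{N}_i|_{Z_r}),\ul{A})$. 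Writing $\alpha_r = a_r + b_r\sigma$ and reindexing, the Thom term becomes $\bigoplus_r H^{\star-2\alpha_r,\star-\alpha_r}_{C_2}(Z_r,\ul{A})$, which is exactly the left-hand term in the claimed Gysin sequence. Substituting this into the long exact sequence above gives the result.

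The main obstacle I anticipate is purely a matter of carefully checking that the homotopy purity equivalence is available in the required generality — namely that \cite[Theorem 7.6]{HKO:EMHT} applies to an arbitrary equivariant closed immersion of smooth $C_2$-schemes, not only to the special cases (like $C_2 \subseteq \A(\sigma)$) used earlier in the paper — and that the resulting cofiber sequence is compatible with the disjoint-union decomposition $Z = \coprod_r Z_r$ at the level of Thom spaces. Once that is in hand, the argument is formal: everything else is the exactness of the representable cohomology theory applied to a cofiber sequence, plus the Thom isomorphism of \aref{thm:thomisos} applied summand by summand. I would also remark that the degree shift $\star \mapsto \star - 2\alpha_r, \star - \alpha_r$ follows from the cancellation isomorphism \eqref{eqn:suspiso} combined with \aref{prop:thomtriv}, so no new computation is needed there.
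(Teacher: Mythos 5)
Your proposal is correct and follows essentially the same route as the paper's proof: invoke equivariant homotopy purity \cite[Theorem 7.6]{HKO:EMHT} to get the cofiber sequence $U_+\to X_+\to \Th(\mcal{N}_i)$, pass to the long exact sequence in representable Bredon motivic cohomology, split $\Th(\mcal{N}_i)=\vee_r\Th(\mcal{N}_i|_{Z_r})$, and apply the Thom isomorphism of \aref{thm:thomisos} to each summand. Your concern about the generality of homotopy purity is unfounded — the cited theorem applies to arbitrary equivariant closed immersions of smooth $G$-schemes, which is exactly how the paper uses it.
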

\begin{proof}
By equivariant homotopical purity \cite[Theorem 7.6]{HKO:EMHT}, 
we have a cofiber sequence of motivic $C_2$-spaces
\begin{equation}
\label{eqn:gysin}
U\to X \to \Th(\mcal{N}_{i}).
\end{equation} 
This induces a long exact sequence
$$
\cdots \to \rH_{C_2}^{\star,\star}(\Th(\mcal{N}_{i}),\ul{A})\to H_{C_2}^{\star,\star}(X, \ul{A})\xrightarrow{j^*} 
H_{C_2}^{\star, \star}(U,\ul{A})  \to 
\rH_{C_2}^{\star +1,\star}(\Th(\mcal{N}_{i}),\ul{A}) \to \cdots .
$$
Note that  $\Th(\mcal{N}_{i}) = \vee_{r} \Th(\mcal{N}_{i}|_{Z_{r}})$. 
Applying the previous theorem to each $\Th(\mcal{N}_{i}|_{Z_{\alpha}})$ identifies the long exact sequence induced by (\ref*{eqn:gysin}) with the desired Gysin sequence.
\end{proof} 

\begin{remark}\label{rem:fixtype}
An important case is the following.
Let $X$ be a smooth $C_2$-scheme and $Z$ a connected component of the fixed point subscheme $X^{C_2}\subseteq X$. 
Then the fibers of the normal bundle of $Z\subseteq X$ are of type $\codim_{X}(Z)\sigma$. 
Indeed, 
 we have $(T_{z}X)^{C_2} = T_{z}(X^{C_2}) = T_{z}(Z)$ for any $z\in Z$, by e.g., \cite[Lemma 8.10]{HVO:cancellation}.
\end{remark}

When $k=\C$, the same construction as in the proof of \aref{thm:thomisos} applies to topological Bredon cohomology. 
Moreover, 
this construction is compatible with the Betti realization functor 
$\RRe_{\C}: \rH^{\star,\star}_{C_{2}}(\Th(E),\ul{A}) \to  \rH^{\star}_{C_{2}}(\Th(E)(\C),\ul{A})$.

\begin{proposition}
\label{prop:thomcompat}
Let $k=\C$ and $E\to X$ be a $C_2$-equivariant vector bundle and suppose ${\rm th}(E)$ is a Thom class.   
Then $\RRe_{\C}({\rm th}(E))\in \rH^{\star}_{C_{2}}(\Th(E)(\C),\ul{A})$ is a Thom class. 
\end{proposition}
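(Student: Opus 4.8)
The plan is to reduce to a trivial bundle over a point --- where a Thom class is nothing but a generating class of a rank-one cohomology group --- and then to recognise the realisation of the motivic generator as the topological one after forgetting the $C_2$-action. Since $\ch(\C)\neq 2$, the involution splits $E=E^{+}\oplus E^{-}$ into its $(\pm 1)$-eigenbundles, so on each component $E$ is, locally in the equivariant Nisnevich topology, a product bundle $U\times\A(V)$ with $V=a+b\sigma$; in particular \aref{thm:thomisos} applies and, by its proof, ${\rm th}(E)$ arises by Mayer--Vietoris patching of the classes $1_{U_i}\times u_V$ over a trivialising cover, where $u_V\in\rH^{2a+2b\sigma,\,a+b\sigma}_{C_2}(T^V,\ul{A})$ is the class of \aref{prop:thomtriv}. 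As $\RRe_{\C}$ is a symmetric monoidal exact functor sending $\MA$ to ${\rm H}\ul{A}$ (\aref{thm:realbred}, \aref{theorem:stableBettirealization}) and $\Th(E)$ to $\Th(E)(\C)$, the Thom space of the topological equivariant bundle $E(\C)\to X(\C)$ (standard compatibilities of the Betti realisation, cf.\ \aref{app:EMHT}), it carries Mayer--Vietoris sequences and cup products to their topological counterparts; hence $\RRe_{\C}({\rm th}(E))$ restricts over each $U_i(\C)$ to $1_{U_i(\C)}\times\RRe_{\C}(u_V)$, and by naturality of $\RRe_{\C}$ it restricts over every algebraic point of $X$, or free algebraic orbit, to the realisation of a generating class of the relevant fibre. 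Since ``being a Thom class'' for a topological $C_2$-vector bundle of locally constant type is a fibrewise property, verifiable on one point in each connected component of each orbit-type stratum of $X(\C)$, and each such component meets the algebraic points by Zariski density, it suffices to show that $\RRe_{\C}(u_V)$ is a Thom class for the trivial bundle $\A(V)(\C)\to{\rm pt}$; equivalently, since for a trivial bundle the defining condition is exactly the topological suspension isomorphism, that $\RRe_{\C}(u_V)$ generates $\rH^{2a+2b\sigma}_{C_2}(S^{2a+2b\sigma},\ul{A})\cong A$.

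To see this I would feed $\mcal{X}=T^V$ into the commutative square \eqref{eqn:esquare}. Forgetting the $C_2$-action gives $(T^V)^{e}\wkeq T^{\wedge(a+b)}$, and the top comparison map $\phi$ sends $u_V$ to the ordinary motivic Thom class $u^{\rm mot}_{a+b}$ of the trivial rank-$(a+b)$ bundle, a generator of $\rH^{2(a+b),\,a+b}_{\mcal{M}}(T^{\wedge(a+b)},A)\cong A$ --- this follows from the definitions $u_V=u_1^{a}u_\sigma^{b}$, $u_1=\Sigma^{1,0}\lambda_1$, $u_\sigma=\Sigma^{\sigma,0}\lambda_\sigma$ and the naturality under $(-)^{e}$ of the classes supplied by the equivariant cancellation theorem. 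The right vertical arrow, nonequivariant Betti realisation, sends $u^{\rm mot}_{a+b}$ to a generator of $\rH^{2(a+b)}_{sing}(S^{2(a+b)},A)\cong A$, a standard compatibility of the realisation with motivic fundamental classes (deducible from \aref{thm:realbred} and the monoidality of $\RRe_{\C}$). Finally the bottom arrow $\phi$, under the suspension isomorphisms, is the restriction homomorphism of the constant Mackey functor $\ul{A}$, namely the identity of $A$, hence injective; chasing $u_V$ around the square therefore forces $\RRe_{\C}(u_V)$ to be a generator, completing the proof.

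I expect the main obstacle to be the local-to-global bookkeeping of the first step: one must secure the fibrewise, locally-on-the-base characterisation of Thom classes in topological Bredon cohomology in precisely the form used above, and handle the orbit-type stratification of $X(\C)$ and the density of algebraic points carefully. The remaining points are routine but worth stating explicitly: the identity $\phi(u_V)=u^{\rm mot}_{a+b}$, which is a matter of tracing the construction of $u_V$ through $(-)^{e}$, and the standard fact that the forgetful homomorphism $\rH^{W}_{C_2}(S^{W},\ul{A})\to\rH^{|W|}_{sing}(S^{|W|},A)$ is an isomorphism for the constant Mackey functor and any actual representation $W$.
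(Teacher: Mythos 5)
Your overall strategy matches the paper's: reduce, via the orbit-restriction criterion for topological Bredon Thom classes, to showing the restricted class over each orbit $C_2/H\to X(\C)$ is a generator, and then identify that generator through the realization functor. However, there are two points worth flagging.

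The opening paragraph contains a red herring. The eigenbundle splitting $E=E^+\oplus E^-$ does not, by itself, show that $E$ is locally trivial of type $V$ in the equivariant Nisnevich topology (the summands need not trivialize simultaneously on the same invariant opens), and in any case that hypothesis is not needed: the proposition assumes only that ${\rm th}(E)$ is a Thom class in the representable sense, and over any orbit $i\colon C_2/H\to X(\C)$ the restricted bundle $i^{*}E$ is automatically of product type $C_2/H\times\A(V)$ for some $V$ depending on the orbit. The paper simply restricts to such orbits directly, without passing through the patching construction of \aref{thm:thomisos}. As for the second paragraph, your chase around the square \eqref{eqn:esquare} is correct but more elaborate than the paper's one-step observation: since ${\rm th}(E)$ is a motivic Thom class, its restriction over each orbit is $a\cdot\Sigma_{T^{V}}1$ with $a$ a unit in $A=H^{0,0}_{C_2}(\spec\C,\ul{A})$, and since $\RRe_{\C}$ is symmetric monoidal it sends $\Sigma_{T^{V}}1$ to $\Sigma_{S^{V(\C)}}1$; hence $i^{*}\RRe_{\C}({\rm th}(E))=a\cdot\Sigma_{S^{V(\C)}}1$ is a generator, and one never needs to know independently that $\RRe_{\C}(u_V)$ itself generates. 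Finally, in the last step of your chase you should note that the bottom map of the square is an isomorphism (the restriction map of the constant Mackey functor) rather than merely injective --- injectivity alone suffices here only by the $A$-linearity argument that an injective self-map of a free rank-one module hitting a unit must be surjective, and that deserves to be said explicitly.
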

\begin{proof}
To show that $\RRe_{\C}({\rm th}(E))$ is a Thom class it suffices to show that $i^*({\rm th}(E))$ is a generator of the free $H^{*}(C_2/H, \ul{A})$-module $\rH^*( \Th(i^*E), \ul{A})$, where $H\subseteq C_2$ is a subgroup and $i:C_2/H\to X(\C)$ is an equivariant map \cite[XVI.9]{May:CBMS}. Write $V = a+ b\sigma$, where $i^*E = C_2/H\times \A(V)$. Then, $i^*({\rm th}(E)) =  a (\Sigma_{T^{V}}1)$ in $\rH^{2a+2b\sigma, a+b\sigma}_{C_2}(Th(i^*E),\ul{A})$, for some $a\in H^{0,0}_{C_2}(\spec(\C), \ul{A})=A$. It follows that $i^*\RRe_{\C}({\rm th}(E)) = a (\Sigma_{S^{V(\C)}}1)$, 
which is a generator of $\rH^{2a+2b\sigma}_{C_2}(\Th(i^*E(\C)),\ul{A})$, 
and so $\RRe_{\C}({\rm th}(E))$ is a Thom class.
\end{proof}

\section{Bredon cohomology and equivariant higher Chow groups}\label{sub:genborel}
In \cite[Theorem 5.19]{HVO:cancellation} we constructed a natural comparison map between the  Bredon motivic cohomology groups and Edidin-Graham's equivariant higher Chow groups. 
In this section we elaborate on the comparison between these two constructions of equivariant motivic cohomology. 
Throughout, $A$ denotes an abelian group.

\begin{proposition}\label{prop:higherchow}
Let $X$ be a smooth quasi-projective $C_2$-scheme over $k$. There is a natural isomorphism
$$
CH_{C_2}^{b}(X, 2b-a, A) \iso 
H^{a,b}_{C_{2}}(X\times\EG C_{2}, \ul{A}).
$$ 
\end{proposition}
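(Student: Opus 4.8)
The plan is to reduce the computation to finite-dimensional approximations of $\EG C_2$, identify the Bredon motivic cohomology of each approximation with the motivic cohomology of a free quotient via \aref{lem:freequot}, translate this into Bloch's higher Chow groups, and then invoke Edidin--Graham's stabilization theorem to pass to the limit.

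In detail: write $U_n:=\A(n\sigma)\setminus\{0\}$, a smooth quasi-projective $C_2$-scheme. Since $\ch(k)\neq 2$, the sign action on $U_n$ is free, and the linear closed immersions $U_n\hookrightarrow U_{n+1}$ (regular of codimension one, cut out by the vanishing of the last coordinate) exhibit $\EG C_2=\colim_n U_n$. As with the analogous colimit presentations in \aref{section:emht}, the suspension spectrum functor carries this filtered colimit to a homotopy colimit in $\SH_{C_2}(k)$, so the Milnor sequence yields a natural short exact sequence
$$
0\to \lim\nolimits^1_n H^{a-1,b}_{C_2}(X\times U_n,\ul{A})\to H^{a,b}_{C_2}(X\times\EG C_2,\ul{A})\to \lim\nolimits_n H^{a,b}_{C_2}(X\times U_n,\ul{A})\to 0,
$$
whose transition maps are pullback along $\id_X\times(U_n\hookrightarrow U_{n+1})$.

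For each $n$ the $C_2$-action on $X\times U_n$ is free and $X\times U_n$ is smooth quasi-projective, so the quotient $Y_n:=(X\times U_n)/C_2$ is again a smooth quasi-projective $k$-scheme (the orbit map is finite étale, as $\ch(k)\neq 2$). By \aref{lem:freequot} there is a natural isomorphism $H^{a,b}_{C_2}(X\times U_n,\ul{A})\iso H^{a,b}_{\mcal{M}}(Y_n,A)$; combining it with the classical isomorphism between motivic cohomology and Bloch's higher Chow groups of smooth $k$-schemes gives $H^{a,b}_{\mcal{M}}(Y_n,A)\iso CH^b(Y_n,2b-a,A)$. By definition of Edidin--Graham's equivariant higher Chow groups, $CH^b_{C_2}(X,2b-a,A)=CH^b((X\times U)/C_2,2b-a,A)$ for a representation $V$ and invariant open $U\subseteq\A(V)$ with free action and $\A(V)\setminus U$ of sufficiently high codimension; and their independence-of-representation theorem shows that the Gysin pullbacks $CH^b(Y_{n+1},2b-a,A)\to CH^b(Y_n,2b-a,A)$ are isomorphisms once $n$ is large relative to $b$ and $2b-a$, with common value $CH^b_{C_2}(X,2b-a,A)$.

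It remains to match the two towers. The transition map $H^{a,b}_{C_2}(X\times U_{n+1},\ul{A})\to H^{a,b}_{C_2}(X\times U_n,\ul{A})$ corresponds, under the isomorphisms of the previous paragraph, to the Gysin pullback $CH^b(Y_{n+1},2b-a,A)\to CH^b(Y_n,2b-a,A)$: this is because \aref{lem:freequot} and the motivic/higher-Chow comparison are isomorphisms of cohomology theories on smooth schemes, and pullback along the regular closed immersion $Y_n\hookrightarrow Y_{n+1}$ is precisely the Gysin map on higher Chow groups. Hence the tower $\{H^{a,b}_{C_2}(X\times U_n,\ul{A})\}_n$ is pro-isomorphic to the constant tower $CH^b_{C_2}(X,2b-a,A)$; in particular it is Mittag--Leffler, the $\lim^1$-terms (in degrees $a$ and $a-1$) vanish, and the displayed exact sequence collapses to the asserted isomorphism $H^{a,b}_{C_2}(X\times\EG C_2,\ul{A})\iso CH^b_{C_2}(X,2b-a,A)$; naturality in $X$ is clear since every comparison map used is natural. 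The step requiring the most care is this last one: verifying that the transition maps on the Bredon side are exactly the Gysin pullbacks to which the Edidin--Graham stabilization applies, so that their theorem may be transported to the tower computing $H^{a,b}_{C_2}(X\times\EG C_2,\ul{A})$.
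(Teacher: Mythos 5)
Your proof follows exactly the same route as the paper's: reduce to the finite approximations $U_n=\A(n\sigma)\setminus\{0\}$, pass through \aref{lem:freequot} and the Voevodsky comparison of motivic cohomology with Bloch's higher Chow groups, invoke the Edidin--Graham definition and stabilization, and dispatch the $\lim^1$ term in the Milnor sequence. You give a bit more detail on why the transition maps match Gysin pullbacks and why Mittag--Leffler holds, which the paper leaves implicit, but the argument is the same.
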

\begin{proof}
By definition, 
$CH_{C_2}^{b}(X, 2b-a, A) = CH^{b}(X\times_{C_2} (\A(n\sigma)\setminus \{0\}), 2b-a, A)$ for $n$ sufficiently large, 
see \cite[p.~599, 605]{EG:int}. 
In particular, 
the value of this latter group is constant for $n \gg 0$. Write $U_n = \A(n\sigma)\setminus\{0\}$.
Using the isomorphism between higher Chow groups and motivic cohomology 
\cite[Corollary 2]{Voevodsky:chow}
together with \aref{lem:freequot} we obtain the natural isomorphisms
\begin{align*}
CH_{C_2}^{b}(X, 2b-a, A) & \iso 
{\rm lim}_{n} H^{a,b}_{\mcal{M}}(X\times_{C_2}U_n,A) 
\\
&\iso {\rm lim}_{n} H^{a,b}_{C_2}(X\times U_n,\ul{A}) \\
& \iso
 H^{a,b}_{C_{2}}(X\times\EG C_{2}, \ul{A}).
\end{align*}
For the last isomorphism we have used the Milnor exact sequence
$$
0\to {\rm lim}_{n}^{1} H^{a,b}_{C_2}(X\times U_n) \to  
H^{a,b}_{C_{2}}(X\times\EG C_{2}) \to {\rm lim}_{n} H^{a,b}_{C_2}(X\times U_n) \to 0, 
$$
and the fact that the ${\rm lim}^1$-term vanishes.
\end{proof}

The groups $H^{\star,\star}_{C_2}(X\times \EG C_2, \ul{A})$ define the Borel motivic cohomology of $X$.
In light of the identification above, 
we view $H^{\star,\star}_{C_2}(X\times \EG C_2, \ul{A})$ as a generalized version of equivariant higher Chow groups (in which the grading is by representations instead of just integers).
By the motivic isotropy separation cofiber sequence (\ref{eqn:motisotropy}), 
the projection map  $\EG C_{2\,+}\to S^{0}$ induces the comparison map between the Bredon motivic cohomology and the Borel motivic cohomology theories. Their difference is measured by $\tilde{\EG} C_{2}$.

\begin{lemma}
\label{lem:EGtilde}
Let $X$ be a smooth, 
quasi-projective $C_{2}$-scheme. 
Suppose that either (i) $X$ has free action, 
(ii) $b<0$, 
or (iii) $a \leq 1$ and $A$ is finite, 
then  
$$
\rH^{a+p\sigma, b+q\sigma}_{C_{2}}(X_{+}\wedge
\EGt  C_{2}, \ul{A}) = 0.
$$  
\end{lemma}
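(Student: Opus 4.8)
The plan is to derive the statement from the motivic isotropy separation cofiber sequence \eqref{eqn:motisotropy}. Smashing it with $X_{+}$ and applying $\rH^{\star,\star}_{C_{2}}(-,\ul{A})$ produces a long exact sequence
$$
\cdots \to \rH^{\star,\star}_{C_{2}}(X_{+}\wedge\EGt C_{2},\ul{A}) \to H^{\star,\star}_{C_{2}}(X,\ul{A}) \xrightarrow{\ \alpha\ } H^{\star,\star}_{C_{2}}(X\times\EG C_{2},\ul{A}) \to \cdots ,
$$
where $\alpha$ is the comparison map to Borel motivic cohomology (\aref{sub:genborel}), i.e.\ pullback along the projection. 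So it suffices to show that $\alpha$ is surjective in cohomological degree $a-1$ and injective in degree $a$, with the $\sigma$-parts of the grading held fixed. I would first discard the $\sigma$-gradings: iterating \aref{prop:invariance} gives $S^{\sigma,0}\wedge\EGt C_{2}\wkeq\EGt C_{2}\wkeq S^{\sigma,\sigma}\wedge\EGt C_{2}$, hence $S^{p\sigma,q\sigma}\wedge\EGt C_{2}\wkeq\EGt C_{2}$ for all integers $p,q$, so that $\rH^{a+p\sigma,b+q\sigma}_{C_{2}}(X_{+}\wedge\EGt C_{2},\ul{A})\iso\rH^{a,b}_{C_{2}}(X_{+}\wedge\EGt C_{2},\ul{A})$. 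From now on I take $p=q=0$; all groups are integrally graded, and $H^{a,b}_{C_{2}}(X\times\EG C_{2},\ul{A})$ is computed from the groups $H^{a,b}_{C_{2}}(X\times U_{n},\ul{A})$, with $U_{n}:=\A(n\sigma)\setminus\{0\}$, by the Milnor ${\rm lim}$--${\rm lim}^{1}$ sequence.

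Case (ii) is immediate: for $b<0$ one has $H^{a,b}_{C_{2}}(Y,\ul{A})=H^{a}_{C_{2}Nis}(Y,\ul{A}(b))=0$ for every smooth $C_{2}$-scheme $Y$, since Bredon motivic cohomology vanishes in negative weight (the motivic Bredon complex $\ul{A}(b)$ is the image of the vanishing motivic complex $A(b)$ under an exact functor; cf.\ the proof of \aref{prop:forgetiso}). Applied to $X$ and to the $X\times U_{n}$, both outer terms of the long exact sequence vanish, hence so does the middle. For case (i), when $X$ is free \aref{lem:freequot} gives $H^{a,b}_{C_{2}}(X,\ul{A})\iso H^{a,b}_{\mcal{M}}(X/C_{2},A)$, and, $X\times U_{n}$ being free as well, $H^{a,b}_{C_{2}}(X\times U_{n},\ul{A})\iso H^{a,b}_{\mcal{M}}(X\times_{C_{2}}U_{n},A)$. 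The localization sequence of the zero section $X/C_{2}\hookrightarrow X\times_{C_{2}}\A(n\sigma)$, together with homotopy invariance and the vanishing $H^{\ast,b-n}_{\mcal{M}}(X/C_{2},A)=0$ for $n>b$, shows $H^{a,b}_{\mcal{M}}(X/C_{2},A)\iso H^{a,b}_{\mcal{M}}(X\times_{C_{2}}U_{n},A)$ for $n\gg 0$; thus the tower is eventually constant, its ${\rm lim}^{1}$ vanishes, and $\alpha$ is an isomorphism. The long exact sequence then gives the claim.

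For case (iii), suppose $a\leq 1$ and $A$ finite. Let $Z=X^{C_{2}}=\coprod_{r}Z_{r}$, which is smooth because $\ch k\neq 2$, with open complement $X\setminus Z$, a smooth quasi-projective $C_{2}$-scheme with free action. By equivariant homotopical purity \cite[Theorem 7.6]{HKO:EMHT} there is a cofiber sequence $(X\setminus Z)_{+}\to X_{+}\to\Th(\mcal{N})$; case (i) makes the first term disappear after $\wedge\,\EGt C_{2}$, so $\rH^{\star,\star}_{C_{2}}(X_{+}\wedge\EGt C_{2},\ul{A})\iso\rH^{\star,\star}_{C_{2}}(\Th(\mcal{N})\wedge\EGt C_{2},\ul{A})$. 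By \aref{rem:fixtype} each $\mcal{N}|_{Z_{r}}$ is of type $c_{r}\sigma$ with $c_{r}=\codim_{X}Z_{r}$, so the Thom isomorphism (\aref{thm:thomisos}), together with $S^{2c_{r}\sigma,c_{r}\sigma}\wedge\EGt C_{2}\wkeq\EGt C_{2}$ and the reduction of $\sigma$-gradings, yields a natural isomorphism
$$
\rH^{\star,\star}_{C_{2}}(X_{+}\wedge\EGt C_{2},\ul{A})\iso\bigoplus_{r}\rH^{a,b}_{C_{2}}((Z_{r})_{+}\wedge\EGt C_{2},\ul{A}).
$$
Each $Z_{r}$ carries the trivial action; by \aref{prop:forgetiso}, by \aref{lem:freequot} applied to the free schemes $Z_{r}\times U_{n}$, and by the finiteness of $A$ (which makes the pertinent ${\rm lim}^{1}$-terms vanish, so that $H^{\star,\star}_{C_{2}}(Z_{r}\times\EG C_{2},\ul{A})={\rm lim}_{n}H^{\star,\star}_{\mcal{M}}(Z_{r}\times U_{n}/C_{2},A)=H^{\star,\star}_{\mcal{M}}(Z_{r}\times\BG C_{2},A)$), the long exact sequence reduces the vanishing of each summand to: the pullback $\alpha\colon H^{\ast,b}_{\mcal{M}}(Z_{r},A)\to H^{\ast,b}_{\mcal{M}}(Z_{r}\times\BG C_{2},A)$ is injective in degree $a$ and surjective in degree $a-1$. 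It is split injective because $\BG C_{2}$ has a $k$-rational point, and it is an isomorphism in cohomological degrees $\leq 0$: for degree $<0$ both sides vanish, and for degree $0$ both sides vanish unless $b=0$, in which case $\alpha$ is an isomorphism since $\BG C_{2}$ is connected. As $a-1\leq 0$, this finishes case (iii).

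The substantive step is case (iii) — reducing to the fixed locus via purity and the Thom isomorphism, and recognising Borel motivic cohomology of a trivial-action variety as the motivic cohomology of its product with $\BG C_{2}$ — and it is there that the hypotheses $a\leq 1$ and $A$ finite are used. The reductions in the first paragraph (that the $\sigma$-gradings are inert against $\EGt C_{2}$, and that the whole problem is governed by the single comparison map $\alpha$) are formal once \aref{prop:invariance} and the identifications of \aref{sub:genborel} are available.
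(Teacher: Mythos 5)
Your overall architecture follows the paper's: reduce to integer gradings via \aref{prop:invariance}, treat the free and trivial-action cases, and glue via purity and the Thom isomorphism. However, there are two genuine gaps, both in places where you departed from the paper's argument.

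In case (ii), you assert $H^{a,b}_{C_2}(Y,\ul{A})=H^{a}_{C_2Nis}(Y,\ul{A}(b))=0$ for \emph{every} smooth $C_2$-scheme $Y$ when $b<0$, justifying this by claiming $\ul{A}(b)$ is $t_*$ of the vanishing complex $A(b)$. This is a notational overreach: the motivic Bredon complex $\ul{A}(V)$ is defined only for $V$ a representation, and for negative weight \aref{thm:motivicrep} requires inserting a $T^W$-suspension, so $H^{a,b}_{C_2}(Y,\ul{A})$ for $b<0$ is $\rH^{a-b}_{C_2Nis}(S^{|b|}_t\wedge Y_+,\ul{A}(0))$, not cohomology of an identically-zero complex. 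The vanishing of this group for arbitrary $Y$ is plausible but not established by the references you cite (\aref{prop:forgetiso} handles only trivial action and nonnegative weight). The paper sidesteps this entirely: it proves the vanishing for trivial-action $Y$, where \aref{prop:forgetiso} reduces it to nonequivariant negative-weight vanishing, and then bootstraps to arbitrary $X$ via the Gysin sequence together with \aref{cor:degshift} to absorb the $\sigma$-shifts --- exactly the reduction you perform for case (iii). Running your case-(iii) reduction also in case (ii) would repair the proof.

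In case (iii), the surjectivity of $\alpha$ in cohomological degree $0$ is justified by the claim that ``for degree $0$ both sides vanish unless $b=0$.'' This is false for finite coefficients: for instance $H^{0,1}_{\mcal{M}}(\spec(k),\Z/2)\iso\mu_2(k)\iso\Z/2$ since $\ch(k)\neq 2$, and more generally $H^{0,b}_{\mcal{M}}(Y,\Z/n)\iso H^{0}_{et}(Y,\mu_n^{\otimes b})$ by Beilinson--Lichtenbaum, which need not vanish for $b>0$. The conclusion (isomorphism in degree $\leq 0$) is still correct, but the argument the paper gives is different and essential: it uses the geometric model $\BG C_2\simeq\mcal{O}_{\P^\infty}(-2)\setminus\P^\infty$ and the resulting cofiber sequence $\BG C_{2}\to\P^{\infty}\to\Th(\mcal{O}(-2))$, together with the projective bundle theorem and the Thom isomorphism, plus the Beilinson--Soul\'e-type vanishing $H^{a,b}_{\mcal{M}}(Z_r,A)=0$ for $a<0$ and $A$ finite, to pin down $H^{a,b}_{\mcal{M}}(Z_r\times\BG C_2,A)$ for $a\leq 0$.
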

\begin{proof} 
By \aref{prop:invariance} we have isomorphisms 
$$
\rH^{a+p\sigma, b+q\sigma}_{C_{2}}(X_{+}\wedge
\EGt  C_{2},\ul{A}) \iso \rH^{a, b}_{C_{2}}(X_{+}\wedge\EGt  C_{2},\ul{A}).
$$ 

First we consider the case when $X$ has free action. 
Then  $X/G$ is smooth and applying \aref{lem:freequot}, 
the map $H^{a,b}_{C_2}(X,\ul{A}) \to H^{a,b}_{C_2}(X\times \EG C_2,\ul{A})$ becomes identified with the natural isomorphism
$$
H^{a, b}_{\mcal{M}}(X/G,A) \xrightarrow{\iso}  
H^{a, b}_{\mcal{M}}(X\times^{G}\EG C_{2},A)
$$ 
for all $a,b$. 
This establishes the vanishing in case (i).

Now consider the case when $X$ has trivial action. 
In this case we have
$$
H^{a,b}_{C_2}(X\times\EG C_2,\ul{A}) = H^{a,b}_{\mcal{M}}(X\times \BG C_2,A). 
$$
The projection map $X\times \BG C_{2}\to X$ affords a section and the long exact sequence associated to the motivic isotropy sequence breaks up into short exact sequences 
$$
0\to H^{a,b}_{C_2}(X,\ul{A}) \to H^{a, b}_{\mcal{M}}(X\times \BG C_2,A)\to
\rH^{a+1, b}_{C_{2}}(X_{+}\wedge\EGt  C_{2},\ul{A})\to 
0.
$$
The two left groups vanish whenever $b<0$ and so does the third, 
which establishes case (ii). 
The space $\BG C_2$ is the complement of the zero section of the line bundle $\mcal{O}(-2)$ on $\P^{\infty}$, 
see e.g., \cite[Lemma 6.4]{Voevodsky:reduced}. 
It thus fits into a cofiber sequence
$$
\BG C_{2} \to \P^{\infty} \to \Th(\mcal{O}(-2))
$$
of motivic spaces. 
Now we suppose that $A$ is finite. 
We have, 
as a consequence of the Bloch-Kato conjectures, 
that $H^{a,b}_{\mcal{M}}(X, A) =0$ for $a<0$.
Combined with the sequence in motivic cohomology resulting from the above cofiber sequence, 
the Thom isomorphism, 
and the projective bundle theorem, 
we find an isomorphism $H^{a, b}_{\mcal{M}}(X,A) \to H^{a, b}_{\mcal{M}}(X\times \BG C_{2},A)$ for $a\leq 0$ and $A$ finite.

Now consider an arbitrary smooth $C_{2}$-scheme $X$. 
The fixed points $X^{C_{2}}\subseteq X$ are smooth and its open complement  $U=X\setminus X^{C_{2}}$ has free action.  
The previous paragraphs applied to $X^{C_2}$ and $U$ together with the Gysin sequence of \aref{cor:gysin} and \aref{rem:fixtype} yields the result. 
\end{proof}

\begin{theorem}\label{thm:BLChow}
Let $X$ be a smooth quasi-projective $C_{2}$-scheme over $k$. 
Then the natural map
$$
H^{a+p\sigma, b+q\sigma}_{C_{2}}(X,\ul{A}) \to H^{a+p\sigma, b+q\sigma}_{C_{2}}(X\times \EG C_{2},\ul{A})
$$
is an isomorphism if either (i) $X$ has free action, 
(ii) $b<0$, 
or (iii) and $A$ is finite $a\leq 0$. 
In case (iii) the map is injective if $a=1$.
\end{theorem}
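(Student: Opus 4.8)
The plan is to obtain the theorem as a formal consequence of the motivic isotropy separation cofiber sequence (\ref{eqn:motisotropy}) together with the vanishing already established in \aref{lem:EGtilde}. First I would smash the cofiber sequence $\EG C_{2\,+}\to S^{0}\to \EGt C_2$ with $X_+$. Since $\EG C_2 = \colim_n \A(n\sigma)\setminus\{0\}$ is a filtered colimit of smooth $C_2$-schemes and Bredon motivic cohomology of such a colimit is computed through the associated suspension spectrum (see \aref{section:Bredonmotivi9ccohomology}), one has $(X\times \EG C_2)_+\cong X_+\wedge \EG C_{2\,+}$, so smashing with $X_+$ yields a cofiber sequence of motivic $C_2$-spectra
\[
(X\times \EG C_2)_+ \longrightarrow X_+ \longrightarrow X_+\wedge \EGt C_2,
\]
in which the first map is $\id_{X_+}$ smashed with the projection $\EG C_{2\,+}\to S^{0}$.

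Applying $\rH^{\star,\star}_{C_2}(-,\ul A)$ to this cofiber sequence produces a long exact sequence
\[
\cdots \to \rH^{a+p\sigma,b+q\sigma}_{C_2}(X_+\wedge \EGt C_2,\ul A)\to H^{a+p\sigma,b+q\sigma}_{C_2}(X,\ul A)\to H^{a+p\sigma,b+q\sigma}_{C_2}(X\times \EG C_2,\ul A)\to \rH^{a+1+p\sigma,b+q\sigma}_{C_2}(X_+\wedge \EGt C_2,\ul A)\to\cdots
\]
in which the displayed middle arrow is precisely the comparison map of the theorem. It then remains only to feed in \aref{lem:EGtilde}. In case (i) (free action) and in case (ii) ($b<0$), \aref{lem:EGtilde} forces $\rH^{m+p\sigma,b+q\sigma}_{C_2}(X_+\wedge \EGt C_2,\ul A)=0$ for every integer $m$, so both neighbouring terms vanish and the comparison map is an isomorphism. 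In case (iii) ($A$ finite), \aref{lem:EGtilde} gives this vanishing for cohomological degrees $\leq 1$; when $a\leq 0$ one has both $a\leq 1$ and $a+1\leq 1$, so the two neighbouring terms vanish and the map is again an isomorphism, whereas when $a=1$ only the left-hand term (in degree $1$) is forced to vanish, which yields injectivity but not surjectivity since the right-hand term now lies in degree $2$.

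There is essentially no remaining obstacle at this stage: the only points needing care are the identification $(X\times \EG C_2)_+\cong X_+\wedge \EG C_{2\,+}$ and the fact that the resulting cohomology long exact sequence genuinely computes the comparison map, both of which are built into the framework of \aref{section:Bredonmotivi9ccohomology}. The real work --- splitting into the free, trivial, and mixed cases, using the Gysin sequence of \aref{cor:gysin}, and invoking the Bloch--Kato vanishing $H^{a,b}_{\mcal M}(X,A)=0$ for $a<0$ together with the projective bundle and Thom isomorphism computation of $H^{\star,\star}_{\mcal M}(\BG C_2,A)$ --- has already been carried out in the proof of \aref{lem:EGtilde}.
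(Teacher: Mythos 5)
Your proposal is correct and is exactly the argument the paper intends; the paper's own proof is the one-line remark that the theorem ``follows immediately from the previous lemma and the motivic isotropy separation cofiber sequence,'' and you have correctly unpacked this into the long exact sequence and the degree bookkeeping, including the careful observation that in case (iii) the surjectivity-controlling term sits in degree $a+1$, which explains why $a\leq 0$ gives an isomorphism while $a=1$ gives only injectivity.
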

\begin{proof} 
This follows immediately from the previous lemma and the motivic isotropy separation cofiber sequence (\ref{eqn:motisotropy}).
\end{proof}

\begin{remark}
 Combining the identification of \aref{prop:higherchow} and the periodicity of \aref{cor:period} we find that there is a natural map
 \begin{align*}
H^{a+p\sigma, b+q\sigma}_{C_{2}}(X,\ul{\Z/2})& \rightarrow
H^{a+p, b+q}_{\mcal{M}}(X\times^{C_2} \EG C_{2}, \Z/2) \\
& \iso CH^{b+q}(X, 2(b+q)-a-p, \Z/2)
\end{align*}
which is an isomorphism if (i) $X$ has free action, (ii) $b<0$, or (iii) $a\leq 0$. Moreover in case (iii) the map is an injection for $a =1$.
\end{remark}

\section{Periodicity and Borel motivic cohomology}
\label{section:periodicityandBmc}
In this section we show that the ring $H^{\star,\star}_{C_2}(\EG C_2,\ul{\Z/2})$ is periodic with period $(2\sigma-2,\sigma-1)$.
It follows that the groups $H^{\star,\star}_{C_{2}}(X\times \EG C_2, \ul{\Z/2})$ are also periodic.  
These form a generalized "geometric" Borel motivic cohomology theory. 
The integer graded portion of these groups is isomorphic to equivariant higher Chow groups, 
see \aref{sub:genborel}.
This periodicity of the generalized Borel motivic cohomology will play an important role in our comparison theorem between motivic and topological Bredon cohomology.

We remind the reader that our convention is that $(*,*)$ stands for an integer bigrading and $(\star,\star)$ stands for the bigrading determined by representations, 
see (\ref{eqn:funnyspheres}).
We begin with the cohomology of $C_2$.

\begin{lemma}
\label{lem:ofC2}
Let $A$ be an abelian group. There is an $H^{*,*}_{\mcal{M}}(k,A)$-algebra isomorphism
$$ 
H^{\star,\star}_{C_{2}}(C_2,\ul{A}) \iso H^{*,*}_{\mcal{M}}(k,A)[s^{\pm 1},t^{\pm 1}],
$$ 
where $s\in H^{\sigma -1, 0}_{C_{2}}(C_2,\ul{A})$ and $t\in H^{\sigma -1,\sigma -1}_{C_{2}}(C_2,\ul{A})$.
\end{lemma}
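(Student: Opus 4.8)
The plan is to compute $H^{\star,\star}_{C_2}(C_2,\ul{A})$ directly via the change-of-groups results already available. First I would invoke \aref{prop:C2e} with $\mcal{X} = S^0 = \spec(k)_+$: since $C_{2\,+}\wedge \spec(k)_+ = C_{2\,+}$, that proposition gives a natural isomorphism
$$
H^{a+p\sigma,b+q\sigma}_{C_2}(C_2,\ul{A}) \xrightarrow{\iso} H^{a+p,\,b+q}_{\mcal{M}}(\spec(k),A)
$$
for all integers $a,b,p,q$. The right-hand side depends only on the sums $a+p$ and $b+q$, so as a first observation $H^{\star,\star}_{C_2}(C_2,\ul{A})$ is determined additively: in bidegree $(a+p\sigma,\,b+q\sigma)$ it is a copy of $H^{a+p,\,b+q}_{\mcal{M}}(k,A)$. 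In particular, setting $p=q=0$, the integer-graded part $H^{*,*}_{C_2}(C_2,\ul{A})$ is identified with $H^{*,*}_{\mcal{M}}(k,A)$, and this identification is a ring map because \aref{prop:C2e} is compatible with the multiplicative structures (the pairing on $\MA$ restricts, under $(-)^e$, to the pairing on $\mathbf{M}A$).

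Next I would produce the periodicity elements. The degree $(\sigma-1,0)$ has $(a,p,b,q)=(-1,1,0,0)$, so $a+p=0=b+q$ and the group in that degree is $H^{0,0}_{\mcal{M}}(k,A)=A$; let $s$ be the class corresponding to $1\in A$. Similarly the degree $(\sigma-1,\sigma-1)$ has $(a,p,b,q)=(-1,1,-1,1)$, again with $a+p=0=b+q$, and I let $t$ be the class corresponding to $1\in A$. Because multiplication in $H^{\star,\star}_{C_2}(C_2,\ul{A})$ is carried by \aref{prop:C2e} to multiplication in $H^{*,*}_{\mcal{M}}(k,A)$, the products $s\cdot(-)$ and $t\cdot(-)$ act on each graded piece as the identity map $A\otimes H^{*,*}_{\mcal{M}}(k,A)\to H^{*,*}_{\mcal{M}}(k,A)$ after the identification; hence multiplication by $s$ (resp.\ $t$) is an isomorphism from bidegree $(a+p\sigma,b+q\sigma)$ to $(a+(p{+}1)\sigma-1,b+q\sigma)$ (resp.\ to the degree shifted by $(\sigma-1,\sigma-1)$). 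In particular $s$ and $t$ are units, with inverses the classes in degrees $(1-\sigma,0)$ and $(1-\sigma,1-\sigma)$ corresponding to $1\in A$. Assembling this, the $H^{*,*}_{\mcal{M}}(k,A)$-algebra map $H^{*,*}_{\mcal{M}}(k,A)[s^{\pm 1},t^{\pm 1}]\to H^{\star,\star}_{C_2}(C_2,\ul{A})$ sending $s,t$ to these classes is bijective in each of the four integer gradings, hence an isomorphism.

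The only genuinely delicate point is bookkeeping: one must check that the isomorphism of \aref{prop:C2e} is not merely additive but an isomorphism of rings, compatibly across all gradings, so that the monomials $s^{i}t^{j}$ (with $i,j\in\Z$) really do give a basis of $H^{\star,\star}_{C_2}(C_2,\ul{A})$ as an $H^{*,*}_{\mcal{M}}(k,A)$-module and multiplication behaves as in a Laurent polynomial ring. This is exactly the content of the multiplicativity assertions in \aref{prop:C2e} and the remark following \aref{thm:motivicrep}, together with the observation that the external products used to define $\cup$ are compatible with $(-)^e$; so the hard part is really just organizing these compatibilities rather than proving anything new. Once that is in place, matching degrees as above finishes the proof.
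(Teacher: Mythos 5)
Your proof follows the same underlying strategy as the paper's (both ultimately reduce to \aref{lem:trick} and the suspension isomorphism to identify each representation-graded piece of $H^{\star,\star}_{C_2}(C_2,\ul{A})$ with an integer-graded piece of $H^{*,*}_{\mcal{M}}(k,A)$). The substantive difference is in how you justify that multiplication by $s$ and $t$ is an isomorphism. You assert that the identification from \aref{prop:C2e} is a \emph{ring} isomorphism across all representation gradings; but \aref{prop:C2e} is stated and proved only as an isomorphism of groups, and promoting it to a ring isomorphism requires a check that is not in the paper and is more delicate than you indicate. In particular, the Koszul sign on the Bredon side in degrees $(a+p\sigma,\cdot)$, $(c+s\sigma,\cdot)$ is $(-1)^{ac+ps}$ by \aref{prop:product}, whereas on the motivic side in degrees $(a+p,\cdot)$, $(c+s,\cdot)$ it is $(-1)^{(a+p)(c+s)}$; these differ by $(-1)^{as+pc}$, so at minimum the comparison map can only respect products up to a sign that must be tracked. (This does not invalidate your conclusion that $s$ and $t$ are units, since an isomorphism up to sign is still an isomorphism, but it does mean the phrase ``carried to multiplication'' is doing more work than the cited results license.) The paper sidesteps this entirely: it shows each degree-shift map is an isomorphism of free rank-one $H^{\star,\star}_{C_2}(C_2,\ul{A})$-modules, hence is multiplication by some invertible element $x_{\alpha,\beta}$, and simply defines $s = x_{\sigma-1,0}$, $t = x_{\sigma-1,\sigma-1}$. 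That module-theoretic phrasing avoids having to decide whether the comparison map is multiplicative on the nose, and is the cleaner way to organize the argument; I'd recommend recasting your step in those terms.
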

\begin{proof}
Write $(\alpha, \beta) = (a+p\sigma, b+ q\sigma)$ and $|a +p\sigma| = a +p$. 
We have isomorphisms
\begin{align*}
H^{\star,\star}_{C_2}(C_{2}, \ul{A}) \xrightarrow{\iso} 
\rH^{\star +\alpha,\star+\beta}_{C_2}(S^{\alpha, \beta}\wedge C_{2\,+}, \ul{A})
& \xrightarrow{\iso} \rH^{\star+\alpha,\star+\beta}_{C_2}(S^{|\alpha|, |\beta|}\wedge C_{2\,+}, \ul{A}) \\
& \xleftarrow{\iso} H^{\star +\alpha - |\alpha|, \star+ \beta - |\beta|}_{C_2}(C_2, \ul{A});  
\end{align*}
the first and last isomorphisms are instances of the suspension isomorphism, and the middle isomorphism follows from \aref{lem:trick}. 
This is an isomorphism of free one-dimensional $H^{\star,\star}_{C_2}(C_{2},\ul{A})$-modules and thus is given by multiplication with an invertible element 
$x_{\alpha, \beta} \in H^{\alpha - |\alpha|, \beta - |\beta|}_{C_2}(C_{2}, \ul{A})$. 
Taking $s= x_{\sigma-1,0}$ and $t = x_{\sigma -1, \sigma-1}$, 
we get an $H^{*,*}_{\mcal{M}}(k,A)$-algebra map
$$ 
H^{*,*}_{\mcal{M}}(k,A)[s^{\pm 1},t^{\pm 1}]\to H^{\star,\star}_{C_{2}}(C_2,\ul{A}),
$$ 
which is an isomorphism.
\end{proof}

\begin{lemma}\label{lem:sigma1}
The map $C_{2+}\to S^{0}$ induces an isomorphism  
\[
H^{2\sigma -2,\sigma-1}_{C_2}(k,\ul{\Z/2})\xrightarrow{\iso} H^{2\sigma-2, \sigma-1}_{C_2}(C_2,\ul{\Z/2}).
\]
\end{lemma}
\begin{proof}
By \aref{thm:motivicrep}, this map is identified with the map
\[
H^{0}_{C_2Nis}(T,\ul{\Z/2}(\sigma)[2\sigma])\to 
H^{0}_{C_2Nis}(C_{2+}\wedge T, \ul{\Z/2}(\sigma)[2\sigma]).
\]
We have that $\ul{\Z/2}(\sigma)[2\sigma] = C_{*}\Z_{tr,C_2}(T^\sigma)\otimes^{\L}\Z/2$ and by \cite[Proposition 5.14]{HVO:cancellation}, 
\[
C_{*}\Z_{tr,C_2}(T^\sigma) \wkeq \cone((\mathcal{O}^*)^{C_2}\oplus \Z \to \Z). 
\]
We now see that 
\[
H^i_{C_2Nis}(T,C_{*}\Z_{tr,C_2}(T^\sigma)) \xrightarrow{\iso} H^{i}_{C_2Nis}(C_{2+}\wedge T, C_{*}\Z_{tr,C_2}(T^\sigma)) 
\]
is an isomorphism for all $i$ (for example, by applying \cite[Lemma 3.19]{HVO:cancellation}).

\end{proof}

\begin{lemma}
\label{lem:EG00}
Let $A$ be an abelian group. There are isomorphisms 
$$
H^{0,0}_{C_2}(\EG C_2,\ul{A}) \cong \lim_nH^{0,0}_{C_2}(\A(n\sigma)\setminus \{0\},\ul{A}) \cong A.
$$
\end{lemma}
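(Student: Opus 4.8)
The plan is to use the colimit presentation $\EG C_2 = \colim_n\bigl(\A(n\sigma)\setminus\{0\}\bigr)$ to reduce the statement to a weight-zero motivic cohomology computation on the associated free quotient schemes. Write $U_n := \A(n\sigma)\setminus\{0\}$. First I would record the Milnor exact sequence attached to this sequential colimit, exactly as in the proof of \aref{prop:higherchow}:
$$
0 \longrightarrow {\rm lim}^1_n\, H^{-1,0}_{C_2}(U_n, \ul{A}) \longrightarrow H^{0,0}_{C_2}(\EG C_2, \ul{A}) \longrightarrow \lim_n H^{0,0}_{C_2}(U_n, \ul{A}) \longrightarrow 0 .
$$
It then suffices to show that the tower $\{H^{0,0}_{C_2}(U_n, \ul{A})\}_n$ is constant equal to $A$ with identity transition maps, and that $H^{-1,0}_{C_2}(U_n, \ul{A}) = 0$ for every $n$.

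The next step is to identify the entries of these towers. Since $\ch(k)\neq 2$, the fixed subscheme of $\A(n\sigma)$ is the origin, so $C_2$ acts freely on the smooth quasi-projective scheme $U_n$. Thus \aref{lem:freequot} supplies natural isomorphisms $H^{a,0}_{C_2}(U_n, \ul{A}) \iso H^{a,0}_{\mcal{M}}(U_n/C_2, A)$ compatible with the maps induced by the linear inclusions $U_n \hookrightarrow U_{n+1}$. Now $U_n/C_2$ is a smooth connected $k$-scheme and the weight-zero motivic complex is the constant Nisnevich sheaf $A$ in degree zero, so $H^{a,0}_{\mcal{M}}(U_n/C_2, A) = H^a_{Nis}(U_n/C_2, A)$ equals $A$ for $a = 0$ and vanishes for $a < 0$; moreover the transition map $H^{0,0}_{\mcal{M}}(U_{n+1}/C_2, A) \to H^{0,0}_{\mcal{M}}(U_n/C_2, A)$ is restriction along a morphism of connected schemes and hence the identity on $A$.

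Finally I would feed these computations into the Milnor sequence: the ${\rm lim}^1$-term vanishes because $H^{-1,0}_{C_2}(U_n, \ul{A}) = 0$ for all $n$, and the surjection onto the $\lim$-term becomes an isomorphism onto $\lim$ of the constant tower $A$, which is $A$. This establishes both displayed isomorphisms simultaneously. I expect the points needing the most care to be the freeness of the $C_2$-action on $U_n$ — this is exactly where $\ch(k)\neq 2$ is used, via the identification of the fixed locus of $\A(n\sigma)$ with the origin — and the check that the stabilizing inclusions descend to morphisms of connected quotient schemes inducing the identity on weight-zero motivic cohomology; the Milnor sequence and the elementary computation of $H^{*,0}_{\mcal{M}}$ of a smooth connected scheme are routine.
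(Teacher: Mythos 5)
Your proof is correct and follows essentially the same route as the paper: reduce via \aref{lem:freequot} and the free $C_2$-action on $\A(n\sigma)\setminus\{0\}$ to weight-zero motivic cohomology of the smooth connected quotients, then use the Milnor sequence and the constancy of the tower. The only cosmetic difference is that you compute $H^{0,0}_{\mcal{M}}(U_n/C_2,A)\cong A$ and the vanishing of $H^{-1,0}_{\mcal{M}}$ directly from the weight-zero motivic complex being the constant sheaf $A$, whereas the paper invokes a proposition of Voevodsky for the constancy of the transition maps; both are fine and you have the $\lim^1$ bookkeeping right.
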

\begin{proof}
We have $H^{0,0}_{C_{2}}(\A(n\sigma)\setminus \{0\},\ul{A}) \cong H^{0,0}_{\mcal{M}}((\A(n\sigma)\setminus \{0\})/C_{2},A)= A$ by \aref{lem:freequot} and the maps 
$$
H^{0,0}_{\mcal{M}}((\A(n\sigma)\setminus \{0\})/C_{2},A)\to H^{0,0}_{\mcal{M}}((\A((n+1)\sigma)\setminus \{0\})/C_{2},A)
$$ 
are isomorphisms.
\end{proof}

Write $v=st\in H^{2\sigma-2,\sigma -1}_{C_2}(k,\ul{\Z/2})$ 
for the element obtained from \aref{lem:ofC2} and \aref{lem:sigma1}.
We also write $v$ for the corresponding image in $H^{\star, \star}_{C_{2}}(\EG C_{2},\ul{\Z/2})$.

\begin{theorem}\label{thm:periodicity}
The element $v\in H^{2(\sigma-1),\sigma -1}_{C_2}(\EG C_2,\ul{\Z/2})$ is invertible.
\end{theorem}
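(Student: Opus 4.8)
The goal is to show that the elements $s\in H^{\sigma-1,0}_{C_2}(\EG C_2,\ul{\Z/2})$ and $t\in H^{\sigma-1,\sigma-1}_{C_2}(\EG C_2,\ul{\Z/2})$ are units. Since $H^{\star,\star}_{C_2}(\EG C_2,\ul{\Z/2})$ is a graded ring and multiplication by $s$ (resp.~$t$) shifts the grading by $(\sigma-1,0)$ (resp.~$(\sigma-1,\sigma-1)$), it suffices to show that for every pair of virtual representations $(\alpha,\beta)$ the map
$$
\cdot\, s : H^{\alpha,\beta}_{C_2}(\EG C_2,\ul{\Z/2}) \to H^{\alpha+\sigma-1,\beta}_{C_2}(\EG C_2,\ul{\Z/2})
$$
is an isomorphism, and similarly for $t$. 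I would prove this directly from the defining colimit $\EG C_2 = \colim_n (\A(n\sigma)\setminus\{0\})$ together with the Milnor $\lim^1$ sequence, reducing the claim to a statement about the finite approximations $U_n := \A(n\sigma)\setminus\{0\}$.

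\textbf{Key steps.} First, $U_n$ has free $C_2$-action, so by \aref{lem:freequot} and \aref{thm:motivicrep} its Bredon motivic cohomology is identified with ordinary motivic cohomology of the quotient $U_n/C_2$, and in particular the integer-graded and representation-graded theories are related through the free-action periodicity. The essential input is that multiplication by $s$ and $t$ fits, via the cofiber sequence \eqref{eqn:fun2} $(U_{n})_+ \to S^0 \to S^{2n\sigma,n\sigma}$, into a comparison between the cohomology of $U_n$ and that of a point; concretely, restricting the universal classes of \aref{lem:ofC2} along $C_{2+}\to (U_n)_+$ and tracking them through the $\A^1$-equivalence $U_n \wkeq$ (a free-action space whose quotient is the algebro-geometric $\BG C_2$-approximation). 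The point is that over $U_n$, the sign representation becomes ``trivializable up to a twist by a line bundle'' — precisely $\A(\sigma)\times_{C_2} U_n$ is the line bundle $\mcal{O}(-2)$ on $U_n/C_2$ restricted appropriately — so the representation-grading shift by $\sigma-1$ becomes a Thom/Euler-class shift, which on the quotient $U_n/C_2$ (a motivic space built from projective spaces) acts by an isomorphism in the stated range, and in the limit by an isomorphism on the nose once one passes to $\EG C_2$. I would make this precise by: (a) identifying $s$ restricted to $U_n$ with a unit in $H^{\star,\star}_{C_2}(U_n,\ul{\Z/2})$ using \aref{lem:ofC2} and naturality along $C_{2+}\to (U_n)_+$, which forces $s|_{U_n}$ to be invertible because $H^{\star,\star}_{C_2}(C_2,\ul{\Z/2})$ is a Laurent algebra in $s$; (b) doing the same for $t$; (c) assembling over $n$ via the $\lim^1$ sequence, using \aref{lem:EG00}-style vanishing of $\lim^1$-terms (the maps in the tower are eventually isomorphisms by \aref{prop:higherchow} and the Bloch--Kato-based results) to conclude $s$ and $t$ act invertibly on $H^{\star,\star}_{C_2}(\EG C_2,\ul{\Z/2})$.

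\textbf{The main obstacle.} The delicate point is step (a)--(b): showing that the \emph{restriction} of the globally-defined classes $s,t \in H^{\star,\star}_{C_2}(k,\ul{\Z/2})$ to $\EG C_2$ is invertible, not merely that an invertible class with those degrees \emph{exists} over each $U_n$. One must verify that the invertible element $x_{\sigma-1,0}\in H^{\star,\star}_{C_2}(C_2,\ul{\Z/2})$ produced in \aref{lem:ofC2} is compatible, under the map $C_{2+}\to (U_n)_+ \to S^0$, with the distinguished $s$ coming from \aref{lem:sigma1}; this requires carefully chasing the suspension isomorphisms and the identification $H^{\sigma-1,0}_{C_2}(C_2,\ul{\Z/2})\cong\Z/2$ against the explicit cofiber sequences \eqref{eqn:fun1} and \eqref{eqn:fun2}. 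Equivalently, one is checking that the class $s$ becomes a unit already in the Borel theory $H^{\star,\star}_{C_2}(\EG C_2,\ul{\Z/2})$ because $\EG C_2$ has free action, so that \aref{lem:EGtilde}(i) forces the relevant relative groups to vanish and the periodicity operators act isomorphically degreewise. I expect the bookkeeping here — matching the abstractly-defined invertible generators with the geometrically-defined $s$ and $t$ across all the adjunctions and Milnor sequences — to be the bulk of the work, with no conceptual surprises beyond that.
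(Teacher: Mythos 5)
Your high-level architecture is compatible with the paper's proof — restrict to $C_2$ via the embedding $C_{2}\subseteq \A(n\sigma)\setminus\{0\}$, use the Milnor sequence to pass from the finite approximations $U_n$ to $\EG C_2$, and check that $s\cup\bar u = 1$ in $H^{0,0}_{C_2}(\EG C_2,\ul{\Z/2})\iso\Z/2$ by restriction — but the core of the argument is missing, and the reasoning you offer in its place is circular.

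The genuine gap is in your steps (a)--(b). You claim that $s|_{U_n}$ is a unit ``because $H^{\star,\star}_{C_2}(C_2,\ul{\Z/2})$ is a Laurent algebra in $s$,'' appealing to naturality along $C_{2+}\to (U_n)_+$. But restriction $i_n^*:H^{\star,\star}_{C_2}(U_n,\ul{\Z/2})\to H^{\star,\star}_{C_2}(C_2,\ul{\Z/2})$ is a ring map in the \emph{wrong} direction for this: that $i_n^*(s)$ is a unit in the target says nothing about $s|_{U_n}$ being a unit in the source, unless one first knows that $i_n^*$ is an isomorphism (or at least surjective) in the degrees where $s^{-1}$ and $t^{-1}$ live. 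Establishing that $i_n^*$ is an isomorphism in degree $(1-\sigma,0)$ and in degree $(1-\sigma,1-\sigma)$ is precisely the hard computational content of the theorem, and your proposal substitutes ``bookkeeping'' and a vague appeal to Thom/Euler class shifts for it. In the paper this is done by a careful five-term-exact-sequence and rank count: one combines the cofiber sequences \eqref{eqn:fun1} and \eqref{eqn:fun2}, the splitting of \cite[Proposition 8.3]{HVO:cancellation} (equivariant transfers giving a section of $i^*$), the cofiber sequence $U_n/C_2\to\P^{n-1}\to\Th(\mcal{O}(-1))$, Gysin sequences, and known ranks of motivic cohomology groups of $U_n/C_2$, to pin down the ranks of the relevant $\Z/2$-modules and deduce that $i_n^*$ is an isomorphism in those two degrees. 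That computation is the theorem; without it there is nothing to ``assemble'' in step (c).

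Two smaller issues: the ``free-action periodicity'' you invoke is exactly what is being proved — it is not available as an input — and \aref{lem:EGtilde}(i) concerns the vanishing of $\rH^{\star,\star}_{C_2}(X_+\wedge\EGt C_2)$ for a smooth quasi-projective $C_2$-scheme $X$ with free action; $\EG C_2$ is a colimit, not such a scheme, and the lemma's conclusion does not by itself deliver the degreewise invertibility of $s$ and $t$. Finally, the uniqueness of the lifts $u_n$, $v_n$ (which the paper gets for free once $i_n^*$ is an isomorphism) is what produces a coherent system in $\lim_n$; you do not need $\lim^1$ to vanish as you suggest, only the surjectivity of the Milnor map onto $\lim_n$ plus \aref{lem:EG00} to check the product equals $1$.
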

\begin{proof}
Consider the equivariant embedding $i_{n}:C_{2}\subseteq \A(n\sigma)\setminus  \{0\} $ given by including at $\{\pm 1\}$.
We show that $i_{n}$ induces an isomorphism
\[
i_n^*:H^{2-2\sigma, 1-\sigma}_{C_2}(\A(n\sigma)\setminus  \{0\} , \ul{\Z/2}) \xrightarrow{\iso} H^{2-2\sigma, 1-\sigma}_{C_{2}}(C_{2}, \ul{\Z/2}).
\]

This will imply the theorem as follows.
For each $n$, 
the elements $v^{-1}$ in $H^{\star,\star}_{C_2}(C_{2}, \ul{\Z/2})$ lift uniquely to  elements $u_{n}$  in $H^{\star,\star}_{C_2}(\A(n\sigma)\setminus \{0\},\ul{\Z/2})$. 
The uniqueness of these lifts implies that $\{u_{n}\}$   determine elements $(u_{n})$, in $\lim_{n} H^{\star,\star}_{C_{2}}(\A(n\sigma)\setminus \{0\},\ul{\Z/2})$.
These in turn lift to an element $\overline{u}$,  in $H^{\star,\star}_{C_{2}}(\EG C_{2}, \ul{\Z/2})$.
We now find that $v\cup \overline{u} \in H^{0,0}_{C_2}(\EG C_{2}, \ul{\Z/2}) =\Z/2$ must be equal to $1$ since it maps to $1 \in H^{0,0}_{C_{2}}(C_{2},\ul{\Z/2})=\Z/2$.

For typographical simplicity we will suppress the coefficients $\Z/2$ of the cohomology groups  from the notation. 
We will also write $U_{n}:=\A(n\sigma)\setminus \{0\}$. 
Consider the comparison of exact sequences, induced by (\ref{eqn:fun2})
\begin{equation*}
\xymatrix{
 H^{1,1}_{C_2}(U_{n})\ar[r]\ar[d] &  H^{1,1}_{C_2}(U_1\times U_n) \ar[r]\ar[d]
&
\rH^{2,1}_{C_2}( T^\sigma\wedge U_{n+})\ar[r]\ar[d] 
& 0 \\
H^{1,1}_{C_2}(C_{2})\ar[r] &  H^{1,1}_{C_2}( U_{1}\times C_{2}) 
\ar[r] &
\rH^{2,1}_{C_2}(T^{\sigma}\wedge C_{2+}) \ar[r] 
& 0 .
}
\end{equation*}

The quotient $U_1\times_{C_2}U_n$ is the complement of the zero section of the line bundle $L:=\A(\sigma)\times_{C_2}U_{n}$ on $U_n/C_2$. 
By \aref{lem:freequot}, the left hand square of the above diagram is identified with the left hand square of the commutative diagram
\begin{equation*}
\xymatrix{
	H^{1,1}_{\mcal{M}}(U_{n}/C_2)\ar[r]\ar[d] &  H^{1,1}_{\mcal{M}}(U_1\times_{C_2} U_n) \ar[r]\ar[d]
	&
	\rH^{2,1}_{\mcal{M}}( \Th(L))\ar[r]\ar[d] 
	& 0 \\
	H^{1,1}_{\mcal{M}}(\spec(k))\ar[r] &  H^{1,1}_{\mcal{M}}(\A^{1}\setminus\{0\}) 
	\ar[r] &
	\rH^{2,1}_{\mcal{M}}(T) \ar[r] 
	& 0.
}
\end{equation*}
The map $\rH^{2,1}_{\mcal{M}}( \Th(L))\to\rH^{2,1}_{\mcal{M}}(T)$ sends the Thom class of $L$ to a generator and so this map is an isomorphism. In particular 
$\rH^{2,1}_{C_2}( T^\sigma\wedge U_{n+})\to \rH^{2,1}_{C_2}( T^\sigma\wedge C_{2+})$ is an isomorphism, as desired.

\end{proof}

\begin{corollary}
\label{cor:period}
Multiplication by $v^{-q}\in H^{2q(1-\sigma), q(1-\sigma)}_{C_2}(\EG C_2,\ul{\Z/2})$ induces a natural isomorphism
$$
\rH^{a+p\sigma,b+q\sigma}_{C_{2}}(\mathsf{E}\wedge (\EG C_{2})_{+},\ul{\Z/2}) \iso 
\rH^{(a+2q) +(p-2q)\sigma,b+q}_{C_{2}}(\mathsf{E}\wedge (\EG C_{2})_+,\ul{\Z/2})
$$
of $H^{\star,\star}_{C_2}(\EG C_2,\ul{\Z/2})$-modules for any motivic $C_2$-spectrum $\mathsf{E}$.
\end{corollary}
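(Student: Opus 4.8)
The plan is to obtain this as a formal consequence of \aref{thm:periodicity}. The one preliminary point to pin down is the module structure on $\rH^{\star,\star}_{C_2}(\mathsf{E}\wedge (\EG C_2)_+,\ul{\Z/2})$ over the ring $H^{\star,\star}_{C_2}(\EG C_2,\ul{\Z/2}) = \rH^{\star,\star}_{C_2}((\EG C_2)_+,\ul{\Z/2})$. It is given in the usual way: a pair $(x,y)$ is sent to $(\mathrm{id}_{\mathsf{E}}\wedge\Delta)^*(x\boxtimes y)$, where $\Delta\colon \EG C_2\to \EG C_2\times \EG C_2$ is the diagonal (a colimit of the diagonals of the $\A(n\sigma)\setminus\{0\}$) and $\boxtimes$ is the external pairing induced by the commutative ring spectrum multiplication $\M{\Z/2}\wedge\M{\Z/2}\to\M{\Z/2}$. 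Coassociativity of $\Delta$ together with homotopy associativity of $\M{\Z/2}$ make this an associative, unital action, visibly natural in $\mathsf{E}$; and since over $\Z/2$ the sign $(-1)^{ac+ps}$ of \aref{prop:product} is trivial, $H^{\star,\star}_{C_2}(\EG C_2,\ul{\Z/2})$ is commutative, so multiplication by a fixed class is a morphism of $H^{\star,\star}_{C_2}(\EG C_2,\ul{\Z/2})$-modules.

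Next I would check the grading bookkeeping. As $s$ has bidegree $(\sigma-1,0)$ and $t$ has bidegree $(\sigma-1,\sigma-1)$, the class $s^{d-b}t^{-d}$ has cohomological degree $(d-b)(\sigma-1)-d(\sigma-1)=b-b\sigma$ and weight $-d(\sigma-1)=d-d\sigma$, so it indeed lies in $H^{b-b\sigma,\,d-d\sigma}_{C_2}(\EG C_2,\ul{\Z/2})$, and cup product with it carries $\rH^{a+b\sigma,c+d\sigma}_{C_2}(\mathsf{E}\wedge(\EG C_2)_+,\ul{\Z/2})$ into $\rH^{a+b,c+d}_{C_2}(\mathsf{E}\wedge(\EG C_2)_+,\ul{\Z/2})$, as in the statement.

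Finally, \aref{thm:periodicity} asserts that $s$ and $t$ are units of $H^{\star,\star}_{C_2}(\EG C_2,\ul{\Z/2})$, so $s^{d-b}t^{-d}$ is a unit with inverse $s^{b-d}t^{d}$. Multiplication by a unit of a ring is an isomorphism of any module over that ring, inverse to multiplication by the inverse unit; applying this to the module of the first paragraph yields the stated natural isomorphism, with naturality in $\mathsf{E}$ inherited from that of the pairing. There is no genuine obstacle here: all the substance sits in \aref{thm:periodicity}, and what remains — making the module structure precise and checking degrees — is routine.
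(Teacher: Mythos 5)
Your proof is correct and follows exactly the intended line: the paper records this as an immediate corollary of \aref{thm:periodicity} with no separate argument, and what you have done is simply spell out the routine module-structure and degree bookkeeping that the paper leaves implicit. Nothing is missing; the observation that multiplication by a unit of the coefficient ring is an isomorphism on any module, together with the bidegree count for $s^{d-b}t^{-d}$, is the whole content.
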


\begin{remark} \label{rem}
This  $(2\sigma -2, \sigma -1)$-periodicity does not decompose
into a  $(\sigma -1, 0)$ and a $(\sigma -1, \sigma -1)$-periodicity (contrary to an erroneous claim in a previous version of this paper). 
We also note that the failure of $(\sigma -1, \sigma -1)$-periodicity shows that the condition $a\leq b-q$ in \aref{prop:free} is unavoidable.

To see the failure of $(\sigma -1, 0)$-periodicity, consider the exact sequence,  where $U_n = \A(n\sigma)\setminus 0$,
$$
H^{0,0}_{C_2}(U_n,\ul{\Z/2}) \to H^{0,0}_{C_2}(C_2\times U_n,\ul{\Z/2}) \to \rH^{1,0}_{C_2}(S^{\sigma}\wedge U_{n+},\ul{\Z/2}) \to H^{1,0}_{C_2}(U_n,\ul{\Z/2}).
$$ 
Since $H^{1,0}_{C_2}(U_n,\ul{\Z/2}) \cong H^{1,0}_{\mathcal{M}}(U_n/C_2,\ul{\Z/2}) = 0$ and the
lefthand map is an isomorphism, we have 
$H^{1-\sigma,0}_{C_2}(U_n,\ul{\Z/2}) = 0$. 
Therefore 
$H^{1-\sigma,0}_{C_2}(\EG C_2,\ul{\Z/2}) = 0$;
in particular,  $H^{\star,\star}_{C_2}(\EG C_2,\ul{\Z/2})$ is not  $(\sigma -1,0)$-periodic. Consequently it is also not $(\sigma -1, \sigma -1)$-periodic.

\end{remark}

We note as well that \aref{prop:invariance} immediately implies that the cohomology of any $X_{+}\wedge \EGt C_{2}$ is periodic in the following sense.

\begin{proposition}\label{cor:degshift}
Let $X$ be a smooth $C_2$-scheme over $k$ and $A$ be a commutative ring.
For all integers $a$,$b$,$p$,$q$, 
there are $H^{\star,\star}_{C_{2}}(X,\ul{A})$-module isomorphisms
$$
\rH^{a+p\sigma, b+q\sigma}_{C_{2}}(X_{+}\wedge
\EGt  C_{2},\ul{A}) \iso \rH^{a+(p-1)\sigma, b+q\sigma}_{C_{2}}(X_{+}\wedge\EGt  C_{2},\ul{A})
$$ 
and 
$$
\rH^{a+p\sigma, b+q\sigma}_{C_{2}}(X_{+}\wedge
\EGt  C_{2},\ul{A}) \iso \rH^{a+(p-2)\sigma, b+(q-1)\sigma}_{C_{2}}(X_{+}\wedge\EGt  C_{2},\ul{A} ).
$$ 
\end{proposition}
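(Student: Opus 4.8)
The plan is to read off both periodicity isomorphisms directly from \aref{prop:invariance}, which supplies equivariant motivic equivalences $\EGt C_2 \wkeq S^{2\sigma,\sigma}\wedge\EGt C_2$ and $\EGt C_2 \wkeq S^{\sigma,0}\wedge\EGt C_2$ induced by the unit maps $S^0\to S^{2\sigma,\sigma}$ and $S^0\to S^{\sigma,0}$. Since smashing with $X_+$ preserves equivariant motivic equivalences, these give $X_+\wedge\EGt C_2 \wkeq S^{2\sigma,\sigma}\wedge(X_+\wedge\EGt C_2)$ and $X_+\wedge\EGt C_2 \wkeq S^{\sigma,0}\wedge(X_+\wedge\EGt C_2)$. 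For the second isomorphism in the statement I would apply $\rH^{a+p\sigma,b+q\sigma}_{C_2}(-,\ul{A})$ to the first equivalence and then invoke the $V=\sigma$ instance of the suspension isomorphism \eqref{eqn:suspiso}, namely $\rH^{a+p\sigma,b+q\sigma}_{C_2}(T^{\sigma}\wedge\mathsf{E},\ul{A})\iso\rH^{a+(p-2)\sigma,b+(q-1)\sigma}_{C_2}(\mathsf{E},\ul{A})$, together with the equivalence $S^{2\sigma,\sigma}\wkeq T^{\sigma}$. For the first isomorphism I would apply $\rH^{a+p\sigma,b+q\sigma}_{C_2}(-,\ul{A})$ to the second equivalence and desuspend: since $S^{\sigma,0}$ is invertible in $\SH_{C_2}(k)$, with $S^{-\sigma,0}\wedge S^{a+p\sigma,b+q\sigma}=S^{a+(p-1)\sigma,b+q\sigma}$ by the convention \eqref{eqn:funnyspheres}, representability gives $\rH^{a+p\sigma,b+q\sigma}_{C_2}(S^{\sigma,0}\wedge\mathsf{E},\ul{A})\iso\rH^{a+(p-1)\sigma,b+q\sigma}_{C_2}(\mathsf{E},\ul{A})$.

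It then remains to verify that the resulting isomorphisms are $H^{\star,\star}_{C_2}(X,\ul{A})$-linear, the module structure on $\rH^{\star,\star}_{C_2}(X_+\wedge\EGt C_2,\ul{A})$ being induced by the diagonal coaction $X_+\wedge\EGt C_2\to X_+\wedge X_+\wedge\EGt C_2$ and the ring structure on $\MA$. This is essentially automatic: each equivalence coming from \aref{prop:invariance} is smashed with $\id_{X_+}$ and hence intertwines the diagonal coactions, while the (de)suspension isomorphisms are compatible with cup product against classes pulled back from $X$; so the composite respects the $H^{\star,\star}_{C_2}(X,\ul{A})$-action.

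I do not anticipate a genuine obstacle here: granting \aref{prop:invariance}, the argument is a formal consequence of representability of Bredon motivic cohomology and of the invertibility of the four sphere objects $S^1,S^{\sigma},S^1_t,S^{\sigma}_t$. The only points requiring care are the bookkeeping in the indexing convention \eqref{eqn:funnyspheres} — confirming that smashing with $S^{\sigma,0}$, respectively with $S^{2\sigma,\sigma}\wkeq T^{\sigma}$, produces exactly the claimed shifts, and that $S^{\sigma,0}$ (not merely $T^{\sigma}$) is invertible in $\SH_{C_2}(k)$ so that the desuspension in the first isomorphism is legitimate — together with the routine module-linearity check just sketched.
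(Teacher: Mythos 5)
Your proof is correct and takes essentially the same route the paper intends: the paper states \aref{cor:degshift} as an immediate consequence of \aref{prop:invariance} with no written proof, and you have supplied precisely the missing bookkeeping (smashing the two equivalences from \aref{prop:invariance} with $X_+$, invoking \eqref{eqn:suspiso} for $T^{\sigma}\wkeq S^{2\sigma,\sigma}$, desuspending by the invertible sphere $S^{\sigma,0}$, and checking $H^{\star,\star}_{C_2}(X,\ul{A})$-linearity via compatibility with the diagonal).
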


\section{Comparing motivic and topological Bredon cohomology over \texorpdfstring{$\C$}{C}}\label{s:C}
Let $X$ be a smooth variety over a field. 
The Beilinson-Lichtenbaum conjecture \cite[Conjecture 6.8]{SV:BK} is the assertion that the map
\begin{equation}\label{eqn:BLconj}
H^{p,q}_{\mcal{M}}(X,\Z/n)\to H^{p}_{et}(X,\mu_{n}^{\otimes q})
\end{equation}
is an isomorphism when $p\leq q$ and is an injection when $p = q+1$. 
The validity of this conjecture is a consequence of the Milnor and Bloch-Kato conjectures \cite{Voev:miln, Voev:BK} together with \cite[Theorem 7.4]{SV:BK}.
Now if $X$ is a complex variety this can be rephrased using singular cohomology;
topological realization
\begin{equation}
\label{eqn:BLconjC}
H^{p,q}_{\mcal{M}}(X,\Z/n)\to H^{p}_{sing}(X(\C),\Z/n)\;\;
\text{is } \begin{cases}
\text{an isomorphism} & \text{if } p\leq q, \\
\text{a monomorphism} & \text{if } p = q+1.                                                                                                                                         
\end{cases}
\end{equation}

In this section, 
we establish a $C_2$-equivariant generalization of the Beilinson-Lichtenbaum conjecture for smooth complex varieties $X$ with involution. 
We begin with a consideration of the Borel part of the Bredon cohomologies.

\begin{lemma}\label{lem:freeinp}
	Let $U$ be a smooth quasi-projective complex $C_{2}$-variety with free action. Then
	 $$
	 \RRe_{\C}: H^{m+i\sigma, n}_{C_{2}}(U, \ul{\Z/2}) \to H^{m+i\sigma}_{C_{2}}(U(\C), \ul{\Z/2}),
	 $$
	 is an isomorphism if $m\leq n$ and $m+i\leq n$. It is 
a monomorphism if  $m \leq n+1$ and  $m+i\leq n+1$. 
\end{lemma}
\begin{proof}
When $i=0$ the result holds by \eqref{eqn:BLconjC} together with \aref{lem:freequot}. The result follows in general using induction on $i$ and considering the comparison of exact sequences obtained from \eqref{eqn:fun1},
\[
\xymatrix@C-1.3pc{
 \ar[r] & H^{m+j\sigma,n}_{C_2}(U) \ar[r]\ar[d] & H^{m+(j+1)\sigma,n}_{C_2}(U) \ar[r]\ar[d] & H^{m+j+1,n}_{\mcal{M}}(U) \ar[r]\ar[d] & H^{m+1+j\sigma,n}_{C_2}(U)\ar[r]\ar[d] & \\
\ar[r] & H^{m+j\sigma}_{C_2}(U(\C)) \ar[r] & H^{m+(j+1)\sigma}_{C_2}(U(\C)) \ar[r] & H^{m+j+1}_{sing}(U(\C))\ar[r]& 
H^{m+1+j\sigma}_{C_2}(U(\C))\ar[r] & . 
}
\]
 
\end{proof}

\begin{proposition}\label{prop:free}
	Let $X$ be a smooth complex $C_{2}$-variety and $A$ a finite abelian group. Then
	 $$
	 \RRe_{\C}: H^{a+p\sigma, b+q\sigma}_{C_{2}}(X\times \EG C_{2}, \ul{A}) \to H^{a+p\sigma}_{C_{2}}(X(\C)\times \EG C_{2}(\C), \ul{A}),
	 $$
	 is 
\begin{enumerate}
\item[(i)] an isomorphism if $a\leq b-q$ and $a+p\leq b+q$, and 
\item[(ii)] a monomorphism if $a \leq b-q+1$ and $a+p\leq b+q+1$. 
\end{enumerate}
\end{proposition}
\begin{proof}
	We first assume that $X$ is quasi-projective.
It suffices to assume that $A=\Z/\ell^i$ where $\ell$ is a prime.
Suppose that $2$ is invertible in $A$. Consider the commutative diagram coming from  and \aref{prop:C2e} and \aref{prop:mtwo}, where the coefficient group $A$ has been suppressed
$$
\xymatrix{
	H^{a+p\sigma,b+q\sigma}_{C_2}(X\times \EG C_2)
	\ar@{^{(}->}[r]^-{\pi^*}\ar[d]_{\RRe_{\C}} & 
	H^{a+p,b+q}_{\mcal{M}}(X\times \EG C_2) \ar@{->>}[r]^-{\tau^*}\ar[d]_{\RRe_{\C}} &
	H^{a+p\sigma,b+q\sigma}_{C_2}(X\times \EG C_2)\ar[d]_{\RRe_{\C}}
	\\
	H^{a+p\sigma}_{C_2}((X\times \EG C_2)(\C)) \ar@{^{(}->}[r]^-{\pi^*}&
	H^{a+p}_{sing}((X\times \EG C_2)(\C)) \ar@{->>}[r]^-{\tau^*}&
	H^{a+p\sigma}_{C_2}((X\times \EG C_2)(\C)).
}
$$
The horizontal compositions are multiplication by $2$ and so are isomorphisms. 
By the Beilinson-Lichtenbaum conjecture (\ref{eqn:BLconjC}), 
the middle map is an isomorphism for $a+p\leq b+q$ and an injection for $a+p=b+q+1$.  
This implies that the same is thus true of the outer vertical arrows. 

It remains to consider the case $A = \Z/2^{i}$. 
By comparing the exact sequences arising from the short exact sequence $0\to \Z/2^{i-1}\to\Z/2^{i}\to \Z/2\to 0$ and induction, 
we are reduced to the case $A=\Z/2$. 

We now assume that $A=\Z/2$ (and continue suppressing the coefficients as needed).
Using the periodicity from \aref{cor:period} we may replace 
$(a+p\sigma, b+q\sigma)$ by $((a+2q) + (p-2q)\sigma, b+q)$ 
and we write $(m+i\sigma,b)$ for this new bidegree. The hypothesis on $(a+p\sigma, b+q\sigma)$ implies that  $(m+i\sigma,b)$ satisfies the hypothesis of the previous lemma. 
The result now follows in this case from the previous lemma together with the comparison of Milnor exact sequences
\begin{equation*}
\xymatrix@-1.3pc{
	0 \ar[r] & 
	\lim_{n}^{1}H^{m-1+i\sigma, b}_{C_{2}}(U_n)
	\ar[r]\ar[d] &
	H^{m+i\sigma, b}_{C_{2}}(X\times\EG C_{2})
	\ar[r]\ar[d] &
	\lim_{n}H^{m+i\sigma, b}_{C_{2}}(U_n)
	\ar[r]\ar[d] & 0
	\\
	0 \ar[r] & 
	\lim_{n}^{1}H^{m-1+i\sigma}_{C_{2}}(U_n(\C))
	\ar[r] &
	H^{m+i\sigma}_{C_{2}}((X\times\EG C_{2})(
	\C))
	\ar[r] &
	\lim_{n}H^{m+i\sigma}_{C_{2}}(U_n(\C))
	\ar[r] & 0
}
\end{equation*}
where $U_{n}:=X\times(\A(n\sigma)\setminus  \{0\})$, since $U_{n}$ is a smooth quasi-projective variety with free action.

To deduce the proposition for a general smooth $X$ from the quasi-projective case,  
we use that $X$ is locally affine in the equivariant Nisnevich topology, 
see \aref{rem:localaffine}. 
There are several ways to turn this observation into a formal argument; 
we proceed directly as follows. 
Suppose that we have a cartesian square of smooth $C_2$-complex varieties
$$
\xymatrix{
	W \ar@{^{(}->}[r]\ar[d] & Y \ar[d]^{\phi}\\
	U \ar@{^{(}->}[r] & X,
}
$$
where $Y$ is quasi-projective, 
$\phi$ is equivariant \'etale, 
$U$ is an invariant open, 
and the restriction $\phi|_{Y\setminus W}$ has an equivariant section. 
Then, 
if the proposition is true for $U$ it is also true for $X$. 
Indeed, 
this square leads to a distinguished equivariant Nisnevich square, via standard techniques,
$$
\xymatrix{
	W' \ar@{^{(}->}[r]\ar[d] & Y' \ar[d]\\
	U \ar@{^{(}->}[r] & X,
}
$$
where $Y'$ is open in $Y$;
in particular, it is quasi-projective. 
Comparing the resulting Mayer-Vietoris long exact sequences then shows that under these assumptions, 
the proposition holds for $X$. 
	
Now we let $A\subseteq X$ be a dense invariant affine open and $Y$ any quasi-projective equivariant Nisnevich cover of $X$. 
Let $\emptyset = Z_{n+1}\subseteq Z_{n}\subseteq \cdots Z_{1}\subseteq Z_{0}:=X\setminus A$ be an equivariant splitting sequence for $Y|_{Z_{0}}$. 
Set $X_{i} = X\setminus Z_{i}$ and $Y_{i} = Y|_{X_{i}}$. 
The cartesian square
$$
\xymatrix{
	Y_{i} \ar@{^{(}->}[r]\ar[d] & Y_{i+1} \ar[d]\\
	X_{i} \ar@{^{(}->}[r] & X_{i+1}
}
$$
satisfies the conditions of the previous paragraph, and so proceeding by induction we find that the proposition holds for each $X_{i}$.
\end{proof}

Next we consider the isotropic part of the Bredon cohomologies.

\begin{proposition}\label{prop:isotrop}
Let $X$ be a smooth complex $C_{2}$-variety and $A$ a finite abelian group. 
Then
$$
\RRe_{\C}: \rH^{a+p\sigma, b+q\sigma}_{C_{2}}(X_+\wedge \wt\EG C_{2}, \ul{A}) \to \rH^{a+p\sigma}_{C_{2}}(X(\C)_+\wedge \wt\EG C_{2}(\C), \ul{A}),
$$
is an isomorphism if $a\leq b$ and an injection if $a = b+1$.
\end{proposition}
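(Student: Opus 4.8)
The plan is to follow the pattern of \aref{prop:free}: first reduce to the fixed-point locus, where the action becomes trivial; then reduce to an integer bidegree using periodicity; and finally exhibit the isotropic part as a natural direct summand of a Borel cohomology group already controlled by \aref{prop:free}. At the outset I would, exactly as at the end of the proof of \aref{prop:free}, use the fact that $X$ is locally affine in the equivariant Nisnevich topology together with a Mayer--Vietoris induction to reduce to the case that $X$ is quasi-projective.

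The first substantive step is the reduction to trivial action. Write $X^{C_2} = \coprod_r Z_r$ for the decomposition into connected components (each $Z_r$ smooth and quasi-projective) and let $U = X\setminus X^{C_2}$, which has free action; let $\mcal N$ be the normal bundle of $X^{C_2}$ in $X$, so that $\mcal N|_{Z_r}$ is of type $c_r\sigma$ with $c_r = \codim_X(Z_r)$ by \aref{rem:fixtype}. Equivariant homotopical purity gives a cofiber sequence $U\to X\to \Th(\mcal N)$ of motivic $C_2$-spaces. Since $U$ has free action, \aref{lem:EGtilde}(i) gives $\rH^{\star,\star}_{C_2}(U_+\wedge\wt{\EG}C_2,\ul A) = 0$, so smashing this cofiber sequence with $\wt{\EG}C_2$ produces an isomorphism $\rH^{\star,\star}_{C_2}(X_+\wedge\wt{\EG}C_2,\ul A)\iso\rH^{\star,\star}_{C_2}(\Th(\mcal N)\wedge\wt{\EG}C_2,\ul A)$, which by the Thom isomorphism \aref{thm:thomisos} is $\bigoplus_r\rH^{\star-2c_r\sigma,\,\star-c_r\sigma}_{C_2}((Z_r)_+\wedge\wt{\EG}C_2,\ul A)$; the key point is that the integer part of the bidegree is unchanged. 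I would run the identical argument over $\C$ — here $U(\C)$ is a free $C_2$-CW complex, so $U(\C)_+\wedge\wt{\EG}C_2(\C)\wkeq *$, and $\mcal N|_{Z_r}(\C)$ is a $C_2$-vector bundle over $Z_r(\C)$ with fibre the real representation $2c_r\sigma$, so the topological Thom isomorphism shifts the grading by $S^{2c_r\sigma}$. These isomorphisms are compatible with $\RRe_\C$ by \aref{prop:thomcompat} (motivic Thom classes realize to topological Thom classes), together with $\RRe_\C(S^{2c_r\sigma,c_r\sigma}) = S^{2c_r\sigma}$. Hence it suffices to prove the proposition when $X$ has trivial $C_2$-action, for the same integer bidegree $(a,b)$.

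Assume $X$ has trivial action. Since $\wt{\EG}C_2$ is periodic in the $\sigma$-directions — the maps $S^0\to S^{2\sigma,\sigma}$ and $S^0\to S^{\sigma,0}$ induce equivalences $\wt{\EG}C_2\wkeq S^{2\sigma,\sigma}\wedge\wt{\EG}C_2$ and $\wt{\EG}C_2\wkeq S^{\sigma,0}\wedge\wt{\EG}C_2$ by \aref{prop:invariance} — the suspension isomorphisms let me move $(p,q)$ to $(0,0)$ while fixing $(a,b)$; applying $\RRe_\C$ to these two maps of spheres yields the corresponding $S^\sigma$-periodicity equivalences for $\wt{\EG}C_2(\C)$, so the reduction is compatible with $\RRe_\C$. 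Thus I may assume $p = q = 0$. Now consider the motivic isotropy separation cofiber sequence $\EG C_{2+}\wedge X_+\to X_+\to\wt{\EG}C_2\wedge X_+$. The projection $X\times\EG C_2\to X$ has a section, so $H^{a-1,b}_{C_2}(X,\ul A)\to H^{a-1,b}_{C_2}(X\times\EG C_2,\ul A)$ is split injective, and the long exact sequence identifies $\rH^{a,b}_{C_2}(X_+\wedge\wt{\EG}C_2,\ul A)$ with the cokernel of that map — hence with a natural direct summand of $H^{a-1,b}_{C_2}(X\times\EG C_2,\ul A)$. The same holds over $\C$, compatibly with $\RRe_\C$, so $\RRe_\C$ on the isotropic part is a direct summand of $\RRe_\C\colon H^{a-1,b}_{C_2}(X\times\EG C_2,\ul A)\to H^{a-1}_{C_2}(X(\C)\times\EG C_2(\C),\ul A)$, which by \aref{prop:free} (with $p=q=0$) is an isomorphism for $a-1\le b$ and a monomorphism for $a-1=b+1$. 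A direct summand of an isomorphism (resp.\ monomorphism) is an isomorphism (resp.\ monomorphism), which gives the claimed range (in fact a little more).

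The step I expect to be the main obstacle is the first one, the reduction to trivial action. It hinges on two compatibility facts for Betti realization that must be handled with care: that $\RRe_\C$ carries the homotopical-purity cofiber sequence to the topological Thom cofiber sequence, and that it carries the motivic Thom isomorphisms to the topological ones (this is \aref{prop:thomcompat}). Equally delicate is the bookkeeping with the grading: one must check that the Thom-isomorphism shift is entirely in the $\sigma$-directions — so that the hypothesis ``$a\le b$'' is preserved under the reduction — and that this shift matches the $S^{2c_r\sigma}$ appearing on the topological side via $\RRe_\C(S^{2c_r\sigma,c_r\sigma}) = S^{2c_r\sigma}$. Once the action is trivial, everything else is a formal consequence of the isotropy separation sequence and \aref{prop:free}.
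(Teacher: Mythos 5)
Your proof is correct and follows the paper's overall outline (purity plus Thom isomorphisms to reduce to $X^{C_2}$, then isotropy separation), but your treatment of the trivial-action step is genuinely different from the paper's, and it is worth contrasting the two. The paper proves the trivial-action case by comparing the two long exact sequences associated to the isotropy cofiber sequence and running a five-lemma argument: the inputs are that the comparison map for $H^{a,b}_{C_2}(X) \iso H^{a,b}_{\mcal M}(X)$ is controlled by nonequivariant Beilinson--Lichtenbaum (\aref{prop:forgetiso}) and that the comparison map for $H^{a,b}_{C_2}(X\times\EG C_2)$ is controlled by \aref{prop:free}; the five-lemma delivers exactly the range $a\le b$ (iso) and $a=b+1$ (mono). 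You instead observe that, for trivial action, the long exact sequence breaks into split short exact sequences, so $\rH^{a,b}_{C_2}(X_+\wedge\wt\EG C_2)$ is a natural direct summand of $H^{a-1,b}_{C_2}(X\times\EG C_2)$, and you feed in \aref{prop:free} at the shifted degree $(a-1,b)$. This is cleaner, avoids invoking the nonequivariant Beilinson--Lichtenbaum theorem a second time, and in fact yields the stronger range (iso for $a\le b+1$, mono for $a=b+2$ in the trivial-action case), which then propagates through the purity/Thom reduction since that reduction shifts only in the $\sigma$-directions. Your addition of an up-front reduction to the quasi-projective case, patterned on the end of the proof of \aref{prop:free}, is also a useful clarification that the paper's proof elides when it invokes \aref{lem:EGtilde} on opens of an arbitrary smooth $X$.

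One inaccuracy to repair: you say ``the projection $X\times\EG C_2\to X$ has a section.'' It does not --- $\EG C_2 = \colim_n \A(n\sigma)\setminus\{0\}$ has no $C_2$-fixed points, so there is no equivariant map $X\to\EG C_2$ when $X$ has trivial action. The correct statement, which is what \aref{lem:EGtilde}(ii) actually uses, is that via the identifications $H^{a,b}_{C_2}(X,\ul A)\iso H^{a,b}_{\mcal M}(X,A)$ (from \aref{prop:forgetiso}) and $H^{a,b}_{C_2}(X\times\EG C_2,\ul A)\iso H^{a,b}_{\mcal M}(X\times\BG C_2,A)$ (from \aref{lem:freequot}), the map in question is pullback along the projection $X\times\BG C_2 \to X$, and it is that nonequivariant projection which has a section. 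The split injectivity, and the resulting direct-summand decomposition, then hold; and since the analogous identification on the topological side is made via the same two lemmas (and $\RRe_\C$ respects the chosen basepoint of $\BG C_2$), the splittings are compatible with $\RRe_\C$, which is exactly what the ``direct summand of an iso/mono is an iso/mono'' conclusion needs. With the premise restated this way, the argument is airtight.
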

\begin{proof}
First we consider the special case when $X$ has trivial action. 
By the periodicities supplied by \aref{cor:degshift} and the corresponding ones in topological Bredon cohomology, 
we may assume that $p=q=0$. 
	
Since $X$ has trivial action, 
the map $H^{a,b}_{C_{2}}(X, \ul{A})\to H^{a}_{C_2}(X(\C), \ul{A})$ is naturally isomorphic to  the map $H^{a,b}_{\mcal{M}}(X,A)\to H^{a}_{sing}(X(\C),A)$,  by \aref{prop:forgetiso}. 
In particular,  
by the Beilinson-Lichtenbaum conjecture, 
it is an isomorphism if $a\leq b$ and an injection if $a=b+1$. 
The map $H^{a, b}_{C_{2}}(X\times \EG C_{2}, \ul{A}) \to H^{a}_{C_{2}}(X(\C)\times \EG C_{2}(\C), \ul{A})$ is an isomorphism for $a\leq b$ and an injection for $a=b+1$, 
by the previous proposition. 
The proposition thus follows for $X$ with trivial action by comparing the long exact sequences associated to the  motivic isotropy cofiber sequence
\begin{equation}\label{eqn:isotropya}
X_+\wedge\EG C_{2\,+} \to X_+ \to X_+\wedge \wt\EG C_2.
\end{equation}
	  
We now treat the general case. 
The fixed point scheme $X^{C_{2}}$ is smooth and so by the equivariant homotopical purity theorem \cite[Theorem 7.6]{HKO:EMHT}, 
we have a cofiber sequence 
$$
X \setminus  X^{C_{2}}\to X \to \Th(\mcal{N}),
$$
where $\mcal{N}$ is the normal bundle of the inclusion $X^{C_{2}}\subseteq X$. 
By \aref{lem:EGtilde}, 
the map $X\to \Th(\mcal{N})$ induces an isomorphism  
\begin{equation}\label{eqn:isoN}
\rH^{\star,\star}_{C_{2}}(\Th(\mcal{N})\wedge \EGt C_{2},\ul{A})\xrightarrow{\iso} \rH^{\star,\star}_{C_{2}}(X_{+}\wedge \EGt C_{2},\ul{A})
\end{equation}
and similarly for the topological Bredon cohomology.  
Note that  $X^{C_{2}}$ is a disjoint union $X^{C_{2}} = \coprod_{r}Z_{r}$ of connected smooth varieties with trivial action. 
We may apply \aref{thm:thomisos} and \aref{prop:thomcompat}, together with \aref{rem:fixtype} to each $\mcal{N}|_{Z_{r}}$ in order to obtain the commutative square
$$
\xymatrix@-1pc{
	\rH^{a+p\sigma, b+q\sigma}_{C_{2}}(\Th(\mcal{N})\wedge \EGt C_{2}) \ar[r]^-{\iso}\ar[d] &
	\bigoplus^{r}\rH^{a+(p-2p_{r})\sigma,\, b + (q-p_{r})\sigma}_{C_2}(Z_{r\,+}\wedge \EGt C_{2}) \ar[d] 
	\\
	\rH^{a+p\sigma}_{C_{2}}(\Th(\mcal{N}(\C))\wedge \EGt C_{2}(\C)) \ar[r]^-{\iso} &
	\bigoplus^{r}\rH^{a+(p-2p_{r})\sigma}_{C_2}(Z_{r}(\C)_+\wedge \EGt C_{2}(\C)),
} 
$$
where $p_r = \codim_{X}(Z_r)$. 
The proposition holds for the right hand map and thus it holds for the left hand map as well.    
Applying the isomorphism (\ref{eqn:isoN}) yields the conclusion of the proposition.
\end{proof}

Combining the previous two results now implies our equivariant generalization of the Beilinson-Lichtenbaum conjectures over $\C$.

\begin{theorem}
\label{theorem:equivariantBLconjecture} 
Let $X$ be a smooth complex $C_{2}$-variety and $A$ a finite abelian group. 
The comparison map
$$
\RRe_{\C}:H^{a+p\sigma, b+q\sigma}_{C_{2}}(X,\ul{A})\to 
H^{a+p\sigma}_{C_{2}}(X(\C),\ul{A})
$$
is  
\begin{enumerate}
	\item[(i)] an isomorphism if both $a+p\leq b+q$ and $a\leq \min\{b-q, b\}$,
	\item[(ii)] an injection if both $a+p\leq b+q+1$ and $a\leq\min\{b-q,b\} + 1$.
\end{enumerate}
\end{theorem}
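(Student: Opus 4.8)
The plan is to deduce the theorem from \aref{prop:free} and \aref{prop:isotrop} by comparing, via Betti realization, the long exact sequences attached to the motivic isotropy separation cofiber sequence \eqref{eqn:motisotropy} and its topological counterpart.

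First I would smash \eqref{eqn:motisotropy} with $X_+$. Using $X_+\wedge\EG C_{2\,+} = (X\times\EG C_2)_+$ this gives the cofiber sequence of motivic $C_2$-spectra $(X\times\EG C_2)_+ \to X_+ \to X_+\wedge\EGt C_2$. Applying $\rH^{\star,\star}_{C_2}(-,\ul A)$ produces a long exact sequence; for the fixed weight $b+q\sigma$ and fixed $p$, abbreviate $B^n := H^{n+p\sigma,\,b+q\sigma}_{C_2}(X\times\EG C_2,\ul A)$, $H^n := H^{n+p\sigma,\,b+q\sigma}_{C_2}(X,\ul A)$, and $\widetilde E^n := \rH^{n+p\sigma,\,b+q\sigma}_{C_2}(X_+\wedge\EGt C_2,\ul A)$. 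The long exact sequence then reads
$$
\cdots \to B^{a-1} \to \widetilde E^{a} \to H^{a} \to B^{a} \to \widetilde E^{a+1} \to \cdots,
$$
the connecting homomorphism raising the simplicial index $a$ by one and leaving $p$, $b$, $q$ untouched. Since $\RRe_{\C}$ is an exact symmetric monoidal functor of triangulated categories it carries this cofiber sequence to the topological isotropy cofiber sequence for $X(\C)$ (with $\EG C_2(\C)$ and $\EGt C_2(\C)$ in the corresponding slots), so Betti realization induces a map from this long exact sequence to its topological analogue; the comparison map of the theorem appears as the map on the middle terms $H^a$.

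Next I would run the sharp five lemma on the segment $B^{a-1}\to\widetilde E^{a}\to H^{a}\to B^{a}\to\widetilde E^{a+1}$ together with its topological counterpart. For part (i), under $a+p\le b+q$ and $a\le b$: \aref{prop:free} gives that $\RRe_\C$ is an isomorphism on $B^a$ (since $a+p\le b+q$) and an isomorphism, hence an epimorphism, on $B^{a-1}$ (since $(a-1)+p\le b+q$); \aref{prop:isotrop} gives that $\RRe_\C$ is an isomorphism on $\widetilde E^a$ (since $a\le b$) and a monomorphism on $\widetilde E^{a+1}$ (since $a\le b$ forces $a+1\le b+1$). The five lemma then shows $\RRe_\C$ is an isomorphism on $H^a$, which is assertion (i). For part (ii), under $a+p\le b+q+1$ and $a\le b+1$: \aref{prop:free} gives that $\RRe_\C$ is an isomorphism, hence an epimorphism, on $B^{a-1}$ (since $(a-1)+p\le b+q$) and a monomorphism on $B^a$ (since $a+p\le b+q+1$); \aref{prop:isotrop} gives that $\RRe_\C$ is a monomorphism on $\widetilde E^a$ (since $a\le b+1$). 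The monic four lemma (the $\{f_1,f_2,f_4\}$ part of the sharp five lemma) then shows $\RRe_\C$ is a monomorphism on $H^a$, which is assertion (ii).

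I do not expect a serious obstacle here: the essential content is already carried by \aref{prop:free} and \aref{prop:isotrop}, and what remains is only to confirm that the numerical hypotheses of the theorem translate precisely into the ranges needed as input to the five- and four-lemma applications — which is exactly the elementary chain of inequalities recorded above, together with the observation that the connecting homomorphism changes only the index $a$.
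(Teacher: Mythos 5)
Your proof is correct and is exactly the argument the paper gives: compare the two long exact sequences attached to the motivic isotropy cofiber sequence and its Betti realization, feeding \aref{prop:free} and \aref{prop:isotrop} into the five- and four-lemmas. You have only spelled out the degree bookkeeping that the paper leaves implicit, and those inequalities all check out.
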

\begin{proof}
This follows by comparing the long exact sequences induced by the motivic isotropy cofiber sequence $X_+\wedge\EG C_{2\,+} \to X_+ \to X_+\wedge \EGt C_2$ together with \aref{prop:free} 
and \aref{prop:isotrop}.
\end{proof}
Notice that in the case $p=q=0$ and $X$ complex variety with trivial $\Z/2-$action the above theorem reduces to the usual Beilinson-Lichtenbaum conjecture for complex varieties (which we actually used in the proof).
(See also \eqref{eqn:BLconjC}.)

\section{Comparing motivic and topological Bredon cohomology over \texorpdfstring{$\R$}{R}}\label{s:R}

Let $X$ be a smooth real variety and write $\Sigma_2 = {\rm Gal}(\C/\R)$. 
The space $X(\C)$ has an $\Sigma_2$-action. 
This extends to the topological realization functor $\RRe_{\C,\Sigma_2}:\SH(\R)\to \SH_{\Sigma_2}$, 
see \cite[Proposition 4.8]{HO:galois}. 
We have $\RRe_{\C,\Sigma_2}(\M A) = {\rm H}\ul{A}$ \cite[Theorem 4.17]{HO:galois} and thus a comparison map relating motivic cohomology and Bredon cohomology. 
By \cite[Corollary 5.11]{HV}, the Beilinson-Lichtenbaum conjecture \eqref{eqn:BLconj} for real varieties can be reinterpreted as the statement that the map
\begin{equation}\label{eqn:blreal}
H^{a,b}_{\mcal{M}}(X,A) \to H^{a-b+b\sigma}_{\Sigma_2}(X(\C), \ul{A}) \;\;\text{is } 
\begin{cases}
\text{an isomorphism} & \text{if } a\leq b, \\
\text{a monomorphism} & \text{if } a = b+1.                                                                                                                                          
\end{cases}
\end{equation}

We now consider a smooth real variety $X$ with a $C_2$-action. 
The space of complex points $X(\C)$ has two involutions, 
one coming from complex conjugation and the other coming from action on the variety $X$. 
To avoid confusing these actions, 
we write $C_2$ for the group acting algebraically on the real variety $X$ while we write $\Sigma_2 = {\rm Gal}(\C/\R)$ for the Galois group and its action is by complex conjugation. 
Thus $X(\C)$ has an $C_2\times \Sigma_2$-action.  
This extends to the topological realization functor, 
see \aref{thm:stableR},
$$
\RRe_{\C,\,\Sigma_2}:\SH_{C_2}(\R) \to \SH_{C_2\times\Sigma_2}.
$$

Write $\tau_1$ (resp.~$\tau_2$) for the nontrivial element of $C_2$ (resp.~of $\Sigma$). 
Write $\sigma$ for the $C_2\times \Sigma_2$-representation which is defined by letting $\tau_1$ act by $-1$ and $\tau_2$ by the identity. 
Write $\epsilon$ for the representation which is defined by letting $\tau_1$ act by the identity and $\tau_2$ by $-1$. 
The four representations of the Klein group $C_2\times \Sigma _2$ are $1$, $\sigma$, $\epsilon$, and $\sigma\otimes\epsilon$. 

The effect of the topological realization functor on spheres is as follows.
\begin{lemma}
We have 
$\RRe_{\C,\,\Sigma_2}(S^1)\wkeq S^1$,   
$\RRe_{\C,\,\Sigma_2}(S^\sigma)\wkeq S^\sigma$, 
$\RRe_{\C,\,\Sigma_2}(S^1 _t)\wkeq S^\epsilon$, 
and 
$\RRe_{\C,\,\Sigma_2}(S^\sigma _t) \wkeq S^{\sigma\otimes\epsilon}$. 
Thus
\begin{align*}
\RRe_{\C,\,\Sigma_2}(S^{a+p\sigma, b+q\sigma}) \wkeq S^{(a-b) + (p-q)\sigma + b\epsilon +q \sigma\otimes\epsilon}.
\end{align*}
\end{lemma}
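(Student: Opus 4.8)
The plan is to use that $\RRe_{\C,\,\Sigma_2}\colon\SH_{C_2}(\R)\to\SH_{C_2\times\Sigma_2}$ is exact and symmetric monoidal (see \aref{thm:stableR}) and that it sends the motivic representation sphere $T^V$ attached to a $C_2$-representation $V$ over $\R$ to the one-point compactification $S^{V(\C)}$ of the $C_2\times\Sigma_2$-representation $V(\C):=V\otimes_\R\C$ on complex points. Granting this, the identification of $\RRe_{\C,\,\Sigma_2}$ on $S^{a+p\sigma,b+q\sigma}$ reduces, via the indexing convention \eqref{eqn:funnyspheres}, to computing it on the four spheres $S^1,S^\sigma,S^1_t,S^\sigma_t$, after which the stated formula follows by smashing the four answers together; an exact monoidal functor also preserves inverses of invertible objects, which handles negative exponents.

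First I would handle $S^1$ and $S^\sigma$. Since $S^1$ is a constant presheaf, $\RRe_{\C,\,\Sigma_2}(S^1)\wkeq S^1$ with trivial action. Since $S^\sigma=\hocofib(C_{2\,+}\to S^0)$ and $\RRe_{\C,\,\Sigma_2}$ is exact and sends the constant $C_2$-scheme $C_2$ (with trivial Galois action) to the discrete $C_2\times\Sigma_2$-space $C_2$, its value on $S^\sigma$ is the unreduced suspension of $C_2$, which is the representation sphere $S^\sigma$ of the representation $\sigma$ on which $\tau_2$ acts trivially. For the Tate spheres I would use the product decompositions $T^1\wkeq S^1\wedge S^1_t$ (the classical $\P^1\wkeq S^1\wedge\G_m$) and $T^\sigma\wkeq S^\sigma\wedge S^\sigma_t$ together with cancellation. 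On one hand $\RRe_{\C,\,\Sigma_2}(T^1)\wkeq S^{\A^1(\C)}$ where $\A^1(\C)=\C$ has trivial $C_2$-action and complex conjugation, hence $\A^1(\C)\cong 1\oplus\epsilon$ and $\RRe_{\C,\,\Sigma_2}(T^1)\wkeq S^{1+\epsilon}$; on the other hand it equals $S^1\wedge\RRe_{\C,\,\Sigma_2}(S^1_t)$, so cancelling the invertible $S^1$ gives $\RRe_{\C,\,\Sigma_2}(S^1_t)\wkeq S^\epsilon$. Similarly $\RRe_{\C,\,\Sigma_2}(T^\sigma)\wkeq S^{\A(\sigma)(\C)}$, and $\A(\sigma)(\C)=\C$ with $\tau_1$ acting by $z\mapsto-z$ and $\tau_2$ by $z\mapsto\bar z$ decomposes, along its real and imaginary axes, as $\sigma\oplus(\sigma\otimes\epsilon)$; comparing $S^{\sigma+\sigma\otimes\epsilon}\wkeq S^\sigma\wedge\RRe_{\C,\,\Sigma_2}(S^\sigma_t)$ and cancelling $S^\sigma$ yields $\RRe_{\C,\,\Sigma_2}(S^\sigma_t)\wkeq S^{\sigma\otimes\epsilon}$. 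Smashing the four values according to \eqref{eqn:funnyspheres} produces $\RRe_{\C,\,\Sigma_2}(S^{a+p\sigma,b+q\sigma})\wkeq S^{(a-b)+(p-q)\sigma+b\epsilon+q\sigma\otimes\epsilon}$, as claimed.

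The $S^1$ and $S^\sigma$ steps are formal; the arithmetic is all in the Tate spheres, and the main obstacle is book-keeping: one must keep the algebraic $C_2$-action and the Galois $\Sigma_2$-action on $\C$ and $\C^\times$ carefully separated. The crucial point is that the real and imaginary axes of $\A(\sigma)(\C)$ realize to the two different representations $\sigma$ and $\sigma\otimes\epsilon$ — equivalently, on the unit circle in $\C^\times$ the inversion action coincides with complex conjugation — and this is precisely why $\sigma\otimes\epsilon$, rather than $\sigma$ or $\epsilon$, is the realization of $S^\sigma_t$. One should also confirm the compatibilities used above: that $\RRe_{\C,\,\Sigma_2}$ commutes with finite smash products and with the homotopy cofiber sequence defining $S^\sigma$, and that it takes $T^V$ to $S^{V(\C)}$; these are part of the construction of the realization functor in \aref{app:EMHT}.
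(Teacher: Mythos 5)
Your proof is correct and reaches the same conclusions as the paper, but you route the computation of the Tate spheres differently. The paper observes directly that $\RRe_{\C,\,\Sigma_2}(S^1_t)$ and $\RRe_{\C,\,\Sigma_2}(S^\sigma_t)$ deformation retract onto the unit circle in $\C^*$ and simply reads off the $C_2\times\Sigma_2$-action there: trivially for $\tau_1$ and by conjugation for $\tau_2$ in the first case, and by $z\mapsto 1/z=\bar z$ and by $\bar z$ (hence the same reflection) in the second. You instead first realize the Thom spheres $T^1$ and $T^\sigma$ — identifying $\RRe_{\C,\,\Sigma_2}(T^V)$ with $S^{V(\C)}$ and decomposing $\A^1(\C)\cong 1\oplus\epsilon$ and $\A(\sigma)(\C)\cong\sigma\oplus\sigma\otimes\epsilon$ along the real and imaginary axes — then cancel the invertible factors $S^1$ and $S^\sigma$ from $T^V\wkeq S^V\wedge S^{V_t}$. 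Both routes are elementary; the paper's avoids invoking $\RRe_{\C,\,\Sigma_2}(T^V)\wkeq S^{V(\C)}$ (which you should justify — it follows by realizing the cofiber sequence $\P(V)_+\to\P(V\oplus 1)_+\to T^V$ of smooth schemes), while yours emphasizes that the splitting of $\A(\sigma)(\C)$ into real and imaginary axes is what produces the twist $\sigma\otimes\epsilon$. The remaining steps — $S^1$ by inspection, $S^\sigma$ via the cofiber sequence $C_{2\,+}\to S^0\to S^\sigma$, and the final formula via monoidality and invertibility — agree with the paper.
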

\begin{proof}
The first relation is obvious. 
The second follows from the cofiber sequence $C_{2\,+}\to S^0 \to S^{\sigma}$. 
We have that $S^1_t(\C)$ and $S^\sigma_t(\C)$ are equivariantly homotopic to the unit circle in $\C^*$. 
In the first case, 
$\tau_1(z)= z$ and $\tau_2(z) = \overline{z}$, 
and in the second case $\tau_1(z) = 1/z$ and $\tau_2(z) = \overline{z}$. 
The displayed equalities follow immediately from these formulae.
\end{proof}

Since  $\RRe_{\C,\Sigma_2}(\MA) \iso {\rm H}\ul{A}$ in $\SH_{C_2\times \Sigma_2}$, see \aref{thm:realbred}, there exists a comparison map
\begin{equation}\label{eqn:cyclemap}
\RRe_{\C,\Sigma_2}: H^{a+p\sigma,b+q\sigma}_{C_2}(X, \ul{A}) \to H^{a-b+(p-q)\sigma + b\epsilon +q\sigma\otimes\epsilon}_{C_2\times\Sigma_2}(X(\C), \ul{A}).
\end{equation}

Observe that $\RRe_{\C,\Sigma_2}(\A(\sigma)) = \sigma +\sigma\otimes \epsilon$.
Applying $\RRe_{\C,\Sigma_2}$ to the motivic isotropy cofiber sequence \eqref{eqn:motisotropy}
yields the cofiber sequence of $C_2\times \Sigma_2$-spaces
$$
{\rm E}_{\Sigma_2}C_{2 +}\rightarrow S^0\rightarrow \wt{\rm E}_{\Sigma_2}C_2.
$$
Here ${\rm E}_{\Sigma_2}C_{2}$ is the $\Sigma_2$-equivariant universal $C_2$-space, see \cite[VII.1]{May:CBMS} or \cite[Definition B.108]{HHR}.
We have 
$\wt{\rm E}_{\Sigma_2}C _2=\colim _{n}S^{n\sigma\otimes\epsilon}\wedge S^{n\sigma}$. For any representation $V$, the cyclic permutation on $S^{3V}$ is equivariantly homotopic to the identity (e.g., a similar argument as in \aref{lem:cyclicistrivial} works topologically) and so we conclude the following. 

\begin{proposition} \label{prop:tildeRperiod}
The unit maps $S^0\rightarrow S^\sigma$, $S^0\rightarrow S^{\sigma\otimes\epsilon}$ induce $C_2\times\Sigma _2$-equivariant homotopy equivalences 
$\wt{\rm E}_{\Sigma_2}C _2\simeq \wt{\rm E}_{\Sigma_2}C _2\wedge S^\sigma$ and $\wt{\rm E}_{\Sigma_2}C _2\simeq \wt{\rm E}_{\Sigma_2}C _2\wedge S^{\sigma\otimes\epsilon} $. 
In particular, 
$\rH^{\star}_{C_2\times\Sigma_2}( \mathsf{E}\wedge \wt{\rm E}_{\Sigma_2}C _2, \ul{A})$ is $\sigma$-periodic as well as $\sigma\otimes\epsilon$-periodic for any 
$C _2\times\Sigma _2$-equivariant spectrum $\mathsf{E}$.
\end{proposition}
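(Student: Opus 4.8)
The plan is to imitate the proof of \aref{prop:invariance}, now in the classical $C_2\times\Sigma_2$-equivariant stable homotopy category, using the explicit model $\wt{\rm E}_{\Sigma_2}C_2\wkeq\colim_n S^{n\sigma\otimes\epsilon}\wedge S^{n\sigma}$ recorded just above. The maps to be shown equivalences are obtained by smashing the identity of $\wt{\rm E}_{\Sigma_2}C_2$ with the unit maps $S^0\to S^\sigma$ and $S^0\to S^{\sigma\otimes\epsilon}$; since the two factors $S^{n\sigma\otimes\epsilon}$ and $S^{n\sigma}$ enter the colimit symmetrically, it suffices to treat the case of $S^\sigma$.

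First I would compute, using that $-\wedge S^\sigma$ commutes with colimits, that $\wt{\rm E}_{\Sigma_2}C_2\wedge S^\sigma\wkeq\colim_n S^{n\sigma\otimes\epsilon}\wedge S^{(n+1)\sigma}$. Regarding $(m,n)\mapsto S^{m\sigma\otimes\epsilon}\wedge S^{n\sigma}$ as a diagram indexed by $\mathbb{N}\times\mathbb{N}$ with transition maps the Euler (unit) maps in each variable, both the diagonal $n\mapsto(n,n)$ and the shifted diagonal $n\mapsto(n,n+1)$ are cofinal subdiagrams; as all transition maps are closed inclusions of cell complexes these colimits compute homotopy colimits, and so the comparison map $\wt{\rm E}_{\Sigma_2}C_2\to\wt{\rm E}_{\Sigma_2}C_2\wedge S^\sigma$ is identified with the canonical equivalence between the two cofinal colimits. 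As a cross-check one can argue on $H$-fixed points for $H\le C_2\times\Sigma_2$: the space $\wt{\rm E}_{\Sigma_2}C_2$ is the reduced universal space for the family of subgroups meeting $C_2$ trivially, so $(\wt{\rm E}_{\Sigma_2}C_2)^H$ is contractible when $\tau_1\notin H$ and is $S^0$ when $\tau_1\in H$; in the latter case $\tau_1$ acts by $-1$ on both $\sigma$ and $\sigma\otimes\epsilon$, whence $(S^\sigma)^H=(S^{\sigma\otimes\epsilon})^H=S^0$ and the unit maps are equivalences on $H$-fixed points, while in the former case both sides are contractible; the equivariant Whitehead theorem then applies since $(-)^H$ commutes with the filtered colimits at hand.

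For the ``in particular'', I would smash each of the two equivalences with an arbitrary $C_2\times\Sigma_2$-spectrum $\mathsf{E}$ to get $\mathsf{E}\wedge\wt{\rm E}_{\Sigma_2}C_2\wkeq\mathsf{E}\wedge\wt{\rm E}_{\Sigma_2}C_2\wedge S^\sigma$ and likewise with $S^{\sigma\otimes\epsilon}$, then apply $\rH^\star_{C_2\times\Sigma_2}(-,\ul{A})$ together with the suspension isomorphism to obtain natural isomorphisms $\rH^{\star}_{C_2\times\Sigma_2}(\mathsf{E}\wedge\wt{\rm E}_{\Sigma_2}C_2,\ul{A})\iso\rH^{\star-\sigma}_{C_2\times\Sigma_2}(\mathsf{E}\wedge\wt{\rm E}_{\Sigma_2}C_2,\ul{A})$, and the analogous one for $\sigma\otimes\epsilon$, which is exactly the claimed periodicity. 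I expect the only real obstacle to be bookkeeping: making precise that the interleaved sequences are genuinely cofinal (or, in the fixed-point route, that the unit map $S^0\to S^V$ restricts to an equivalence on $H$-fixed points precisely when $V^H=0$, and that fixed points commute with the colimits involved), all of which is routine in this setting.
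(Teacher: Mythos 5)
Your proof is correct and takes essentially the same approach as the paper, which simply observes that the periodicity is ``immediate'' from the colimit presentation $\wt{\rm E}_{\Sigma_2}C_2 = \colim_n S^{n\sigma}\wedge S^{n\sigma\otimes\epsilon}$; your cofinality argument (with the fixed-point cross-check as a sanity verification) is a reasonable spelling-out of that ``immediate.'' It is worth noting that the analogy with \aref{prop:invariance} is cleaner here than in the motivic case: there the second periodicity required a separate isotropy argument because $\colim_n S^{2n\sigma,n\sigma}$ does not interleave with $S^{\sigma,0}$, whereas here both $S^\sigma$ and $S^{\sigma\otimes\epsilon}$ interleave with $\colim_n S^{n(\sigma+\sigma\otimes\epsilon)}$ by exactly the same cofinality argument, as you observe.
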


Since \eqref{eqn:cyclemap} is a ring map, 
\aref{thm:periodicity} implies there is an invertible element $v\in H^{\star}_{C _2\times \Sigma_2}({\rm E}_{\Sigma_2}C_2,\ul{\mathbb{Z}/2})$, 
the degree of $v$ is $\sigma -1+\sigma\otimes\epsilon -\epsilon$. This immediately implies the following.

\begin{proposition}\label{prop:topperiod}
Multiplication by $v^{-q}\in 
H^{q-q\sigma+ q\epsilon-q\sigma\otimes\epsilon}_{C_2\times\Sigma_2}({\rm E}_{\Sigma_{2}}C_2,\ul{\Z/2})$ induces a natural isomorphism
$$
\rH^{a+p\sigma + c\epsilon +q\sigma\otimes \epsilon}_{C_{2}\times\Sigma_2}(\mathsf{E}\wedge ({\rm E}_{\Sigma_{2}}C_2)_{+},\ul{\Z/2}) \iso 
\rH^{(a+q) + (p-q)\sigma + (c+q)\epsilon}_{C_{2}\times\Sigma_2}(\mathsf{E}\wedge ({\rm E}_{\Sigma_{2}}C_2)_+,\ul{\Z/2})
$$
of $H^{\star}_{C_2\times\Sigma_2}({\rm E}_{C_2}\Sigma_2,\ul{\Z/2})$-modules for any  $C_2\times\Sigma_2$-spectrum $\mathsf{E}$.	
\end{proposition}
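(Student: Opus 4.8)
The plan is to treat this as a purely formal consequence of the invertibility of the classes $s$ and $t$ recorded in the paragraph immediately above. First I would make explicit the relevant module structure: for any based $C_2\times\Sigma_2$-space $Y$ equipped with its diagonal $\Delta\colon Y\to Y\wedge Y$, and with $\mathrm{H}\ul{\Z/2}\iso\RRe_{\C,\Sigma_2}(\M{\Z/2})$ the $C_2\times\Sigma_2$-equivariant Eilenberg-MacLane ring spectrum (cf.~\aref{thm:realbred}), the graded group $\rH^{\star}_{C_2\times\Sigma_2}(\mathsf{E}\wedge Y,\ul{\Z/2})$ is a module over the graded ring $\rH^{\star}_{C_2\times\Sigma_2}(Y,\ul{\Z/2})$: a class $\xi$ of the latter acts on a class $\eta$ of the former by first applying $\id_{\mathsf{E}}\wedge\Delta\colon\mathsf{E}\wedge Y\to\mathsf{E}\wedge Y\wedge Y$, then smashing $\eta$ on the first two factors with $\xi$ on the last, and finally using the product on $\mathrm{H}\ul{\Z/2}$; this action is natural in $\mathsf{E}$. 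Specializing to $Y=({\rm E}_{\Sigma_2}C_2)_+$ yields the module structure named in the statement.

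Next I would invoke that, by the paragraph preceding the proposition, the classes $s,t\in\rH^{\star}_{C_2\times\Sigma_2}({\rm E}_{\Sigma_2}C_2,\ul{\Z/2})$ --- obtained by applying the ring map \eqref{eqn:cyclemap} to the invertible motivic classes of \aref{thm:periodicity} --- are themselves invertible, so $s^{-b}t^{-d}$ is a unit of the ring $\rH^{\star}_{C_2\times\Sigma_2}({\rm E}_{\Sigma_2}C_2,\ul{\Z/2})$ with inverse $s^{b}t^{d}$. The proposition then follows from the general fact that multiplication by a unit of a graded ring, acting on a graded module over it, is an isomorphism whose inverse is multiplication by the inverse unit; naturality in $\mathsf{E}$ is inherited from that of the module action just described.

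Finally I would carry out the routine degree bookkeeping: from the formula $\RRe_{\C,\Sigma_2}(S^{a+p\sigma,b+q\sigma})\wkeq S^{(a-b)+(p-q)\sigma+b\epsilon+q\sigma\otimes\epsilon}$ one reads off that $s$ has degree $\sigma-1$ and $t$ has degree $\sigma\otimes\epsilon-\epsilon$, hence $s^{-b}t^{-d}$ has degree $b-b\sigma+d\epsilon-d\sigma\otimes\epsilon$; adding this shift to $a+b\sigma+c\epsilon+d\sigma\otimes\epsilon$ produces $a+b+(c+d)\epsilon$, matching the asserted bidegrees. I do not expect a genuine obstacle here: all the substantive content already sits in \aref{thm:periodicity} together with the symmetric monoidality of $\RRe_{\C,\Sigma_2}$ and the identification $\RRe_{\C,\Sigma_2}(\M{\Z/2})\iso\mathrm{H}\ul{\Z/2}$, and the only point requiring care is a clean description of the $\rH^{\star}(\mathsf{E}\wedge(-)_+)$-module structure and the check that the realization functor transports it --- and the invertibility of $s$ and $t$ --- correctly.
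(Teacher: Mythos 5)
Your proposal is correct and follows the same route the paper intends: the paper gives no written proof, saying only that the invertibility of $s,t$ (as images of the motivic units of Theorem \ref{thm:periodicity} under the ring map \eqref{eqn:cyclemap}, whose degrees are $\sigma-1$ and $\sigma\otimes\epsilon-\epsilon$) ``immediately implies'' the result. You have simply spelled out the module structure and the degree arithmetic that the paper leaves tacit; no gap.
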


We proceed, as in the previous section, towards our equivariant version of the Beilinson-Lichtenbaum conjecture over the reals by establishing the following case.  
\begin{lemma}\label{lem:freeinpR}
	Let $U$ be a smooth quasi-projective real $C_{2}$-variety with free action. Then
	 $$
	 \RRe_{\C}: H^{m+i\sigma, n}_{C_{2}}(U, \ul{\Z/2}) \to H^{m-n+i\sigma+n\epsilon}_{C_{2}\times\Sigma _2}(U(\C), \ul{\Z/2}),
	 $$
	 is an isomorphism if $m\leq n$ and $m+i\leq n$. It is 
a monomorphism if  $m \leq n+1$ and  $m+i\leq n+1$. 
\end{lemma}
\begin{proof}
 If $Y$ is a $C_2\times\Sigma_2$-space, then we have $H^{s +t\epsilon}_{C_{2}\times \Sigma_2}(Y, \ul{A}) \iso 	H^{s +t\sigma}_{\Sigma_{2}}(Y/C_2, \ul{A})$. 
Moreover, if $Y=X(\C)$, where $X$ is a real variety with free $C_2$-action, arguing as in \aref{lem:freequot}, we have a commutative square
$$
\xymatrix{
H^{m,n}_{C_{2}}(X,\ul{A}) \ar[d]_{\RRe_{\C,\,\Sigma_2}}  \ar[r]^{\iso}
&
H^{m,n}_{\mcal{M}}(X/C_2, A) \ar[d]^{\RRe_{\C,\,\Sigma_2}}
\\
H^{m-n +n\epsilon}_{C_{2}\times \Sigma_2}(X(\C), \ul{A}) \ar[r]^{\iso} & 	
H^{m-n +n\sigma}_{\Sigma_{2}}(X(\C)/C_2, \ul{A}).
}
$$
When $i=0$ the result thus holds by \eqref{eqn:blreal}.  The result follows in general using induction and considering the comparison of exact sequences obtained from \eqref{eqn:fun1},
\[
\xymatrix@C-1.3pc{
 \ar[r] & H^{m+(j-1)\sigma,n}_{C_2}(U) \ar[r]\ar[d] & H^{m+j\sigma,n}_{C_2}(U) \ar[r]\ar[d] & H^{m+j,n}_{\mcal{M}}(U) \ar[r]\ar[d] &  \\
\ar[r] & H^{m-n+n\epsilon+(j-1)\sigma}_{C_2\times\Sigma _2}(U(\C)) \ar[r] & H^{m-n+n\epsilon+j\sigma}_{C_2\times\Sigma _2}(U(\C)) \ar[r] & H^{m+j-n+n\sigma}_{\Sigma _2}(U(\C))\ar[r]&  . 
}
\]

\end{proof}

\begin{proposition} \label{EG} 
Let $X$ be a smooth real variety with $C_2$-action and $A$ a finite abelian group. Then the map
$$
H^{a+p\sigma,b+q\sigma}_{C_2}(X \times \EG C_2, \ul{A})\rightarrow 
H^{a-b +(p-q)\sigma +b\epsilon+q\sigma\otimes\epsilon}_{C_{2}\times \Sigma_2}(X(\C) \times 
{\rm E} _{\Sigma _2}C_2, \ul{A})
$$
is 
\begin{enumerate}
\item[(i)] an isomorphism if $a\leq b-q$ and $a+p\leq b+q$, and 
\item[(ii)] a monomorphism if $a \leq b-q+1$ and $a+p\leq b+q+1$. 
\end{enumerate}

\end{proposition}
\begin{proof}  
It suffices to consider the case of a quasi-projective $X$ by the same argument as in the second half of \aref{prop:free}.
It also suffices to consider the case $A=\Z/2$ by the same argument as in \aref{prop:free}, 
using an obvious variant of \aref{prop:mtwo}.   

Using \aref{cor:period} and \aref{prop:topperiod}, we can replace $(a+p\sigma, b+q\sigma)$ by $(a+2q+(p-2q)\sigma, b+q)$ and $a-b+(p-q)\sigma + b\epsilon + q\sigma\otimes\epsilon$ by 
$(a-b-q) +(p-2q)\sigma+(b+q)\epsilon$. For simplicity, we reindex, replacing $a+2q+(p-q)\sigma$ by $m+i\sigma$ and $b+q$ by $b$. The hypothesis on $(a+p\sigma, b+q\sigma)$ implies that  $(m+i\sigma,b)$ satisfies the hypothesis of the previous lemma \ref {lem:freeinpR}. 

Considering  the comparison of Milnor exact sequences, as in \aref{prop:free}, we see from lemma \ref{lem:freeinpR} that
\begin{equation}\label{eqn:c2s2map}
H^{m+i\sigma,b}_{C_{2}}(U_{n}, \ul{\Z/2}) \to H^{m-b+i\sigma+b\epsilon}_{C_{2}\times C _2}(U_{n}(\C), \ul{\Z/2})
\end{equation}
is an isomorphism for $m+i\leq b$ and $m\leq b$ and a monomorphism for $m+i=b+1$ and $m\leq b+1$. Here $U_{n}:=X\times(\A(n\sigma)\setminus  \{0\})$. Replacing back $m$ by $a+2q$ and $i$ for $p-2q$ and $b$ for $b+q$ we get the result.
 
\end{proof}  
    
We also have the following:
\begin{proposition} \label{first} 
Let $X$ be a smooth real variety with $C_2$-action and $A$ a finite abelian group. The map
$$
\rH^{a+p\sigma,b+q\sigma} _{C _2}(X _+\wedge \wt\EG C_2,\ul{A})\to   \rH^{a-b+b\epsilon}_{C_2\times \Sigma_2}(X(\C) _+\wedge \wt{\rm E}_{\Sigma _2}C _2, \ul{A})
$$ 
is an isomorphism for $a\leq b$ and a monomorphism for $a=b+1$. 
\end{proposition}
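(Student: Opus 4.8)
The plan is to follow the proof of \aref{prop:isotrop} almost verbatim, with the real Beilinson--Lichtenbaum comparison \eqref{eqn:blreal} in place of \eqref{eqn:BLconjC}, \aref{EG} in place of \aref{prop:free}, and the $\sigma$- and $\sigma\otimes\epsilon$-periodicities of \aref{prop:tildeRperiod} in place of the topological periodicities used there. First I would reduce to the case $p=q=0$: on the motivic side $\rH^{\star,\star}_{C_2}(X_+\wedge\EGt C_2,\ul{A})$ is $(\sigma-1,0)$- and $(2\sigma-1,\sigma-1)$-periodic by \aref{cor:degshift}, and applying $\RRe_{\C,\Sigma_2}$ — which sends $S^{\sigma,0}$ to $S^{\sigma}$, $S^{2\sigma,\sigma}$ to $S^{\sigma+\sigma\otimes\epsilon}$, and $\EGt C_2$ to $\wt{\rm E}_{\Sigma_2}C_2$ — carries these periodicities precisely to the two periodicities of \aref{prop:tildeRperiod}, so the comparison map in bidegree $(a+p\sigma,b+q\sigma)$ is, up to these isomorphisms, the comparison map in bidegree $(a,b)$. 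As in \aref{prop:free} and \aref{EG} I would also reduce at the outset to quasi-projective $X$ by the usual equivariant Nisnevich cover and Mayer--Vietoris argument, since both $\rH^{\star,\star}_{C_2}((-)_+\wedge\EGt C_2,\ul{A})$ and its topological counterpart take equivariant distinguished squares to Mayer--Vietoris sequences.

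Next I would treat the special case in which $C_2$ acts trivially on $X$. Then \aref{prop:forgetiso} identifies $H^{a,b}_{C_2}(X,\ul{A})$ with $H^{a,b}_{\mcal{M}}(X,A)$, and since $C_2$ acts trivially on $X(\C)$ the identification $H^{s+t\epsilon}_{C_2\times\Sigma_2}(Y,\ul{A})\iso H^{s+t\sigma}_{\Sigma_2}(Y/C_2,\ul{A})$ used in the proof of \aref{EG} identifies $H^{a-b+b\epsilon}_{C_2\times\Sigma_2}(X(\C),\ul{A})$ with $H^{a-b+b\sigma}_{\Sigma_2}(X(\C),\ul{A})$; under these identifications the comparison map is the real Beilinson--Lichtenbaum map \eqref{eqn:blreal}, hence an isomorphism for $a\leq b$ and a monomorphism for $a=b+1$. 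By \aref{EG} (with $p=q=0$) the comparison map $H^{a,b}_{C_2}(X\times\EG C_2,\ul{A})\to H^{a-b+b\epsilon}_{C_2\times\Sigma_2}(X(\C)\times{\rm E}_{\Sigma_2}C_2,\ul{A})$ has the same range of validity. Applying $\RRe_{\C,\Sigma_2}$ to the motivic isotropy cofiber sequence $X_+\wedge\EG C_{2\,+}\to X_+\to X_+\wedge\EGt C_2$ gives the corresponding topological cofiber sequence, and the five lemma applied to the induced ladder of long exact sequences, fed by the two inputs just recorded at the $X$- and $(X\times\EG C_2)$-terms, yields the claim for $\rH^{a,b}_{C_2}(X_+\wedge\EGt C_2,\ul{A})$; the full statement for trivial-action $X$ then follows from the periodicity reduction of the first paragraph.

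For a general smooth quasi-projective real $C_2$-variety $X$, the fixed locus $X^{C_2}=\coprod_r Z_r$ is smooth with trivial action, and by \aref{rem:fixtype} the normal bundle $\mcal{N}$ of $X^{C_2}\hookrightarrow X$ has $\mcal{N}|_{Z_r}$ of type $p_r\sigma$ with $p_r=\codim_X(Z_r)$. From the equivariant purity cofiber sequence $(X\setminus X^{C_2})\to X\to\Th(\mcal{N})$, the vanishing $\rH^{\star,\star}_{C_2}((X\setminus X^{C_2})_+\wedge\EGt C_2,\ul{A})=0$ of \aref{lem:EGtilde}(i) (valid since $X\setminus X^{C_2}$ has free action), and the analogous topological vanishing $\rH^{\star}_{C_2\times\Sigma_2}(Y_+\wedge\wt{\rm E}_{\Sigma_2}C_2,\ul{A})=0$ for $C_2$-free $Y$, the map $X\to\Th(\mcal{N})$ induces isomorphisms $\rH^{\star,\star}_{C_2}(\Th(\mcal{N})\wedge\EGt C_2,\ul{A})\iso\rH^{\star,\star}_{C_2}(X_+\wedge\EGt C_2,\ul{A})$ and similarly after realization. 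Using the Thom classes of \aref{thm:thomisos} and the $\RRe_{\C,\Sigma_2}$-version of the compatibility \aref{prop:thomcompat} (whose proof is the obvious variant of the one given there), the comparison map on $\Th(\mcal{N})\wedge\EGt C_2=\bigvee_r\Th(\mcal{N}|_{Z_r})\wedge\EGt C_2$ is identified with $\bigoplus_r$ of the comparison maps on $Z_{r\,+}\wedge\EGt C_2$ in bidegrees $(a+(p-2p_r)\sigma,\,b+(q-p_r)\sigma)$; since each $Z_r$ has trivial action, the previous case applies and gives, uniformly in $r$, an isomorphism for $a\leq b$ and a monomorphism for $a=b+1$. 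Transporting back along the isomorphism above finishes the proof.

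I expect the main obstacle to be the degree bookkeeping across the real Betti realization $\RRe_{\C,\Sigma_2}$: one must check that it intertwines the motivic periodicities of \aref{cor:degshift} with the topological ones of \aref{prop:tildeRperiod}, that the fiber of $\mcal{N}|_{Z_r}$ over a complex point becomes the $C_2\times\Sigma_2$-representation $p_r(\sigma+\sigma\otimes\epsilon)$ so that the motivic Thom shift $(2p_r\sigma,p_r\sigma)$ and the topological Thom shift match under $\RRe_{\C,\Sigma_2}$, and that \aref{prop:thomcompat} holds verbatim for $\RRe_{\C,\Sigma_2}$. Once these compatibilities are in hand, the remainder is a routine five-lemma chase parallel to \aref{prop:isotrop}.
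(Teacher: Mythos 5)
Your proposal is correct and follows the same structure as the paper's proof: reduce to $p=q=0$ via the periodicities of \aref{cor:degshift} and \aref{prop:tildeRperiod}, handle the trivial-action case by feeding the real Beilinson--Lichtenbaum map and \aref{EG} into the isotropy long exact sequence, and reduce the general case to the trivial-action case via the purity cofiber sequence $X\setminus X^{C_2}\to X\to\Th(\mcal{N})$ and Thom isomorphisms. Your explicit reduction to quasi-projective $X$ and the careful degree bookkeeping across $\RRe_{\C,\Sigma_2}$ are details the paper leaves implicit, but the argument is essentially identical.
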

\begin{proof}

In the case of a free $C _2$-action on $X$ the above map is an isomorphism for all the indexes because the groups are zero. 
Consider the comparison of long exact sequences in cohomology associated to the motivic isotropy cofiber sequence
$$
X _+\wedge \EG C _2\rightarrow X _+\rightarrow X _+\wedge \wt{\EG} C _2.
$$
Suppose that $X$ has  trivial action. Using \aref{cor:degshift} and \aref{prop:tildeRperiod}, we may assume that $p=0$ and $q=0$. Consider the map
$$
\rH^{a,b} _{C _2}(X _+\wedge \wt{\EG} C_2,\ul{A})\rightarrow \rH^{a-b+b\epsilon}_{C_2\times \Sigma _2}(X(\C) _+\wedge \wt{\rm E} _{\Sigma _2}{C _2}, \ul{A}).
$$
 	  
Because $X$ has a trivial action, we have that the middle map in the long exact sequence given by the motivic isotropy cofiber sequence is identified with
$$
H^{a,b} _{\mcal{M}}(X,\ul{A})  \rightarrow H^{a-b,b}_{C_2}(X(\mathbb{C}), \ul{A}).
$$
This is an isomorphism if $a\leq b$ and a monomorphism if $a=b+1$ by the Beilinson-Lichtenbaum conjecture  \eqref{eqn:blreal}.
 	
The other significant map in the diagram is 
$$
\rH^{a,b} _{C _2} (X \times \EG C _2,\ul{A})
\rightarrow 
\rH^{a-b+b\epsilon} _{C _2\times \Sigma _2} (X(\C)\times E _{\Sigma_2}{C _2},\ul{A})
$$ 
which according to Proposition \ref{EG} is an isomorphism for any $a\leq b$ and monomorphism for any $a=b+1$. This confirms the isomorphism in the statement when the action is trivial. 
 	
The isomorphism in the general case follows as in Proposition \ref{prop:isotrop} by considering the equivariant cofiber sequence for the real variety $X$
$$
X\setminus X^{C _2}\rightarrow X\rightarrow Th(\mathcal{N}),
$$
where $\mathcal{N}$ is the normal bundle of the inclusion $X^{C _2}\subset X$.  
\end{proof}

\begin{theorem}  \label{equivBLconjecture}
Let $X$ be a smooth real variety with $C_2$-action and $A$ an abelian group. Then the map
$$
H^{a+p\sigma,b+q\sigma}_{C_2}(X, \ul{A})\to 
H^{a-b+(p-q)\sigma + b\epsilon +q\sigma\otimes\epsilon}_{C_{2}\times \Sigma_2}(X(\C) , \ul{A})
$$
is   
\begin{enumerate}
\item[(i)] an isomorphism if both $a+p\leq b+q$ and $a\leq min\{b-q,b\}$,
\item[(ii)] an injection if both $a+p\leq b+q+1$ and $a\leq min\{b-q,b\}+1$.
\end{enumerate}
\end{theorem}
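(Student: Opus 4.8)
The plan is to mirror the proof of \aref{theorem:equivariantBLconjecture}: combine the Borel computation of \aref{EG} with the isotropic computation of \aref{first} by means of the motivic isotropy separation cofiber sequence. (The cited propositions require $A$ finite, as does \aref{theorem:equivariantBLconjecture}; we argue accordingly.)

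First I would apply the real topological realization functor $\RRe_{\C,\Sigma_2}\colon \SH_{C_2}(\R)\to \SH_{C_2\times\Sigma_2}$ to the motivic isotropy cofiber sequence \eqref{eqn:motisotropy} smashed with $X_+$. Since $\RRe_{\C,\Sigma_2}$ is symmetric monoidal and exact, and since $\RRe_{\C,\Sigma_2}(\EG C_2)\wkeq {\rm E}_{\Sigma_2}C_2$ and $\RRe_{\C,\Sigma_2}(\EGt C_2)\wkeq \wt{\rm E}_{\Sigma_2}C_2$ by the relations recorded just before \aref{prop:tildeRperiod}, this produces a map of cofiber sequences from
$$
(X\times\EG C_2)_+ \to X_+ \to X_+\wedge\EGt C_2
$$
in $\SH_{C_2}(\R)$ to the corresponding cofiber sequence of $C_2\times\Sigma_2$-spectra obtained by passing to complex points. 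Because the comparison map \eqref{eqn:cyclemap} is induced by $\RRe_{\C,\Sigma_2}$ on the representing spectra, and hence is natural, this yields a commutative ladder of long exact sequences in which the three maps attached to $X_+\wedge\EGt C_2$, $X_+$, and $(X\times\EG C_2)_+$ are respectively the map of \aref{first}, the map in the statement, and the map of \aref{EG}.

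Then I would invoke the five lemma. For fixed indices the long exact sequence attached to the cofiber sequence reads
$$
\cdots\to \rH^{a+p\sigma,b+q\sigma}_{C_2}(X_+\wedge\EGt C_2,\ul{A})\to H^{a+p\sigma,b+q\sigma}_{C_2}(X,\ul{A})\to H^{a+p\sigma,b+q\sigma}_{C_2}(X\times\EG C_2,\ul{A})\to \rH^{a+1+p\sigma,b+q\sigma}_{C_2}(X_+\wedge\EGt C_2,\ul{A})\to\cdots
$$
and likewise on the topological side. Under hypothesis (i), the bounds $a+p\le b+q$ and $a\le b$ ensure via \aref{EG} and \aref{first} that the two maps flanking $H^{a+p\sigma,b+q\sigma}_{C_2}(X,\ul{A})$ are isomorphisms, while the next ones out are an epimorphism (the $\EG C_2$-term in cohomological degree $a-1$, whose index still satisfies $a-1+p\le b+q$) and a monomorphism (the $\EGt$-term in degree $a+1$, which is a monomorphism precisely because $a\le b$ forces $a+1\le b+1$); the five lemma then gives that the middle map is an isomorphism. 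Under hypothesis (ii), the weaker bounds $a+p\le b+q+1$ and $a\le b+1$ give that the two flanking maps are monomorphisms and the preceding $\EG C_2$-term is still an epimorphism, so the four lemma yields that the middle map is a monomorphism. Finally one notes that in the case $p=q=0$ with trivial $C_2$-action the statement recovers the real Beilinson--Lichtenbaum statement \eqref{eqn:blreal}, as a consistency check.

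The genuine content is already contained in \aref{EG} and \aref{first}; what remains here is formal. The only thing to watch is the bookkeeping of the four-integer motivic grading against the $C_2\times\Sigma_2$-grading by $1,\sigma,\epsilon,\sigma\otimes\epsilon$ under realization, together with the verification that the degree shifts in the two long exact sequences keep the hypotheses of \aref{EG} and \aref{first} satisfied in the adjacent slots; I expect this indexing check to be the only potential pitfall, and it is routine.
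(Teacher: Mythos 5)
Your proof is correct and is exactly the paper's argument: compare the long exact sequences obtained from the motivic isotropy cofiber sequence $X_+\wedge\EG C_{2\,+}\to X_+\to X_+\wedge\EGt C_2$ and its realization, then invoke \aref{EG} and \aref{first} together with the five lemma (resp.\ four lemma for the injectivity statement). Your observation that $A$ should be \emph{finite} is also well taken --- the paper's statement of \aref{equivBLconjecture} reads ``$A$ an abelian group,'' but both \aref{EG} and \aref{first}, and hence the proof, require $A$ finite, so this is evidently a slip in the statement.
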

\begin{proof}
This follows by comparing the long exact sequences induced by the motivic isotropy cofiber sequence $X_+\wedge\EG C_{2\,+} \to X_+ \to X_+\wedge \wt\EG C_2$ together with \aref{EG} 
and \aref{first}.
\end{proof}

When $p=q=0$ and $X$ a smooth real variety with trivial $C_2$-action the above theorem reduces to the version of the Beilinson-Lichtenbaum conjecture for a real variety (see \ref{eqn:blreal}) 
established in \cite{HV}. (Of course this was used to prove the theorem.)

\appendix
\section{Equivariant motivic homotopy theory}\label{app:EMHT}
Unstable equivariant motivic homotopy theory was first considered  by Voevodsky \cite{Deligne:V}.  
A stable version was considered by Hu-Kriz-Ormsby \cite{HKO} as part of their work on the completion problem in Hermitian $K$-theory. 
General foundations and model structures are constructed in \cite{HKO:EMHT} and representability results for equivariant algebraic $K$-theory are also established. A general framework for stable equivariant motivic homotopy theory emphasizing the six functor formalism is introduced in \cite{Hoyois:six}.
Alternate versions of a homotopy theory for smooth $G$-schemes are studied in \cite{Herrmann} and \cite{CJ};
however, 
theories of interest, such as equivariant algebraic $K$-theory, are not representable in the homotopy categories constructed there.

The main results in this paper rely on a Betti or topological realization functor, 
which is constructed in this appendix, relating equivariant motivic homotopy and classical equivariant homotopy theory. 
We begin by giving a brief but self-contained construction of a model for the unstable and stable equivariant motivic homotopy categories, 
which is geared towards the construction of the Betti realization functor. 
We also record the details of the construction of several comparison functors between equivariant and nonequivariant homotopy categories in this setting. Finally in the last sections of this appendix we verify that the topological realization of the Bredon motivic cohomology spectrum is the topological Bredon cohomology spectrum.

\subsection{Unstable equivariant motivic homotopy theory}
To keep exposition streamlined, 
we restrict attention to the case of a finite group $G$ over a field $k$. Furthermore we always assume that the order of $G$ is invertible in $k$.
A \emph{motivic $G$-space} over $k$ is defined to be a presheaf of simplicial sets on $G\Sm/k$. 
We write $G\MS(k)$ and $G\bMS(k)$ respectively for the categories of motivic $G$-spaces and pointed motivic $G$-spaces over $k$. 
We are primarily interested in the stable equivariant motivic homotopy category, 
and so we only treat the unstable model structure for pointed motivic $G$-spaces. The category $G\bMS(k)$ is a symmetric monoidal category via 
the pointwise smash product $(F\wedge G)(X) := F(X)\wedge G(X)$ of motivic $G$-spaces. 

We make use of an equivariant version of the ``closed flasque model structure'' introduced in \cite{PPR:KGL}, a variation on the flasque model structure of \cite{I:flasque}. 
This model structure is particularly well-suited for topological realization. 

Let $\mcal{Z}=\{Z_{i}\hookrightarrow  X\}$ be a finite collection of closed immersions ($\emptyset \to X$ is allowed) in $G\Sm/k$. 
Define 
$$
\cup \mcal{Z} : = \mathrm{coeq}( \coprod_{r,r'} Z_{r}\times_{X} Z_{r'} \rightrightarrows \coprod_{r} Z_{r}),
$$ 
where the coequalizer is computed in $\MS(k)$ (i.e., $\cup Z$ is the categorical union of the $Z_{r}\subseteq X$). 
Write $i_{\mcal{Z}}:\cup \mcal{Z} \to X$ for the resulting monomorphism. 

The pushout product $i\ \Box \ j$ of two maps $i:A\to X$ and $j:B\to Y$ is defined to be the map $A\wedge Y \coprod_{A\wedge B} X\wedge B \to X\wedge Y$.
Define two sets of maps:
\begin{enumerate}
\item  
$I^c$ is the set of maps of the form $(i_{\mcal{Z}})_+\ \Box \ g_+$, 
where $\mcal{Z}$ is a finite set of closed  equivariant immersions in $G\Sm/k$ and $g:\partial\Delta^n \to \Delta^n$, $n\geq 0$ is the standard inclusion.
\item 
$J^c$ is the set of maps of the form  $(i_{\mcal{Z}})_+\  \Box \ g_+$, 
where $\mcal{Z}$ is a finite set of closed  equivariant immersions in $G\Sm/k$ and $g: \Lambda^{n,k}\to\Delta^n$, $n\geq 1$, $0\leq i \leq n$ is the standard inclusion.
\end{enumerate}

\begin{definition}
\label{definition:globalclosedflasque}
Let $f:F\to G$ be a map of pointed motivic $G$-spaces.
\begin{enumerate}
 \item Say that $f$ is a \emph{schemewise weak equivalence} provided $f:F(U)\to G(U)$ is a weak equivalence of simplicial sets for all $U$ in $G\sm/k$. 
\item Say that $f$ is a \emph{closed flasque fibration} if it has the right lifting property with respect to $J^c$. 
\item Say that $f$ is a \emph{closed flasque cofibration} if it has the left lifting property with respect to acyclic closed schemewise fibrations. 
\end{enumerate}
\end{definition}

The schemewise weak equivalences, closed flasque cofibrations, and closed flasque fibrations define the \emph{global closed flasque model structure} on $G\bMS(k)$, 
see e.g., \cite[Theorem 3.8]{HKO:EMHT}.

\begin{proposition}
The global closed flasque model structure is a simplicial, proper, cellular,  monoidal model structure on $G\bMS(k)$. The sets $I^c$ and $J^c$ are respectively generating cofibrations 
and acyclic cofibrations.
\end{proposition}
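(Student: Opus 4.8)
The plan is to obtain the model structure from the recognition theorem for cofibrantly generated model categories (Kan's theorem, see e.g.\ \cite{GoerssJardine}), with $I^c$ as the proposed generating cofibrations and $J^c$ as the proposed generating acyclic cofibrations, and then to extract the adjectives ``simplicial, proper, cellular, monoidal'' directly from the explicit shape of these sets. Since $G\bMS(k)$ is the category of pointed simplicial presheaves on the small category $G\Sm/k$, it is bicomplete and locally presentable, so the domains of all maps in $I^c\cup J^c$ are small; and the schemewise weak equivalences satisfy two-out-of-three and are closed under retracts because the weak equivalences of simplicial sets are. The substantive hypotheses to verify are then (a) $J^c$-cell $\subseteq W$ and (b) the acyclic fibrations are correctly detected, in the form $I^c$-inj $= W\cap J^c$-inj. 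First I would note that $i_{\mcal{Z}}\colon\cup\mcal{Z}\to X$ is a monomorphism of presheaves, since at each $U$ it is the injection $\Hom_k(U,\cup\mcal{Z})\hookrightarrow\Hom_k(U,X)$; hence every map in $I^c$ and $J^c$ is a monomorphism, and $I^c$-cof consists of monomorphisms. For a horn inclusion $g$, the pushout product $(i_{\mcal{Z}})_+\,\Box\,g_+$ evaluated at any $U$ is a trivial cofibration of pointed simplicial sets (as $\ssetb$ is a monoidal model category), and since colimits of presheaves are schemewise this gives $J^c$-cell $\subseteq$ (schemewise trivial cofibrations) $\subseteq W$, i.e.\ (a). Taking $\mcal{Z}=\{\emptyset\hookrightarrow U\}$ shows $I^c$ contains the standard generating projective cofibrations and $J^c$ the standard projective acyclic cofibrations; thus $I^c$-inj is exactly the class of schemewise acyclic Kan fibrations, and every closed flasque fibration is a schemewise Kan fibration. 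From these two facts (b) follows at once: $I^c$-inj $\subseteq W$; each map in $J^c$ is a monomorphism, hence in $I^c$-cof, so $I^c$-inj $\subseteq J^c$-inj; and conversely a closed flasque fibration that is a weak equivalence is a schemewise Kan fibration and a schemewise weak equivalence, hence a schemewise acyclic fibration, hence in $I^c$-inj. This yields the model structure, which coincides with the one of \cite[Theorem 3.8]{HKO:EMHT}, and by construction $I^c$ and $J^c$ are the generating (acyclic) cofibrations.

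Next I would dispatch properness and the simplicial structure formally. Left properness holds because cofibrations are schemewise monomorphisms and colimits are schemewise, so a pushout of a schemewise weak equivalence along a cofibration is a schemewise weak equivalence by left properness of $\ssetb$; right properness is dual, using that closed flasque fibrations are schemewise fibrations and that $\ssetb$ is right proper. For the simplicial pushout-product axiom, I would reduce (by bifunctoriality of $\Box$ and a saturation argument) to the combinatorial facts that the pushout product of two boundary inclusions $\partial\Delta^n\to\Delta^n$, $\partial\Delta^m\to\Delta^m$ is a monomorphism of simplicial sets and the pushout product of a boundary inclusion with a horn inclusion is anodyne: a generator of $I^c$ has the form $(i_{\mcal{Z}})_+\,\Box\,g_+$, so its pushout product with $\partial\Delta^m_+\to\Delta^m_+$ is $(i_{\mcal{Z}})_+\,\Box\,\bigl(g_+\,\Box\,(\partial\Delta^m_+\to\Delta^m_+)\bigr)$, and since $I^c$ contains $(i_{\mcal{Z}})_+\,\Box\,(\partial\Delta^N_+\to\Delta^N_+)$ for every $N$ and $(i_{\mcal{Z}})_+\,\Box\,(-)$ preserves pushouts and transfinite compositions, the result again lies in $I^c$-cof, and in $J^c$-cof as soon as a horn inclusion is involved.

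The monoidal pushout-product axiom for the smash product is the one genuinely nontrivial point, and I expect the main obstacle to be there. Given generators $(i_{\mcal{Z}})_+\,\Box\,g_+$ and $(i_{\mcal{Z}'})_+\,\Box\,g'_+$ of $I^c$, I would regroup their pushout product as $\bigl((i_{\mcal{Z}})_+\,\Box\,(i_{\mcal{Z}'})_+\bigr)\,\Box\,(g_+\,\Box\,g'_+)$; the factor $g_+\,\Box\,g'_+$ is a monomorphism of pointed simplicial sets, hence in the saturation of the standard generators, and the key lemma — established exactly as in \cite{PPR:KGL} — is that $(i_{\mcal{Z}})_+\,\Box\,(i_{\mcal{Z}'})_+$ is again of the form $(i_{\mcal{Z}''})_+$, where $\mcal{Z}''$ is the finite family $\{Z_r\times X'\}_r\cup\{X\times Z'_{r'}\}_{r'}$ of closed immersions in $X\times X'$; this uses only that $\cup(\mcal{Z}\times X')$ and $\cup(X\times\mcal{Z}')$ meet in $\cup(\mcal{Z}\times\mcal{Z}')$ inside $X\times X'$, so that their pushout is precisely the categorical union of closed subschemes. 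The saturation argument of the previous paragraph then places the pushout product in $I^c$-cof, and in $J^c$-cof whenever one of $g,g'$ is a horn inclusion. Finally, cellularity: every object of $G\bMS(k)$ is small, so the domains and codomains of the maps in $I^c,J^c$ are compact, and since these maps are schemewise monomorphisms the cofibrations are effective monomorphisms, which verifies the definition of a cellular model category. The only step that requires real work is the identification $(i_{\mcal{Z}})_+\,\Box\,(i_{\mcal{Z}'})_+\cong(i_{\mcal{Z}''})_+$, the equivariant incarnation of the corresponding fact for the non-equivariant closed flasque model structure of \cite{PPR:KGL}, which goes through verbatim.
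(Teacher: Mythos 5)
Your overall plan---run the recognition theorem on $I^c$, $J^c$, then read off the adjectives from the explicit shape of the generating sets, with the monoidal axiom resting on the identification $(i_{\mcal{Z}})_+\,\Box\,(i_{\mcal{Z}'})_+\cong(i_{\mcal{Z}''})_+$---is the right one, and that key identification is correct. But the recognition-theorem step has a genuine gap. From the fact that $I^c$ contains the projective generators you conclude that ``$I^c$-inj is \emph{exactly} the class of schemewise acyclic Kan fibrations.'' That inclusion only goes one way, and the reverse is false: $I^c\text{-inj}\subsetneq\{\text{schemewise trivial Kan fibrations}\}$. Concretely, for a single closed immersion $Z\hookrightarrow X$ the generator $(i_{\{Z\}})_+\,\Box\,(\partial\Delta^0\to\Delta^0)_+$ is just $Z_+\to X_+$, and the right lifting property of $p\colon E\to B$ against it forces the map $E(X)\to E(Z)\times_{B(Z)}B(X)$ to be surjective on vertices (more generally a trivial Kan fibration once one ranges over boundary inclusions). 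Nothing about $p$ being sectionwise a trivial Kan fibration controls the restriction maps of $E$, so a sectionwise trivial Kan fibration need not satisfy this. This is precisely what makes the closed flasque model structure sit strictly between the projective and injective ones. As a result the chain ``closed flasque fibration $+$ weak equivalence $\Rightarrow$ schemewise acyclic fibration $\Rightarrow$ in $I^c$-inj'' breaks at the last arrow, and the inclusion $W\cap J^c\text{-inj}\subseteq I^c\text{-inj}$---the one substantive hypothesis---is left unproved. The usual argument (Isaksen, PPR) shows for $p\in J^c$-inj that is a sectionwise weak equivalence that each $E(X)\to E(\cup\mcal{Z})\times_{B(\cup\mcal{Z})}B(X)$ is not only a Kan fibration but also a weak equivalence, by induction on the size of $\mcal{Z}$ and using the flasqueness of $p$ to compute $E(\cup\mcal{Z})\to B(\cup\mcal{Z})$ as a levelwise fibration of homotopy limits; this genuinely uses the $J^c$-injectivity of $p$ and cannot be replaced by the purely sectionwise statement.

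The same misidentification leaks into ``each map in $J^c$ is a monomorphism, hence in $I^c$-cof.'' Being a monomorphism does not place a map in $I^c$-cof; that is the characterization of cofibrations in the injective model structure, not here. The conclusion $J^c\subseteq I^c\text{-cof}$ is true, but for the reason you yourself give two paragraphs later and should invoke here instead: a horn inclusion is a retract of a relative cell complex built from boundary inclusions, and $(i_{\mcal{Z}})_+\,\Box\,(-)$ is a left adjoint, hence preserves pushouts, transfinite compositions, and retracts. The remaining points (properness from sectionwise (co)limits, cellularity from local presentability and monicity of the generators, the simplicial and monoidal pushout-product axioms via regrouping and the $\mcal{Z}''$ lemma) are sound.
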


For a distinguished equivariant Nisnevich square $Q$ as in (\ref{eqn:edist}), write $P_{Q}$ for the pushout of $A\leftarrow B \rightarrow Y$ in $G\MS(k)$.
\begin{definition}
\label{definition:localmotivicclosedflasque}
\begin{enumerate}
\item 
The \emph{closed flasque (equivariant Nisnevich) local model structure} on $G\bMS(k)$ is the left Bousfield localization of the global closed flasque model structure 
at the set of maps $\{(P_{Q})_+\to X_+\}$, where $Q$ ranges over the set of  distinguished equivariant Nisnevich squares. 
The associated homotopy category is denoted $\bGH^{Nis}(k)$.
\item 
The \emph{closed flasque motivic model structure} on $G\bMS(k)$ is the left Bousfield localization of the closed flasque local model structure at the set of projections 
$(X\times\A^{1})_{+}\to X_+$ for all $X$ in $G\Sm/k$. For brevity, we refer to the weak equivalences and fibrations of this model structure as \emph{motivic weak equivalences} and \emph{motivic fibrations}.
The associated homotopy category is the unstable equivariant motivic homotopy category and is denoted $\bGH(k)$. 
\end{enumerate}
\end{definition}

\begin{theorem}
\label{theorem:localmotivicclosedflasque}
The closed flasque local and closed flasque motivic model structures are simplicial, proper, cellular, monoidal model structures on $G\bMS(k)$. 
The identity functor from the  projective model structures to the closed flasque model structures is a left Quillen equivalence.
In particular, 
the homotopy category $\bGH(k)$ coincides with the one defined in \cite{HKO:EMHT} (and hence also with the one defined by Voevodsky in \cite{Deligne:V}). 
\end{theorem}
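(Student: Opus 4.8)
The plan is to derive everything formally from the global closed flasque model structure (whose properties are recorded in the preceding proposition) together with Hirschhorn's theory of left Bousfield localization, comparing throughout with the projective model structures of \cite{HKO:EMHT}. The key starting observation is a comparison of generators at the global (schemewise) level: taking the empty collection $\mcal{Z}=\emptyset$ in the definitions of $I^c$ and $J^c$, and using the identification $X_+\wedge K_+\iso (X\otimes K)_+$ of a smooth $G$-scheme $X$ tensored with a simplicial set $K$, recovers precisely the standard generating cofibrations and generating acyclic cofibrations of the projective model structure on $G\bMS(k)$. Hence $I_{\mathrm{proj}}\subseteq I^c$ and $J_{\mathrm{proj}}\subseteq J^c$, so that every projective cofibration is a closed flasque cofibration; since the two global structures share the same (schemewise) weak equivalences, the identity functor $G\bMS(k)_{\mathrm{proj}}\to G\bMS(k)_{cf}$ is a left Quillen equivalence, and in particular $\mathrm{Cof}_{\mathrm{proj}}\subseteq\mathrm{Cof}_{cf}$ globally.

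Next I would obtain the closed flasque local and motivic model structures as the indicated left Bousfield localizations. As the global closed flasque model structure is left proper, cellular and simplicial, Hirschhorn's existence theorem produces left proper, cellular and simplicial model structures, and left Bousfield localization leaves the class of cofibrations unchanged. The resulting weak equivalences are the equivariant-Nisnevich- and $\A^1$-local equivalences, which depend only on the common underlying homotopy theory of schemewise equivalences and hence coincide with those of the corresponding projective localizations of \cite{HKO:EMHT}. Combining this with the global inclusion $\mathrm{Cof}_{\mathrm{proj}}\subseteq\mathrm{Cof}_{cf}$, the identity functor $G\bMS(k)_{\mathrm{proj}\text{-}\mathrm{mot}}\to G\bMS(k)_{cf\text{-}\mathrm{mot}}$ preserves cofibrations and acyclic cofibrations and is again a left Quillen equivalence; consequently $\bGH(k)$ agrees with the equivariant motivic homotopy category of \cite{HKO:EMHT}, and therefore with Voevodsky's \cite{Deligne:V}.

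For right properness I would transfer from the projective motivic (resp.\ local) model structure, which is right proper by \cite{HKO:EMHT}: the closed flasque motivic structure has the same weak equivalences but, by the cofibration inclusion (preserved under localization), more acyclic cofibrations and hence fewer fibrations; since pullbacks are computed in the common underlying category, a pullback of a motivic equivalence along a closed flasque motivic fibration is a fortiori such a pullback along a projective motivic fibration, hence again a motivic equivalence. For monoidality of the localized structures I would apply a monoidal-localization criterion (in the style of Hirschhorn; cf.\ Gorchinskiy--Guletskii): the global closed flasque model structure is monoidal with cofibrant unit $\spec(k)_+$, so it suffices, by left properness and simpliciality, to verify that smashing each localizing map with $Y_+$ for a smooth $G$-scheme $Y$ is still a local equivalence. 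This is immediate from $(P_Q)_+\wedge Y_+\iso (P_Q\times Y)_+$ and $P_Q\times Y = P_{Q\times Y}$ for the distinguished equivariant Nisnevich square $Q\times Y$, and from $(X\times\A^1)_+\wedge Y_+\iso (X\times Y\times\A^1)_+$, since in each case the result is again one of the localizing maps; the pushout-product and unit axioms for the localized structures follow.

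The only step that is not a formality is this last one — verifying that left Bousfield localization preserves the pushout-product axiom — and I expect that, together with the bookkeeping needed to run the whole argument uniformly for both the local and the motivic structures, to be where the genuine work lies; the remaining assertions are routine consequences of Hirschhorn's machinery and the comparison of generating sets.
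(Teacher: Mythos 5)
Your overall structure mirrors the paper's: existence, simplicial, cellular, and left properness come from Hirschhorn's localization theorem applied to the global closed flasque structure; right properness transfers from the projective model structure (the paper cites \cite{HKO:EMHT} directly, your pullback argument is exactly why that citation does the work); the identity is a left Quillen equivalence at the global level (same observation, projective generators sit inside the closed flasque ones), which then descends to the localizations since the localizing sets are the same. Those parts are fine and agree with the paper.

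The place you diverge, and where a gap opens up, is the monoidality argument. You appeal to a Gorchinskiy--Guletskii/White-style localization criterion and check that smashing each localizing map with $Y_+$, $Y$ a smooth $G$-scheme, yields a local equivalence. That verification itself is correct, but it is not the right thing to check: the criterion requires that $s\wedge K$ be a local equivalence for every $s$ in the localizing set and every $K$ which is a domain or codomain of a \emph{generating cofibration of the base model structure}. For the closed flasque structure those (co)domains are of the shape $(\cup\mcal{Z})_+\wedge(\Delta^n)_+\coprod_{(\cup\mcal{Z})_+\wedge(\partial\Delta^n)_+}X_+\wedge(\partial\Delta^n)_+$, and $\cup\mcal{Z}$ is a coequalizer of schemes, not a smooth $G$-scheme. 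Your identity $(P_Q)_+\wedge Y_+\iso(P_{Q\times Y})_+$ no longer applies to such $K$. In addition, the localizing maps $(P_Q)_+\to X_+$ are not obviously cofibrations with cofibrant domain in the closed flasque structure, which is a standing hypothesis of the criterion you invoke, so one would also need a cofibrant replacement step. Neither obstruction is fatal, but as written the argument does not close.

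The paper takes a shorter route that sidesteps both issues: it observes that smash product with any fixed object preserves motivic weak equivalences, because every object is cofibrant in the (Quillen-equivalent) injective motivic model structure. Granting that one fact, it then checks the pushout-product axiom directly on generating cofibrations, using that pushout products of maps of the form $(i_{\mcal{Z}})_+\to X_+$ are again of that form together with the pushout-product axiom for simplicial sets. Your approach would need a substitute for the ``smash preserves motivic equivalences'' lemma or a careful treatment of the non-representable (co)domains; if you had included the injective-cofibrancy observation, the remaining bookkeeping would indeed be routine, as you anticipated.
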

\begin{proof}
The global closed flasque model structure is left proper (in fact proper) as well as cellular and so by \cite[Theorem 4.1.1]{Hirschhorn} the left Bousfield localization of this 
model structure at a set of maps exists (and is again left proper and cellular). 
This implies that the above model structures exist, are simplicial, left proper, and cellular. 
Right properness follows from the fact that the local projective and motivic model structures are proper,
see \cite[Theorem 4.3]{HKO:EMHT}.

Every projective cofibration is a closed flasque cofibration and 
the weak equivalences in the global model structures coincide so that the identity is a Quillen equivalence between the global model structures, 
and hence a Quillen equivalence on the localized model structures.

To show that these are symmetric monoidal model structures we need to check that if $f$ and $g$ are cofibrations, 
then the pushout product of $f\ \Box \ g$ is also a cofibration and that it is a weak equivalence if either $f$ or $g$ is an acyclic cofibration. 
It suffices to assume that $f$ and $g$ are generating cofibrations. 
Note that taking smash products preserve motivic weak equivalences since every object is injective cofibrant. 
The pushout product of maps of the form $(i_{\mcal{Z}})_+\to X_+$ are again of the same form. 
The pushout product axiom in simplicial sets therefore implies that the pushout product of closed flasque cofibrations is again a cofibration. 
If one of $f$ or $g$ is a weak equivalence, 
so is the pushout product because the smash product preserves equivariant motivic equivalences. 
\end{proof}

\begin{remark}
It follows from the definitions that $F$ is motivic fibrant on $G\sm/k$ if and only if the following three conditions hold.
\begin{enumerate}
\item 
$F$ is fibrant in the global closed flasque model structure.
\item 
$F$ is \emph{equivariant Nisnevich excisive}, 
i.e., 
for any distinguished square in $G\sm/k$ the square
$$
\xymatrix{
F(X) \ar[r]\ar[d] & F(Y) \ar[d]\\
F(A) \ar[r] & F(B)
}
$$
is homotopy cartesian.
\item 
$F$ is \emph{$\A^1$-invariant}, 
i.e., 
$F(X\times \A^{1})\to F(X)$ is a weak equivalence for all $X$.
\end{enumerate}
\end{remark}

\subsection{Hypercohomology and  motivic homotopy}\label{sub:hyper}
Let $\mcal{F}$ be a presheaf of simplicial abelian groups on $G\sm/k$. 
Write $\mcal{N}\mcal{F}$ for the associated presheaf of normalized cochain complexes.
Forgetting the group structure, we view $\mcal{F}$ as a pointed motivic $G$-space, with $0$ as basepoint. 
There is a natural isomorphism (see e.g., \cite[Proposition 2.1.26]{MV:A1}) of homotopy classes of maps in $\bGH^{Nis}(k)$ and sheaf cohomology groups
$$
[S^{n}\wedge X_{+}, \mcal{F}]_{\bGH^{Nis}(k)} \iso  H^{-n}_{GNis}(X,(\mcal{N}\mcal{F})_{GNis}).
$$

Moreover, 
if $\mcal{F}$ is equivariant Nisnevich excisive then both of these groups agree with the homotopy group $\pi_{n}\mcal{F}(X)$.

In \cite{HVO:cancellation} we introduced an equivariant generalization of Voevodsky's machinery of presheaves with transfers. 
A presheaf with equivariant transfers is an additive presheaf on the category $G\Cor_{k}$, 
which has the same objects as $G\Sm/k$ and whose maps are given by $ \Cor_{k}(X,Y)^{G}$.
From the viewpoint of motivic homotopy theory, 
a fundamental feature of the transfer structure is that it allows one to construct a ``small'' motivic fibrant replacement. 

Write $\mcal{L}_{GNis}$ for a fibrant replacement functor in the equivariant Nisnevich local model structure of Theorem \ref{theorem:localmotivicclosedflasque}.
If $F$ is a presheaf of abelian groups recall that $C_{*}F(X)$ is the simplicial abelian group $F(X\times\Delta^{\bullet}_{k})$. 
By the exponent $\exp(G)$ of a group $G$ we mean the least common multiple of the orders of elements of the group.

\begin{theorem}\label{thm:rcoh}
Suppose that $G$ is abelian group, 
$|G|$ is coprime to ${\rm char}(k)$, 
and $k$ contains a primitive $\exp(G)$th-root of unity.  
Let $\mcal{F}$ be a presheaf with equivariant transfers. 
Then $\mcal{F}\to \mcal{L}_{GNis}C_{*}\mcal{F}$ is a motivic fibrant replacement functor. 
In particular, 
we have a natural isomorphism 
$$
[S^{n}\wedge X_{+},\mcal{F}]_{\bGH(k)} \iso  H^{-n}_{GNis}(X,\mcal{N}C_{*}\mcal{F}).
$$
\end{theorem}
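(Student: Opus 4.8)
The plan is to establish the two assertions defining a fibrant replacement — that the canonical map $\mcal{F}\to\mcal{L}_{GNis}C_{*}\mcal{F}$ is a motivic weak equivalence, and that $\mcal{L}_{GNis}C_{*}\mcal{F}$ is motivic fibrant — and then to read off the displayed isomorphism. For the weak equivalence I would factor the map as $\mcal{F}\to C_{*}\mcal{F}\to\mcal{L}_{GNis}C_{*}\mcal{F}$; the second arrow is an equivariant Nisnevich local, hence motivic, weak equivalence by construction, and the first is the Suslin--Voevodsky singular construction, which is inverted by $\A^{1}$-localization by the usual cosimplicial-contraction argument (the $\A^{1}$-contractions of the terms of $\Delta^{\bullet}_{k}$ turn $X\times\Delta^{\bullet}_{k}\to X$ into an $\A^{1}$-homotopy equivalence; cf.~\cite[\S2.3]{MV:A1}). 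This step uses nothing about the transfer structure.

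For fibrancy, $\mcal{L}_{GNis}C_{*}\mcal{F}$ is equivariant Nisnevich local fibrant by construction, so by the characterization of motivic fibrant objects in the remark following \aref{theorem:localmotivicclosedflasque} it remains to check $\A^{1}$-invariance. By the isomorphism recalled in \aref{sub:hyper}, the homotopy presheaves of $\mcal{L}_{GNis}C_{*}\mcal{F}$ are the equivariant Nisnevich hypercohomology presheaves $U\mapsto H^{-n}_{GNis}(U,\mcal{N}C_{*}\mcal{F})$, so I must show these are $\A^{1}$-invariant. Here one uses that $\mcal{N}C_{*}\mcal{F}$ is a bounded-above cochain complex of presheaves with equivariant transfers whose cohomology presheaves $\underline{H}^{\,i}(\mcal{N}C_{*}\mcal{F})\iso\underline{H}_{-i}(C_{*}\mcal{F})$ are homotopy invariant presheaves with equivariant transfers (the singular complex makes any presheaf homotopy invariant on (co)homology, and the transfer structure is inherited). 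The equivariant homotopy invariance theorem \cite[Theorem 8.12]{HVO:cancellation} — which is precisely where the hypotheses that $G$ is abelian, $|G|$ is prime to $\ch(k)$, and $k$ contains a primitive $\exp(G)$th root of unity are used — then gives that the equivariant Nisnevich sheafifications of these presheaves are again homotopy invariant, and that the equivariant Nisnevich cohomology presheaves of a homotopy invariant sheaf with equivariant transfers are homotopy invariant. Feeding this into the hypercohomology spectral sequence $E_{2}^{s,t}=H^{s}_{GNis}\bigl(U,\underline{H}^{\,t}(\mcal{N}C_{*}\mcal{F})_{GNis}\bigr)\Rightarrow H^{s+t}_{GNis}(U,\mcal{N}C_{*}\mcal{F})$ — which converges because the complex is bounded above and smooth $G$-schemes have finite equivariant Nisnevich cohomological dimension \cite[Corollary 3.9]{HVO:cancellation} — shows each $H^{n}_{GNis}(-,\mcal{N}C_{*}\mcal{F})$ is homotopy invariant, so $\mcal{L}_{GNis}C_{*}\mcal{F}$ is motivic fibrant.

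Granting both points, $[S^{n}\wedge X_{+},\mcal{F}]_{\bGH(k)}\iso[S^{n}\wedge X_{+},\mcal{L}_{GNis}C_{*}\mcal{F}]_{\bGH(k)}$; since $\mcal{L}_{GNis}C_{*}\mcal{F}$ is simultaneously motivic fibrant and equivariant Nisnevich local fibrant, this coincides with $[S^{n}\wedge X_{+},\mcal{L}_{GNis}C_{*}\mcal{F}]_{\bGH^{Nis}(k)}\iso[S^{n}\wedge X_{+},C_{*}\mcal{F}]_{\bGH^{Nis}(k)}$, and the latter is $H^{-n}_{GNis}(X,(\mcal{N}C_{*}\mcal{F})_{GNis})=H^{-n}_{GNis}(X,\mcal{N}C_{*}\mcal{F})$ by \aref{sub:hyper}; naturality in $X$ is clear.

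The hard part is the $\A^{1}$-invariance of the hypercohomology of $\mcal{N}C_{*}\mcal{F}$, that is, the equivariant analogue of Voevodsky's foundational results on homotopy invariant presheaves with transfers. Once \cite[Theorem 8.12]{HVO:cancellation} is invoked, what remains is the passage from a single homotopy invariant sheaf to a bounded-above complex via the hypercohomology spectral sequence, and there the only delicate point is convergence, which is supplied by finite equivariant Nisnevich cohomological dimension.
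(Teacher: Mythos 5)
Your argument follows the paper's proof essentially step for step: reduce fibrancy to $\A^1$-invariance of the hypercohomology presheaves $H^{-n}_{GNis}(-,\mcal{N}C_*\mcal{F})$, deduce this from the equivariant homotopy invariance theorem \cite[Theorem 8.12]{HVO:cancellation} fed into the hypercohomology spectral sequence (with convergence from finite equivariant Nisnevich cohomological dimension), and then read off the identification via the hypercohomology description of local homotopy classes. You spell out a couple of points the paper leaves tacit — that $\mcal{F}\to C_*\mcal{F}$ is a motivic weak equivalence by the cosimplicial contraction argument, and the convergence condition on the spectral sequence — but these are routine and the underlying approach is identical.
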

\begin{proof}
By definition $\mcal{L}C_*\mcal{F}$ is closed flasque fibrant and equivariant Nisnevich excisive. 
It remains to see that $\mcal{L}_{GNis}C_{*}\mcal{F}$ is $\A^1$-invariant.  
We have natural isomorphisms
$$
\pi_{n}\mcal{L}_{GNis}C_*\mcal{F}(X) 
\iso 
[S^{n}\wedge X_+, C_*\mcal{F}]_{\bGH^{Nis}(k)} 
\iso 
H^{-n}_{GNis}(X,\mcal{N}C_*\mcal{F}).
$$
The $\A^1$-invariance of this presheaf follows from \cite[Theorem 8.12]{HVO:cancellation} together with an application of the spectral sequence 
$$
H^{p}_{GNis}(X, \mcal{H}^{q})\Longrightarrow H^{p+q}_{GNis}(X, \mcal{N}C_*\mcal{F}),
$$ 
where $\mcal{H}^{q}$ is the sheafification of the presheaf $U\mapsto H^{q}\mcal{N}C_*\mcal{F}(U)$. 
\end{proof}

\subsection{Adjunctions of motivic spaces}\label{sub:adjunction}
Next we record some motivic analogues of familiar adjunctions in topology relating $G$-spaces and ordinary spaces.
We begin with the adjunctions
\begin{align*}
i:=(-)^{triv}:\Sm/k & \rightleftarrows G\Sm/k: (-)^{G}=:\phi,
\\ 
\epsilon:=G\times - :\Sm/k & \rightleftarrows G\Sm/k: (-)^{e}=:\rho.
\end{align*}
Here, $(-)^{e}$ simply forgets the action, the underlying scheme of $X^{triv}$ is $X$ and it is considered as a $G$-scheme with trivial action, and 
$X^{G}$ is the fixed points scheme, which is smooth since $|G|$ is invertible in $k$, see e.g., \cite[Proposition 3.4]{Edixhoven}).
We obtain several adjoint pairs of functors on based motivic spaces
\begin{align*}
i^*:\spc_{\bullet}(k) & \rightleftarrows G\spc_{\bullet}(k): i_*,
\\ 
i_*=\phi^*: G\spc_{\bullet}(k) & \rightleftarrows \spc_{\bullet}(k): \phi_*,
\\
\epsilon^*:\spc_{\bullet}(k) & \rightleftarrows G\spc_{\bullet}(k): \epsilon_*,
\\ 
\epsilon_*=\rho^*: G\spc_{\bullet}(k) & \rightleftarrows 
\spc_{\bullet}(k): \rho_*.
\end{align*}

\begin{proposition}
 Each of the pairs  $(i^*,i_*)$, $(\phi^*,\phi_*)$, $(\epsilon^*,\epsilon_*)$, and $(\rho^*, \rho_*)$ are Quillen adjoints on the closed flasque motivic model structures. 
\end{proposition}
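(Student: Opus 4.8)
The plan is to establish each of the four Quillen adjunctions by the standard criterion: since all model structures in question are left Bousfield localizations of the global closed flasque model structure, it suffices to check that the left adjoint preserves cofibrations and acyclic cofibrations, and then to verify compatibility with the two localizations (at equivariant Nisnevich squares and at $\A^1$-projections). First I would treat the global level. For $\epsilon^* = (G\times -)^*$ and $i^* = ((-)^{triv})^*$, the right adjoints are restriction along the functors $\rho = (-)^e$ and $\phi = (-)^G$ respectively; the point is that both $(-)^e\colon G\Sm/k\to \Sm/k$ and $(-)^G\colon G\Sm/k\to\Sm/k$ send the generating (acyclic) cofibrations of the closed flasque structure to (acyclic) cofibrations. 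This is immediate from the description of $I^c$ and $J^c$: a finite set $\mcal{Z}$ of closed equivariant immersions into $X$ restricts under $(-)^e$ (resp. $(-)^G$) to a finite set of closed immersions into $X^e$ (resp. $X^G$), and the categorical union $\cup\mcal{Z}$ is preserved because these functors preserve fibre products and finite coproducts of schemes; smashing with the simplicial generators $g_+$ is untouched. Hence $i^*, \epsilon^*$ preserve $I^c$ and $J^c$, and since these are cofibrantly generated model structures this gives the global Quillen adjunctions for $(i^*,i_*)$ and $(\epsilon^*,\epsilon_*)$. For the pairs $(\phi^*,\phi_*) = (i_*,\phi_*)$ and $(\rho^*,\rho_*) = (\epsilon_*,\rho_*)$, I would instead check that the left adjoints $\phi^* = i_* = $ ``take fixed points'' and $\rho^* = \epsilon_* = $ ``forget the action'' preserve cofibrations and acyclic cofibrations; again this reduces to generators, and $(-)^G$, $(-)^e$ applied to a closed equivariant immersion $i_{\mcal{Z}}$ produce maps of the form $i_{\mcal{Z}'}$, so the generating (acyclic) cofibrations are preserved.

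Next I would pass to the localizations. By the universal property of left Bousfield localization (\cite[Theorem 3.3.20]{Hirschhorn}), once we know a Quillen adjunction between the global structures, the adjunction descends to the local closed flasque structures provided the left adjoint sends the localizing maps $\{(P_Q)_+\to X_+\}$ to maps that become weak equivalences in the target local structure, and similarly descends to the motivic structures provided it sends the $\A^1$-projections $(X\times\A^1)_+\to X_+$ to motivic weak equivalences. For the $\A^1$-projections this is automatic: $i^*, \epsilon^*, \phi^*=i_*, \rho^*=\epsilon_*$ all carry $X\times\A^1$ to $Y\times\A^1$ for the appropriate $Y$ (using that $(X\times\A^1)^G = X^G\times\A^1$, $(X\times\A^1)^e = X^e\times\A^1$, etc., and that $G\times(X\times\A^1) = (G\times X)\times\A^1$, $(X\times\A^1)^{triv}=X^{triv}\times\A^1$), so the image is again an $\A^1$-projection. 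For the Nisnevich squares one uses that each of the underlying scheme-level functors $(-)^{triv}$, $G\times-$, $(-)^G$, $(-)^e$ sends an equivariant distinguished square to a(n equivariant, resp. ordinary) distinguished square — for $(-)^G$ this is the content of the fact, already used in the paper via Luna's slice theorem, that taking fixed points is compatible with étale maps and open immersions and preserves the reduced-isomorphism condition — and hence sends the pushout $P_Q$ to the corresponding pushout, so the localizing maps map to localizing maps (up to the weak equivalences already inverted). Actually, for the pairs where the left adjoint is a restriction functor $i^*$ or $\epsilon^*$, it is cleaner to check the equivalent condition that the \emph{right} adjoint preserves local (resp. motivic) fibrant objects, using the explicit ``three conditions'' characterization of motivic fibrancy recorded in the Remark after \aref{theorem:localmotivicclosedflasque}: one verifies that $(-)^G$ and $(-)^e$ (as morphisms of sites) carry Nisnevich-excisive $\A^1$-invariant presheaves to Nisnevich-excisive $\A^1$-invariant presheaves, which is exactly the statement that these functors are morphisms of the relevant Grothendieck sites — precisely the observation already invoked in \aref{prop:forgetiso}.

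The main obstacle, and the only step requiring genuine care, is the compatibility of $(-)^G$ with the equivariant Nisnevich topology: one must know that if $\{U\to X, Y\to X\}$ is an elementary Nisnevich cover in $G\Sm/k$ then $\{U^G\to X^G, Y^G\to X^G\}$ is a (nonequivariant) Nisnevich cover of $X^G$, equivalently that $(-)^G$ takes equivariant distinguished squares to distinguished squares. This follows from the characterization of equivariant Nisnevich covers via residue fields and set-theoretic stabilizers recalled just before \aref{rem:localaffine}: over a point $x\in X^G$ the stabilizer is all of $G$, so the lifting point $y\in Y$ with $S_y\iso S_x = G$ is itself $G$-fixed and has $k(y)\iso k(x)$, giving the required splitting of $Y^G\setminus W^G\to X^G\setminus U^G$; smoothness of the fixed-point schemes (Luna) keeps us inside $\Sm/k$. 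Everything else — preservation of generating cofibrations, of $\A^1$-projections, of finite categorical unions of closed subschemes, and the bookkeeping across the two Bousfield localizations — is routine. I would organize the write-up as: (1) reduce to the global structures via \cite[Theorem 3.3.20]{Hirschhorn}; (2) check generators for each of the four left adjoints; (3) record that each underlying site-morphism respects distinguished squares and $\A^1$, citing the stabilizer characterization for the $(-)^G$ case; (4) conclude.
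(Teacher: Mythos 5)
Your overall strategy---preserve generators at the global level, then descend through the two Bousfield localizations via compatibility with distinguished squares and $\A^1$-projections---matches the paper's, but the global step confuses which underlying scheme functor controls which adjunction. The functors $i^*$ and $\epsilon^*$ are the left Kan extensions along $i=(-)^{triv}$ and $\epsilon = G\times -$ (not restrictions), and the right adjoint $i_*$ is restriction along $i$, which coincides with the Kan extension $\phi_!$ rather than with restriction along $\phi$. To show $i^*$ and $\epsilon^*$ preserve the generators $I^c$, $J^c$ of $\bMS(k)$ one needs that $(-)^{triv}\colon \Sm/k\to G\Sm/k$ and $G\times -\colon \Sm/k\to G\Sm/k$ preserve closed immersions and commute with fiber products. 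You instead cite the corresponding facts for $(-)^e$ and $(-)^G$, which is exactly the input for the other two pairs $(\phi^*,\phi_*)$ and $(\rho^*,\rho_*)$, and you then repeat that same argument a sentence later when you turn to those pairs. So as written the conclusion ``hence $i^*$, $\epsilon^*$ preserve $I^c$ and $J^c$'' is a non sequitur, and the first two adjunctions are never actually established. The repair is immediate---$(-)^{triv}$ and $G\times -$ obviously preserve closed immersions and fiber products---and the paper handles all four cases in one stroke by recording that each of $i$, $\phi$, $\epsilon$, $\rho$ has these two properties; but the argument you wrote down does not do this.

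A secondary point concerns your sketch that $(-)^G$ carries a distinguished equivariant Nisnevich square to a distinguished Nisnevich square. Your stabilizer argument only verifies the Nisnevich lifting condition over $G$-fixed points for the resulting family $\{U^G\to X^G, Y^G\to X^G\}$, which is the condition for a cover. It does not address why $p^G$ remains \'etale (which needs Luna's theorem plus a flatness argument) nor why $(Y^G\setminus W^G)_{red}\to(X^G\setminus U^G)_{red}$ is an isomorphism, which is what the distinguished-square condition actually requires. The paper cites Herrmann, Corollary 3.2.6, for precisely this statement rather than arguing it from scratch.
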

\begin{proof}
Each of the functors $i$, $\phi$, $\epsilon$, and $\rho$ commute with fiber products and preserve closed immersions.  It follows that $i^*$, $\phi^*$, $\epsilon^*$, and $\rho^*$ preserve generating cofibrations and generating acyclic cofibrations for the global closed flasque model structure. These are thus left Quillen adjoints on the global closed flasque model structure.

Both $i$ and $\epsilon$ send distinguished Nisnevich squares to distinguished equivariant Nisnevich squares. 
The functor $\rho$ sends distinguished equivariant Nisnevich squares to distinguished Nisnevich squares 
and by \cite[Corollary 3.2.6]{Herrmann},  
$\phi$ does as well. 
Moreover,  $i$, $\phi$, $\epsilon$, and $\rho$ all
send maps of the form $X\times\A^{1}\to X$ to maps of the same form. It follows by the universal property of Bousfield localization that the functors $i^*$, $\phi^*$, $\epsilon^*$, and $\rho^*$ are also left Quillen functors on the closed flasque motivic model structure.

\end{proof}

We may now define motivic analogues of the classical change of groups functors.
\begin{definition}
Define the
\begin{enumerate}
\item \emph{trivial action} functor by $(-)^{triv}:=  \L i^*$,
\item \emph{$G$-fixed points} functor by  $ (-)^{G} := \R i_*$,
\item \emph{induced motivic $G$-space} functor by $G_+\wedge - := \L \epsilon^*$, 
\item \emph{coinduced motivic $G$-space} functor by $F(G_+, -):= \R\rho_{*}$,
\item \emph{underlying motivic space} functor by $(-)^{e}:= \R \epsilon_*$.
\end{enumerate}
\end{definition} 
Note that  $i_*= \phi^*$ and $\epsilon_*=\rho^*$ are both Quillen left and Quillen right adjoints. 
We thus have natural motivic equivalences $\R i_*=(-)^{G} \wkeq \L i_*$ and $\R \epsilon_*=(-)^{e}\wkeq \L \epsilon_*$.

In summary, we have adjunctions
\begin{align*}
(-)^{triv}:\HH(k) & \rightleftarrows \bGH(k): (-)^{G}, \\
G_+\wedge - :\HH(k) & \rightleftarrows \bGH(k) :  (-)^{e}, \\
(-)^{e} :\bGH(k) & \rightleftarrows \HH(k) :  F(G_+, -).
\end{align*}

We note that $(G_+\wedge X)^{e} \wkeq \coprod_{|G|}X$. 
Indeed, we have $\epsilon_*\epsilon^*(X) = \coprod_{|G|}X$ since
this formula holds for smooth schemes and both sides commute with colimits. If $X^{cof}$ is a cofibrant replacement for $X$, then  we have the weak equivalences $(G_+\wedge X)^{e} = \R\epsilon_*\L\epsilon^*(X) \wkeq \L\epsilon_*\L\epsilon^*(X) \wkeq
 \epsilon_*\epsilon^*(X^{cof}) = \coprod_{|G|}X^{cof}$, which establishes the claim. 
The inclusion $X^{e}\to (G_+\wedge X)^{e}$ at the summand corresponding to $e\in G$ induces the map $i:G_+\wedge X^{e}\to G_+\wedge X$.

\begin{lemma}\label{lem:trick}
The map $i: G_+\wedge X^{e} \xrightarrow{\iso} G_{+}\wedge X$ is an isomorphism in  $\bGH(k)$.
\end{lemma}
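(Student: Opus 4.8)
The plan is to reduce $i$ to a manifestly invertible map on smooth $G$-schemes, and then bootstrap along homotopy colimits. First I would unwind the definition of $i$: by construction it is the map adjoint, under the adjunction $G_+\wedge-=\L\epsilon^{\ast}\dashv\R\epsilon_{\ast}=(-)^{e}$, to the inclusion $X^{e}\hookrightarrow(G_+\wedge X)^{e}\wkeq\coprod_{|G|}X^{e}$ of the summand indexed by $e\in G$. Explicitly, $i$ is the composite $\epsilon^{\ast}(X^{e})\xrightarrow{\epsilon^{\ast}(\iota_e)}\epsilon^{\ast}\bigl((G_+\wedge X)^{e}\bigr)\xrightarrow{\mathrm{counit}}G_+\wedge X$, where the counit $\epsilon^{\ast}\epsilon_{\ast}W\to W$ is the action map; in particular $i$ is a natural transformation between the two endofunctors $X\mapsto\epsilon^{\ast}\epsilon_{\ast}X\;(=G_+\wedge X^{e}$, the induced $G$-space on the underlying space$)$ and $X\mapsto G_+\wedge X$ (with the diagonal $G$-action) of $G\bMS(k)$.

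Next I would evaluate $i$ on a representable. For $X=Y_+$ with $Y\in G\Sm/k$ one has $\epsilon^{\ast}\epsilon_{\ast}X=(G\times Y^{e})_+$ with $G$ acting through the first factor only, while $G_+\wedge X=(G\times Y)_+$ with the diagonal action; tracing through the counit shows that $i$ is the shearing map $(g,y)\mapsto(g,gy)$. This is a $G$-equivariant isomorphism of $k$-schemes, with inverse $(g,y)\mapsto(g,g^{-1}y)$ (the source carrying the first-factor action, the target the diagonal action), hence in particular an isomorphism in $\bGH(k)$.

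Finally I would globalize. Each of $\epsilon^{\ast}$, $\epsilon_{\ast}$ and $-\wedge G_+$ is a left Quillen functor for the closed flasque motivic model structure ($G_+$ being cofibrant), so both functors of the first paragraph preserve homotopy colimits and weak equivalences between cofibrant objects. Since every cofibrant pointed motivic $G$-space is a homotopy colimit of representables $Y_+$, $Y\in G\Sm/k$, together with the point, and $i$ is an equivalence on all of these, it is an equivalence for every $X$. (Alternatively, $\epsilon_{\ast}$ is itself a left adjoint---its right adjoint is $F(G_+,-)$---so $\epsilon^{\ast}\epsilon_{\ast}$ and $G_+\wedge-$ both preserve all colimits, and a natural transformation of colimit-preserving functors that is an isomorphism on representables is an isomorphism everywhere; thus $i$ is in fact an isomorphism of motivic $G$-spaces.) I do not expect a genuine obstacle here: the only points needing care are the bookkeeping that identifies $i$ with the explicit shearing map through the counit---keeping straight which side carries the first-factor action and which carries the diagonal action---and the (standard) fact that the functors involved preserve homotopy colimits, so that verification on the generating smooth $G$-schemes suffices.
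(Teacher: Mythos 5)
Your argument is correct and follows essentially the same strategy as the paper's: both reduce to representable $X$ using the fact that all functors in sight preserve (homotopy) colimits, and then identify $i$ on a smooth $G$-scheme $Y$ with the shearing isomorphism $(g,y)\mapsto(g,gy)$. Your proposal is more explicit about unwinding $i$ through the counit of the $(\L\epsilon^{\ast},\R\epsilon_{\ast})$ adjunction and also records the sharper observation that $i$ is a genuine isomorphism of presheaves, but the core of the argument coincides with the paper's.
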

\begin{proof}
We have that $G_+\wedge (-)^{e} \iso \L\epsilon^*\L\epsilon_*(-) = \L\epsilon^*\L\rho^*(-)$. 
Note that these commute with homotopy colimits, 
as does $G_{+}\wedge -$. 
It thus suffices to assume that $X$ is representable. 
If $Y$ is a $G$-scheme, 
then the map $G\times Y^e \to G\times Y$ given by $(g,y)\mapsto (g,gy)$ yields the desired equivariant isomorphism.
\end{proof}

\subsection{Stable equivariant motivic homotopy theory}
\label{subsection:Stableequivariantmotivichomotopytheory}
Stable model structures on motivic spectra are constructed in \cite{Jardine:motspt}, yield the stable motivic homotopy category. 
A stable equivariant motivic homotopy category was first constructed in \cite{HKO}.
In this paper we will work with model structures for equivariant motivic spectra by using Hovey's machinery \cite[Section 8]{Hovey:spt}.

Let $V$ be a representation. 
The associated motivic representation sphere is the quotient 
$T^{V} := \P(V\oplus 1)/\P(V)$, where the quotient is taken in the category of presheaves. It is naturally an object of $G\spc_{\bullet}(k)$. We also write $T^{ nV}:= (T^{V})^{\wedge n}$. Since $\P(V)\to \P(V\oplus 1)$ is a closed flasque cofibration, 
$T^{V}$ is closed flasque cofibrant motivic $G$-space.  
We write $\rG$ for the regular representation.

Let $K$ be a pointed motivic $G$-space. 
We write $\ihom(K,-)$ for the right adjoint of $K\wedge -$. 
In particular,
$\Sigma_{T^{\rG}}F = T^{\rG}\wedge F$ and $\Omega_{T^{\rG}}F = \ihom(T^{\rG},F)$. 

\begin{definition}\label{def:spectra}
A symmetric $K$-spectrum  is a sequence $E=(E_{0}, E_{1},\ldots)$ consisting of objects $E_n$ of $G\spc_{\bullet}(k)$ together with the following data
\begin{enumerate}
\item[(i)] a $\Sigma_n$-action  on $E_n$, 
\item[(ii)] $\Sigma_n$-equivariant maps $\sigma_n:E_n\wedge K \to E_{n+1}$,
\end{enumerate}
where the structure maps $\sigma_n$ are required to satisfy the condition that the composites $E_{n}\wedge K^{\wedge p} \to E_{n+p}$ are $\Sigma_n\times\Sigma_p$-equivariant for all $n,p\geq 0$. 
\end{definition}
A map $E\to F$ of symmetric $K$-spectra is a collection of $\Sigma_n$-equivariant maps $E_n\to F_n$ which are compatible with the structure maps. 
Write $G\sspt_{K}(k)$ for the category of symmetric $K$-spectra in $G\spc_{\bullet}(k)$. 
Now suppose that $K$ is a closed flasque cofibrant based motivic $G$-space.
The category $G\spc_{\bullet}(k)$ equipped with the closed flasque motivic model structure is a left proper, cellular, simplicial, symmetric monoidal model category, 
and so we can use \cite[Definition 8.7]{Hovey:spt} to define a stable model structure on the category of symmetric $K$-spectra.
It is again a symmetric monoidal model category by \cite[Theorem 8.11]{Hovey:spt}. 
If $K'$ is another closed flasque cofibrant based motivic $G$-space, 
we write $G\sspt_{K,K'}(k)$ for the category of symmetric $(K,K')$-bispectra (i.e., symmetric $K'$-spectra in $G\sspt_{K}(k)$). 
This is again equipped with the stable model structure of \cite[Section 8]{Hovey:spt}. 
  
A symmetric $K$-spectrum $\mcal{E}$ is fibrant if and only if it is an $\Omega_K$-spectrum,
i.e., 
$E_i$ is motivic fibrant and $\mcal{E}_i\to \Omega_{K}E_{i+1}$ is a motivic weak equivalence for all $i$.
By \aref{thm:motivicrep} the Bredon motivic cohomology spectrum $\MA$ is an $\Omega_{T^{\rho_{C_2}}}$-spectrum.

The adjunctions of \aref{sub:adjunction} stabilize.

\begin{proposition}\label{prop:Gstadj}
The adjoint pairs of \aref{sub:adjunction} induce adjoint pairs 
\begin{align*}
(-)^{triv}:\SH(k) & \rightleftarrows \SH_{G}(k): (-)^{G}, \\
G_+\wedge - : \SH(k)&\rightleftarrows \SH_{G}(k): (-)^{e}, \\
(-)^{e}: \SH_{G}(k)&\rightleftarrows \SH(k): F(G_+,-).
\end{align*}
The functors $(-)^{triv}$,  and $(-)^{e}$ are symmetric monoidal and $(-)^{G}$ is lax monoidal.
\end{proposition}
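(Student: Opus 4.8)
The plan is to obtain each of the three stable adjunctions by prolonging the corresponding unstable Quillen adjunction of the preceding proposition to spectra via Hovey's machinery \cite[Section 8]{Hovey:spt}, after reindexing the two stable categories so that the relevant stabilizing spheres line up. The only genuine work is in this reindexing, which I would carry out first. Since $\ch(k)$ is coprime to $|G|$, Maschke's theorem gives a $G$-representation splitting $\rho_{G}\iso 1\oplus\widetilde{\rho}_{G}$ into the trivial representation and the reduced regular representation $\widetilde{\rho}_{G}$, and hence, by $T^{V}\wedge T^{W}\wkeq T^{V\oplus W}$, a motivic equivalence $T^{\rho_{G}}\wkeq T\wedge T^{\widetilde{\rho}_{G}}$ with $T=T^{triv}$ carrying the trivial action. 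Invoking the standard fact that stabilizing a left proper cellular symmetric monoidal model category at a smash product of symmetric cofibrant objects is, up to Quillen equivalence, the iterate of the two stabilizations --- the requisite symmetry being that the cyclic permutations act as identities in the homotopy categories on $T^{\wedge 3}$ and on $(T^{\rho_{G}})^{\wedge 3}$, which holds for the motivic representation spheres at hand --- yields a symmetric monoidal Quillen equivalence $\SH_{G}(k)\wkeq G\sspt_{T,T^{\widetilde{\rho}_{G}}}(k)$. By the same token ($T$ symmetric), stabilizing at $T$ or at $T^{\wedge|G|}$ gives a symmetric monoidal Quillen equivalence $\SH(k)\wkeq\sspt_{T^{|G|}}(k)$. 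I would invoke these presentations at the outset; modulo this bookkeeping everything else is formal.

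With these presentations in hand, consider $(-)^{triv}$ and $(-)^{G}$. The functor $i^{*}$ is left Quillen on the closed flasque motivic structures and sends $T$ to $T^{triv}$, so it prolongs to a left Quillen functor $\sspt_{T}(k)\to G\sspt_{T}(k)$; composing with the infinite $T^{\widetilde{\rho}_{G}}$-suspension spectrum functor lands in $G\sspt_{T,T^{\widetilde{\rho}_{G}}}(k)\wkeq\SH_{G}(k)$ and defines $(-)^{triv}=\L i^{*}$ as a left Quillen functor. Its right adjoint is obtained by composing $\R\Omega^{\infty}_{T^{\widetilde{\rho}_{G}}}$ with the levelwise right adjoint $\R i_{*}$ of $i^{*}$, and this is precisely $(-)^{G}$. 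Because $i^{*}$ is strict symmetric monoidal on pointed motivic spaces (the trivial-action functor commutes with products and with smash products) and the suspension spectrum functors are symmetric monoidal, $(-)^{triv}$ is symmetric monoidal; being the right adjoint of a symmetric monoidal functor, $(-)^{G}$ is then lax symmetric monoidal.

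For the remaining two adjunctions, note that unstably $\rho^{*}=\epsilon_{*}$ is simultaneously left and right Quillen, and that $\rho^{*}(T^{\rho_{G}})=T^{|G|}$, since forgetting the action turns the regular representation sphere into the $|G|$-fold Tate sphere. Hence, using $\SH(k)\wkeq\sspt_{T^{|G|}}(k)$, the functor $\rho^{*}$ prolongs to a functor $\SH_{G}(k)\to\SH(k)$ which is both left and right Quillen, namely $(-)^{e}$. Its left adjoint is the prolongation of $\epsilon^{*}$, that is, $G_{+}\wedge -=\L\epsilon^{*}$, and its right adjoint is $F(G_{+},-)=\R\rho_{*}$, which yields the last two adjunctions. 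Since forgetting the action strictly commutes with the levelwise smash product and with the spectrum-level smash product and carries the unit to the unit, $(-)^{e}$ is symmetric monoidal.

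The main obstacle I anticipate is concentrated entirely in the first paragraph: establishing that the reindexed presentations $\SH_{G}(k)\wkeq G\sspt_{T,T^{\widetilde{\rho}_{G}}}(k)$ and $\SH(k)\wkeq\sspt_{T^{|G|}}(k)$ are symmetric monoidal Quillen equivalences. This comes down to checking the cyclic-permutation (symmetry) hypotheses for the motivic representation spheres involved and then quoting the independence-of-stabilization results for Hovey spectra; once that is in place, prolonging the unstable Quillen functors, identifying their adjoints, and transporting the monoidal structure along the equivalences are all routine.
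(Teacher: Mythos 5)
Your proposal follows the same broad strategy as the paper: reindex the two stable categories so the stabilizing spheres line up, then prolong the unstable Quillen pairs from \aref{sub:adjunction} using Hovey's machinery. But there are two places where you either gloss over or diverge from the actual argument, one of which is a genuine gap.

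First, the reindexing. You invoke ``stabilizing at a smash product of symmetric cofibrant objects is, up to Quillen equivalence, the iterate of the two stabilizations'' to get $\SH_G(k)\wkeq G\sspt_{T,T^{\tilde{\rho}_G}}(k)$. The paper instead works with $G\sspt_{T,T^{\rho_G}}(k)$ (the full regular representation for the secondary sphere) and argues more cleanly via \cite[Theorem 9.1]{Hovey:spt}: since $T\wedge T^{\tilde{\rho}_G}=T^{\rho_G}$ is already invertible in $G\sspt_{T^{\rho_G}}(k)$, the endofunctor $-\wedge T$ is a Quillen autoequivalence there, so the further $T$-stabilization $G\sspt_{T^{\rho_G}}(k)\to G\sspt_{T^{\rho_G},T}(k)\cong G\sspt_{T,T^{\rho_G}}(k)$ is a Quillen equivalence. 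Your ``iterated stabilization'' fact is morally correct but nontrivial in the symmetric-spectrum setting (the $\Sigma_n$-equivariance data on $K\wedge L$-symmetric spectra versus $K$-symmetric spectra of $L$-symmetric spectra do not obviously match), and it is not a single clean theorem you can quote from Hovey. The paper's route through Theorem 9.1 avoids this entirely, so you are taking on unnecessary risk here.

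Second, and this is the real gap: your claim that ``the functor $\rho^*$ prolongs to a functor $\SH_G(k)\to\SH(k)$ which is both left and right Quillen'' and that ``its left adjoint is the prolongation of $\epsilon^*$'' leaves the crucial step unjustified. The right adjoint of the prolonged $\rho^*$ is indeed the levelwise $\rho_*$, but identifying the \emph{left} adjoint with a ``prolongation of $\epsilon^*$'' is not automatic: $\epsilon^*$ on spaces does \emph{not} carry $T^{|G|}$ to $T^{\rho_G}$ (it carries it to $G_+\wedge T^{|G|}$), so the naive levelwise prolongation of $\epsilon^*$ does not even land in $T^{\rho_G}$-spectra. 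The paper gets around this by first establishing the projection formula $\epsilon^*(X\wedge\rho^*(Y))\iso\epsilon^*(X)\wedge Y$ (giving $\epsilon^*(X\wedge T^{|G|})=\epsilon^*(X)\wedge T^{\rho_G}$, which is what lets the structure maps be defined) and then invoking \cite[Lemma 4.1]{HO:galois} to conclude that $(\epsilon^*,\epsilon_*)$ and $(\rho^*,\rho_*)$ extend to Quillen adjunctions on the spectrum categories. That projection-formula input is what you are missing; without it, the claim that $G_+\wedge-$ on the stable level is ``the prolongation of $\epsilon^*$'' is an assertion rather than a proof.
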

\begin{proof}
First we note that we have an equivalence, as tensor triangulated categories
$$
\SH_{G}(k) \wkeq {\rm Ho}(G\sspt_{T,T^{\rho_{G}}}(k))
$$
where the category $G\sspt_{T,T^{\rho_{G}}}(k)$ of symmetric $(T,T^{\rho_G})$-bispectra equipped with the stable model structure as above. 
Indeed, the model category $G\sspt_{T,T^{\rho_{G}}}(k)$ is isomorphic to 
$G\sspt_{T^{\rho_{G}}, T}(k)$ and the endofunctor $-\wedge T$ on $G\sspt_{T^{\rho_{G}}}(k)$ is a Quillen equivalence, 
since $T\wedge T^{\tilde{\rho}_{G}}= T^{\rho_{G}}$ (where $\tilde{\rho}_{G}$ is the reduced regular representation). 
By \cite[Theorem 9.1]{Hovey:spt}, the stabilization functor (which is symmetric monoidal)
$G\sspt_{T^{\rho_{G}}}(k)\to G\sspt_{T^{\rho_{G}},T}(k)$ is therefore a left Quillen equivalence.

The monoidal functor $i^*:\spc_{\bullet}(k)\to G\spc_{\bullet}(k)$ makes $G\spc_{\bullet}(k)$ into a $\spc_{\bullet}(k)$-model category in the sense of \cite[Definition 4.2.18]{Hovey:model} 
and the Quillen functor $i^*$ is a $\spc_{\bullet}(k)$-module functor (see \cite[Definition 4.1.7]{Hovey:model}). 
By \cite[Theorem 9.3]{Hovey:spt}, 
the Quillen pairs $(i^*, i_*)$  extends to a Quillen pair on spectra. Combined with the stablization adjunction, we have the composition of Quillen functors
$$
\xymatrix{
\sspt_{T}(k) 
\ar@<+.7ex>[r]^{i^*}
& 
G\sspt_{T}(k) 
\ar@<+.7ex>[r]^{\Sigma^{\infty}_{T^{\rho_{G}}}}
\ar@<+.7ex>[l]^{i_*}
&
G\sspt_{T,T^{\rho_{G}}}(k).
\ar@<+.7ex>[l]^{\Omega^{\infty}_{T^{\rho_G}}}
}
$$

The pair $((-)^{triv}, (-)^G)$ is the induced adjunction on homotopy categories, i.e., $(-)^{triv} = \L(\Sigma^{\infty}_{T^{\rho_{G}}}\circ i^*)$ and 
$(-)^G = \R(i_*\circ \Omega^{\infty}_{T^{\rho_G})}$. 

The other two adjunctions are attained as follows. 
We use $\sspt_{T^{|G|}}(k)$ as our model for $\SH(k)$. 
Note for $X$ in $\spc_{\bullet}(k)$ and $Y$ in $G\spc_{\bullet}(k)$ we have a natural isomorphism $\epsilon^*(X\wedge \rho^*(Y)) \iso \epsilon^*(X)\wedge Y$. 
Indeed, this holds when $X$ is in $\sm k$ and $Y$ is in $G\sm/k$ and both sides commute with colimits.
Since $\rho^*(T^{\rho_G}) = T^{|G|}$, we have $\epsilon^*(X \wedge T^{|G|}) = \epsilon^*(X)\wedge T^{\rho_{G}}$. 
We also have $\rho^*(Y\wedge T^{\rho_{G}}) = \rho^*(Y)\wedge T^{|G|}$. By \cite[Lemma 4.1]{HO:galois}, the adjunctions 
$(\epsilon^*,\epsilon_*)$ and $(\rho^*, \rho_*)$ extend to Quillen adjunctions on stable model structures
\begin{align*}
\epsilon^*:\sspt_{T^{|G|}}(k) & \rightleftarrows G\sspt_{T^{\rho_{G}}}(k):\epsilon_*, 
\\
\rho^*:G\sspt_{T^{\rho_{G}}}(k) & \rightleftarrows \sspt_{T^{|G|}}(k):\rho_*.
\end{align*}
We have $\epsilon_*=\rho^*:G\spc_{\bullet}(k)\to \spc_{\bullet}(k)$. 
The prolongations to functors on spectra are levelwise isomorphic and it is straightforward to verify that they are in fact isomorphic as spectra. 
In other words, 
$\epsilon_*= \rho^*:G\sspt_{T^{\rho_{G}}}(k) \to \sspt_{T^{|G|}}(k)$. In particular, $\R\epsilon_*\wkeq \L\rho^*$. The adjunctions $(G_+\wedge -, (-)^{e})$ and 
$((-)^{e}, F(G_+,-))$ on the homotopy categories are thus obtained as $G_+\wedge - = \L\epsilon^*$, $F(G_+,-) = \R\rho_*$, and $(-)^{e} = \L\rho^*$.

Since $i^*$ and $\rho^*$ are symmetric monoidal functors, so are 
$(-)^{triv}$ and $(-)^e$. Since $(-)^G$ is right adjoint to a symmetric monoidal functor, it is lax monoidal.
\end{proof}

\subsection{Topological realization over \texorpdfstring{$\C$}{C} (unstable)}\label{sub:top}
If $X$ is a complex variety, 
we consider $X(\C)$ as a topological space with the Euclidean topology. 
If $X$ has a $G$-action, 
then $X(\C)$ also has a $G$-action and $X\mapsto X(\C)$ defines a functor $G\sm/\C\to G\topp$.
The topological realization functor $\RRe_{\C}:\GMS(\C)\to G\topp_{\bullet}$ is  defined by the Kan extension
$$
\RRe_{\C}(F) 
=
\colim_{(X\times \Delta^{n})_+\to F}( X(\C) \times \Delta^{n}_{top})_{+}
$$   
where $\Delta^{n}_{top}$ is the standard topological $n$-simplex, 
considered with trivial action. 
It has a right adjoint $K\mapsto \msing_{\C}(K)$, 
defined by $\msing_{\C}(K)(X) = \shom_{G}(X(\C), K)$ (where $\shom_{G}(-,-)$ is the simplicial set of continuous equivariant maps).
Equip $G\topp_{\bullet}$ with the model structure where a map $X\to Y$ is a weak equivalence or a fibration if $X^{H}\to Y^{H}$ is a weak equivalence or a fibration 
for all subgroups of $G$, see e.g., \cite[Theorem III.1.8]{MM:EOS}. The resulting homotopy category $\bGH$ is the classical unstable equivariant homotopy category. 
For the corresponding model structure on $G$-simplicial sets, 
see e.g.,  
\cite[\S9.2]{DRO:enrichedfunctors}. 

\begin{proposition}
\label{proposition:ReQuillen}
The adjoint pair
$$
\RRe_{\C}:\GMS(\C) \rightleftarrows G\topp_{\bullet}: \msing_{\C}
$$
is a Quillen adjunction and  
$\RRe_{\C}$ is a symmetric monoidal functor. 

\end{proposition}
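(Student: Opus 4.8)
The plan is to check, in turn: that $(\RRe_{\C},\msing_{\C})$ is an adjunction, that $\RRe_{\C}$ is symmetric monoidal, that $\RRe_{\C}$ is left Quillen for the closed flasque motivic structure, and finally for $\GMS(\C)^{fp}$. The adjunction is formal: $\RRe_{\C}$ is defined by a colimit, hence cocontinuous, hence has a right adjoint, and unwinding the coend identifies it with $\msing_{\C}$. For symmetric monoidality, $\RRe_{\C}(S^{0}) = \spec(\C)(\C)_{+} = S^{0}$ handles the unit; since $\RRe_{\C}$ and the pointwise smash products on both sides preserve colimits in each variable, the lax comparison $\RRe_{\C}(F)\wedge\RRe_{\C}(F')\to\RRe_{\C}(F\wedge F')$ need only be checked an isomorphism on (pointed) representables and in the simplicial directions, where $X_{+}\wedge X'_{+}\iso(X\times X')_{+}$ together with $(X\times X')(\C) = X(\C)\times X'(\C)$ (and the classical statement for simplicial sets) give it, with coherence inherited from $X\mapsto X(\C)$ on $G\Sm/\C$.

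First I would show $\RRe_{\C}$ is left Quillen for the \emph{global} closed flasque structure. Being cocontinuous, it is left Quillen once it carries $I^{c}$ to cofibrations and $J^{c}$ to acyclic cofibrations of $G\topp_{\bullet}$. A member of $I^{c}$ (resp.\ $J^{c}$) is $(i_{\mcal{Z}})_{+}\ \Box\ g_{+}$ with $g$ a boundary (resp.\ horn) inclusion of standard simplices; $\RRe_{\C}$ commutes with pushout products (being monoidal and cocontinuous), and $\RRe_{\C}(g_{+})$ is the realization of a trivial-action simplicial map, hence an equivariant (acyclic) cofibration, so by the pushout product axiom in $G\topp_{\bullet}$ it suffices to see that $\RRe_{\C}((i_{\mcal{Z}})_{+})$ is an equivariant cofibration. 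Now $\RRe_{\C}$ preserves the coequalizer defining $\cup\mcal{Z}$, and $(Z_{r}\times_{X}Z_{r'})(\C) = Z_{r}(\C)\cap Z_{r'}(\C)$ in $X(\C)$, so $\RRe_{\C}(\cup\mcal{Z}) = \bigcup_{r}Z_{r}(\C)\subseteq X(\C)$, a finite union of closed $G$-invariant subanalytic subsets of the smooth $G$-manifold $X(\C)$. By the equivariant triangulation theorem, $X(\C)$ admits a $G$-CW structure for which $\bigcup_{r}Z_{r}(\C)$ is a subcomplex, so $(X(\C),\bigcup_{r}Z_{r}(\C))$ is a relative $G$-CW complex and $i_{\mcal{Z}}(\C)_{+}$ is an equivariant cofibration; this is the design feature of the closed flasque structure.

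Next I would descend using the universal property of left Bousfield localization \cite[Theorem 3.3.20]{Hirschhorn}: the left Quillen functor $\RRe_{\C}$ descends to the closed flasque local, then motivic, structure iff $\L\RRe_{\C}$ sends each localizing map — $(P_{Q})_{+}\to X_{+}$ for $Q$ a distinguished equivariant Nisnevich square, and $(X\times\A^{1})_{+}\to X_{+}$ — to an isomorphism in $\bGH$. The $\A^{1}$-maps realize to the projections $(X(\C)\times\C)_{+}\to X(\C)_{+}$, which are equivariant homotopy equivalences since $\C = \A^{1}(\C)$ carries a trivial (hence contractible) action. For the Nisnevich maps: in $Q$ the map $W\to Y$ is a monomorphism of presheaves, so $P_{Q}$ already computes the homotopy pushout of $U\leftarrow W\to Y$, and $\L\RRe_{\C}$ preserves homotopy pushouts; thus $\L\RRe_{\C}((P_{Q})_{+}\to X_{+})$ is the canonical map from the homotopy pushout of $U(\C)_{+}\leftarrow W(\C)_{+}\to Y(\C)_{+}$ to $X(\C)_{+}$, and it suffices to show that the square $\RRe_{\C}(Q)$ of $G$-spaces is homotopy cocartesian. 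I would check this on $H$-fixed points for each $H\subseteq G$: by $Z(\C)^{H} = Z^{H}(\C)$, the $H$-fixed square is the realization of the distinguished Nisnevich square $Q^{H}$ of smooth complex schemes, and realization of a distinguished Nisnevich square of complex varieties is homotopy cocartesian — the non-equivariant topological descent statement, using that $p(\C)$ is a local homeomorphism restricting to a homeomorphism over $X(\C)\setminus U(\C)$. The same fixed-point reduction handles the fixed-point Nisnevich distinguished squares of $\GMS(\C)^{fp}$, whose underlying global cofibrations are treated exactly as in the previous paragraph; hence $\RRe_{\C}$ is left Quillen on $\GMS(\C)^{fp}$ as well.

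The hard part will be the geometric input to the descent step: that $\RRe_{\C}$ turns distinguished (and fixed-point) Nisnevich squares into homotopy cocartesian squares of $G$-spaces. After the reduction to $H$-fixed points this is the non-equivariant fact that a distinguished Nisnevich square of complex varieties realizes to a homotopy pushout — resting on étale maps realizing to local homeomorphisms together with a topological excision argument — plus the routine but necessary check that $H$-fixed points of schemes and of realizations are computed compatibly under the colimits at issue. The equivariant triangulability of complex varieties used in the global step is the other non-formal ingredient, but it is standard.
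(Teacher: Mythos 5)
Your proof follows essentially the same route as the paper's: establish left Quillen-ness for the global closed flasque model structure by analyzing the generating (acyclic) cofibrations, then descend through the Bousfield localizations by realizing the $\A^{1}$-projections to equivariant homotopy equivalences and the distinguished squares to equivariant homotopy pushouts, with the latter checked by $H$-fixed-point reduction to the nonequivariant case. Two remarks. First, your treatment of the global step is a mild streamlining: rather than show $J^{c}$ lands in acyclic cofibrations, the paper only observes it lands in weak equivalences and then invokes Dugger's lemma (\cite[Corollary A2]{dugger:simp}) to conclude left Quillen-ness; your pushout-product argument reaches the same conclusion more directly, provided you have first established that $\RRe_{\C}$ is strong monoidal and cocontinuous (which you do).

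Second, you write ``the $H$-fixed square is the realization of the distinguished Nisnevich square $Q^{H}$'' without justifying that $Q^{H}$ \emph{is} a distinguished Nisnevich square of smooth schemes. This is precisely the nontrivial input: one needs that the $H$-fixed points of a smooth $G$-scheme are smooth (Luna's slice theorem, which requires characteristic zero) and that fixed points of an equivariant \'etale morphism remain \'etale and still satisfy the excision condition on the closed complement. The paper records this as ``an equivariant Nisnevich square is also a fixed-point Nisnevich square,'' citing \cite[Corollary 2.13]{Herrmann}. Your argument is correct as stated only once that fact is supplied; as written it is asserted rather than proved. The remaining ingredients you name — equivariant triangulability of complex $G$-varieties to get the $G$-CW structure, $Z(\C)^{H}=Z^{H}(\C)$, the contractibility of $\A^{1}(\C)$, and the nonequivariant statement that a distinguished Nisnevich square of complex varieties realizes to a homotopy pushout (the paper's reference for this is \cite[Theorem 5.2]{DI:hyp}) — all match the paper.
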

\begin{proof}
The argument is similar to that given in \cite[Theorem A.23]{PPR:KGL}. 
First we show that $\RRe_{\C}$ is a left adjoint on the closed flasque model structure. For any finite collection of closed immersions 
$\{Z_{i} \to X\}$ in $G\sm/\C$, $\RRe_{\C}((i_{\mcal{Z}})_{+})$ is  the inclusion of an equivariant subcomplex, 
and hence an equivariant cofibration. It follows that 
$\RRe_{\C}$ sends $I^{c}$ to cofibrations in $G\topp_{\bullet}$ and $J^{c}$ to weak homotopy equivalences in $G\topp_{\bullet}$. This implies that 
$\msing_{\C}$ preserves trivial fibrations as well as fibrations between fibrant objects. By Dugger's lemma \cite[Corollary A2]{dugger:simp}, $\RRe_\C$
is thus a left Quillen functor on the closed flasque global model structure.

Next we claim that $\RRe_{\C}$ sends equivariant distinguished squares to homotopy pushouts.  By \cite[Corollary 2.13]{Herrmann}, the $H$-fixed points of an equivariant distinguished square is a distinguished  Nisnevich square in $\Sm/k$.  A square in $G\topp_{\bullet}$ is a homotopy pushout if and only if it is so on all fixed points. Since $X^{H}(\C) = X(\C)^{H}$, it suffices to show that the square obtained by taking complex points of a distinguished Nisnevich square is a homotopy pushout square in topological spaces. This follows from \cite[Theorem 5.2]{DI:hyp}, establishing the claim. As $\A^{1}(\C)$ is equivariantly contractible, 
$\RRe_{\C}(X_{+})\to \RRe_{\C}((X\times \A^{1})_{+})$ is an equivariant homotopy equivalence. It follows by the universal property of Bousfield localization that $\RRe_{\C}$ is a left Quillen functor on the closed flasque motivic model structure. 

That $\RRe_{\C}$ is symmetric monoidal is a simple consequence of the fact that there is a natural equivariant homeomorphism $(X\times Y)(\C) \iso X(\C)\times Y(\C)$.
\end{proof}

\begin{proposition}\label{prop:commrel}
 The squares of left adjoints commutes up to natural isomorphisms
 $$
 \xymatrix{
 \HH(\C) \ar[r]^-{G_+\wedge -} \ar[d]_-{\RRe_{\C}} & \bGH(\C)\ar[d]^-{\RRe_{\C}} \\ 
 \HH\ar[r]^-{G_+\wedge -}&  \bGH,
 }
\;
\xymatrix{
 \HH(\C) \ar[r]^-{(-)^{triv}} \ar[d]_-{\RRe_{\C}} & \bGH(\C)\ar[d]^-{\RRe_{\C}} \\ 
 \HH\ar[r]^-{(-)^{triv}}&  \bGH,
 }
\;
 \xymatrix{
 \bGH(\C) \ar[r]^-{(-)^{e}} \ar[d]_-{\RRe_{\C}} & \HH(\C)\ar[d]^-{\RRe_{\C}} \\ 
 \bGH\ar[r]^-{(-)^{e}}&  \HH.
 }
 $$
\end{proposition}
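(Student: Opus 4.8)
The plan is to reduce each of the three squares to an assertion about the \emph{underlying point-set left adjoints} and then apply the standard formalism of derived functors. The first observation is that every functor occurring in the three diagrams is a left Quillen functor: $\RRe_{\C}$ is left Quillen by \aref{proposition:ReQuillen} (in the equivariant form, and in the case $G=\{e\}$ in the nonequivariant form), while the change-of-group functors $(-)^{triv}$, $G_{+}\wedge -$ and $(-)^{e}$ are left Quillen on the closed flasque motivic model structures by the Quillen-adjunction results of \aref{sub:adjunction}, and their topological counterparts are left Quillen as well. Hence both ways around each square are composites of left Quillen functors, and it suffices to produce a natural isomorphism between the two underlying point-set composites and then descend it to the homotopy categories.

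For the point-set step I would first note that $\RRe_{\C}$ is by construction a left Kan extension, and that $(-)^{triv}$, $G_{+}\wedge -$ and $(-)^{e}$ are left adjoints; hence all of these functors preserve arbitrary colimits, and therefore so does each of the two composites in a given square. Since each composite is thus determined by its restriction to the representables $(Y\times\Delta^{\bullet})_{+}$ with $Y\in G\Sm/\C$, it is enough to identify the two composites there. On such objects the claim unwinds to the evident natural equivariant homeomorphisms $(G\times Y)(\C)\iso G\times Y(\C)$, $(Y^{triv})(\C)\iso Y(\C)^{triv}$, and $(Y^{e})(\C)\iso Y(\C)^{e}$, compatible with adjoining a disjoint basepoint; the underlying fact is that forming complex points commutes with taking $H$-fixed points, $X^{H}(\C)=X(\C)^{H}$, so that $X\mapsto X(\C)$ intertwines the scheme-level change-of-group operations with the topological ones. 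Naturality in $Y$ is transparent, and colimit-preservation propagates these isomorphisms to all pointed motivic $G$-spaces.

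Finally I would pass to the homotopy categories. In each square the inner functor of either composite is one of the left Quillen functors above and so preserves cofibrant objects; consequently the total left derived functor of each composite agrees with the composite of the total left derived functors. Inserting the point-set natural isomorphisms of the previous step then yields natural isomorphisms between the derived composites, which is precisely the asserted commutativity of the three squares. I do not anticipate a genuine obstacle here: the only points requiring care are bookkeeping ones, namely keeping track of which model structure each functor is Quillen with respect to and checking that the representable-level isomorphisms are natural and basepoint-compatible, both of which are routine. (For the $(-)^{e}$-square one could alternatively invoke the identification $(-)^{e}\wkeq\R\epsilon_{*}\wkeq\L\epsilon_{*}$ from \aref{sub:adjunction}, but the argument above applies uniformly and needs nothing beyond it.)
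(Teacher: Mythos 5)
Your argument is essentially identical to the paper's proof: both reduce to constructing point-set natural isomorphisms of the underlying left adjoints, verified on representables and extended to all motivic $G$-spaces by colimit preservation, and then descend to the homotopy categories via the standard compatibility of total left derived functors with composition of left Quillen functors. No gaps; the proposal matches the paper's route.
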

\begin{proof}
Using the notation from \aref{sub:adjunction}, we have a natural isomorphism $\RRe_{\C}\circ\epsilon^*(-)\iso G_+\wedge \RRe_{\C}(-)$ of functors $\spc_{\bullet}(\C)\to G\topp$. Indeed, this is clear on representable motivic spaces and since these functors commute with colimits, this suffices. Similarly, we have natural isomorphisms $\RRe_{\C}\circ i^*(-)\iso (\RRe_{\C}(-))^{triv}$ and $\RRe_{\C}\circ \rho^*(-) \iso (\RRe_{\C}(-))^{e}$. These isomorphisms imply the isomorphisms of derived functors on the homotopy categories. 
\end{proof}

\subsection{Topological realization over \texorpdfstring{$\C$}{C} (stable)}
\label{subsection:topologicalrealization(stable)}
Now we turn our attention to a stable realization functor. 
For any real orthogonal representation $V$ there is a representation sphere $S^{V}=V_+$, 
where $(-)_+$ denotes the one-point compactification. 
Since $\C[G]=\R[G]\oplus\R[G]$ as real representations, 
we have $\RRe_{\C}(T^{\rho_{G}}) = S^{2\rho_{G}}$. 

If $\mathsf{E}$ is a motivic $G$-spectrum, 
define the topological $S^{2\rho_{G}}$-spectrum $\RRe_{\C}\mathsf{E}$ by $(\RRe_{\C}\mathsf{E})_{i} = \RRe_{\C}\mathsf{E}_i$ with structure maps 
$$
\RRe_{\C}\mathsf{E}_i\wedge S^{2\rho_{G}} = \RRe_{\C}(\mathsf{E}_{i}\wedge T^{\rho_G}) \to \RRe_{\C}\mathsf{E}_{i+1}.
$$
The functor $\msing_{\C}$ extends as well to a functor on $S^{2\rho_{G}}$-spectra.
In fact,  
we obtain an adjoint pair of functors
$$
\RRe_{\C}:\sspt_{T^{\rho_G}}(\C) \rightleftarrows \sspt_{S^{2\rho_{G}}}(G\topp_{\bullet}):\msing_{\C}.
$$

\begin{theorem}
\label{theorem:stableBettirealization}
The adjoint pairs
\begin{align*}
\RRe_{\C}:\sspt_{T^{\rho_G}}(\C) & \rightleftarrows \sspt_{S^{2\rho_{G}}}(G\topp_{\bullet}):\msing_{\C},  \\
\RRe_{\C}:\sspt_{T^{\rho_G}}(\C)^{fp} &\rightleftarrows \sspt_{S^{2\rho_{G}}}(G\topp_{\bullet}):\msing_{\C}
\end{align*}
are both Quillen adjoint pairs.
Moreover, 
$\RRe_{\C}$ is symmetric monoidal.
\end{theorem}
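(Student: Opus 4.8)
The plan is to bootstrap from the unstable statement \aref{proposition:ReQuillen} using Hovey's machinery for symmetric spectra \cite{Hovey:spt}, following the pattern of \cite[Appendix A]{PPR:KGL}. The two inputs already in hand are that $\RRe_{\C}$ is a symmetric monoidal left Quillen functor $\GMS(\C)\to G\topp_{\bullet}$ (and likewise on $\GMS(\C)^{fp}$), and that $\RRe_{\C}(T^{\rho_{G}})\iso S^{2\rho_{G}}$, where $S^{2\rho_{G}}$ is a cofibrant $G$-CW complex. First I would record that the prolongation behaves well with respect to the free-spectrum functors: writing $F^{K}_{n}$ for the left adjoint of evaluation at level $n$ on symmetric $K$-spectra, symmetric monoidality of $\RRe_{\C}$ supplies a $\Sigma_{n}$-equivariant isomorphism $\RRe_{\C}((T^{\rho_{G}})^{\wedge n})\iso (S^{2\rho_{G}})^{\wedge n}$, hence a natural isomorphism $\RRe_{\C}\circ F^{T^{\rho_{G}}}_{n}\iso F^{S^{2\rho_{G}}}_{n}\circ \RRe_{\C}$, and dually $\msing_{\C}\circ \Omega_{S^{2\rho_{G}}}\iso \Omega_{T^{\rho_{G}}}\circ\msing_{\C}$. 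Since the generating (trivial) cofibrations of the projective model structure on symmetric $K$-spectra are the maps $F^{K}_{n}(j)$ with $j$ a generating (trivial) cofibration of the base, and \aref{proposition:ReQuillen} gives that $\RRe_{\C}$ carries generating (trivial) cofibrations of $\GMS(\C)$ (resp.~$\GMS(\C)^{fp}$) to (trivial) cofibrations of $G\topp_{\bullet}$, it follows that the prolonged $\RRe_{\C}$ is a left Quillen functor between the projective model structures on $\sspt_{T^{\rho_{G}}}(\C)$ and $\sspt_{S^{2\rho_{G}}}(G\topp_{\bullet})$, and likewise in the $fp$ variant.

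Next I would upgrade this to the stable model structures. Hovey's stable model structure on symmetric $K$-spectra is the left Bousfield localization of the projective one at the set of stabilization maps, obtained by applying the functors $F^{K}_{n}$ to (the smash with $K$ of) the domains and codomains of the generating cofibrations; these maps are between cofibrant objects. Because $\RRe_{\C}$ commutes with the $F_{n}$'s and carries $T^{\rho_{G}}$ to $S^{2\rho_{G}}$, it carries the stabilization maps for $T^{\rho_{G}}$-spectra to those for $S^{2\rho_{G}}$-spectra, which are stable equivalences by construction; by the universal property of left Bousfield localization (cf.~\cite{Hirschhorn}), $\RRe_{\C}$ therefore remains left Quillen for the stable model structures. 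Equivalently --- and this is the form I would probably write up --- the right adjoint $\msing_{\C}$, which is already right Quillen unstably, takes an $\Omega_{S^{2\rho_{G}}}$-spectrum with fibrant terms to an $\Omega_{T^{\rho_{G}}}$-spectrum with fibrant terms via $\msing_{\C}\circ\Omega_{S^{2\rho_{G}}}\iso\Omega_{T^{\rho_{G}}}\circ\msing_{\C}$, hence preserves stably fibrant objects. The identical argument, fed by the $fp$ half of \aref{proposition:ReQuillen}, handles the second Quillen pair.

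For the monoidality claim, the smash product of symmetric $K$-spectra is assembled by Day convolution from the symmetric monoidal product of the base, so the strong symmetric monoidal structure on the unstable $\RRe_{\C}$ prolongs to a strong symmetric monoidal structure on the spectrum-level $\RRe_{\C}$, with unit $\RRe_{\C}(\mathbf{1}) = \RRe_{\C}(\Sigma^{\infty}_{T^{\rho_{G}}}S^{0}) = \Sigma^{\infty}_{S^{2\rho_{G}}}\RRe_{\C}(S^{0}) = \Sigma^{\infty}_{S^{2\rho_{G}}}S^{0}$. The main thing to watch --- bookkeeping rather than a genuine difficulty --- is that the passage from the levelwise to the stable structure really goes through; this is exactly where $\RRe_{\C}(T^{\rho_{G}})\iso S^{2\rho_{G}}$ and symmetric monoidality of $\RRe_{\C}$ are used, since together they force the localizing set of maps on the source side to be sent into the localizing set on the target side. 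One should also confirm that Hovey's standing hypotheses apply to $G\topp_{\bullet}$ with its fixed-point model structure as set up above, and that the $\Sigma_{n}$-actions on the relevant smash powers match under $\RRe_{\C}$, but these are routine.
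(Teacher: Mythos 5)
Your proposal is correct and follows essentially the same route the paper intends: the paper's proof simply cites Proposition~\ref{proposition:ReQuillen} together with~\cite[Theorem A.45]{PPR:KGL}, which is precisely the prolongation-through-Hovey's-stable-model-structure argument you spell out (free-spectrum commutation, localizing maps go to localizing maps, Day convolution for monoidality). You have just made explicit what the paper labels ``straightforward.''
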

\begin{proof}
This is straightforward from Proposition \ref{proposition:ReQuillen},
cf.~\cite[Theorem A.45]{PPR:KGL}. 
\end{proof}

\begin{proposition}\label{prop:stablecommrel}
The squares of left adjoints commute up to natural isomorphisms
$$
\xymatrix{
\SH(\C) \ar[r]^-{G_+\wedge -} \ar[d]_-{\RRe_{\C}} & \SH_G(\C)\ar[d]^-{\RRe_{\C}} \\ 
\SH\ar[r]^-{G_+\wedge -}&  \SH_G,
}
\;
\xymatrix{
\SH(\C) \ar[r]^-{(-)^{triv}} \ar[d]_-{\RRe_{\C}} & \SH_G(\C)\ar[d]^-{\RRe_{\C}} \\ 
\SH\ar[r]^-{(-)^{triv}}&  \SH_G,
}
\;
\xymatrix{
\SH_G(\C) \ar[r]^-{(-)^{e}} \ar[d]_-{\RRe_{\C}} & \SH(\C)\ar[d]^-{\RRe_{\C}} \\ 
\SH_G\ar[r]^-{(-)^{e}}&  \SH.
}
$$
\end{proposition}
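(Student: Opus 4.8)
The plan is to reduce the statement to the unstable assertion \aref{prop:commrel}, combined with the explicit levelwise descriptions of all the functors involved. Recall from the proof of \aref{prop:Gstadj} that, presenting $\SH_G(\C)$ by the bispectrum category $G\sspt_{T,T^{\rho_G}}(\C)$, the stable functors $G_+\wedge-$, $(-)^{triv}$ and $(-)^e$ are the prolongations of the unstable functors $\epsilon^*$, $i^*$ and $\rho^*$ on $G\spc_\bullet(\C)$, using the identifications $\rho^*(T^{\rho_G})=T^{|G|}$, $\epsilon^*(-\wedge T^{|G|})\cong\epsilon^*(-)\wedge T^{\rho_G}$, $i^*(T)=T$; and from \aref{subsection:topologicalrealization(stable)} that $\RRe_\C$ on symmetric $T^{\rho_G}$-spectra is the prolongation of the unstable $\RRe_\C$, via $\RRe_\C(T^{\rho_G})=S^{2\rho_G}$. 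Moreover, by \aref{prop:Gstadj} and \aref{theorem:stableBettirealization} every one of the six functors occurring in the three squares is left Quillen, hence preserves cofibrant objects; consequently the composite left-derived functor along either side of a given square is computed by applying the strict point-set composite to a cofibrant replacement.

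First I would upgrade the three unstable natural isomorphisms exhibited in the proof of \aref{prop:commrel}, namely $\RRe_\C\circ\epsilon^*(-)\cong G_+\wedge\RRe_\C(-)$, $\RRe_\C\circ i^*(-)\cong(\RRe_\C(-))^{triv}$ and $\RRe_\C\circ\rho^*(-)\cong(\RRe_\C(-))^e$ of functors on pointed motivic $G$-spaces, to natural isomorphisms of the prolonged functors on symmetric spectra. Concretely, for $\mathsf{E}=(\mathsf{E}_i)$ one takes these isomorphisms levelwise and checks compatibility with the bonding maps; this is a diagram chase that uses only the symmetric monoidality of $\RRe_\C$ together with the coherence of the sphere-identifications $\RRe_\C(T^{\rho_G})=S^{2\rho_G}$, $\rho^*(T^{\rho_G})=T^{|G|}$, $\epsilon^*(-\wedge T^{|G|})\cong\epsilon^*(-)\wedge T^{\rho_G}$ already used in the proofs of \aref{prop:Gstadj} and \aref{theorem:stableBettirealization}. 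For the $G_+\wedge-$ square one may alternatively shortcut by reducing to the $(-)^{triv}$ square: since $G_+\wedge\mathsf{E}\cong G_+\wedge\mathsf{E}^{triv}$ and $\RRe_\C$ is monoidal and sends the motivic $G$-object $G_+$ to the topological $G_+$, this square follows formally from the middle one.

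Finally I would pass to the homotopy categories. Since every functor in sight is left Quillen, one has $\L(\beta\circ\alpha)\cong\L\beta\circ\L\alpha$ whenever $\alpha$ is left Quillen (as $\alpha$ carries cofibrant objects to cofibrant objects); hence the two composite left-derived functors around each square are obtained by applying the respective point-set composites to a cofibrant replacement, and the point-set natural isomorphism of the previous paragraph descends to the asserted natural isomorphism of derived functors on $\SH(\C)$, $\SH_G(\C)$, $\SH$, $\SH_G$. I expect the only real obstacle to be bookkeeping: verifying the structure-map coherence of the prolonged natural transformations and keeping track of the bispectrum model of $\SH_G(\C)$; the substantive content is already contained in \aref{prop:commrel} and the constructions of \aref{prop:Gstadj} and \aref{theorem:stableBettirealization}.
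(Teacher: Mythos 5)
Your proposal is correct and follows exactly the route the paper intends: the paper's proof is the single sentence ``Straightforward using the unstable result in Proposition \ref{prop:commrel},'' and your argument is precisely the expansion of that sentence — prolong the unstable point-set isomorphisms of \aref{prop:commrel} to the symmetric-spectrum level using the sphere identifications from \aref{prop:Gstadj} and \aref{theorem:stableBettirealization}, then use that all functors involved are left Quillen so the composites of derived functors around each square are computed by the strict composites on cofibrant replacements. The shortcut you note for the $G_+\wedge-$ square (via $G_+\wedge\mathsf{E}\cong G_+\wedge\mathsf{E}^{triv}$, monoidality of $\RRe_\C$, and the $(-)^{triv}$ square) is a legitimate simplification consistent with the paper's conventions.
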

\begin{proof}
Straightforward using the unstable result in Proposition \ref{prop:commrel}.
\end{proof}

\subsection{Topological realization over  \texorpdfstring{$\R$}{R}}\label{sub:topR}

Write $\Sigma_2={\rm Gal}(\C/\R)$. 
If $X$ is a real variety with $G$-action,
then $X(\C)$ is a $(G\times\Sigma_2)$-space, 
where the $\Sigma_2$-action is via complex conjugation. 
We thus have a functor $G\Sm/\R\to (G\times\Sigma_2)\topp$ which induces the topological realization functor
$\RRe_{\C,\,\Sigma_2}:\GMS(\C)\to (G\times\Sigma_2)\topp_{\bullet}$,
defined by the Kan extension
$$
\RRe_{\C,\,\Sigma_2}(F) 
=
\colim_{(X\times \Delta^{n})_+\to F}( X(\C) \times \Delta^{n}_{top})_{+}.
$$   
Its right adjoint is defined by $\msing_{\C,\,\Sigma_2}(K)(X) = \shom_{G\times\Sigma_2}(X(\C),K)$, 
where $K$ is a $G\times\Sigma_2$-space and $X$ is a smooth real $G$-variety.

\begin{proposition}\label{prop:unstableR}
The adjoint pair
$$
\RRe_{\C,\,\Sigma_2}:\GMS(\R) \rightleftarrows (G\times\Sigma_2)\topp_{\bullet}: \msing_{\C,\Sigma_2}
$$
is a Quillen adjunction and $\RRe_{\C,\Sigma_2}$ is a symmetric monoidal functor. 
\end{proposition}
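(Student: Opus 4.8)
The plan is to follow the proof of \aref{proposition:ReQuillen} essentially verbatim, with $\C$ replaced by $\R$ and the group $G$ replaced by $G\times\Sigma_2$, isolating the one genuinely new input: that taking $(G\times\Sigma_2)$-equivariant complex points carries distinguished equivariant (equivalently, by \cite[Corollary 2.13]{Herrmann}, fixed point) Nisnevich squares of smooth real $G$-schemes to homotopy pushout squares. First I would check that $\RRe_{\C,\Sigma_2}$ is left Quillen for the global closed flasque model structure. As in \aref{proposition:ReQuillen}, for any finite collection $\mcal{Z}=\{Z_i\hookrightarrow X\}$ of closed equivariant immersions in $G\sm/\R$ the map $\RRe_{\C,\Sigma_2}((i_{\mcal{Z}})_+)$ is the inclusion of a $(G\times\Sigma_2)$-CW subcomplex of $X(\C)_+$, hence an equivariant cofibration; so $\RRe_{\C,\Sigma_2}$ sends $I^c$ to cofibrations and $J^c$ to acyclic cofibrations, its right adjoint $\msing_{\C,\Sigma_2}$ preserves trivial fibrations and fibrations between fibrant objects, and $\RRe_{\C,\Sigma_2}$ is left Quillen for the global closed flasque structure by Dugger's lemma \cite[Corollary A2]{dugger:simp}.

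Next, since $\A^1(\C)=\C$, with trivial $G$-action and the $\Sigma_2$-action by complex conjugation, is equivariantly contractible, the map $\RRe_{\C,\Sigma_2}(X_+)\to\RRe_{\C,\Sigma_2}((X\times\A^1)_+)$ is an equivariant homotopy equivalence. By the universal property of left Bousfield localization it then suffices to establish the descent statement above; granting it, $\RRe_{\C,\Sigma_2}$ is left Quillen for the closed flasque motivic model structure and, using \cite[Corollary 2.13]{Herrmann} once more, for $\GMS(\R)^{fp}$ as well. Symmetric monoidality follows at once from the natural $(G\times\Sigma_2)$-homeomorphism $(X\times Y)(\C)\iso X(\C)\times Y(\C)$, exactly as in \aref{proposition:ReQuillen}.

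The substantive step, and the one I expect to require the most care, is the descent statement. A square in $(G\times\Sigma_2)\topp_{\bullet}$ is a homotopy pushout precisely when it is one on $H$-fixed points for every subgroup $H\subseteq G\times\Sigma_2$, so one must identify $X(\C)^H$. When $H\subseteq G\times\{e\}$ one has $X(\C)^H=X^H(\C)$, with $X^H$ the smooth (by Luna's slice theorem \cite[p.~97]{Luna:slicetheorem}) fixed point scheme over $\R$; passing to $H$-fixed point schemes sends a distinguished equivariant Nisnevich square to a distinguished Nisnevich square of smooth $\R$-schemes by \cite[Corollary 2.13, Corollary 3.2.6]{Herrmann}, and its complex points form a homotopy pushout by \cite[Theorem 5.2]{DI:hyp}, as in \aref{proposition:ReQuillen}. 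When $H$ surjects onto $\Sigma_2$, $X(\C)^H$ is instead a space of ``twisted real points'' — for $H=\{e\}\times\Sigma_2$ it is the ordinary real locus $X(\R)$, and in general it is $X'(\R)$ for a real form $X'$ of $X$ determined by $H$ — and here one needs the real analogue of \cite[Theorem 5.2]{DI:hyp}: that taking such (twisted) real points of a distinguished Nisnevich square of smooth $\R$-varieties yields a homotopy pushout. This should hold because an étale morphism induces a local homeomorphism on (twisted) real loci and the distinguished-square hypothesis makes the resulting square an honest pushout of spaces; the real work will be checking that these identifications are natural in the square and fit together over all the relevant subgroups $H$ simultaneously.
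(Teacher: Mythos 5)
Your proposal is correct and matches the paper's intent: the paper's own proof consists of the single sentence ``The argument is similar to \aref{proposition:ReQuillen},'' and you have fleshed out exactly what ``similar'' entails. In particular you have correctly isolated the one genuinely new input — the descent check on fixed points for subgroups $H\subseteq G\times\Sigma_2$ surjecting onto $\Sigma_2$, which produce (twisted) real loci rather than complex points — which the paper does not spell out.

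Three small remarks. First, your description of the fixed-point spaces needs a minor correction: for $H$ surjecting onto $\Sigma_2$ with $K_0:=H\cap(G\times\{e\})$ nontrivial, the space $X(\C)^H$ is the real locus of a twisted real form of $X^{K_0}$, not of $X$ itself; one first passes to the smooth fixed-point scheme $X^{K_0}$ over $\R$ (Luna again) and then applies the twist by a representative $g$ of the nontrivial coset. Second, your closing worry about ``fitting together over all the relevant subgroups $H$ simultaneously'' is unfounded: a square in $(G\times\Sigma_2)\topp_\bullet$ is a homotopy pushout if and only if each $(-)^H$-square is, and these are checked one $H$ at a time; the fixed-point functors are natural, so there is nothing to glue. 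Third, you do not need to re-derive the real analogue of \cite[Theorem 5.2]{DI:hyp} from scratch: the $G=\{e\}$ case of exactly this Quillen-adjunction statement (which contains the real Nisnevich descent) is \cite[Proposition 4.8]{HO:galois}, which the paper itself invokes in \aref{s:R}, and the twisted forms are handled by Galois descent applied to the square $Q^{K_0}$ of smooth $\R$-schemes, reducing to the untwisted real case.
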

\begin{proof}
The argument is similar to \aref{proposition:ReQuillen}.
\end{proof}

Now we turn our attention to a stable realization functor. 
Since $\C[G]=\R[G\times \Sigma_2]$ as real representations, 
we have $\RRe_{\C}(T^{\rho_{G}}) = S^{\rho_{G\times\Sigma_2}}$. 

If $\mathsf{E}$ is a motivic $G$-spectrum, 
define the topological $S^{\rho_{G\times\Sigma_2}}$-spectrum $\RRe_{\C,\,\Sigma_2}\mathsf{E}$ by $(\RRe_{\C,\,\Sigma_2}\mathsf{E})_{i} = \RRe_{\C,\,\Sigma_2}\mathsf{E}_i$ with structure maps 
$$
\RRe_{\C,\,\Sigma_2}\mathsf{E}_i\wedge \RRe_{\C,\,\Sigma_2} (S^{\rho_{G\times\Sigma_2}}) = \RRe_{\C,\,\Sigma_2}(\mathsf{E}_{i}\wedge T^{\rho_G}) \to \RRe_{\C,\,\Sigma_2}\mathsf{E}_{i+1}.
$$
The functor $\msing_{\C}$ extends as well to a functor on $S^{\rho_{G\times\Sigma_2}}$-spectra, adjoint to $\RRe_{\C,\,\Sigma_2}$.

\begin{theorem}\label{thm:stableR}
The adjoint pair
\[
\RRe_{\C,\,\Sigma_2}:\sspt_{T^{\rho_G}}(\C) \rightleftarrows \sspt_{S^{\rho_{G\times\Sigma_2}}}((G\times\Sigma_2)\topp_{\bullet}):\msing_{\C,\,\Sigma_2}, 
\]
is a  Quillen adjoint pair.
Moreover, 
$\RRe_{\C,\,\Sigma_2}$ is symmetric monoidal.
\end{theorem}
\begin{proof}
This is straightforward from Proposition \ref{prop:unstableR},
cf.~\cite[Theorem A.45]{PPR:KGL}. 
\end{proof}

\subsection{Symmetric powers}
In order to analyze the topological realization of the Bredon motivic cohomology spectrum, we need to make precise that it is represented by symmetric powers. 
In this subsection we introduce and analyze symmetric powers for motivic $G$-spaces. 
This discussion parallels the treatment of symmetric powers in \cite{Deligne:V}, \cite{Voevodsky:motivicEM}, and 
\cite[Appendix]{Levine:motivicclassical}. 
Throughout this subsection, 
we assume that ${\rm char}(k) = 0$.

Write $G\Sm'/k$ for the category of smooth, 
quasi-projective $G$-schemes over $k$ and 
$G\spc'(k)$ and $G\spc'_{\bullet}(k)$ for the corresponding categories of simplicial presheaves and pointed simplicial presheaves. 
These can also be given a motivic model structure as above. The inclusion $\phi:G\sm'/k\subseteq G\sm/k$ induces an adjoint pair
$\phi^*:G\spc'(k) \rightleftarrows G\spc(k):\phi_*$ 
and similarly for based motivic spaces.

\begin{lemma}\label{lem:qpequiv}
The adjunction  $\phi^*:G\spc'(k) \rightleftarrows G\spc(k):\phi_*$ is a Quillen equivalence on motivic model structures. Similarly for based motivic $G$-spaces.
\end{lemma}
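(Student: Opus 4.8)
The statement asserts that restriction along the inclusion $\phi: G\Sm'/k \subseteq G\Sm/k$ of quasi-projective smooth $G$-schemes into all smooth $G$-schemes induces a Quillen equivalence of motivic model structures. The strategy is the standard one for comparing presheaf categories along a dense subcategory: first check that $(\phi^*, \phi_*)$ is a Quillen pair, then show the derived unit and counit are motivic weak equivalences, using that every smooth $G$-scheme is covered, in the equivariant Nisnevich topology, by quasi-projective (indeed affine) $G$-schemes. The key geometric input is \aref{rem:localaffine}: for every $X$ in $G\Sm/k$ and every point $x$ there is an $S_x$-invariant affine open $U_x$, so that the maps $G\times^{S_x}U_x \to X$ form an equivariant Nisnevich cover admitting a finite subcover, and these total spaces $G\times^{S_x}U_x$ are quasi-projective.

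First I would verify the Quillen pair assertion. The functor $\phi^*$ is restriction (it is exact, preserves monomorphisms and finite limits), and $\phi_*$ is right Kan extension. Since $\phi$ preserves fiber products, closed immersions, the projections $X\times\A^1\to X$, and sends equivariant distinguished squares to equivariant distinguished squares, the argument is formally identical to that of \aref{prop:Gstadj}'s unstable predecessor: $\phi^*$ preserves generating (acyclic) cofibrations of the global closed flasque model structure and, by the universal property of Bousfield localization, remains left Quillen after localizing at the equivariant Nisnevich squares and the $\A^1$-projections. Hence $(\phi^*,\phi_*)$ is a Quillen pair on the motivic model structures, and likewise on based motivic $G$-spaces.

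Next, for the equivalence, I would argue at the level of homotopy sheaves. Because motivic fibrant objects are in particular equivariant Nisnevich excisive and $\A^1$-invariant, a map of motivic $G$-spaces is a motivic weak equivalence iff it induces isomorphisms on equivariant Nisnevich sheaves of homotopy groups after motivic fibrant replacement. The point is that the equivariant Nisnevich topos on $G\Sm/k$ is equivalent to the one on $G\Sm'/k$: every object of $G\Sm/k$ admits a finite equivariant Nisnevich cover by quasi-projective $G$-schemes (via the $G\times^{S_x}U_x$ of \aref{rem:localaffine}), so $\phi$ induces an equivalence of the associated sheaf categories, hence of sheaves of homotopy groups; consequently $\phi^*$ and $\phi_*$ preserve and reflect motivic weak equivalences between objects that are already excisive and $\A^1$-invariant. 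From this one deduces that the derived unit $X \to \R\phi_*\L\phi^* X$ and derived counit $\L\phi^*\R\phi_* Y \to Y$ are motivic weak equivalences: on fibrant-cofibrant replacements they are identified with comparisons of hypercohomology computed over the two equivalent sites. The based case follows by applying the unbased statement and noting $\phi^*$, $\phi_*$ commute with adding disjoint basepoints.

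\textbf{Main obstacle.} The delicate point is not the formal Quillen-pair verification but identifying the two motivic localizations precisely — i.e. confirming that restriction to quasi-projective $G$-schemes loses nothing at the level of the equivariant Nisnevich topology \emph{and} $\A^1$-homotopy. Concretely, one must check that the equivariant distinguished squares and the $\A^1$-projections in $G\Sm/k$ are generated, up to the localization, by those living in $G\Sm'/k$; this is where \aref{rem:localaffine} (and the finiteness of the covers it produces, cited there from \cite[Remark 2.18]{HKO:EMHT}) does the real work, ensuring the inclusion of sites is a cover-refinement in both directions and hence induces an equivalence on sheaves. Once that comparison of sites is in hand, everything else is bookkeeping with Bousfield localizations.
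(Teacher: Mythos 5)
Your overall strategy matches the paper's: the entire proof of this lemma, as given in the paper, is a one-sentence appeal to \aref{rem:localaffine} (every smooth $G$-scheme is locally affine in the equivariant Nisnevich topology), and you correctly identify this as the decisive input and sketch the standard site-comparison argument it enables. The key point — that the equivariant Nisnevich topoi on $G\Sm'/k$ and $G\Sm/k$ coincide because every object of $G\Sm/k$ admits a finite cover by quasi-projective $G$-schemes of the form $G\times^{S_x}U_x$ — is exactly right.

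However, you have the functors mislabeled, and this is more than a convention mismatch. With the paper's conventions (see the discussion of $i:G\Sm'/k\subseteq G\Sch'/k$ in the symmetric-powers subsection, where the paper notes $i_*$ preserves colimits because these are computed sectionwise), $\phi^*:G\spc'(k)\to G\spc(k)$ is left Kan extension along the inclusion $\phi:G\Sm'/k\hookrightarrow G\Sm/k$, and $\phi_*:G\spc(k)\to G\spc'(k)$ is restriction; you assert $\phi^*$ is restriction and $\phi_*$ is right Kan extension. That cannot be right on directional grounds alone: restriction along $\phi$ maps presheaves on the \emph{larger} category to presheaves on the \emph{smaller} one, so it lands in $G\spc'(k)$, not $G\spc(k)$. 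Consequently your parenthetical justification that $\phi^*$ ``is exact, preserves monomorphisms and finite limits'' does not apply — left Kan extension along a fully faithful functor is fully faithful and cocontinuous, but it is not in general exact. The reason $\phi^*$ is left Quillen is instead that it sends representables to representables (on a quasi-projective $X$, $\phi^*(y_X)=y_X$), and since $\phi$ preserves finite collections of closed immersions, it carries the generating sets $I^c$, $J^c$ of the closed flasque model structure into themselves, and carries the localizing maps (equivariant distinguished squares and $\A^1$-projections) in $G\Sm'/k$ to those in $G\Sm/k$. With that correction, your argument via \aref{rem:localaffine} for the derived unit and counit being weak equivalences is sound and is precisely what the paper's terse proof is relying on.
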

\begin{proof}
 This follows easily from the fact that smooth $G$-schemes are locally affine in the equivariant Nisnevich topology, see \aref{rem:localaffine}.
\end{proof}

Write $G\sch'/k$ for the category of reduced, quasi-projective $G$-schemes of finite type over $k$. 
Consider the functor $(-)^{\times n}:G\sch'/k \to (\Sigma_n\times G)\sch'/k$ which sends a $G$-scheme $X$ to the $n$-fold product $X^{\times n}$, 
where $G$ acts diagonally and $\Sigma_{n}$ acts by permuting the factors.
The composition of $(-)^{\times n}$ and the Yoneda embedding gives us a functor $G\sch'/k \to \spre((\Sigma_{n}\times G)\sch'/k)$ and we define $\Pi^{(n)}(-)$ to be its left Kan extension, 
yielding the functor
$$
\Pi^{(n)}:\spre(G\sch'/k) \to 
\spre((\Sigma_{n}\times G)\sch'/k),
$$
and similarly for pointed motivic spaces.

Let $N\trianglelefteq K$ be a normal subgroup of a group $K$ and write $\Gamma = K/N$ for the quotient group. 
If $X$ is a quasi-projective $K$-scheme over $k$ then a quotient scheme $X/N$ exists and the $K$-action on $X$ induces a $\Gamma$-action on the scheme $X/N$. 
We write 
$q_{K,N}:K\sch'/k \to \Gamma\sch/k$ for the quotient functor, i.e., $q_{K,N}(X) = X/N$. 
The functor $q_{K,N}$ induces an adjoint pair of functors
$$
(q_{K,N})^*:\spre(K\sch'/k)\rightleftarrows \spre(\Gamma\Sch/k):(q_{K,N})_{*}
$$ 
and similarly for based presheaves. 
For $\mcal{X}$ in $K\spc'(k)$, define $\mcal{X}/N:= (q_{K,N})^*(\mcal{X})$.
If $\mcal{X}$ is represented by a quasi-projective $G$-scheme $X$,
then $\mcal{X}/N$ is represented by $X/N$.

Let $\mathcal{C}$ be a category with finite coproducts and $F$ a presheaf of sets on $\mcal{C}$. 
Say that $F$ is $\emph{additive}$\footnote{In \cite{Voevodsky:rad} the term \emph{radditive} is used.} 
provided $F(\emptyset) = \ast$ and $F(X\coprod Y) = F(X)\times F(Y)$ for any $X,Y\in \mcal{C}$.
By \cite[Theorem 2.6]{sifted}, a presheaf of sets on $\mcal{C}$ is additive if and only if the opposite of its category of elements is sifted. In particular an additive presheaf is canonically a sifted colimit of representable presheaves. Recall that filtered colimits and reflexive coequalizers are sifted colimits.

Note that a presheaf on $G\sch'/k$ is additive if and only if it is sheaf for the topology generated by covers by $G$-connected components. 
The inclusion of the full subcategory of presheaves 
$i:{\rm Pre}_{\Sigma}(G\sch'/k)\subseteq {\rm Pre}(G\sch'/k)$ whose objects are additive presheaves  has a left adjoint 
$$
a_\#:{\rm Pre}(G\sch'/k)\to {\rm Pre}_{\Sigma}(G\sch'/k)
$$
given by sheafification (or as a special case of \cite[Lemma 3.8]{Voevodsky:rad}). Since we do not work in the category 
${\rm Pre}_{\Sigma}(G\sch'/k)$, we will also simply write $a_\#$ again for the composite $i a_\#$.

\begin{definition}
Define the \emph{$n$th symmetric product} $\Sym^{n}:\spre(G\sch'/k) \to \spre(G\sch'/k)$  by
$$
\Sym^{n}(\mcal{X}) := \Pi^{(n)}(a_\#\mcal{X})/\Sigma_{n}.
$$
\end{definition}

The inclusion of categories $i:G\sm'/k\subseteq G\sch'/k$ induces an adjoint pair of functors
$$
i^*:G\spc'(k) \rightleftarrows \spre(G\Sch'/k):i_{*}
$$
and similarly for pointed motivic spaces. 
Note that $i_*$ preserves colimits, 
as these are computed sectionwise. 
If $\mcal{X}$ is an object of $\spre(G\sch'/k)$, 
when no confusion should arise we again write $\Sym^n(\mcal{X})$ for $i_*(\Sym^n(\mcal{X}))$ in $G\spc'(k)$ and also for $\phi^*i_*(\Sym^n(\mcal{X}))$ in $G\spc(k)$.

If $\mcal{X}$ is represented by a scheme  $X$ in $G\sch'/k$, then $\Sym^n(\mcal{X})$ is 
 represented by the scheme-theoretic symmetric power $\Sym^n(X)$. Moreover, $\Sym^n(-)$ commutes with sifted colimits.

If $(\mcal{X},x)$ is an object of $\spre_{\bullet}(G\sch'/k)$, 
then $\Sym^{n}(\mcal{X})$ has a canonical basepoint given by $\Sym^{n}(x)$ 
and we write $\Sym^{n}_{\bullet}(\mcal{X})$ for the corresponding pointed motivic space in $G\spc_{\bullet}'(k)$. If $\mcal{X}$ is represented by a pointed scheme  $X$ in $G\sch'/k$, then $\Sym^n(\mcal{X})$ is 
 represented by the scheme-theoretic symmetric power $\Sym^n(X)$. 
Note that $\Sym^n_\bullet(-)$ commutes with sifted colimits. 

\begin{remark}
We avoid here any explicit discussion of the derived functors necessary to make this construction $\A^1$-invariant. However, consideration of the symmetric powers of a coproduct shows that any sensible definition can't both commute with all colimits and compute the correct value of quotient spaces. Some amount of ``derived-ness'' needs to be baked into the definition. This explains  the appearance of $a_\#$ in the definition.
\end{remark}

We have natural pairings
$
(X^{\times m})/\Sigma_m \times (X^{\times n})/\Sigma_{n} 
\to (X^{\times (m+n)})/\Sigma_{m+n}$
and 
$(X^{\times m})/\Sigma_m\times (Y^{\times n})/\Sigma_{n}\to (X\times Y)^{\times mn}/\Sigma_{mn}$,
for  $X,Y\in G\Sch'/k$. 
These induce  addition and multiplication maps, 
\begin{align*}
\Sym_{\bullet}^{m}(\mcal{X}) &\times \Sym_{\bullet}^{n}(\mcal{X}) \xrightarrow{\sigma_{m,n}} 
\Sym_{\bullet}^{m+n}(\mcal{X}) \;\;\text{and} \\ 
\Sym_{\bullet}^{m}(\mcal{X}) &\wedge \Sym_{\bullet}^{n}(\mcal{Y}) \xrightarrow{\mu_{m,n}}\Sym^{mn}_{\bullet}(\mcal{X}\wedge\mcal{Y}).
\end{align*}

Stabilization maps 
${\rm st}_{n+1}:=\sigma_{n,1}(-,\ast):\Sym_{\bullet}^{n}(\mcal{X})\to \Sym_{\bullet}^{n+1}(\mcal{X})$ 
are thus obtained by adding the basepoint. 
Define 
$$
\Sym_{\bullet}^{\infty}(\mcal{X}):=\colim_{n}(\mcal{X} \xrightarrow{{\rm st}_2} \Sym^2_{\bullet}(\mcal{X}) \xrightarrow{{\rm st}_3} \Sym^3_{\bullet}(\mcal{X})\xrightarrow{{\rm st}_4}\cdots )
$$

Recall that the group of finite correspondences $\Cor_{k}(Y,X)$ is the free abelian group generated by integral closed subschemes $Z\subseteq Y\times X$ 
such that $Z$ is finite over $Y$ and dominates an irreducible component of $Y$. 
When $G$ acts on $X$ and $Y$ there is an induced action on $\Cor_{k}(Y,X)$. The group of equivariant finite correspondences is defined to be $G\Cor_{k}(Y,X) := \Cor_{k}(Y,X)^{G}$.
We write $\Z_{tr,G}(X)$ for the presheaf of equivariant correspondences,
$\Z_{tr,G}(X)(Y):=G\Cor_{k}(Y,X)$. 
If $A\subseteq X$ is a closed invariant subscheme we define $\Z_{tr,G}(X/A):=\Z_{tr,G}(X)/\Z_{tr, G}(A)$, the group quotient.

The submonoid $\Cor_{k}^{eff}(Y,X)^{G}$ of effective equivariant correspondences consists of those equivariant correspondences $\sum n_{Z} Z$ such that all $n_{Z}\geq 0$. 
We write $\Z_{tr,G}^{eff}(X)$ for the corresponding presheaf of effective equivariant correspondences.
Let $A\subseteq X$ is a closed invariant subscheme and
define an equivalence relation $\sim$ on $\Z_{tr,G}^{eff}(X)(Y)$ by declaring $Z\sim Z'$ if $Z-Z'\in\Z_{tr,G}(A)(Y)$. 
Now define the presheaf $\Z_{tr,G}^{eff}(X/A):=\Z_{tr,G}^{eff}(X)/\sim$. 
We have that $\Z_{tr,G}(X) = (\Z_{tr,G}^{eff}(X))^{+}$ and $\Z_{tr,G}(X/A) = (\Z_{tr,G}^{eff}(X/A))^{+}$, 
where $(-)^{+}$ denotes group completion. 

Consider the subset $L_{n}(X)(Y)\subseteq \Cor_{k}^{eff}(Y,X)$ of effective correspondences of degree $n$. 
Write $L_{n}^{G}(X)(Y) = (L_{n}(X)(Y))^{G}$ for the subset of equivariant correspondences of degree $n$. Addition of cycles induces a pairing
\[
\sigma_{i,j}:L_{i}^{G}(X) \times L_{j}^{G}(X) \to L_{i+j}^{G}(X)
\] 
Now we consider a pointed $G$-scheme $(X,x)$. 
The presheaf $L_{n}^G(X)$ is pointed by $n(Y\times x)$ in $L_{n}^G(X)(Y)$. 
Let $A\subseteq X$ be a closed invariant subscheme containing $x\in X$. Adding the basepoint induces stabilization maps
\[
{\rm st} = \sigma(-, \ast):L_i^G(X) \to L_{i+1}^G(X).
\]
Define the presheaf $L_{n}^{G}(X/A)$ to be the coequalizer (in $G\spc(k)$)
\begin{equation}\label{eqn:Lcoprod}
\coprod_{j=0}^{n} L_{j}^{G}(A) \times L_{n-j}^{G}(X) \rightrightarrows L_n^G(X) \to L_n^G(X/A)
\end{equation}
where the top arrow is induced by the inclusion $A\subseteq X$ and addition of cycles and the bottom map is the composition 
\[
L_{j}^{G}(A) \times L_{n-j}^{G}(X)\xrightarrow{{\rm proj}} L_{n-j}^{G}(X) \xrightarrow{{\rm st}} L_n^G(X). 
\]
Note that the presheaf $L_n^G(X/A)$ has a canonical basepoint and the quotient map $L_n^G(X)\to L^G_n(X/A)$ is a map of based presheaves. 
The addition map induces an addition map $L_{i}^{G}(X/A)\times L_{j}^{G}(X/A) \to L_{i+j}^{G}(X/A)$ and in particular we have stabilization maps ${\rm st}:L_{i}^G(X/A) \to L_{i+1}^G(X/A)$ obtained by adding the basepoint.

\begin{proposition}
Let $X$ be a pointed, semi-normal, quasi-projective  $G$-scheme over $k$ and $A\subseteq X$ an invariant closed reduced subscheme containing the basepoint. 
There are isomorphisms $L_{n}^{G}(X)\iso \Sym^n_{\bullet}(X)$ in $G\spc_{\bullet}(k)$ which induce isomorphisms $L_{n}^{G}(X/A)\iso \Sym_{\bullet}^{n}(X/A)$.
\end{proposition}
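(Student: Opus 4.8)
The plan is to deduce the equivariant statement from the nonequivariant representability theorem for symmetric products. Since $\mathrm{char}(k)=0$ and $X$ is semi-normal and quasi-projective, the appendix to \cite{Levine:motivicclassical} (building on the theory of relative cycles and Chow sheaves, cf.~\cite[\S 4]{SV:BK}) provides a natural isomorphism $L_{n}(X)\iso \Sym^{n}(X)$ of pointed presheaves on $\Sm/k$, under which the monoid maps $\sigma_{i,j}$ and the stabilization maps correspond on the two sides, and which for a closed reduced invariant subscheme $A\subseteq X$ is compatible with the corresponding inductive pushout presentations, yielding $L_{n}(X/A)\iso \Sym^{n}_{\bullet}(X/A)$. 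First I would recall this result in the precise form needed.

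Next I would promote it to an equivariant statement by naturality. The group $G$ acts on $X$, hence compatibly on $L_{n}(X)$, on the scheme $X^{\times n}/\Sigma_{n}$ representing $\Sym^{n}(X)$, and on all the auxiliary objects; since the Suslin--Voevodsky identification is natural in $X$ and in the test scheme, it intertwines these actions: if $\alpha \in L_{n}(X)(Y)$ corresponds to $f\colon Y\to X^{\times n}/\Sigma_{n}$, then $(g\times g)_{\ast}\alpha$ corresponds to $g\circ f\circ g^{-1}$. Taking $G$-invariant sections over a $G$-scheme $Y$ then gives
$$
L_{n}^{G}(X)(Y)=L_{n}(X)(Y)^{G}\;\iso\;\Hom_{\Sch/k}(Y,X^{\times n}/\Sigma_{n})^{G}=\Hom_{G\Sch/k}(Y,X^{\times n}/\Sigma_{n})=\Sym^{n}(X)(Y),
$$
naturally in $Y$, and by \aref{lem:qpequiv} it suffices to verify this on quasi-projective test objects, on which $L_{n}^{G}$ and $\Sym^{n}$ were defined. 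The addition and stabilization maps are respected because their nonequivariant counterparts are.

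To handle the quotient $X/A$ I would compare pushout presentations. By definition $L_{n}^{G}(X/A)$ is the pushout \eqref{eqn:POtwo} of
$$
L_{n-1}^{G}(X/A)\;\longleftarrow\;\coprod_{j=1}^{n} L_{j}^{G}(A)\times L_{n-j}^{G}(X)\;\longrightarrow\;L_{n}^{G}(X),
$$
while \aref{prop:symhpo}, together with the isomorphism $\Sym^{n}_{\bullet}(X/A,\ast)\iso\Sym^{n-1}_{\bullet}(X/A)$ established there, exhibits $\Sym^{n}_{\bullet}(X/A)$ as the pushout \eqref{eqn:POone} of $\Sym^{n-1}_{\bullet}(X/A)\leftarrow\Sym^{n}_{\bullet}(X,A)\to\Sym^{n}_{\bullet}(X)$. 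Under the isomorphisms $L_{m}^{G}(-)\iso\Sym^{m}(-)$ from the previous step (and the analogous ones for $A$ in place of $X$), the map $\coprod_{j=1}^{n} L_{j}^{G}(A)\times L_{n-j}^{G}(X)\to L_{n}^{G}(X)$ is identified with the composite $\coprod_{j=1}^{n}\Sym^{j}_{\bullet}(A)\times\Sym^{n-j}_{\bullet}(X)\twoheadrightarrow\Sym^{n}_{\bullet}(X,A)\hookrightarrow\Sym^{n}_{\bullet}(X)$ of the addition map onto the locus $\Sym^{n}_{\bullet}(X,A)$ of cycles meeting $A$ (described just before \aref{prop:symhpo}) followed by the inclusion, and likewise the left-hand map of \eqref{eqn:POtwo} is identified, on the relevant summand, with $\overline{\pi}_{1,n-1}$ factoring through the map $\pi_{n}$ of \eqref{eqn:POone} via $\pi_n\circ\sigma'_{1,n-1}=\overline\pi_{1,n-1}$. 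Because the first arrow of this composite is an epimorphism of presheaves through which both legs of the cocone factor, replacing the source of a pushout leg by its image leaves the pushout unchanged, so the two pushouts coincide. Hence induction on $n$ (base case $n\le 1$ given by $L_{0}^{G}=\ast$, $L_{1}^{G}(X)=X$, $L_{1}^{G}(X/A)=X/A$) yields the natural isomorphism $L_{n}^{G}(X/A)\iso\Sym^{n}_{\bullet}(X/A)$, compatible with all structure maps.

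The step I expect to be the main obstacle is this last comparison of the two pushout squares: checking carefully that the equivariant Suslin--Voevodsky identifications send the maps of \eqref{eqn:POtwo} to the maps $\overline{\pi}_{1,n-1}$, $\sigma'_{1,n-1}$, $\pi_{n}$ governing \eqref{eqn:POone}, and that the resulting surjection $\coprod_{j=1}^{n}\Sym^{j}_{\bullet}(A)\times\Sym^{n-j}_{\bullet}(X)\to\Sym^{n}_{\bullet}(X,A)$ really is an epimorphism of (pointed) presheaves, so that the pushout is insensitive to it. By contrast, the passage from the nonequivariant to the equivariant statement via $G$-invariants is essentially formal once naturality is in hand.
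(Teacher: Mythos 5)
Your proposal follows essentially the same route as the paper: invoke the Suslin--Voevodsky identification $L_n(X)\iso \Sym^n(X)$ (the paper cites \cite[Theorem 6.8]{SV:homology} via the explicit universal-cycle formula $\psi(f)=(f\times\id_X)^*(W_n)$, and deduces equivariance from that formula, which is the concrete form of your ``naturality implies equivariance'' observation), then identify the image of $\coprod_j L_j^G(A)\times L_{n-j}^G(X)\to L_n^G(X)$ with $\Sym^n_\bullet(X,A)$ and compare the pushout squares \eqref{eqn:POone} and \eqref{eqn:POtwo} by induction. Your elaboration of the pushout comparison --- that both legs of \eqref{eqn:POtwo} factor through the epimorphism onto $\Sym^n_\bullet(X,A)$, so the universal properties of the two pushouts coincide --- is precisely the content the paper compresses into ``comparing the pushout squares and induction''.
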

\begin{proof}
There is a natural isomorphism 
$\psi:\Sym_{\bullet}^n(X)\to L_{n}(X)$ by \cite[Theorem  6.8]{SV:homology}, which is given as follows. 
Let $W_{n}\subseteq \Sym^n(X)\times X$ be the image of the closed subscheme $X^{n-1}\times \Delta_X\subseteq X^{n+1}$ under  $\pi_n\times \id_X$, where $\pi_n:X^n\to \Sym^n(X)$ is the quotient. 
If $f:Y\to \Sym^n(X)$ is a map, with $Y$ smooth, 
then $\psi(f)= (f\times \id_{X})^*(W_n)$ is defined to be the pullback cycle. 
This is an equivariant isomorphism. 
Since $\Sym^n(-)$ commutes with sifted colimits, we can compute 
$\Sym^n_{\bullet}(X/A)$ as the coequalizer (in $G\spc(k)$)
\begin{equation}\label{eqn:symcoeq}
\Sym^n(A\coprod X) \rightrightarrows \Sym^n(X) \to \Sym^n_{\bullet}(X/A)
\end{equation}
where the top arrow is induced by the inclusion $A\subseteq X$ and the identity on $X$ while the bottom arrow is induced by mapping $A$ to the basepoint and the identity on $X$. Since $\Sym^n(A\coprod X) \iso \coprod_{j=0}^{n} \Sym^j(A)\times  \Sym^{n-j}(X) $, comparing with \eqref{eqn:Lcoprod} yields the isomorphisms  $L_{n}^{G}(X/A)\iso \Sym_{\bullet}^{n}(X/A)$. 
\end{proof}

The maps $L_{n}^{G}(X)\to \Z_{tr,G}^{eff}(X)$ induce maps 
$L_{n}^{G}(X/A)\to \Z_{tr,G}^{eff}(X/A)$
which are compatible with the stabilization 
$L_{n}^{G}(X/A)\to 
L_{n+1}^{G}(X/A)$. We thus have an induced map $\colim_{n}L_{n}^{G}(X/A)\to \Z_{tr,G}^{eff}(X/A)$.
\begin{proposition}\label{prop:Liso} 
Let $(X,x)$ be a pointed smooth quasi-projective $G$-scheme over $k$ and $A\subseteq X$ a closed reduced subscheme which contains the basepoint $x\in X$. 
Then we have isomorphisms in $G\spc_{\bullet}(k)$
$$
\Sym^{\infty}_{\bullet}(X/A)\xleftarrow{\iso}\colim_{n}L_{n}^{G}(X/A) \xrightarrow{\iso} \Z_{tr,G}^{eff}(X/A).
$$
\end{proposition}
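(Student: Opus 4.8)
The plan is to deduce both isomorphisms from the preceding proposition together with a sectionwise analysis of the presheaves $L_{n}^{G}(X/A)$.

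First I would dispatch the left isomorphism, which is essentially formal. The previous proposition supplies isomorphisms $L_{n}^{G}(X/A)\iso \Sym_{\bullet}^{n}(X/A)$ in $G\spc_{\bullet}(k)$; since on both sides the stabilization maps are induced by the addition maps $\sigma_{n,1}(-,\ast)$, these isomorphisms are compatible with the monomorphisms $st_{n+1}$. Passing to the colimit over $n$ then yields $\colim_{n}L_{n}^{G}(X/A)\iso \colim_{n}\Sym_{\bullet}^{n}(X/A)$, and the right-hand side is $\Sym^{\infty}_{\bullet}(X/A)$ by definition of $\Sym^{\infty}_{\bullet}$, once one observes, via \aref{prop:symhpo}, that the scheme-level construction $\Sym_{\bullet}^{n}(X/A)$ agrees with $\Sym_{\bullet}^{n}$ applied to the pointed motivic space represented by $X/A$.

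The right isomorphism is the substantive part; it is the $G$-equivariant analogue of a theorem of Suslin--Voevodsky and Voevodsky, cf.~\cite{SV:homology}, \cite{Voevodsky:motivicEM}, \cite[Appendix]{Levine:motivicclassical}. Since colimits of presheaves are computed sectionwise, it suffices to show that for every smooth $G$-scheme $Y$ over $k$ the natural map $\colim_{n}L_{n}^{G}(X/A)(Y)\to \Z_{tr,G}^{eff}(X/A)(Y)$ is a bijection. For this I would unwind the iterated pushout squares \eqref{eqn:POtwo}, which are likewise computed sectionwise and are pushouts along the monomorphisms $st_{n}$: by induction on $n$ one identifies $L_{n}^{G}(X/A)(Y)$ with the set of effective equivariant correspondences from $Y$ to $X$ of degree at most $n$, modulo the relation identifying $W$ with $W-W'$ for every effective equivariant subcorrespondence $W'\leq W$ supported in $Y\times A$. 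The transition maps in the colimit are the degree inclusions, so $\colim_{n}L_{n}^{G}(X/A)(Y)$ is identified with the set of all effective equivariant correspondences from $Y$ to $X$ modulo that same relation; one checks that removing effective components supported in $Y\times A$ generates exactly the relation $W-W'\in\Z_{tr,G}(A)(Y)$, which is the equivalence relation $\sim$ used to define $\Z_{tr,G}^{eff}(X/A)(Y)=\Z_{tr,G}^{eff}(X)(Y)/\sim$. These bijections are natural in $Y$ and preserve basepoints, hence assemble to the desired isomorphism in $G\spc_{\bullet}(k)$.

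The step I expect to be the main obstacle is making that inductive identification of $L_{n}^{G}(X/A)(Y)$ precise: one has to verify that the pushout \eqref{eqn:POtwo}, which glues $L_{n}^{G}(X)$ onto $L_{n-1}^{G}(X/A)$ along the locus of correspondences having a component in $A$, genuinely produces the quotient by subcorrespondences supported in $Y\times A$, and that this relation stabilizes correctly. A convenient way to cross-check, which I would also carry out, is to note that $L_{n}^{G}(X)=(L_{n}(X))^{G}$ and $\Z_{tr,G}^{eff}(X)=(\Z_{tr}^{eff}(X))^{G}$, so that the equivariant statement follows from the known nonequivariant one by applying $(-)^{G}$ sectionwise, using that for the finite group $G$ taking fixed points commutes with the filtered colimit over $n$ and with the pushouts along the monomorphisms $st_{n}$.
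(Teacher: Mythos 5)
Your proposal is correct and takes essentially the same approach as the paper: the left isomorphism is deduced from the preceding proposition, and the right one rests on the unique decomposition $W = W_A + W'$ of an effective equivariant cycle into a part $W_A$ supported in $Y\times A$ and a part $W'$ with no component there. The paper is slightly more direct — rather than inductively unwinding \eqref{eqn:POtwo} to an explicit presentation of $L_n^G(X/A)(Y)$, it simply observes surjectivity and checks injectivity by noting that $f([W])=f([V])$ forces $W'=V'$ and hence $[W]=[V]$.
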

\begin{proof}
The left hand isomorphism follows from the previous proposition. The map $\colim_{n}L_{n}(X_+)\to \Z_{tr,G}^{eff}(X)$ is immediately seen to be an isomorphism. The map 
$f:\colim_{n}L_{n}^{G}(X/A) \to \Z_{tr,G}^{eff}(X/A)$ is surjective and we check injectivity. Let $[W],[V]\in L_n^{G}(X/A)(Y)$ and represent them by elements in $L_n^G(X)(Y)$ which we again write as $W,V$. We can write $W=W_A+W'$ and $V=V_A +V'$ uniquely as a sum of effective cycles with $W_A$, $V_A$ supported on $Y\times A$ and no component of the supports of $W'$, $V'$ contained in $Y\times A$. If $f([W])= f([V])$, then we have that $W'=V'$. It follows from the definition of $L_n^G(X/A)$ that $[W]=[V]$.
\end{proof}

\begin{proposition}\label{prop:effwk}
Let $X$ be a smooth quasi-projective $G$-scheme over $k$ and $V$ a representation which contains a copy of the trivial representation. 
Then the map $\Sym^{\infty}_{\bullet}(T^V\wedge X_+)
\to \Z_{tr,G}(T^{V}\wedge X_+)$ is an equivariant motivic weak equivalence.
\end{proposition}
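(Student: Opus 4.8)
The plan is to recognize the map as a group completion and then reduce to an equivariant form of the Suslin--Voevodsky comparison for effective cycles on a suspended space. First, by \aref{prop:Liso} there are isomorphisms $\Sym^{\infty}_{\bullet}(T^{V}\wedge X)\iso\colim_{n}L^{G}_{n}(T^{V}\wedge X)\iso\Z^{eff}_{tr,G}(T^{V}\wedge X)$ in $G\spc_{\bullet}(k)$. Chasing these identifications together with the definition $\Z_{tr,G}(T^{V}\wedge X)=(\Z^{eff}_{tr,G}(T^{V}\wedge X))^{+}$, the map of the proposition becomes the canonical group-completion map $\Z^{eff}_{tr,G}(T^{V}\wedge X)\to\Z_{tr,G}(T^{V}\wedge X)$. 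Sectionwise this is the inclusion of a cancellative abelian monoid into its group completion, hence injective, so it is a monomorphism of pointed motivic $G$-spaces; it therefore suffices to show that it is an equivariant motivic weak equivalence. Using \aref{thm:rcoh} and that $C_{*}\Z^{eff}_{tr,G}(T^{V}\wedge X)$ sits inside $C_{*}\Z_{tr,G}(T^{V}\wedge X)$, this reduces to a stalkwise statement about the singular complexes of these presheaves at the points $G\times_{H}\spec(R)$ of the equivariant Nisnevich topology described in \cite[Theorem 3.14]{HVO:cancellation}.

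Next I would use the hypothesis on $V$. Writing $V=1\oplus V'$ yields a $G$-equivariant identification $T^{V}\wedge X\iso T\wedge Z$, where $Z:=T^{V'}\wedge X$ is again the quotient by a closed invariant subscheme of a smooth quasi-projective $G$-scheme, and $T=T^{1}=\P(1\oplus 1)/\P(1)\wkeq\A(1)/(\A(1)\setminus\{0\})$ carries a distinguished \emph{$G$-fixed} affine-line coordinate. Since group completion and the colimits above are compatible with the $T$-smash structure, it is enough to treat $T^{V}\wedge X=T\wedge Z$ for such $Z$; that is, to prove that $\Z^{eff}_{tr,G}(T\wedge Z)\to\Z_{tr,G}(T\wedge Z)$ is an equivariant motivic weak equivalence. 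This last assertion is the equivariant analogue of the motivic Dold--Thom theorem of Suslin--Voevodsky: effective algebraic cycles on a $T$-suspension already realize the full $\A^{1}$-local homotopy type of all algebraic cycles. I would run the classical argument of \cite{SV:homology}, \cite{Voevodsky:motivicEM} and \cite[Appendix]{Levine:motivicclassical}, now carried out $H$-equivariantly over $R$: the $G$-fixed $\A^{1}$-coordinate of $T$ supplies pinch and scaling self-maps of $T\wedge Z$ and an $\A^{1}$-homotopy on $C_{*}\Z^{eff}_{tr,G}(T\wedge Z)$ exhibiting a homotopy inverse for the fold map, whence the group-completion map induces isomorphisms on all equivariant Nisnevich sheaves of homotopy groups; the requisite homotopy invariance is \cite[Theorem 8.12]{HVO:cancellation}.

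The main obstacle is precisely this last step: verifying that the cycle-moving homotopies of the Suslin--Voevodsky argument can be performed $H$-equivariantly and uniformly over the essentially smooth Henselian local $k$-algebras $R$ with $H$-action, while keeping track simultaneously of the $\Sigma_{n}$- and $G$-equivariance of the symmetric-product addition maps built in \aref{prop:symhpo}. Here the hypothesis that $V$ contains a trivial summand is used essentially: it provides a $G$-fixed line direction along which the scaling homotopies are automatically equivariant, whereas a direction carrying a nontrivial $G$-action would destroy this. Once the equivariant Dold--Thom comparison is in place, reassembling the smash factors of $T^{V}$ and unwinding the isomorphisms of \aref{prop:Liso} completes the proof.
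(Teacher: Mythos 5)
Your setup matches the paper's: identify the map with the group completion $\Z^{eff}_{tr,G}(T^V\wedge X)\to\Z_{tr,G}(T^V\wedge X)$ via \aref{prop:Liso}, reduce to a stalkwise check at points $S=G\times_H\spec(R)$ of the equivariant Nisnevich topology, and observe that the trivial summand of $V$ supplies an equivariant $\A^1$-direction for a scaling homotopy. However, your endgame is where you differ, and where you honestly flag the gap. You gesture at a Dold--Thom-style argument (pinch/scaling giving a homotopy inverse for the fold map) and say one should ``run the classical argument $H$-equivariantly over $R$'' --- but you never actually carry this out, and you acknowledge it as the ``main obstacle.'' The paper closes the argument by a different and more economical mechanism: after replacing $T^{V}=\P(V\oplus 1)/\P(V)$ by the quotient $\P(V\oplus 1)/(\P(V\oplus 1)\setminus\P(1))$ (so that cycles finite over a Henselian local base are local and hence supported in the affine chart $\A(V)_S$), it shows directly that $\pi_0 C_*\Z^{eff}_{tr,G}(\ldots)(S)=0$ via the explicit equivariant translation $(t,x,y)\mapsto(x-t,y)$ along the trivial summand, and then cites the Friedlander--Mazur group completion theorem for free commutative monoids together with simplicity of the two sides to upgrade a homology equivalence to a weak equivalence. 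Thus the crucial content --- vanishing of $\pi_0$ of the effective side stalkwise, established by that concrete shearing homotopy --- is precisely what your proposal leaves unproven; without it your claim that the effective monoid is grouplike via a pinch map is not justified. In particular, note that $T$ is not a suspension of a motivic space in the simplicial direction, so a literal ``pinch map'' providing inverses is not immediately available; the argument has to be run at the level of cycles, which is exactly what the paper's $\pi_0$-vanishing computation does.

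One further small point: your proposed reduction to ``$T\wedge Z$ with $Z:=T^{V'}\wedge X$'' is harmless but not what the paper does, and it would require you to formulate the Dold--Thom comparison for quotient presheaves that are not themselves smooth $G$-schemes; the paper sidesteps this by working with $\P(V\oplus 1)$ directly.
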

\begin{proof}
If $E$ is a presheaf of sets, write $C_*E$ as usual for the presheaf of simplicial sets defined by 
$C_nE(Y) = E(Y\times \Delta^n_k)$. The map $E\to C_*E$ is an $\A^1$-weak equivalence.

Write $F_1=\Z_{tr,G}^{eff}(T^V\wedge X_+)$ and $F_2=\Z_{tr,G}(T^{V}\wedge X_+)$ and let $\phi:F_1\to F_2$ be the group completion. 
Under the isomorphism of \aref{prop:Liso}, the map in the statement of the proposition is identified with $\phi$.
It suffices to show that $\phi:C_*F_1\to C_*F_2$ is a motivic weak equivalence. 
In fact, we show that if $S$ is a point for the equivariant Nisnevich topology, then $\phi:C_*F_1(S)\to C_*F_2(S)$ is a weak equivalence of simplicial sets.  
We claim that $\pi_{0}C_*F_1(S) = 0$. 
Granted this, since $\phi$ is the group completion of a free commutative monoid, by \cite[Theorem Q4]{FM} the map $\phi$ is a homology equivalence. 
Since $\phi$ is a map between simple spaces, this implies that $\phi$ is in fact a homotopy equivalence.

It remains to see that $\pi_{0}C_*F_1(S) = 0$. 
We have that $S=G\times_{H}\spec(R)$ where $H\subseteq G$ is a subgroup and $R$ is a smooth, local Henselian ring with $H$-action, 
see \cite[Theorem 3.14]{HVO:cancellation}. 
Write $\rho_{H}:G\sch/k\to H\sch/k$ for the functor which restricts the action. 
Note that $\rho_{H}(\Sym^{\infty}_{\bullet}(Y)) = \Sym^{\infty}_{\bullet}(\rho_HY)$ for any based quasi-projective $G$-scheme $Y$ and so for any $H$-scheme $Z$
$$
\Hom_{G\Sch/k}(G\times_{H}Z, \Sym^{\infty}_{\bullet}(X))=
\Hom_{H\Sch/k}(Z, \Sym^{\infty}_{\bullet}(\rho_HX)).
$$
Thus, replacing $G$ with $H$ and $S$ with $\spec(R)$, we may assume that $S$ is a local Henselian $G$-scheme. 
	
Write 
$$
F'_1 = \Z_{tr,G}^{eff}\left((\P(V\oplus 1)/(\P(V\oplus 1)\setminus\P(1))\wedge X_+)\right).
$$
First note that since $\P(V)\subseteq \P(V\oplus 1)\setminus\P(1)$ has an equivariant section, that 
$$
\pi_0C_*F_1(S) \subseteq \pi_0C_*F_1'(S).
$$ 
(In fact, $C_*F_1(S)$ and $C_*F_1'(S)$ are weakly equivalent,  but the weaker statement is enough for our purposes here.)
Therefore, it suffices to show that $\pi_0C_*F_1'(S)=0$.
An element of $C_dF_1'(S)$ is represented by an effective invariant cycle $\sum n_iZ_i$ on
$\Delta^d_{S\times X} \times \P(V\oplus 1)$ such that $Z_i$ is finite over $S$ and has  nontrivial intersection with $\Delta^d_{S\times X}\times\P(1)$. 
	
Let $\mcal{Z}$ be an element of $C_0F_1'(S)$ and $\sum n_iZ_i\in \Z_{tr,G}^{eff}(\P(V\oplus 1)\times X)(S)$ an element representing $\mcal{Z}$, as in the previous paragraph. Since the $Z_i$ are finite over $S$, they are local. 
Since $Z_i$ has nontrivial intersection with $\P(1)_{S\times X}$ the closed points are supported in $\P(1)_{S\times X}$ and $Z_i$ are contained in the open 
$\A(V)_{S\times X}\subseteq \P(V\oplus 1)_{S\times X}$ (where $\P(1)_{S\times X} = 0_{S\times X}$ in $\A(V)_{S\times X}$). 
By assumption, $V$ contains a copy of the trivial representation so we can write $V = 1\oplus V'$. 
Consider the equivariant map $\phi:\A^1\times \A(V)_{S\times X} \to \A(V)_{S\times X} $ given by  $(t, (x_i),y)\mapsto ((x_i-t), y)$.
Let $\Phi$ be the cycle on $\A^1\times \A(V)_{S\times X}$ obtained by pull-back of $\mcal{Z}$ along $\phi$. 
Then $\Phi\in C_1F_1'(S)$ and has the property that $\Phi|_{0} = \mcal{Z}$ and $\Phi|_{1}$ is supported in $(\P(V\oplus 1)\setminus \P(1))_{S\times X}$ 
(the closed points lie in $\{1\}_{S\times X}\subseteq \A(V)_{S\times X}$). Thus $\mcal{Z} = 0$ in $\pi_{0}C_*F_1'(S)$.
\end{proof}

\subsection{Realization of Eilenberg-MacLane spectra}
Our next goal is to show that the topological realization, in both the complex and real case, of the Bredon motivic cohomology spectrum $\MA$ is the topological Bredon cohomology spectrum 
(for the constant Mackey functor $\ul{A}$). 
The construction of the motivic $G$-spectrum $\MZ$ is spelled out in detail in \aref{sub:stablerep} for the case $G=C_2$ and the construction for a general finite group $G$ is similar. 
In brief, we set $\MZ_{n} := \Z_{tr,G}(T^{n\rho_{G}})$ and structure maps are defined by $\MZ_n\wedge T^{\rho_{G}} \to \MZ_{n}\wedge \Z_{tr, G}(T^{\rho_{G}}) \to \MZ_{n+1}$, 
the first map being induced by the inclusion $T^{\rho_{G}}\to \Z_{tr,G}(T^{\rho_{G}})$ together with the addition-of-cycles map.

\begin{lemma}\label{lem:n1}
\hspace{2em}
	\begin{enumerate}
\item Let $V$ be a complex representation which contains a trivial summand. For any $X$ in $G\Sch/\C$ the natural map 
$$
\L\RRe_{\C}(T^V\wedge X_+)
\to S^{V(\C)}\wedge X(\C)_+
$$ 
is an equivariant equivalence in $G\topp_{\bullet}$. 
\item Let $V$ be a real representation which contains a trivial summand. 
For any $X$ in $G\Sch/\R$ the natural map 
$$
\L\RRe_{\C,\,\Sigma_2}(T^V\wedge X_+)
\to S^{V(\C)}\wedge X(\C)_+
$$ 
is an equivariant equivalence in $(G\times\Sigma_2)\topp_{\bullet}$. 
\end{enumerate}
\end{lemma}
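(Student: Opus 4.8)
The plan is to reduce both parts to a statement about the topological realization of the motivic representation sphere $T^V$, and then invoke a cell-structure argument. First I would record the key computational inputs: for a one-dimensional representation $V$ there is an equivariant motivic weak equivalence $T^V \wkeq \A(V)/(\A(V)\setminus\{0\})$ and, for $V$ containing a trivial summand, the model $T^V \wkeq \Sym^{\infty}_{\bullet}(T^V) \xleftarrow{\wkeq} \colim_n L_n^G(T^V)$ is not what is needed here — rather, the crucial point is that the realization functor $\RRe_{\C}$ (resp.~$\RRe_{\C,\,\Sigma_2}$) is symmetric monoidal and sends $\A^1$ to an equivariantly contractible space, so it suffices to identify $\L\RRe_{\C}(T^V)$ with $S^{V(\C)}$ and then smash with $X(\C)_+$.

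The main step would be to establish $\L\RRe_{\C}(T^V) \wkeq S^{V(\C)}$ for $V$ containing a trivial summand. I would first treat $V = 1$: here $T^1 = \P^1$ with trivial action, $\P^1(\C) = S^2 = S^{1(\C)}$, and one checks directly (e.g.~via the cofiber sequence $(\A^1\setminus\{0\})_+ \to \A^1_+ \to T^1$, whose realization is the cofiber sequence $(\C^*)_+ \to \C_+ \to S^2$ since $\RRe_{\C}$ preserves homotopy cofiber sequences of cofibrant objects by \aref{proposition:ReQuillen}) that the natural map is an equivalence. For a general $V = 1 \oplus V'$, I would use $T^V \wkeq T^1 \wedge T^{V'}$ and the symmetric monoidality of $\RRe_{\C}$ together with the homeomorphism $(X\times Y)(\C) \iso X(\C)\times Y(\C)$ to get $\L\RRe_{\C}(T^V) \wkeq \L\RRe_{\C}(T^1)\wedge \L\RRe_{\C}(T^{V'}) \wkeq S^{1(\C)}\wedge S^{V'(\C)} = S^{V(\C)}$; one still must check that $\L\RRe_{\C}(T^{V'})$ behaves well, which one does by writing $V' = \A(V')/(\A(V')\setminus\{0\})$ up to motivic equivalence and realizing the associated cofiber sequence, noting $\A(V')(\C)$ is equivariantly contractible and $(\A(V')\setminus\{0\})(\C) \wkeq S(V'(\C))$ the unit sphere, so its cofiber is $S^{V'(\C)}$. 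The real case \texttt{(2)} is entirely parallel, replacing $\RRe_{\C}$ by $\RRe_{\C,\,\Sigma_2}$ and using $\RRe_{\C,\,\Sigma_2}(\A(V)) = $ the $C_2\times\Sigma_2$-representation obtained by regarding $\A(V)(\C)$ as a real representation, which is computed sphere-by-sphere exactly as in the lemma preceding \eqref{eqn:cyclemap}.

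Finally, smashing with $X_+$: since $\RRe_{\C}$ is symmetric monoidal on the homotopy categories (\aref{theorem:stableBettirealization}, or rather its unstable predecessor \aref{proposition:ReQuillen}), $\L\RRe_{\C}(T^V\wedge X_+) \wkeq \L\RRe_{\C}(T^V) \wedge \L\RRe_{\C}(X_+) \wkeq S^{V(\C)}\wedge X(\C)_+$, and this composite identification is the natural map in the statement. The same formal argument gives \texttt{(2)}. I expect the main obstacle to be the bookkeeping in the first bullet: one must be careful that the realization of the motivic model $\A(V)/(\A(V)\setminus\{0\})$ of $T^V$ — rather than the projective model $\P(V\oplus 1)/\P(V)$ — is what computes $\L\RRe_{\C}$, which requires knowing $\RRe_{\C}$ sends the specific motivic weak equivalence $T^V \wkeq \A(V)/(\A(V)\setminus\{0\})$ to a weak equivalence (this uses that both sides are cofibrant and $\RRe_{\C}$ is left Quillen, per \aref{remark:Refp}), and that the hypothesis ``$V$ contains a trivial summand'' is exactly what makes \aref{prop:effwk} available should one prefer to route through the symmetric-power model instead. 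Either route works; the trivial-summand hypothesis is used only to guarantee one of these models realizes correctly.
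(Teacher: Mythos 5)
Your argument breaks down because the hypothesis on $X$ is that it lies in $G\Sch/\C$, not $G\Sm/\C$ --- in particular $X$ may be singular. For a singular scheme $X$, viewed as a presheaf on $G\Sm/\C$ via the Yoneda embedding, the map $\L\RRe_{\C}(X_+) \to X(\C)_+$ is \emph{not} a weak equivalence in general, so the step ``$\L\RRe_{\C}(T^V\wedge X_+) \wkeq \L\RRe_{\C}(T^V)\wedge \L\RRe_{\C}(X_+) \wkeq S^{V(\C)}\wedge X(\C)_+$'' silently assumes something false. (This is the same subtlety as in the nonequivariant case: the realization of the Nisnevich-local model of a singular scheme can differ wildly from its analytic space, and the cure is $cdh$-descent, not monoidality.) Your proposal also misattributes the role of the trivial-summand hypothesis: the identification $\L\RRe_{\C}(T^V)\wkeq S^{V(\C)}$ is already correct for \emph{any} representation $V$, since $T^V=\P(V\oplus 1)/\P(V)$ is a quotient of smooth projective schemes; no summand is needed there.

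The actual content of the proof is to handle singular $X$. The paper takes an equivariant resolution of singularities to build a simplicial scheme $X_\bullet\to X$ which is a proper $cdh$-hypercover on every fixed-point locus $X^H$, and then invokes a theorem of Voevodsky (on $cdh$-descent for $T$-suspensions) to show $(T^V\wedge|X_\bullet|_+)^H\to(T^V\wedge X_+)^H$ is a motivic weak equivalence for each $H\subseteq G$ --- this is exactly where ``$V$ contains a trivial summand'' enters, because it guarantees $V^H\neq 0$ for every $H$ and thus that one is in the suspended range where $cdh$-descent holds. One then realizes levelwise (each $X_n$ is smooth, so your computation applies there), and uses cohomological descent for proper hypercovers on the topological side to conclude. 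Without some version of this resolution argument, there is no way to control $\RRe_{\C}$ of a singular scheme, and the lemma would be false without the trivial-summand hypothesis (already nonequivariantly, taking $V=0$ and $X$ a nodal curve gives a counterexample). I would encourage you to revisit the statement and note that neither $T^V$ nor the smash-with-$X_+$ step is the locus of difficulty once $X$ is allowed to be singular; the whole proof is about making $cdh$-descent work equivariantly.
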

\begin{proof}
	We treat the complex case, the real case is similar.

Over a field of characteristic zero there are equivariant resolutions of singularities, 
see e.g., 
\cite[Proposition 3.9.1]{Kollar:singularities}.
We may thus find a  simplicial scheme $X_{\bullet}\to X$ over $X$ such that $X^{H}_{\bullet}\to X^{H}$ is a proper $cdh$-hypercover for every subgroup $H\subseteq G$.  We have $|X_{\bullet}|^{H} = |X_{\bullet}^H|$ and since $V^H\neq 0$, it follows from  \cite[Theorem 4.2]{Voev:unstable} that the map
 $ (T^V\wedge |X_{\bullet}|_+)^H \to (T^V\wedge X_+)^H$ 
 is a motivic weak equivalence in $\bMS(\C)$.
Therefore the map $\L\RRe_{\C}(T^V\wedge |X_{\bullet}|_+)
\to S^{V(\C)}\wedge X(\C)_+$ 
is an equivariant weak equivalence in $G\topp_{\bullet}$.

Since each $X_{n}$ is smooth, 
$\L\RRe_{\C}(T^V\wedge |X_{\bullet}|_+) \wkeq 
S^{V(\C)}\wedge |X(\C)_{\bullet}|_+$.
To complete the proof it remains to show that $S^{V(\C)}\wedge |X(\C)_{\bullet}|_+\to S^{V(\C)}\wedge X(\C)_+$ is an equivariant weak equivalence. 
It suffices to show that this map is a weak  equivalence on all fixed points. Note that $X^{H}(\C) = X(\C)^{H}$.
Applying $(-)^{H}$ to the map above 
we obtain the map $S^{2|V^H|}\wedge |X^H(\C)_{\bullet}|_+ \to  S^{2|V^H|}\wedge X^H(\C)_+$.
Since $X^H(\C)_{\bullet}\to X^H(\C)$ is a proper hypercover, 
it  is a universal cohomological descent hypercover \cite[5.3.5]{Deligne:hodgeIII}. 
It follows that $H^{*}_{sing}(|X^H(\C)_{\bullet}|,A) \to H^{*}_{sing}(X^H(\C),A)$ is an isomorphism for all abelian groups $A$. 
It follows that  
$S^{2|V^H|}\wedge |X^H(\C)_{\bullet\,+}| \to S^{2|V^H|}\wedge X^H(\C)_{+}$ 
is a
homology isomorphism. Since $|V^H|\geq 1$, these are simply connected spaces and thus this homology isomorphism is a weak equivalence. 
\end{proof}

\begin{lemma}\label{lem:Nsmash}
	\begin{enumerate}
 \item Let $W$,$V$ be complex representations with $V$ containing a trivial summand and $X$ a smooth quasi-projective complex variety with $G$-action. Then for $n\geq 0$,
 $$
\L\RRe_{\C}(\Sigma_{T^V}\Sym_{\bullet}^{n}(\Sigma_{T^W}X_+))
\to 
\Sigma_{S^{V(\C)}}\Sym_{\bullet}^{n}(\Sigma_{S^{W(\C)}} X(\C)_+)
$$ 
is an equivariant weak equivalence in $G\topp_{\bullet}$.
\item  Let $W$,$V$ be real representations with $V$ containing a trivial summand and $X$ a smooth quasi-projective real variety with $G$-action. Then for $n\geq 0$,
$$
\L\RRe_{\C,\,\Sigma_2}(\Sigma_{T^V}\Sym_{\bullet}^{n}(\Sigma_{T^W}X_+))
\to 
\Sigma_{S^{V(\C)}}\Sym_{\bullet}^{n}(\Sigma_{S^{W(\C)}} X(\C)_+)
$$ 
is an equivariant weak equivalence in $(G\times\Sigma_2)\topp_{\bullet}$.
\end{enumerate}
\end{lemma}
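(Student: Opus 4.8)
The plan is to prove both statements by induction on $n$, using the homotopy pushout square for symmetric powers (\aref{prop:symhpo}) and its classical topological counterpart to reduce the inductive step to the base case, which is exactly \aref{lem:n1}. I describe the complex case (1); the real case (2) is identical after replacing $\RRe_{\C}$ by $\RRe_{\C,\,\Sigma_2}$, invoking \aref{lem:n1}(2) and \aref{prop:unstableR}, and working $(G\times\Sigma_2)$-equivariantly. For $n=0$ one has $\Sym^{0}_{\bullet}(\mcal{Z})=S^{0}$, so the claim is that $\L\RRe_{\C}(T^{V})\to S^{V(\C)}$ is an equivalence, which is \aref{lem:n1}(1) for $X=\spec(k)$; for $n=1$ the claim reads $\L\RRe_{\C}(T^{V\oplus W}\wedge X_{+})\to S^{(V\oplus W)(\C)}\wedge X(\C)_{+}$, again \aref{lem:n1}(1) since $V\oplus W$ contains a trivial summand.

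Now fix $n\geq 1$ and assume the statement for $n-1$. Put $M:=\P(W\oplus 1)\times X$ and $A':=\P(W)\times X\subseteq M$, so that the pointed quasi-projective $G$-scheme $Y:=M_{+}$ and its closed invariant reduced subscheme $B:=A'_{+}$ satisfy $Y/B=T^{W}\wedge X_{+}$. Applying \aref{prop:symhpo} to $(Y,B)$ yields a homotopy pushout square of pointed motivic $G$-spaces with corners $\Sym^{n}_{\bullet}(Y,B)$, $\Sym^{n}_{\bullet}(Y)$, $\Sym^{n-1}_{\bullet}(T^{W}\wedge X_{+})$ and $\Sym^{n}_{\bullet}(T^{W}\wedge X_{+})$; since the top horizontal map is a monomorphism, this is a pushout along a cofibration between cofibrant objects and hence a homotopy pushout in the $fp$ model structure as well. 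Smashing with the cofibrant object $T^{V}$ keeps it a homotopy pushout, and applying $\L\RRe_{\C}$ — which is left Quillen on $G\bMS(\C)^{fp}$ by \aref{proposition:ReQuillen} and therefore preserves homotopy pushouts, producing maps that are equivalences in $G\topp_{\bullet}$ by \aref{remark:Refp} — gives a homotopy pushout square in $G\topp_{\bullet}$.

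On the topological side, the classical analogue of \aref{prop:symhpo} applied to the $G$-CW pair $(M(\C)_{+},A'(\C)_{+})$ yields a homotopy pushout square with corners the topological symmetric powers, which stays a homotopy pushout after smashing with $S^{V(\C)}$. Using the standard identifications $\Sym^{i}(Z)(\C)=\Sym^{i}(Z(\C))$ and $\Sym^{n}(M,A')(\C)=\Sym^{n}(M(\C),A'(\C))$ for quasi-projective $G$-schemes, the natural transformations assemble into a map from the realized motivic square to this topological square. It is an equivalence on three of the four corners: on $\Sigma_{T^{V}}\Sym^{n-1}_{\bullet}(T^{W}\wedge X_{+})$ by the inductive hypothesis; and on $\Sigma_{T^{V}}\Sym^{n}_{\bullet}(Y)$ and $\Sigma_{T^{V}}\Sym^{n}_{\bullet}(Y,B)$ because $\Sym^{n}_{\bullet}(M_{+})\cong\bigl(\coprod_{i=1}^{n}\Sym^{i}M\bigr)_{+}$, and likewise $\Sym^{n}_{\bullet}(M_{+},A'_{+})$ is $\bigl(\coprod_{i=1}^{n-1}\Sym^{i}M\sqcup\Sym^{n}(M,A')\bigr)_{+}$, a disjoint union of quasi-projective $G$-schemes with a disjoint basepoint; hence these corners are of the form $T^{V}\wedge Z_{+}$ and \aref{lem:n1}(1) applies — this is precisely where the hypothesis that $V$ contains a trivial summand is used. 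By the gluing (cube) lemma for homotopy pushouts, the map on the remaining corner, namely $\L\RRe_{\C}(\Sigma_{T^{V}}\Sym^{n}_{\bullet}(\Sigma_{T^{W}}X_{+}))\to\Sigma_{S^{V(\C)}}\Sym^{n}_{\bullet}(\Sigma_{S^{W(\C)}}X(\C)_{+})$, is an equivalence, completing the induction.

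The main obstacle is the organization of the induction rather than any single computation: one must present $T^{W}\wedge X_{+}$ as a quotient of a pointed quasi-projective $G$-scheme so that \aref{prop:symhpo} applies, recognize that the two auxiliary corners become $T^{V}\wedge(\text{scheme})_{+}$ after adjoining disjoint basepoints so that the base case \aref{lem:n1} covers them, and take care that $\L\RRe_{\C}$ (resp.\ $\L\RRe_{\C,\,\Sigma_2}$) is applied in a model structure where it is left Quillen \emph{equivariantly}, i.e.\ the $fp$ structure (cf.\ \aref{remark:Refp}). The facts that formation of complex points commutes with symmetric powers and that the topological symmetric-power pushout square holds for CW pairs are standard, and I would only cite them.
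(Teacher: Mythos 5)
Your proof is correct and follows the same strategy as the paper's: induction on $n$, using the homotopy pushout of \aref{prop:symhpo} applied to $(\P(W\oplus 1)\times X, \P(W)\times X)$ (suitably pointed), smashing with $T^V$, and comparing realizations on the other three corners via \aref{lem:n1} and the inductive hypothesis. Your extra observation that $\Sym^{n}_{\bullet}(M_{+})$ and $\Sym^{n}_{\bullet}(M_{+},A'_{+})$ decompose as $(\text{quasi-projective }G\text{-scheme})_{+}$ — so that the two auxiliary corners are literally of the form $T^{V}\wedge Z_{+}$ to which \aref{lem:n1} applies — is a useful clarification the paper leaves implicit, but the argument is the same.
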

\begin{proof}
We treat the complex case, the real case is similar. 
 From  \eqref{eqn:symcoeq}, we see that $\L\RRe_{\C,\,\Sigma_2}(\Sigma_{T^V}\Sym_{\bullet}^{n}(\Sigma_{T^W}X_+))$
is the homotopy coequalizer of
\[
\L\RRe_{\C,\,\Sigma_2}(\Sigma_{T^V}\Sym^n(A\coprod Y)) \rightrightarrows \L\RRe_{\C,\,\Sigma_2}(\Sigma_{T^V}\Sym^n(Y)). 
\]
The result now follows from \aref{lem:n1}.

\end{proof}

\begin{corollary}\label{cor:symrealize}
\hspace{2em}
\begin{enumerate}
\item Let $X$ be a smooth quasi-projective complex variety with $G$-action. 
For any $n,m\geq 0$ the natural map 
$$
\L\RRe_{\C}(\Sigma^{\infty}_{T^{\rho_G}}\Sym_{\bullet}^{n}(\Sigma^{m}_{T^{\rho_{G}}}X_+))
\to 
\Sigma^{\infty}_{S^{2\rho_{G}}}\Sym_{\bullet}^{n}(\Sigma^{m}_{S^{2\rho_{G}}}X(\C)_+)
$$ 
is a stable equivariant equivalence.
\item Let $X$ be a smooth quasi-projective real variety with $G$-action. For any
$n,m\geq 0$ the natural map 
$$
\L\RRe_{\C,\Sigma_2}(\Sigma^{\infty}_{T^{\rho_G}}\Sym_{\bullet}^{n}(\Sigma^{m}_{T^{\rho_{G}}}X_+))
\to 
\Sigma^{\infty}_{S^{2\rho_{G}}}\Sym_{\bullet}^{n}(\Sigma^{m}_{S^{2\rho_{G}}}X(\C)_+)
$$ 
is a stable equivariant equivalence.
\end{enumerate}
\end{corollary}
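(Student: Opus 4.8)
The plan is to bootstrap the stable statement from the unstable computation of \aref{lem:Nsmash} by a formal argument with Quillen adjunctions, treating the complex case in detail; the real case follows verbatim upon replacing $\RRe_{\C}$, \aref{theorem:stableBettirealization}, and \aref{lem:Nsmash}(1) by $\RRe_{\C,\,\Sigma_2}$, \aref{thm:stableR}, and \aref{lem:Nsmash}(2), and $S^{2\rho_{G}}$ by the representation sphere $\RRe_{\C,\,\Sigma_2}(T^{\rho_{G}})$.

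First I would record the compatibility of realization with infinite suspension. Since the stable realization functor on spectra is defined levelwise from the unstable one and $\RRe_{\C}(T^{\rho_{G}}) = S^{2\rho_{G}}$, there is a strict equality $\RRe_{\C}\circ\Sigma^{\infty}_{T^{\rho_{G}}} = \Sigma^{\infty}_{S^{2\rho_{G}}}\circ\RRe_{\C}$ of point-set functors; as $\Sigma^{\infty}_{T^{\rho_{G}}}$, $\Sigma^{\infty}_{S^{2\rho_{G}}}$, and $\RRe_{\C}$ are all left Quillen (\aref{theorem:stableBettirealization}) and left Quillen functors preserve cofibrant objects, the two composites share a total left derived functor, giving a natural isomorphism $\L\RRe_{\C}(\Sigma^{\infty}_{T^{\rho_{G}}}(-))\iso \Sigma^{\infty}_{S^{2\rho_{G}}}(\L\RRe_{\C}(-))$. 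Under this identification the map of the statement becomes $\Sigma^{\infty}_{S^{2\rho_{G}}}$ applied to the natural comparison $f\colon\L\RRe_{\C}(\Sym_{\bullet}^{n}(\Sigma^{m}_{T^{\rho_{G}}}X_+))\to \Sym_{\bullet}^{n}(\Sigma^{m}_{S^{2\rho_{G}}}X(\C)_+)$ coming from the monoidality of $\RRe_{\C}$ and the canonical transformation $\RRe_{\C}\circ\Sym^{n}_{\bullet}\to\Sym^{n}_{\bullet}\circ\RRe_{\C}$.

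The main step is then to show $\Sigma^{\infty}_{S^{2\rho_{G}}}(f)$ is a stable equivalence. Here I would use that $\Sigma_{S^{2\rho_{G}}} = S^{2\rho_{G}}\wedge -$ is invertible in $\SH_{G}$, so it suffices to see that $\Sigma^{\infty}_{S^{2\rho_{G}}}(\Sigma_{S^{2\rho_{G}}}f)$ is a stable equivalence. Although $f$ need not be an unstable weak equivalence (that is why \aref{lem:Nsmash} carries an outer suspension), its single suspension is: unwinding the identifications, $\Sigma_{S^{2\rho_{G}}}f$ is the natural map $\L\RRe_{\C}(\Sigma_{T^{\rho_{G}}}\Sym_{\bullet}^{n}(\Sigma^{m}_{T^{\rho_{G}}}X_+))\to\Sigma_{S^{2\rho_{G}}}\Sym_{\bullet}^{n}(\Sigma^{m}_{S^{2\rho_{G}}}X(\C)_+)$, and since $\rho_{G}$ contains a trivial summand this is an equivariant weak equivalence of pointed $G$-spaces by \aref{lem:Nsmash} with $V = \rho_{G}$ and $W = m\rho_{G}$. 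Applying $\Sigma^{\infty}_{S^{2\rho_{G}}}$, which takes equivariant weak equivalences of pointed $G$-spaces to stable equivalences, then finishes the argument.

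The step I expect to be the main obstacle is the naturality bookkeeping in the last paragraph: verifying that $\Sigma_{S^{2\rho_{G}}}f$ really is the comparison map to which \aref{lem:Nsmash} applies, i.e., that the monoidality isomorphism $\RRe_{\C}(T^{\rho_{G}}\wedge -)\iso S^{2\rho_{G}}\wedge\RRe_{\C}(-)$, the transformation $\RRe_{\C}\Sym^{n}_{\bullet}\to\Sym^{n}_{\bullet}\RRe_{\C}$, and the structure maps defining $\Sigma^{\infty}$ are all mutually compatible. Everything else is formal.
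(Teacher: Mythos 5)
Your proof is correct and, as far as can be discerned (the paper leaves the deduction of the corollary implicit), it is precisely the intended argument: commute $\L\RRe_{\C}$ past $\Sigma^{\infty}$, use invertibility of $\Sigma_{S^{2\rho_G}}$ in $\SH_G$ to reduce to the once-suspended comparison map, identify that map via monoidality with the one in \aref{lem:Nsmash} (with $V=\rho_G$, $W=m\rho_G$, noting $\rho_G$ contains a trivial summand), and apply $\Sigma^{\infty}$ which sends equivariant weak equivalences to stable equivalences.
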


Define
$$
(\Sigma^{\infty}_{T^{\rho_{G}}}X)_{tr}^{eff} = ( \Sym^{\infty}_{\bullet}(X_+), \Sym_{\bullet}^{\infty}( \Sigma_{T^{\rho_{G}}}X_+ ), \Sym^{\infty}_{\bullet}(\Sigma^{2}_{T^{\rho_{G}}}X_+),\ldots)
$$
with obvious structure maps.

\begin{theorem}\label{thm:realbred}
There is an isomorphism $\RRe_{\C}(\M{A}) \cong {\rm H}\ul{A}$ in $\SH_{G}$, for any abelian group $A$. 
Similarly, there is an isomorphism  $\RRe_{\C,\Sigma_2}(\M{A}) \cong {\rm H}\ul{A}$ in $\SH_{G\times\Sigma_2}$.
\end{theorem}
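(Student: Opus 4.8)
The plan is to reduce to the case $A=\Z$, replace $\M{\Z}$ by the effective model $(\Sigma^{\infty}_{T^{\rho_{G}}}\spec(k))^{eff}_{tr}$ built from symmetric powers, realize this model levelwise using the computations of \aref{cor:symrealize}, and finally recognize the result via the equivariant Dold--Thom theorem. For the reduction: by \aref{sub:stablerep} there is a stable equivalence $\M{A}\wkeq\M{\Z}\wedge\SS A$, where $\SS A$ is the Moore spectrum of $A$. Since $\RRe_{\C}$ and $\RRe_{\C,\Sigma_2}$ are symmetric monoidal left adjoints (\aref{theorem:stableBettirealization}, \aref{thm:stableR}) carrying the motivic sphere $\mathbf 1=\Sigma^{\infty}_{T^{\rho_{G}}}S^{0}$ to the topological (genuine) equivariant sphere spectrum, and $\SS A$ is built from $\mathbf 1$ by a cofibre sequence coming from a free resolution of $A$, they carry $\SS A$ to the topological Moore spectrum $\SS A$; hence $\RRe_{\C}(\M{A})\wkeq\RRe_{\C}(\M{\Z})\wedge\SS A$ and likewise over $\R$. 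As ${\rm H}\ul{A}\wkeq{\rm H}\ul{\Z}\wedge\SS A$ for any $A$ (again from a free resolution, using exactness of the constant Mackey functor construction $A\mapsto\ul{A}$), it suffices to prove $\RRe_{\C}(\M{\Z})\wkeq{\rm H}\ul{\Z}$ in $\SH_{G}$ and its analogue over $\R$.

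To compute $\RRe_{\C}(\M{\Z})$ (with $k=\C$): by \aref{prop:Liso} the group completion gives natural maps $\Sym^{\infty}_{\bullet}(T^{m\rho_{G}})\to\Z_{tr,G}(T^{m\rho_{G}})=\M{\Z}_{m}$, and by \aref{prop:effwk} these are equivariant motivic weak equivalences for $m\geq1$; they assemble into a stable equivalence $(\Sigma^{\infty}_{T^{\rho_{G}}}\spec(\C))^{eff}_{tr}\wkeq\M{\Z}$. Presenting this spectrum as a homotopy colimit of shifted suspension spectra of its spaces, $\M{\Z}\wkeq\hocolim_{m}\Sigma^{-m}_{T^{\rho_{G}}}\Sigma^{\infty}_{T^{\rho_{G}}}\Sym^{\infty}_{\bullet}(T^{m\rho_{G}})$, and writing $\Sym^{\infty}_{\bullet}(T^{m\rho_{G}})=\colim_{n}\Sym^{n}_{\bullet}(\Sigma^{m}_{T^{\rho_{G}}}\spec(\C)_{+})$, one uses that $\L\RRe_{\C}$ commutes with these filtered homotopy colimits together with $\RRe_{\C}(T^{\rho_{G}})\wkeq S^{2\rho_{G}}$ and \aref{cor:symrealize}, which identifies $\L\RRe_{\C}$ of each $\Sigma^{\infty}_{T^{\rho_{G}}}\Sym^{n}_{\bullet}(\Sigma^{m}_{T^{\rho_{G}}}\spec(\C)_{+})$ with $\Sigma^{\infty}_{S^{2\rho_{G}}}\Sym^{n}_{\bullet}(S^{2m\rho_{G}})$ compatibly with the structure maps. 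Passing to the homotopy colimit, $\RRe_{\C}(\M{\Z})$ is the topological prespectrum $m\mapsto\Sym^{\infty}_{\bullet}(S^{2m\rho_{G}})$.

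Finally I would identify this prespectrum with ${\rm H}\ul{\Z}$. For $m\geq1$ each fixed-point space $(S^{2m\rho_{G}})^{H}=S^{2m|G/H|}$ is simply connected, so $\Sym^{\infty}_{\bullet}(S^{2m\rho_{G}})\to\Z\otimes S^{2m\rho_{G}}$ is an equivariant equivalence, and by the equivariant Dold--Thom theorem the prespectrum $m\mapsto\Z\otimes S^{2m\rho_{G}}$ — whose indexing representations $2m\rho_{G}$ are cofinal — is an $\Omega$-$G$-spectrum representing $RO(G)$-graded Bredon cohomology with constant Mackey functor coefficients, i.e.\ it is ${\rm H}\ul{\Z}$ in $\SH_{G}$. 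This settles the complex case. The real case (now $k=\R$, realizing $X\mapsto X(\C)$ with conjugation $\Sigma_2$-action) is proved by the identical argument with $G$ replaced by $G\times\Sigma_{2}$, using $\RRe_{\C,\Sigma_2}(T^{\rho_{G}})\wkeq S^{\rho_{G\times\Sigma_2}}$ (from $\C[G]\cong\R[G\times\Sigma_2]$ as real representations, see \aref{sub:topR}) and part (2) of \aref{cor:symrealize}: one obtains that $\RRe_{\C,\Sigma_2}(\M{\Z})$ is the prespectrum $m\mapsto\Sym^{\infty}_{\bullet}(S^{m\rho_{G\times\Sigma_2}})$, which is ${\rm H}\ul{\Z}$ in $\SH_{G\times\Sigma_2}$ by the equivariant Dold--Thom theorem for $G\times\Sigma_2$.

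The step I expect to be the main obstacle is the last one: verifying that the realized symmetric-power prespectrum is the Eilenberg--MacLane spectrum of the \emph{constant} Mackey functor — in particular that it is genuinely an $\Omega$-spectrum and that the Mackey structure comes out constant — which is exactly the input supplied by the equivariant Dold--Thom theorem. The bookkeeping in the previous step (commuting $\L\RRe_{\C}$ with the homotopy colimit presenting the spectrum, and matching up the structure maps) is routine, but must be handled with some care about cofibrancy.
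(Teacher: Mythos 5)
Your proof follows the paper's own strategy almost exactly: reduce to $A=\Z$ via the Moore spectrum identity and monoidality of Betti realization, replace $\M{\Z}$ by the effective symmetric-power model via \aref{prop:Liso} and \aref{prop:effwk}, present it as a homotopy colimit of shifted suspension spectra, realize levelwise using \aref{cor:symrealize} and $\RRe_{\C}(T^{\rho_G})\wkeq S^{2\rho_G}$, and identify the resulting prespectrum $m\mapsto\Sym^{\infty}_{\bullet}(S^{2m\rho_G})$ with ${\rm H}\ul{\Z}$. The only difference is cosmetic: for the final identification the paper cites \cite{DS:equiDT} (for $\{\Z S^{2n\rho_G}\}\wkeq{\rm H}\ul{\Z}$) and \cite{Dug:kr} (for the comparison of $\Sym^{\infty}_{\bullet}$ with $\Z(-)$ on representation spheres), whereas you sketch the group-completion and equivariant Dold--Thom inputs directly; note that the honest assertion you need at that step is that \emph{the fixed points of $\Sym^{\infty}_{\bullet}(S^{2m\rho_G})$} (not merely of $S^{2m\rho_G}$) are connected commutative $H$-spaces so that group completion is an equivalence, which is exactly what \cite[Prop.\ A.6]{Dug:kr} supplies, so it is cleaner to invoke that reference as the paper does.
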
 
\begin{proof}
We treat the complex case. The real case is similar.
	
Since $\MA = \MZ \wedge \MM A$ and ${\rm H}\ul{A} = {\rm H}\ul{\Z}\wedge \MM A$, 
where $\MM A$ is a Moore spectrum for $A$, 
and $\L\RRe_{\C}(\MM A) = \MM A$, 
it suffices to establish the result for $A=\Z$.
The map $(\Sigma^{\infty}_{T^{\rho_{G}}}(S^{0}))_{tr}^{eff} \to \MZ$ is a stable equivalence by \aref{prop:effwk}.
 
It follows from \cite[Proposition 3.7]{DS:equiDT} that the spectrum $\{\Z S^{2n\rho_{G}}\}_{n\geq 0}$ is a $S^{2\rho_{G}}$-spectrum model for ${\rm H}\ul{\Z}$.
It follows from \cite[Proposition A.6]{Dug:kr} that the natural map 
$(\Sigma^{\infty}_{S^{2\rho_{G}}}(S^{0}))_{tr}^{eff}:=\{\Sym_{\bullet}^{\infty}(\Sigma_{S^{2n\rho_{G}}}S^0)\} \to \{\Z S^{2n\rho_{G}}\}_{n\geq 0}$ is an equivariant stable equivalence. 
It thus suffices to see that $\L\RRe_{\C}(\Sigma^{\infty}_{T^{\rho_{G}}}(S^{0}))_{tr}^{eff}\to (\Sigma^{\infty}_{S^{2\rho_{G}}}(S^{0}))_{tr}^{eff}$ is a stable equivalence.
 
We have the natural isomorphism $\colim_{n}(\Sigma^{\infty}_{T^{\rho_G}}E_{n})[-n]\iso E$ in $\SH_{k}(G)$, 
where $D[n]$ is the shifted $T^{\rho_{G}}$-spectrum given by $(D[n])_{i} = D_{i-n}$. 
Similarly, 
we have the natural isomorphism $\colim_{n}(\Sigma^{\infty}_{S^{2\rho_{G}}}F_{n})[-n]\iso F$ in $\SH_{G}$. 
Since $\L\RRe_{\C}$ preserves homotopy colimits and shifts, 
the result follows from \aref{cor:symrealize}.
\end{proof}

{\bf Acknowledgments}
We thank the anonymous referee for a careful reading, spotting a mistake in the appendix, and many helpful comments which have improved the exposition. 
The authors gratefully acknowledge support from the RCN project Special Geometries, no. 239015 and the RCN Frontier Research Group Project no. 250399 ``Motivic Hopf equations." 
Heller was supported by NSF Grant DMS-1710966.
{\O}stv{\ae}r was supported by a Friedrich Wilhelm Bessel Research Award from the Humboldt Foundation.
\providecommand{\bysame}{\leavevmode\hbox to3em{\hrulefill}\thinspace}
\providecommand{\MR}{\relax\ifhmode\unskip\space\fi MR }
\providecommand{\MRhref}[2]{%
  \href{http://www.ams.org/mathscinet-getitem?mr=#1}{#2}
}
\providecommand{\href}[2]{#2}

\vspace{0.2in}

\end{document}